\documentclass[12pt,a4paper]{amsart}
\usepackage{amsfonts}
\usepackage{amssymb}
\usepackage{amsmath}
\usepackage{amsthm}
\usepackage{cite}
\usepackage{enumitem}
\usepackage{tikz}
\usepackage{subfigure}
\usepackage{color}

\theoremstyle{plain}
\newtheorem{thm}{Theorem}

\newtheorem{lem}{Lemma}
\newtheorem{prop}{Proposition}
\newtheorem{cor}{Corollary}
\newtheorem{bem}{Remark}

\textwidth166mm 
\textheight212mm
\hoffset-2cm
\voffset-7mm

%\sloppy

\newcommand{\vecII}[2]{
\ensuremath{
\begin{pmatrix}
#1 \\ #2 \\
\end{pmatrix}}}

\newcommand{\matII}[4]{
\ensuremath{ 
\begin{pmatrix}  
#1 & #2 \\
#3 & #4 \\
\end{pmatrix}}}

\providecommand{\sm}{\setminus}
\providecommand{\N}{\mathbb{N}} 
\providecommand{\R}{\mathbb{R}}

\providecommand{\Hh}{\mathbb{H}}
\providecommand{\Z}{\mathbb{Z}} 
\providecommand{\C}{\mathbb{C}}

\providecommand{\skp}[2]{\langle#1,#2\rangle}
\DeclareMathOperator{\cn}{cn}
\DeclareMathOperator{\sn}{sn}
\DeclareMathOperator{\dn}{dn}

\DeclareMathOperator{\sign}{sign}

\DeclareMathOperator{\Arcosh}{Arcosh}

\DeclareMathOperator{\sech}{sech}
\renewcommand{\qed}{\hfill $\Box$}

\setitemize{itemsep=+2pt}
\setenumerate{itemsep=+2pt}

\begin{document}

\allowdisplaybreaks

\title[Willmore surfaces of revolution]{Explicit formulas, symmetry and symmetry breaking for
Willmore surfaces of revolution}

\author{Rainer Mandel}
\address{R. Mandel \hfill\break
Karlsruhe Institute of Technology \hfill\break
Institute for Analysis \hfill\break
Englerstra{\ss}e 2 \hfill\break
D-76131 Karlsruhe, Germany}
 
\email{Rainer.Mandel@kit.edu}
\date{\today}

\subjclass[2000]{Primary: 53C42, Secondary: 34B15, 49Q10}
\keywords{}

\begin{abstract} 
  In this paper we prove explicit formulas for all Willmore surfaces of revolution and demonstrate their use
  in the discussion of the associated Dirichlet boundary value problems. It is shown by an
  explicit example that symmetric Dirichlet boundary conditions do in general not entail the symmetry of the
  surface. In addition we prove a symmetry result for a subclass of Willmore surfaces satisfying symmetric
  Dirichlet boundary data. 
\end{abstract}

\maketitle

%{\tiny

\section{Introduction}

  In this paper we discuss a class of boundary value problems for immersed but in general nonembedded Willmore
  surfaces of revolution $\Sigma\subset\R^3$. A Willmore surface is, by definition, a critical point of the
  Willmore functional 
  $$
    \mathcal{W}(\Sigma) = \int_\Sigma H^2 \,d\mu
  $$
  where $H=\frac{1}{2}(k_1+k_2)$ is the mean curvature, $k_1,k_2$ are the principal curvatures and where  
  $d\mu$ denotes the associated volume form of the surface. It it known \cite{LanSing_Curves_in_the}
  that Willmore surfaces of revolution are generated by elastic curves in the hyperbolic
  plane $\Hh$ (called elasticae), which, by definition, are critical for the total squared curvature
  functional.
  About 30 years ago these curves were classified by Langer and Singer \cite{LanSing_the_total} so that a
  complete qualitative description of Willmore surfaces of revolution is available. One problematic feature
  of this classification is that a quantitative description of elastic curves turns out to be rather difficult
  since they are characterized by their curvature functions only in terms of the arclength parameter, which a priori is
  unknown. On the other hand, such quantitative information is needed when studying boundary value problems so
  that the above-mentioned classification result has not been of much use so far. An alternative approach was
  followed by Bergner, Dall'Acqua, Eichmann, Grunau and others (see for instance
  \cite{BerDalFro_Symmetric,DalAcDecGru_Classical_solutions,DalAcFroGruSch_Symmetric,Eich_symmetry,Eich_nonuniqueness})
  who considered the corresponding boundary value problems under the additional assumption that the generating
  curve is a graph between its prescribed endpoints. In such a way the unknown piece of a curve is
  determined by a single positive profile function parametrized over its axis of revolution. The Willmore
  equation 
  $$
    -\Delta_\Sigma H = 2H(H^2-K)
  $$ 
  for such graphical surfaces of revolution then simplifies to a rather complicated nonlinear ODE of fourth
  order for the profile function (see for instance (2.5) in \cite{DalAcFroGruSch_Symmetric}) coming with
  appropriate boundary conditions. Using this approach several results for mostly symmetric (i.e. even)
  profile functions satisfying symmetric boundary value problems could be proved whereas the existence of
  nonsymmetric or nongraphical solutions for symmetric or nonsymmetric boundary value problems is almost
  completely open up to now. Our intention is to close this gap. Somewhat surprisingly, in the course of our
  study it became apparent that the above-mentioned characterization of elastic curves by Langer and
  Singer may be used to find \r{rather complicated} explicit formulas for any given elastic curve and thus
  for all Willmore surfaces of revolution. Here, the word explicit refers to the fact that the formulas only involve Jacobi's
  elliptic functions and theta functions, elliptic integrals  etc. In particular, this allows
  us to treat arbitrary boundary conditions by explicit means and to analyze the shape of each solution.
  For instance we find in Theorem~\ref{thm:Symmetry_breaking} that there are symmetric boundary conditions
  for the Willmore equation with nonsymmetric solutions. In order to describe these solutions in a better way
  let us recall the following fundamental result due to Langer and Singer. 

  \begin{thm}[cf. \cite{LanSing_Curves_in_the}, Theorem 3] \label{thm:LanSin}
    Let $\gamma$ be a regular closed curve in $\Hh$ which is critical for the elastic energy functional. Then
    either $\gamma$ is the $m$-fold cover of the ball for some $m$, or $\gamma$ is a member of the family
    of solutions $\gamma_{m,n}$ having the following description: if $m>1$ and $n$ are integers satisfying
    $1<\frac{2m}{n}<\sqrt{2}$ there is (up to congruence) a unique curve $\gamma_{m,n}$, which closes up in
    $n$ periods of its curvature $\kappa=\kappa_0 \cn^2(rs,p)$ while making $m$ orbits about the fixed point
    of the associated rotation field $J$. Further, $\gamma_{m,n}$ oscillates between a pair of invariant
    circles of $J$ with the $n$ maxima of $\kappa$ equally spaced around the outer circle and the $n$ minima
    of $\kappa$ equally spaced around the inner circle. Also, $\gamma_{m,n}$ has $n(m-1)$ points of
    selfintersections.
  \end{thm}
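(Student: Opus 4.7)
The plan is to analyze the Euler--Lagrange equation for the elastic energy functional $\int\kappa^2\,ds$ (augmented by the appropriate length or Lagrange multiplier term to fix the arclength parametrization) on curves in $\Hh$. A standard variational calculation reduces the resulting fourth-order system to a second-order ODE for the curvature of the form
$$\kappa''(s) + \tfrac12\kappa(s)^3 + (\lambda-1)\kappa(s) = 0,$$
the $-1$ reflecting the constant Gaussian curvature of $\Hh$. Constant solutions of this ODE correspond to geodesic circles, and their $m$-fold covers account for the first alternative in the statement. For the nonconstant bounded solutions, multiplication by $\kappa'$ and one integration yield an energy relation $(\kappa')^2 = P(\kappa)$ with $P$ a quartic polynomial, and the classical theory of elliptic ODEs writes them in closed form as $\kappa(s) = \kappa_0\cn^2(rs,p)$ with parameters $\kappa_0,r>0$ and modulus $p\in(0,1)$ encoding the constants of integration.

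Next I would exploit the invariance of the problem under the full isometry group of $\Hh$ to extract, via Noether's theorem, a Killing vector field $J$ along $\gamma$ that is preserved by the elastica flow. Since $\kappa$ is bounded and periodic, $\gamma$ cannot escape to infinity, so $J$ must be of elliptic (rotational) type; its orbits are the geodesic circles around a fixed point $p_0\in\Hh$. At the instants where $\kappa'=0$ the curve is orthogonal to an orbit of $J$, which identifies the invariant circles of the theorem as those on which the extrema of $\cn^2$ are attained and pins down the qualitative picture of $\gamma$ oscillating between a pair of geodesic circles around $p_0$, with $\kappa$-maxima and $\kappa$-minima occurring on alternating orbits.

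The arithmetic condition on $m$ and $n$ arises in the closure analysis. Over one period of the curvature, $\gamma$ advances by a fixed angular amount $\Delta\theta$ about $p_0$, expressible as a quotient of complete elliptic integrals in the modulus $p$. The curve closes up after making $m$ orbits in $n$ curvature periods precisely when $n\Delta\theta = 2\pi m$, i.e. when $\Delta\theta/(2\pi) = m/n$. A careful monotonicity study of the map $p\mapsto \Delta\theta(p)/(2\pi)$ on $(0,1)$, together with its limits at the endpoints, should establish that its image is the open interval $(\tfrac12,\tfrac{1}{\sqrt 2})$, yielding exactly the admissibility condition $1<2m/n<\sqrt 2$. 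The self-intersection count $n(m-1)$ then follows from a turning-tangent / winding-number argument applied to the projection of $\gamma$ onto the $J$-orbits in a single fundamental domain.

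I expect the last quantitative step to be the main obstacle: proving that the period-ratio function $p\mapsto \Delta\theta(p)/(2\pi)$ is strictly monotonic and that its range is precisely $(\tfrac12,\tfrac{1}{\sqrt 2})$ requires delicate manipulations of complete elliptic integrals and cannot be bypassed, since the exact bound $\sqrt 2$ is the geometric core of the closure criterion and distinguishes the family $\gamma_{m,n}$ from the excluded multiply-covered geodesic circles in the first alternative.
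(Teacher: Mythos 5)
Your outline follows the broad Langer--Singer strategy (first integral for $\kappa$, Killing field $J$, closure condition $n\Delta\theta=2\pi m$, monotonicity of $\Delta\theta$), and the claimed range $(\tfrac12,\tfrac1{\sqrt2})$ for $\Delta\theta/(2\pi)$ is indeed what Proposition~\ref{prop:rotation} establishes. But there is a concrete error at the start: the bounded nonconstant periodic solutions of the curvature equation $-\kappa''+\kappa=\tfrac12\kappa^3$ are \emph{not} of the form $\kappa_0\cn^2(rs,p)$. A function $u=\cn^2$ satisfies $(u')^2=4u(1-u)(1-p^2+p^2u)$, hence a second-order ODE with \emph{quadratic} nonlinearity, and cannot solve the cubic elastica equation. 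The closed (orbitlike) elasticae have curvature $\kappa(s)=\frac{2}{\sqrt{2-k^2}}\dn\bigl(\frac{s+a}{\sqrt{2-k^2}},k\bigr)$, as in \eqref{eq:def_kappa_orbitlike}; the $\cn^2$ expression is a typo in the original Langer--Singer statement which this paper explicitly flags and corrects. Reproducing it as the output of ``the classical theory of elliptic ODEs'' is not a harmless slip, since the subsequent period and monotonicity analysis depends on which elliptic function actually appears.

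The larger gap is that the two genuinely quantitative claims --- that $\gamma_{m,n}$ makes exactly $m$ orbits about the fixed point of $J$, and that it has exactly $n(m-1)$ self-intersections --- are precisely the parts whose proofs are missing from the literature, and your proposal does not supply them. The relation $n\Delta\theta=2\pi m$ gives the total rotation of an auxiliary angle, but one still has to show that the winding number of $\gamma$ itself about the fixed point equals $m$; the paper does this by an explicit homotopy from $\gamma-P$ to a curve whose argument is the monotone angle $\vartheta$, and verifying admissibility of that homotopy requires the explicit formulas of Theorem~\ref{thm:explicit_formula_orbitlike}. The self-intersection count is harder still: ``a turning-tangent / winding-number argument in a fundamental domain'' is not a proof. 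The paper characterizes $\gamma(s)=\gamma(t)$ via $\kappa(s)=\kappa(t)$, $W_1(s)=W_1(t)$, $W_2(s)=W_2(t)$, reduces to a family of equations $\vartheta(s)/\pi=(l-1)m/n-p$ indexed by integer pairs $(l,p)$ subject to \eqref{eq:selfintersection_lp_conditions}, and then counts these pairs by a coprimality argument and an evaluation of $\sum_{l}\lceil lm/n\rceil$. None of this is routine, and without it the assertion ``$n(m-1)$ points of self-intersection'' remains unproved in your approach.
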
 
  
  Notice that $\kappa=\kappa_0 \cn^2(rs,p)$ is a typo in \cite{LanSing_Curves_in_the} since the curvature
  function is basically a $\dn$-function (cf. \eqref{eq:def_kappa_orbitlike}), as we will see later. 
  Following \cite{LanSing_Curves_in_the} (not necessarily closed) elastic curves generated by such
  curvature functions will be called orbitlike.  The proof of the above theorem is in parts provided in the
  paper \cite{LanSing_the_total}. Computations of the winding number and of the number of selfintersections
  are, however, missing, so that our proofs provided later may be of interest. 
  
  \medskip
  
  Our main concern is the study of the Dirichlet problem for Willmore surfaces of
  revolution. In a very general sense the Dirichlet problem  was solved by
  Sch\"atzle \cite{Schaetzle_WillmoreBVP} using variational methods and geometric measure theory. Due to the generality of his approach there is not
  much information about the obtained solutions. Within the class of surfaces of revolution, however, the
  possible shapes of solutions to the Dirichlet problem are, thanks to the classification result mentioned
  above, quite well understood. The surface then takes the form
  \begin{equation} \label{eq:Sigma_gamma}
    \Sigma_\gamma = 
    \Big\{ (\gamma_1(s),\gamma_2(s)\cos(t),\gamma_2(s)\sin(t)) : s\in I, t\in [0,2\pi) \Big\}
    %\qquad (\gamma\in C^\infty(I,\Hh))
  \end{equation} 
  where $I\subset\R$ is some interval und $\gamma$ is an elastic curve in the hyperbolic plane. The problem
  then is to find a Willmore surface with preassigned initial or end positions and angles, i.e. for given
  $(A_1,A_2),(B_1,B_2)$ in the hyperbolic plane and $\phi_A,\phi_B\in\R$ one has to find an elastic curve
  $\gamma$ with (unknown) hyperbolic length $L$ such that the following holds:
  \begin{equation} \label{eq:DirichletBVP}
    \mathcal{W}'(\Sigma_\gamma)=0,\quad  \gamma(0)=(A_1,A_2),\quad \gamma(L)=(B_1,B_2),\quad
    e^{i\phi(0)}=e^{i\phi_A},\quad e^{i\phi(L)}=e^{i\phi_B}. 
  \end{equation}
  Here, we formally write $\mathcal{W}'(\Sigma_\gamma)=0$ in place of the Willmore equation and the
  angle function $\phi$ is given by $\gamma'=|\gamma'|e^{i\phi}$.
  We stress that we do not impose $\gamma$ to be simple so that the generated surface $\Sigma_\gamma$ may not
  be embedded. In particular, in contrast to most of the earlier works on the Dirichlet problem for
  Willmore surfaces of revolution, we do not assume that $\Sigma_\gamma$ has a global parametrization as a
  graph (which in fact only occurs in very special situations). First results concerning non-graphical
  Willmore surfaces of revolution can be found in \cite{Eich_nonuniqueness} (Theorem~8.1) or
  \cite{EiGr_existence} (Theorem~1.1). One example for a result in the graphical setting
  can be found in the papers \cite{DalAcDecGru_Classical_solutions,DalAcFroGruSch_Symmetric}: For
  $A_1=-B_1,A_2=B_2$ and $\phi_A=-\phi_B \in [0,\pi/2)$ the Dirichlet problem has a graphical solution which,
  by Theorem~1.1 in \cite{Eich_nonuniqueness}, may in general fail to be unique.  
  Our contribution to the Dirichlet problem is the following: In a first result we use our explicit formulas
  in order to reduce the Dirichlet problem \eqref{eq:DirichletBVP} to a system of two equations for two unknowns. 
  For the sake of shortness we only consider the Dirichlet problem for orbitlike elasticae but the
  analogous analysis may be done for the wavelike ones as well.  The precise statement of this result
  requires the notation from the following sections so that we only provide it near the end of this paper,
  see~Theorem~\ref{thm:DirProblemI}. In its full generality we can not solve this system, but for particular
  boundary data it is possible to say more about the solution set. For instance we show that for certain
  symmetric boundary data orbitlike solutions must be symmetric themselves. Our result is the following:

  \begin{thm}\label{thm:Symmetry}
    Assume $A_1=-B_1\neq 0, A_2=B_2>0,\phi_A+\phi_B\in 2\pi\Z$ such that
    $$
      \pm\big(\frac{A_2}{A_1}\sin(\phi_A) + \cos(\phi_A)\big) \notin (0,2).
    $$
    Then all positively $(\pm=+)$ respectively negatively $(\pm=-)$ oriented orbitlike solutions
    of~\eqref{eq:DirichletBVP} are symmetric.
  \end{thm}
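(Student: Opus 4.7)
The plan is to combine the symmetry of the boundary data with the explicit parametrization of orbitlike elasticae developed in the previous sections.

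First I would observe that the Euclidean reflection $R\colon (x,y)\mapsto (-x,y)$ is a hyperbolic isometry of $\Hh$, preserves the elastic energy, and sends orbitlike curves to orbitlike curves. Combined with the parameter reversal $s\mapsto L-s$, it therefore sends any solution $\gamma$ of \eqref{eq:DirichletBVP} to another curve $\tilde\gamma(s) := R(\gamma(L-s))$. The assumptions $A_1=-B_1$, $A_2=B_2$ and $\phi_A+\phi_B\in 2\pi\Z$ guarantee that $\tilde\gamma$ satisfies the same Dirichlet data, and since both the reflection and the parameter reversal flip orientation once, $\tilde\gamma$ is orbitlike with the same orientation as $\gamma$. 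Proving that $\gamma$ is symmetric is thus equivalent to proving $\gamma\equiv\tilde\gamma$.

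Next I would invoke the explicit description. By Theorem~\ref{thm:LanSin} (in the corrected form used in the paper) the curvature is $\kappa(s)=\kappa_0\dn(rs+s_0,p)$ for suitable $\kappa_0,r,p$ and a phase $s_0$, and the closed-form integration of the Frenet system in $\Hh$ recorded earlier expresses $\gamma(s)$ and the angle $\phi(s)$ in terms of Jacobi and theta functions evaluated at $rs+s_0$. A key structural observation is that the extrinsic curve carries a reflection symmetry through the $\Hh$-geodesic perpendicular to $\gamma'$ at every critical point of $\kappa$. Hence $\gamma$ is symmetric in the sense of the theorem if and only if some critical point of $\kappa$ lies on the $\gamma_2$-axis, equivalently the phase $s_0$ of $\gamma$ and the analogous phase $\tilde s_0$ of $\tilde\gamma$ agree modulo a half-period of $\dn$.

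The third step is to convert this into an equation. Since $\gamma$ and $\tilde\gamma$ both solve the BVP and share $\kappa_0,r,p,L$, the reduced system obtained in Theorem~\ref{thm:DirProblemI} yields an algebraic relation between $s_0$ and $\tilde s_0$. Using the addition formulas for Jacobi functions I would compute $\tilde s_0$ in closed form and write $s_0-\tilde s_0$ as an explicit function of the boundary data; symmetry of $\gamma$ then corresponds to this difference vanishing modulo a half-period.

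The main obstacle, and the place where the hypothesis
\[
  \pm\bigl(\tfrac{A_2}{A_1}\sin(\phi_A)+\cos(\phi_A)\bigr)\notin (0,2)
\]
enters, is to show that no asymmetric orbitlike solution exists outside the excluded interval. Geometrically, $\tfrac{A_2}{A_1}\sin(\phi_A)+\cos(\phi_A)$ encodes the horizontal position where the hyperbolic geodesic tangent to $\gamma$ at $A$ meets the $\gamma_2$-axis, and the two signs $\pm$ correspond to the two possible rotational senses of an orbitlike curve. I would expect that when the difference $s_0-\tilde s_0$ is computed from the explicit formulas, precisely this expression appears as the argument of a monotone elliptic integral whose image of $(0,2)$ covers the asymmetric regime; outside that interval one is forced to $s_0=\tilde s_0$ and hence to $\gamma=\tilde\gamma$. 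Carrying out this elliptic-function analysis in a clean and conceptual way is the technical heart of the argument, and the interplay with the symmetry-breaking Theorem~\ref{thm:Symmetry_breaking} suggests that the constant $2$ and the interval $(0,2)$ are sharp.
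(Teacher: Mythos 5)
Your overall architecture --- reduce to comparing $\gamma$ with its reflected, reparametrized copy $\tilde\gamma(s)=R(\gamma(L-s))$, then exploit the explicit formulas and the reduced system of Theorem~\ref{thm:DirProblemI} --- is consistent with the paper's strategy, and your observations that $\tilde\gamma$ solves the same boundary value problem with the same orientation, and that symmetry amounts to a critical point of $\kappa$ sitting at the midpoint, are correct. However, the decisive step is missing: you never actually show where and how the hypothesis $\pm\bigl(\tfrac{A_2}{A_1}\sin\phi_A+\cos\phi_A\bigr)\notin(0,2)$ forces the symmetric configuration, and the mechanism you guess at (a ``monotone elliptic integral whose image of $(0,2)$ covers the asymmetric regime'') is not the right one. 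In the paper the hypothesis enters through an entirely elementary bound. Writing the solution in the normal form of Theorem~\ref{thm:explicit_formula_orbitlike}, the boundary conditions yield the two linear relations
\begin{align*}
\kappa(0)-\kappa(L)=-2\mu b_3\,\frac{A_1}{A_2},\qquad
\kappa(0)^2-\kappa(L)^2=-4\mu b_3\Bigl(\sin\phi_A+\frac{A_1}{A_2}\cos\phi_A\Bigr),
\end{align*}
so that if the asymmetry parameter $b_3$ were nonzero, dividing would give $\kappa(0)+\kappa(L)=2\bigl(\tfrac{A_2}{A_1}\sin\phi_A+\cos\phi_A\bigr)$. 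Since every positively oriented orbitlike curvature satisfies $0<\kappa\le 2/\sqrt{2-k^2}<2$, the left side lies in $(0,4)$, contradicting the hypothesis; hence $b_3=0$. From $b_3=0$ one reads off $\kappa(0)=\kappa(L)$, $\kappa'(0)=-\kappa'(L)$, so $\kappa$ is even about $L/2$, and a short computation with \eqref{Gl:DirProblem_II} gives $\gamma_1(L/2)=\sin\phi(L/2)=0$; uniqueness for the ODE system \eqref{eq:ODE_solution} then yields the symmetry. Without identifying this chain --- in particular the role of the uniform bound $\kappa<2$, which is exactly where the constant $2$ in the hypothesis comes from --- your proposal does not close.

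Two further cautions. First, your reduction of the $\pm=-$ case should be made explicit: the paper conjugates by $(\gamma_1,\gamma_2)\mapsto(-\gamma_1,\gamma_2)$, which sends $\phi_A$ to $\pi-\phi_A$ and flips the sign of $\tfrac{A_2}{A_1}\sin\phi_A+\cos\phi_A$, which is why the two sign choices in the hypothesis correspond to the two orientations. Second, your appeal to Theorem~\ref{thm:Symmetry_breaking} for sharpness of the interval $(0,2)$ is not justified as stated: that construction lives at $A_1=B_1=0$, which is excluded by the standing assumption $A_1=-B_1\neq 0$ of the present theorem.
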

  
  Here, a solution of \eqref{eq:DirichletBVP} is called symmetric if we have
  $$
    \gamma_1(L/2+s)=-\gamma_1(L/2-s),\quad \gamma_2(L/2+s)=\gamma_1(L/2-s)
    \qquad\text{for all }s\in [0,L/2]
  $$ 
  and it is called nonsymmetric otherwise. Positive / negative orientation means that the
  hyperbolic curvature of the elastica is assumed to be positive / negative at some point (and hence
  everywhere since the $\dn$-function is zero-free). In particular, negatively oriented elasticae are
  symmetric when $A_1=-B_1=-1,A_2=B_2=\alpha>0,\phi_A=\phi_B=0$, which corresponds to the boundary data studied by
  Koeller and Eichmann in Theorem~1.1~\cite{Eich_symmetry}. They were able to prove the symmetry
  of one particular solution of this boundary value problem, namely of the graphical least energy solution.
  Indeed, the energy of this solution is smaller than $4\pi$ according to Theorem~1.1 in
  \cite{DalAcDecGru_Classical_solutions} whereas nonsymmetric solutions (if they exist)  have
  Willmore-energy $>4\pi$ by Theorem~3.9 in \cite{Eich_symmetry}. A link between these results is
  unfortunately missing up to now so that have to leave this issue open. Our next theorem shows that
  the assumption  $A_1=-B_1\neq 0$ from Theorem~\ref{thm:Symmetry} is in fact needed for a symmetry result
  because we can construct a class of nonsymmetric solutions with $A_1=-B_1=0$. 
  
  \begin{thm} \label{thm:Symmetry_breaking}
    There are boundary data $A_1=-B_1,A_2=B_2>0,\phi_A=-\phi_B$ such that \eqref{eq:DirichletBVP} has
    uncountably many nonsymmetric solutions.
  \end{thm}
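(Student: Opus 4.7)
My plan is to exploit Theorem~\ref{thm:LanSin} to construct an uncountable continuous family of nonsymmetric solutions of \eqref{eq:DirichletBVP} corresponding to the particular symmetric boundary data
$$
A_1 = -B_1 = 0,\quad A_2 = B_2 > 0,\quad \phi_A = -\phi_B = 0.
$$
The key observation is that when $A = B$, the Dirichlet problem becomes automatically compatible with any \emph{closed} orbitlike elastica transported to pass through $(0, A_2)$ with horizontal Euclidean tangent there, and the one-parameter freedom in choosing the ``cut point'' on the closed curve produces a continuum of geometrically distinct solutions of the same boundary value problem.

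Concretely, I would fix integers $(m, n)$ with $1 < 2m/n < \sqrt{2}$, e.g.\ $(m, n) = (3, 5)$, and let $\gamma^*:[0, L] \to \Hh$ be the closed orbitlike elastica $\gamma_{m,n}$ furnished by Theorem~\ref{thm:LanSin}, of hyperbolic length $L$. Fix any $A_2 > 0$. Since the orientation-preserving isometry group $\mathrm{PSL}(2,\R)$ of $\Hh$ acts simply transitively on the unit tangent bundle $T^1 \Hh$, there is, for every cut parameter $s_0 \in [0, L)$, a unique orientation-preserving hyperbolic isometry $T_{s_0}$ sending the unit hyperbolic tangent of $\gamma^*$ at $s_0$ to the horizontal unit hyperbolic tangent at $(0, A_2)$. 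Setting
$$
\tilde\gamma_{s_0}(s) := T_{s_0}\bigl(\gamma^*((s_0 + s)\bmod L)\bigr), \quad s \in [0, L],
$$
invariance of the elastic energy under hyperbolic isometries guarantees that $\tilde\gamma_{s_0}$ is again an elastica, while smoothness of $\gamma^*$ across the cut gives $\tilde\gamma_{s_0}(0) = \tilde\gamma_{s_0}(L) = (0, A_2)$ with horizontal Euclidean tangent at both endpoints. Thus $\tilde\gamma_{s_0}$ solves \eqref{eq:DirichletBVP} with the stated boundary data.

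What remains is to verify that the family $\{\tilde\gamma_{s_0}\}_{s_0 \in [0, L)}$ contains uncountably many pairwise distinct, nonsymmetric solutions. For distinctness: two parameters $s_0 \neq s_0'$ yield curves with identical image in $\Hh$ only when $T_{s_0'} \circ T_{s_0}^{-1}$ belongs to the finite isometric symmetry group of $\gamma^*$, so the parameter set $[0, L)$ modulo this finite group is still uncountable and yields uncountably many distinct images (hence distinct parametrizations, by uniqueness of the initial-value problem). For nonsymmetry: in view of the definition given in the introduction, $\tilde\gamma_{s_0}$ can only be symmetric if its image is invariant under the Euclidean reflection $\sigma: (x, y) \mapsto (-x, y)$, equivalently only if $T_{s_0}^{-1} \circ \sigma \circ T_{s_0}$ coincides with one of the finitely many reflective symmetries of $\gamma^*$. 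Since $s_0 \mapsto T_{s_0}^{-1} \circ \sigma \circ T_{s_0}$ is a continuous and nonconstant map into the space of orientation-reversing hyperbolic isometries, it meets each of these finitely many reflective symmetries at only finitely many $s_0$, leaving uncountably many nonsymmetric $\tilde\gamma_{s_0}$.

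The principal obstacle is the last step: one must rule out the degenerate possibility that the one-parameter family $(T_{s_0})_{s_0}$ is trapped in a single coset of the stabilizer of some reflective symmetry of $\gamma^*$. This can be excluded because both the hyperbolic position $\gamma^*(s_0)$ and its tangent direction vary nontrivially with $s_0$, forcing $T_{s_0}$ to move nontrivially in $\mathrm{PSL}(2,\R)$; the resulting non-constancy of $s_0 \mapsto T_{s_0}^{-1} \circ \sigma \circ T_{s_0}$ can then be verified directly from the explicit description of $\gamma^*$ and of hyperbolic isometries.
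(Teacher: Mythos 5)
Your construction is sound and its core idea coincides with the paper's: the solutions are obtained by cutting the closed elastica $\gamma_{m,n}$ of Theorem~\ref{thm:LanSin} at an arbitrary point, and the uncountable family comes from varying that cut point. The implementations differ substantially, though. The paper (Corollary~\ref{cor:Symmetry_breaking}) runs the boundary data through the explicit reduction of Theorem~\ref{thm:DirProblemI}: it shows that \emph{every} orbitlike solution of this Dirichlet problem is of the form $k=k_{m,n}$, $L=\mathcal{L}_{m,n}$ with a free phase $s_*\in\R$ in the curvature $\kappa(s)=\tfrac{2}{\sqrt{2-k^2}}\dn(\tfrac{s+s_*}{\sqrt{2-k^2}},k)$, and that symmetry holds precisely when $s_*\in\sqrt{2-k^2}K(k)\Z$. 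Your argument replaces all of this by the simply transitive action of $\mathrm{PSL}(2,\R)$ on $T^1\Hh$ and needs no explicit formulas at all, which is more elementary and exactly sufficient for the existence statement of Theorem~\ref{thm:Symmetry_breaking}; what it gives up is the paper's complete classification of the solution set and its exact symmetry criterion.

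Two points in your last step deserve tightening. First, ``continuous and nonconstant'' does not by itself imply that the map $s_0\mapsto T_{s_0}^{-1}\circ\sigma\circ T_{s_0}$ hits each fixed reflection only finitely often (a continuous nonconstant function can attain a value on an uncountable set); you need real-analyticity of $s_0\mapsto T_{s_0}$, or better the direct geometric reformulation: $T_{s_0}^{-1}\circ\sigma\circ T_{s_0}$ is the reflection across the geodesic through $\gamma^*(s_0)$ orthogonal to $\gamma^{*\prime}(s_0)$, so the exceptional $s_0$ are those where $\gamma^*$ crosses one of its finitely many reflection axes orthogonally --- a finite set, since the analytic closed curve meets each fixed geodesic finitely often. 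Second, both the distinctness and the nonsymmetry checks become one-line arguments if you phrase them via the curvature and uniqueness for the system~\eqref{eq:ODE_solution}: $\tilde\gamma_{s_0}$ has curvature $\kappa(s_0+\cdot)$, so $\tilde\gamma_{s_0}=\tilde\gamma_{s_0'}$ iff $s_0-s_0'$ lies in the period lattice of $\kappa$, and $\tilde\gamma_{s_0}$ is symmetric iff $\kappa(s_0+s)=\kappa(s_0+L-s)$ for all $s$, i.e.\ iff $\kappa$ is even about $s_0+L/2$, which again confines $s_0$ to a discrete set. This is exactly the criterion the paper extracts from its explicit formulas, and it avoids the detour through image-invariance (which is only a necessary condition for symmetry anyway).
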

  
  The proof of Theorem~\ref{thm:Symmetry_breaking} reveals further properties of the constructed solutions. 
  We will see that one can choose $A_1=B_1=0$, $A_2=B_2>0$ and $\phi_A=-\phi_B=0$ and that the conformal
  class of every $\gamma_{m,n}$ $(m,n\in\N)$ described in Theorem~\ref{thm:LanSin} contains uncountably many
  nonsymmetric representatives that solve the Dirichlet problem~\eqref{eq:DirichletBVP}. Examples are
  illustrated in Figure~\ref{Fig:Symmetry_breaking}. 
  
  \medskip
  
  The plan of this paper is the following: In section~\ref{sec:geometry} we briefly review some facts about
  the hyperbolic plane $\Hh$ and curves in it that are parametrized by hyperbolic arclength. Next, in
  section~\ref{sec:hyperbolic_elasticae}, we collect some material about elasticae in $\Hh$
  and their (hyperbolic, geodesic) curvature functions $\kappa$. We show that the hyperbolic distance of an
  elasticae to any given point in the hyperbolic plane satisfies a simple linear ordinary differential
  equation of second order with coefficients depending only on $\kappa,\kappa'$, see 
  Proposition~\ref{prop:ode_Z}. 
  Given that the definition of $\kappa,\kappa'$ involves Jacobi's elliptic function we then continue by
  recalling in section~\ref{sec:Jacobi} some properties of these functions which will be important later on.
  The same will be done for elliptic theta functions, elliptic integrals and others which we will
  use.  In the next two sections we solve the differential equation mentioned
  above  and derive from it the explicit formulas for the elasticae we are interested in. The first of these sections is
  devoted to orbitlike elasticae where the hyperbolic curvature function is built from Jacobi's
  $\dn$-function while the second deals with wavelike elasticae where the curvature is
  built from Jacobi's $\cn$-function. Additionally, we provide new short proofs of the properties mentioned in
  Theorem~\ref{thm:LanSin}. Next, in section~\ref{sec:BVPs} we use the explicit formulas to solve the
  Dirichlet problem for elasticae in $\Hh$ in its full generality and, as an application, we prove our
  symmetry-related results in Corollary~\ref{cor:Symmetry}  and Corollary~\ref{cor:Symmetry_breaking}.
  In the Appendix we provide the proofs of some technical results used earlier in the paper.

\section{Geometry of the hyperbolic plane} \label{sec:geometry}
   
  Willmore surfaces of revolution are known to be generated by elastic curves in the hyperbolic plane
  $\mathbb{H}=\{(x_1,x_2)\in\R^2: x_2>0\}$, see for instance p.532 in \cite{LanSing_Curves_in_the}. The
  metric is given by $\frac{dx_1^2+dx_2^2}{x_2^2}$ %(w.r.t the standard cartesian chart)  
  and the distance between 
  two points $(x_1,x_2),(y_1,y_2)\in \Hh$ is 
  \begin{equation}\label{eq:metric}
      d_{\Hh}((x_1,x_2),(y_1,y_2)) 
      =  \Arcosh\Big(1+\frac{(x_1-y_1)^2+(x_2-y_2)^2}{2x_2y_2}\Big). 
      %\\
	%  &= \inf \{L_{\Hh}(\gamma) : \gamma(0)=(x_1,y_1),\gamma(1)=(x_2,y_2)\}
  \end{equation}
%     where, for $s<t$ 
%     $$
%       L_{\Hh}(\gamma|_{[s,t]}) := \int_s^t \sqrt{\frac{(\gamma_1')^2+(\gamma_2')^2}{\gamma_2^2}}. 
%     $$
  Regular curves $\gamma$ in $\Hh$ are therefore parametrized by hyperbolic arclength if
  $(\gamma_1')^2+(\gamma_2')^2\equiv \gamma_2^2$ and in fact all curves studied in this paper will be written
  down in this parametrization. The advantage compared to other parametrizations is that the
  hyperbolic curvature function has a nice explicit expression as we will see later. 
  On the other hand, compared to the Euclidean case, it is harder to reconstruct a curve from its curvature
  function and the initial data. While this can be done explicitly in the Euclidean case, it is not clear a
  priori how this can be achieved in the hyperbolic plane. Introducing the angle  function $\phi$ via 
  $\gamma_1' = \gamma_2 \cos(\phi),\gamma_2' = \gamma_2 \sin(\phi)$ (such that $\gamma'=|\gamma'|e^{i\phi}$) 
  one finds that the hyperbolic (or geodesic) curvature function is given by
    \begin{equation} \label{eq:hyperbolic_curvature}
      \kappa
      := - \frac{\gamma_2^2}{\gamma_2'} \Big(\frac{\gamma_1'}{\gamma_2^2}\Big)' 
      = \phi'+\cos(\phi).
    \end{equation}
    %\r{see (2.3) in \cite{DalAcDecGru_Classical_solutions}.}
    Here, we chose this sign convention for $\kappa$ in order to be consistent with (2.3) in
    \cite{DalAcDecGru_Classical_solutions}. 
%     If such a $\phi$ exists then $\gamma$ is said to be parametrized by hyperbolic arclength. Motivation:
%     ${D_t\gamma^k}'(t) = {\gamma^k}''+\Gamma^k_{ij}{\gamma^i}'{\gamma^j}'=0$ is the geodesics equation. One
%     gets
%     \begin{align*}
%       |D_t\gamma'(t) |_{\gamma(t)} 
%       &= \frac{\Big| \Big(\gamma_1''(t)-\frac{2\gamma_1'(t)\gamma_2'(t)}{\gamma_2(t)},
%       \gamma_2''(t)+\frac{\gamma_1'(t)^2-\gamma_2'(t)^2}{\gamma_2(t)} \Big)\Big|}{\gamma_2(t)} \\
%       &=
%       \frac{\Big|\Big(\gamma_2(t)\sin(\phi(t))(-\phi'(t)-\cos(\phi(t))),\gamma_2(t)\cos(\phi(t))(\phi'(t)+\cos(\phi(t))
%       \Big)\Big| }{\gamma_2(t)}  \\
%       &= |\phi'(t)+\cos(\phi(t))| 
%     \end{align*}
%     which, by definition, is the hyperbolic curvature
%     Hence the hyperbolic curvature function measures the punctual deviation from being a geodesic in $\Hh$.
     So, a curve $\gamma$ may be recovered from its  curvature function $\kappa$ by solving the ODE system
    \begin{align} \label{eq:ODE_solution}
      \gamma_1' = \gamma_2 \cos(\phi),\quad
      \gamma_2' = \gamma_2 \sin(\phi),\quad 
      \phi' + \cos(\phi)  = \kappa. 
    \end{align}
    This ODE system is invariant under M\"obius transformations and in particular under the continuous
    isometries of the hyperbolic plane where only the initial data $\gamma_1(0),\gamma_2(0),\phi(0)$ changes.
    Identifying $\Hh$ with $\{x+iy:y>0\}$ and writing $\gamma=\gamma_1+i\gamma_2$ these isometries take the
    following form:
    \begin{itemize}
      \item[(i)] horizontal translation: $\gamma\mapsto \gamma+a$ for $a\in\R$,
      \item[(ii)] dilation: $\gamma \mapsto b\gamma$ for $b>0$,
      \item[(iii)] hyperbolic rotation: $\gamma\mapsto 
      \tfrac{\cos(\theta/2)\gamma+\sin(\theta/2)}{-\sin(\theta/2)\gamma+\cos(\theta/2)}
      %(\cos(\theta/2)\gamma+\sin(\theta/2)) / (-\sin(\theta/2)\gamma+\cos(\theta/2)) 
      $ for $\theta\in\R$.
    \end{itemize}
    The missing isometries are discrete and come from reflections, i.e. horizontal reflection and inversion
    with respect to the unit sphere. Notice that both cause a sign change in the curvature function.

  \section{Classification of elasticae in the hyperbolic plane}  \label{sec:hyperbolic_elasticae}
  
  In this section we \r{briefly} recall the classification of (free) elasticae in $\Hh$ from
  \cite{LanSing_the_total}. So let $\gamma=(\gamma_1,\gamma_2):\R\to\Hh$ be such a one with hyperbolic
  curvature function $\kappa$. From section 2 in \cite{LanSing_the_total}  we get that $\kappa$
  satisfies
  \begin{equation} \label{eq:ODE_kappa}
      -\kappa''+\kappa =\frac{1}{2}\kappa^3,\qquad\quad
      \mu := -(\kappa')^2+ \kappa^2 - \frac{1}{4}\kappa^4 \in\R. 
  \end{equation}
  Solutions of this ODE are well-known. From Table (2.7)(c) in~\cite{LanSing_the_total} for $G=-1$ we find
  that $\kappa$ is, up to multiplication by $-1$, one of the following functions:
  \begin{align*}
	&(i)\;\;\quad \kappa\equiv \sqrt{2} &&\mu = 1\\
	&(ii)\;\quad \kappa(s) =  \frac{2}{\sqrt{2-k^2}}\dn\Big(\frac{s+a}{\sqrt{2-k^2}},k\Big)\quad(0<k<1)
	&& \mu =\frac{4(1-k^2)}{(2-k^2)^2}, \\ 
	&(iii)\quad \kappa\equiv 0 \quad\text{or}\quad \kappa(s)= 2\sech(s+a) &&\mu= 0, \\
	&(iv)\;\quad \kappa(s)= \frac{2k}{\sqrt{2k^2-1}} \cn\Big(\frac{s+a}{\sqrt{2k^2-1}},k\Big)\quad
	  (\frac{1}{\sqrt 2}<k<1)
	  && \mu = - \frac{4k^2(1-k^2)}{(2k^2-1)^2}.
  \end{align*}
 % \r{We refer to \cite{ByrFri_handbook} for details about Jacobi's elliptic functions $\sn,\cn,\dn$, but also
 % about Elliptic integrals $E(k),K(k)$, theta functions etc., which will be used later.} 
  We will recall the basic properties of $\cn,\dn$ and related functions in the next section. 
  We mention that Jacobi's elliptic functions also appear in the study of elastic knots
  \cite{LanSing_Knotted}, \cite{IvSin_knot_types}. In the following we concentrate on elasticae generated by curvature functions given by the above formulas. 
  %\r{The
  %curves generated by $-\kappa$ instead of $\kappa$ may be analyzed after applying suitable M\"obius
  %transformations.}
  Notice that the analysis of an elastica $\gamma=(\gamma_1,\gamma_2)$ with curvature $-\kappa$ may be
  reduced to the analysis of an elastica $\gamma=M\circ (-\gamma_1,\gamma_2)$ having curvature $\kappa$ 
  where $M$ is any of the M\"obius transformations (i),(ii),(iii) from the last section.  Elasticae generated
  by the curvature functions listed above are plotted in Figure~\ref{Fig:plots_of_curves} with the help of
  MAPLE and the explicit formulas for the elasticae that we are going to prove in
  Theorem~\ref{thm:explicit_formula_orbitlike} and Theorem~\ref{thm:explicit_formula_wavelike}.
  These figures motivate the names (i) circular, (ii) orbitlike, (iii)
  geodesic / asymptotically geodesic or (iv) wavelike, respectively.
  
  \begin{figure}[!htb] 
    \centering 
    \subfigure[Orbitlike elastica, $k=0.8$]{
      \includegraphics[scale=.38]{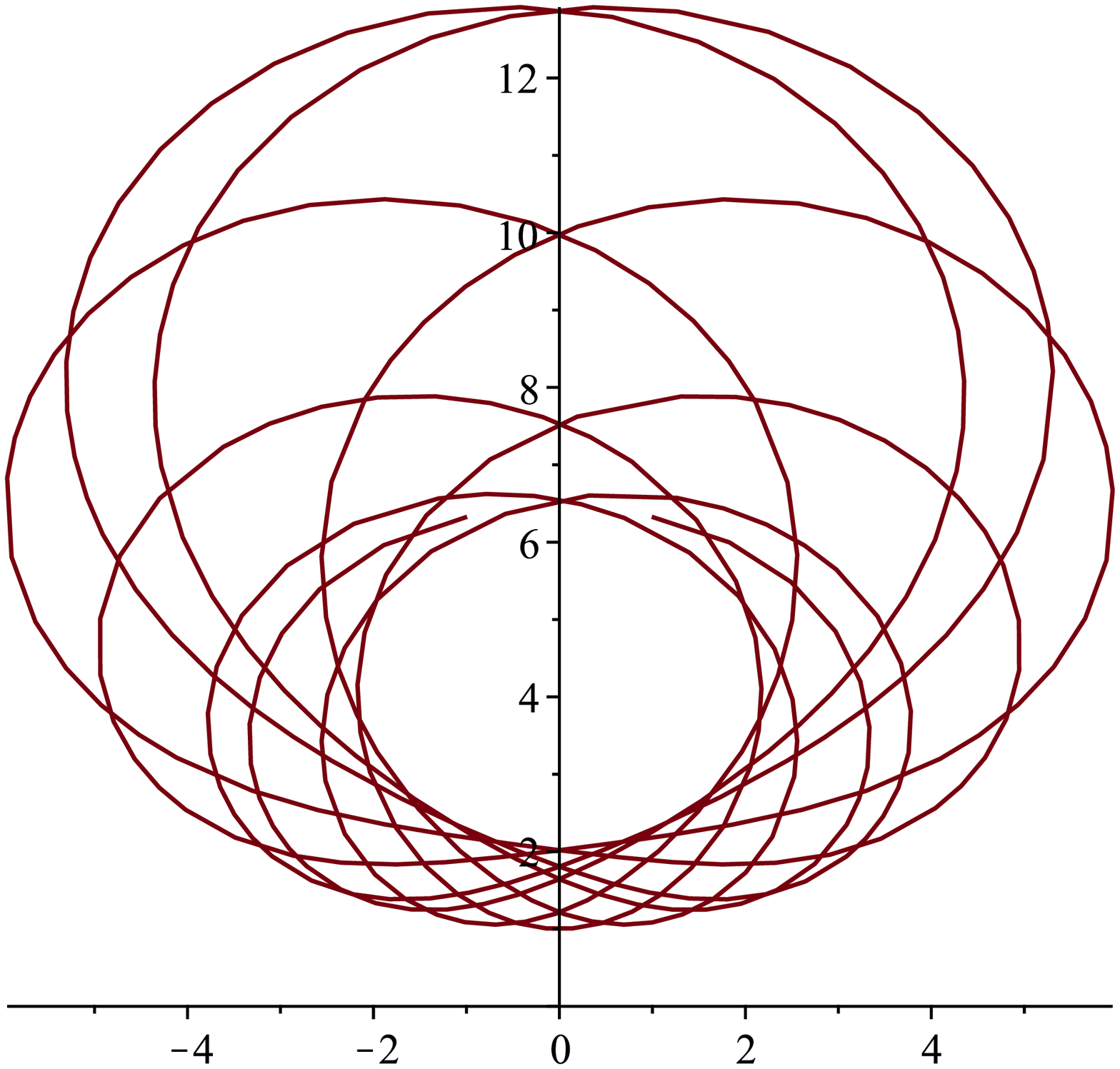} 
    }
    \;
    \subfigure[Wavelike elastica, $k=0.8$]{ 
      \includegraphics[scale=.38]{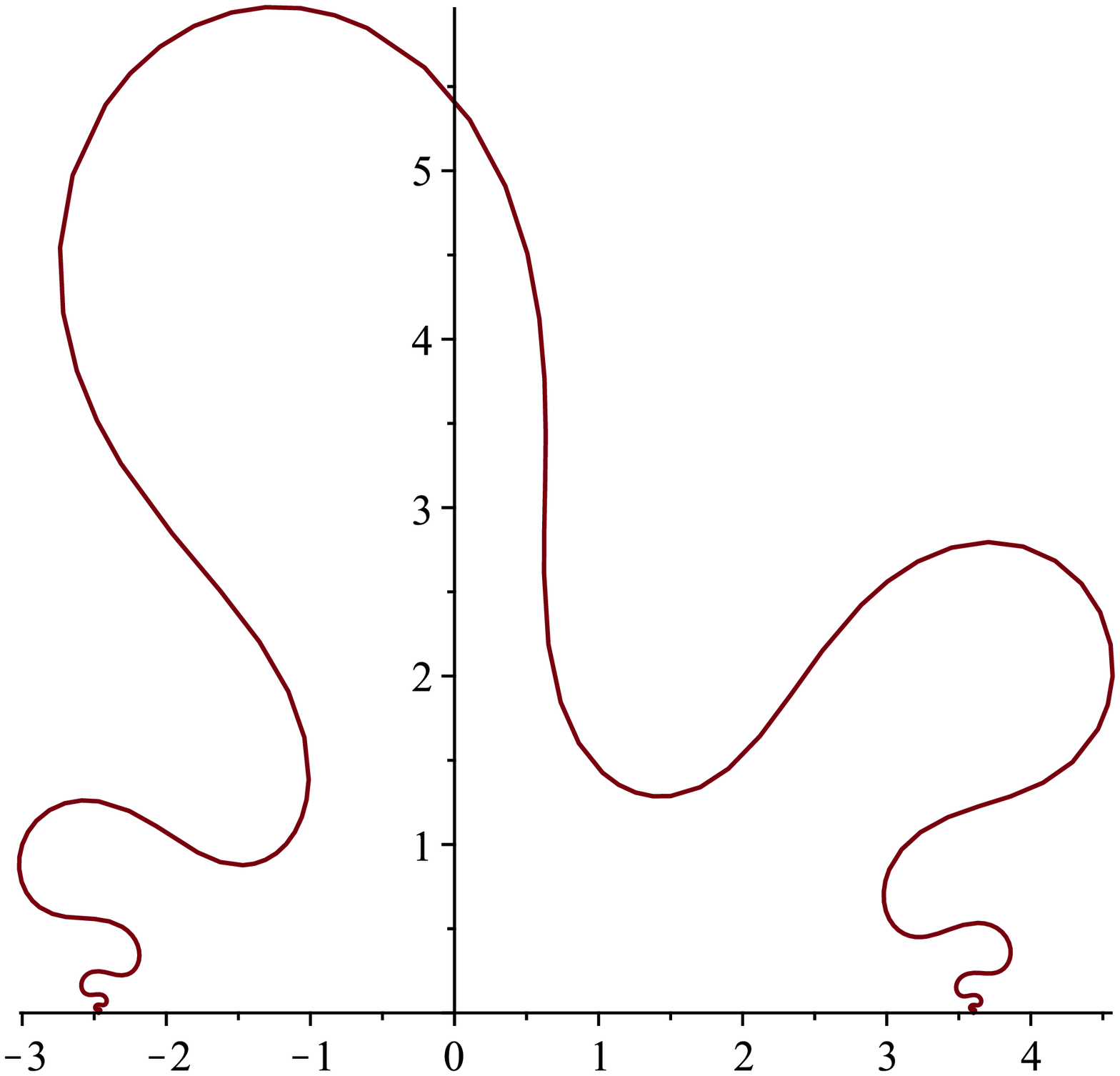} 
    } 
    \caption{Elasticae in $\Hh$}
     \label{Fig:plots_of_curves} 
  \end{figure} 
   
   As we will see below, the circular and (asymptotically) geodesic can be determined explicitly by elementary
   means so that our main concern will be the discussion of orbitlike and wavelike elasticae. This includes
   the derivation of explicit formulas, which turns out to be a nontrivial task. The key to achieve this goal
   is a thorough discussion of the function 
  \begin{equation} \label{eq:def_Z}
    Z(s;P) = \frac{(\gamma_1(s)-P_1)^2+(\gamma_2(s)-P_2)^2}{2P_2\gamma_2(s)} \qquad\text{for all }
    P=(P_1,P_2)\in %\r{\Hh}
    \C \text{ with }P_2\neq 0,
  \end{equation}
  which for $P\in\Hh$  is a conformal invariant due to $d_{\Hh}(\gamma(s),P) = \Arcosh(1+Z(s;P))$,
  see~\eqref{eq:metric}. We mention that in the context of wavelike elasticae we will study
  $Z(\cdot;P)$ for complex numbers $P\in\C$ with $P_2\neq 0$ despite the fact that it has a clear geometrical
  meaning only for $P\in \Hh$. 
  Now we  recall the relevant information about circular and
  (asymptotically) geodesic elasticae.
  
  \medskip
  
  The circular elasticae given by $\kappa\equiv \sqrt{2}$  generate the conformal class of the
  Clifford-Torus, which plays a special role in the study of Willmore surfaces. Notably the Willmore Conjecture says that the infimum of the Willmore energy in
  the class of immersed tori in $\R^3$ is achieved by the Clifford-Torus. It has been an open problem for
  about fifty years and its proof due to Marques and Neves \cite{MarNev_Wilconj} must be considered as
  a breakthrough in the study of Willmore surfaces. 
  %\r{A reduced version of this conjecture
  %was proved in \cite{LanSing_Curves_in_the} (Corollary~5), where the authors showed that the conjecture is
  %true within the class of Willmore surfaces of revolution.}
  Reduced versions of this conjecture
  were proved in \cite{LanSing_Curves_in_the} (Corollary~5) within the class of Willmore surfaces of revolution
  and in the larger class of canal surfaces of torus type in \cite{HerJerPin_Beweis}. 
  One explicit parametrization of the Clifford torus is given by
	\begin{align*}
 	  \gamma(s) = \Big( \frac{(1+\sqrt{2})\sin(s)}{\sqrt{2}+\cos(s)},
 	  \frac{1+\sqrt{2}}{\sqrt{2}+\cos(s)}\Big),\quad
 	  e^{i\phi(s)} =  \frac{1+\sqrt{2}\cos(s)+i\cdot \sin(s)}{\sqrt{2}+\cos(s)}.
	\end{align*}
  This precise realization of the Clifford torus corresponds to solving \eqref{eq:ODE_solution} for
  $\gamma(0)=(0,1),\phi(0)=0$ and $\kappa\equiv \sqrt{2}$. Geodesics in $\Hh$ are known to be either vertical
  lines or half circles in dependence of the initial position and angle. 
  %\r{If $\phi(0)\in \frac{\pi}{2}\Z$,
  %then the geodesic through $(0,1)$ is vertical and given by $\gamma(s)=(0,e^s)$}. 
  If $\phi(0)\in (\pm \frac{\pi}{2}+2\pi)\Z$ then the geodesic through $(0,1)$ is vertical and given by
  $\gamma(s)=(0,e^{\pm s}),\phi(s)=\phi(0)$.
  Otherwise, for instance in case $\phi(0)=0$, we find the halfcircles 
  $$
     \gamma(s) = (\tanh(s),\sech(s)),\qquad e^{i\phi(s)}= \sech(s) - i\cdot \tanh(s).
   $$
   The third exceptional family of elasticae is the asymptotically geodesic one which corresponds to the
   conformal class of the catenoid. By definition, it is the graph of the $\cosh$-function and therefore   
   $$
      \gamma(s) = (s,\cosh(s)),\qquad 
      e^{i\phi(s)} = \sech(s)+i\cdot \tanh(s).  
   $$
   In the next two sections we show that both orbitlike and wavelike elasticae admit 
   explicit solutions as well, which, however, are much more involved than the ones above. As mentioned
   above, our analysis is based on an investigation of the distance function $Z$ from \eqref{eq:def_Z}
   associated with a given elastica $\gamma$. The following result provides a useful relation between the
   curvature of $\gamma$ and $Z$.
   \r{The proof consists of elementary calculations, so we only give the main steps.}
   
  \begin{prop}\label{prop:ode_Z}
    Let $\gamma:\R\to\Hh$ be parametrized by hyperbolic arclength and $Z$ be
    given by \eqref{eq:def_Z}. Then, for all $P\in \C^2$ with $P_2\neq 0$, the function
    $Z(\cdot;P)$ satisfies
    \begin{equation}\label{eq:ODE_Z}
      \kappa Z''-2\kappa'Z'+\kappa(Z+1)\equiv 2\mu C
    \end{equation}
    where $C\in\C$ is, for any $s_0\in\R$,  given by 
     \begin{align} \label{eq:def_C}
       \begin{aligned}
       C &=
       \frac{P_2^2+(\gamma_1(s_0)-P_1)^2}{2P_2\gamma_2(s_0)}\cdot\Big(\frac{\kappa(s_0)}{\mu}+\frac{\kappa'(s_0)}{\mu}\sin(\phi(s_0))
       - \frac{\kappa(s_0)^2}{2\mu}\cos(\phi(s_0))\Big) \\
        & + \frac{\gamma_2(s_0)}{2P_2}\cdot \Big(\frac{\kappa(s_0)}{\mu}-\frac{\kappa'(s_0)}{\mu}\sin(\phi(s_0))
       + \frac{\kappa(s_0)^2}{2\mu}\cos(\phi(s_0))\Big) \\
       &+ \frac{\gamma_1(s_0)-P_1}{P_2}\cdot \Big(
       -\frac{\kappa(s_0)^2}{2\mu}\sin(\phi(s_0))-\frac{\kappa'(s_0)}{\mu}\cos(\phi(s_0)) \Big).
       \end{aligned} 
     \end{align}
  \end{prop}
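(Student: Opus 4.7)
The plan is to prove \eqref{eq:ODE_Z} by direct differentiation: compute $Z'$ and $Z''$ along $\gamma$ using the system \eqref{eq:ODE_solution}, form the quantity
$$ F(s) \;:=\; \kappa(s)\,Z''(s;P) - 2\kappa'(s)\,Z'(s;P) + \kappa(s)\bigl(Z(s;P)+1\bigr), $$
show $F'(s)\equiv 0$ by means of the elastica ODE \eqref{eq:ODE_kappa}, and then evaluate $F(s_0)$ to identify the resulting constant with $2\mu C$.

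First I would rewrite $Z+1 = \tfrac{(\gamma_1-P_1)^2+\gamma_2^2+P_2^2}{2P_2\gamma_2}$ (which makes the dependence on $P_2$ more transparent) and use $\gamma_1' = \gamma_2\cos\phi$, $\gamma_2' = \gamma_2\sin\phi$ to obtain
$$ Z' \;=\; \frac{(\gamma_1-P_1)\cos\phi + \gamma_2\sin\phi}{P_2} \;-\; (Z+1)\sin\phi. $$
Differentiating once more and replacing $\phi'$ by $\kappa-\cos\phi$ yields a formula for $Z''$ as a similar linear combination of $\tfrac{\gamma_1-P_1}{P_2}$, $\tfrac{\gamma_2}{P_2}$ and $Z+1$, with coefficients polynomial in $\cos\phi$, $\sin\phi$ and $\kappa$. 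Once these are in hand, forming $F'(s)$ produces many trigonometric terms together with a single contribution involving $\kappa''$; substituting $\kappa''=\kappa-\tfrac12\kappa^3$ from \eqref{eq:ODE_kappa} makes everything collapse to zero, so that $F$ is constant.

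It remains to identify $F(s_0)$ with $2\mu C$. For this I would plug the formulas for $Z'(s_0),Z''(s_0)$ into the definition of $F$ and collect in the three natural $P$-dependent factors $\tfrac{P_2^2+(\gamma_1(s_0)-P_1)^2}{2P_2\gamma_2(s_0)}$, $\tfrac{\gamma_2(s_0)}{2P_2}$ and $\tfrac{\gamma_1(s_0)-P_1}{P_2}$; the first two arise from the decomposition $Z+1 = \tfrac{(\gamma_1-P_1)^2+P_2^2}{2P_2\gamma_2}+\tfrac{\gamma_2}{2P_2}$, while the third is produced by the $\cos\phi$ term in $Z'$. Dividing by $2\mu$ then gives exactly \eqref{eq:def_C}.

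The main obstacle will be the algebraic bookkeeping needed to verify $F'\equiv 0$: the cancellation rests precisely on the coefficient pattern $(\kappa,-2\kappa',\kappa)$ which, together with $\kappa''=\kappa-\tfrac12\kappa^3$, produces the correct balance between the $\kappa^3 Z'$-type terms created by the product rule on $\kappa Z''$ and by $-2\kappa''Z'$. In the degenerate case $\mu=0$ the statement reduces to $F\equiv 0$ and the explicit formula for $C$ is vacuous, while for $\mu\neq 0$ it produces the asserted value of $C$ at any chosen $s_0$.
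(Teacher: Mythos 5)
Your proposal is correct and follows essentially the same route as the paper: compute $Z'$ and $Z''$ via \eqref{eq:ODE_solution} and $\phi'+\cos(\phi)=\kappa$, show that the derivative of $\kappa Z''-2\kappa'Z'+\kappa(Z+1)$ vanishes using $-\kappa''+\kappa=\tfrac12\kappa^3$, and then evaluate at $s_0$ to read off $2\mu C$. The only cosmetic difference is that the paper organizes the cancellation through an auxiliary function $w$ with $Z''=Z+1-\kappa w$ and $w'=\kappa Z'$, whereas you propose to expand everything directly; both lead to the same identity.
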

  \begin{proof}
    For convenience we write $Z:=Z(\cdot;P)$. Differentiating $Z$ gives the formula
    \begin{equation} \label{eq:formula_Zprime}
      Z' 
       = \frac{\gamma_1-P_1}{P_2} \cdot \cos(\phi)  +  
      \frac{\gamma_2^2-P_2^2-(\gamma_1-P_1)^2}{2P_2\gamma_2}\cdot   \sin(\phi).
    \end{equation}
    Differentiating once more and using $\phi'+\cos(\phi)=\kappa$ gives
    \begin{align*}
      Z''
      &= \phi'\cdot \Big( - \frac{\gamma_1-P_1}{P_2}\cdot \sin(\phi)
      + \frac{\gamma_2^2-P_2^2-(\gamma_1-P_1)^2}{2P_2\gamma_2}\cdot\cos(\phi)
      \Big)  \\
      &\quad + \frac{\gamma_1'}{P_2}\cos(\phi) +
      \frac{2\gamma_2^2\gamma_2'-2(\gamma_1-P_1)\gamma_2\gamma_1' - (\gamma_2^2-P_2^2-(\gamma_1-P_1)^2)\gamma_2'   }{2P_2\gamma_2^2} \sin(\phi)
      \\
      &= (\kappa-\cos(\phi)) \cdot \Big( - \frac{\gamma_1-P_1}{P_2}\cdot \sin(\phi)
      + \frac{\gamma_2^2-P_2^2-(\gamma_1-P_1)^2}{2P_2\gamma_2}\cdot\cos(\phi)
      \Big) \\
      &\quad + \frac{\gamma_2}{P_2}\cdot \cos^2(\phi)
      + \frac{(\gamma_2^2+P_2^2+ (\gamma_1-P_1)^2) \sin(\phi)
      -2(\gamma_1-P_1)\gamma_2\cos(\phi)}{2P_2\gamma_2} \cdot \sin(\phi)
      \\
      &= (\kappa-\cos(\phi)) \cdot \Big( - \frac{\gamma_1-P_1}{P_2}\cdot \sin(\phi)
      + \frac{\gamma_2^2-P_2^2-(\gamma_1-P_1)^2}{2P_2\gamma_2}\cdot\cos(\phi)
      \Big) \\
      &\quad + \frac{\gamma_2}{P_2}\cdot \cos^2(\phi) +
      \frac{\gamma_2^2+P_2^2+(\gamma_1-P_1)^2}{2P_2\gamma_2} \cdot \sin^2(\phi) -\frac{\gamma_1-P_1}{P_2}\cdot
      \sin(\phi)\cos(\phi)    \\
      &= \frac{\gamma_2^2+P_2^2+(\gamma_1-P_1)^2}{2P_2\gamma_2} - \kappa w  \\
      &= Z+1-\kappa w.
    \end{align*}
    Here, the auxiliary function $w$ is given by
    $$
      w
     : = \frac{\gamma_1-P_1}{P_2} \cdot \sin(\phi) - 
      \frac{\gamma_2^2-P_2^2-(\gamma_1-P_1)^2}{2P_2\gamma_2}
      \cdot \cos(\phi). 
    $$
  % \r{After having proved $w'=\kappa Z'$ we may use $-2\kappa''(s)+2\kappa(s)=\kappa(s)^3$ to get} \\
    From the identity
    \begin{align*}
      w'
      &= \Big(\frac{\gamma_1-P_1}{P_2} \cdot \cos(\phi)  +
      \frac{\gamma_2^2-P_2^2-(\gamma_1-P_1)^2}{2P_2\gamma_2}\cdot   \sin(\phi) \Big) \phi' \\
      &\;+ \Big( \frac{\gamma_2}{P_2}\cdot \sin(\phi)  -
      \Big(\frac{\gamma_2^2-P_2^2-(\gamma_1-P_1)^2}{2P_2\gamma_2}\Big)'\Big) \cdot   \cos(\phi) \\
      &= \Big(\frac{\gamma_1-P_1}{P_2} \cdot \cos(\phi)  +
      \frac{\gamma_2^2-P_2^2-(\gamma_1-P_1)^2}{2P_2\gamma_2}\cdot   \sin(\phi) \Big) \phi' \\
      &\;+ \Big(\frac{\gamma_1-P_1}{P_2} \cdot \cos(\phi)  +
      \frac{\gamma_2^2-P_2^2-(\gamma_1-P_1)^2}{2P_2\gamma_2}\cdot   \sin(\phi) \Big) \cos(\phi) \\
      &= Z' (\phi'+\cos(\phi)) \\
      &= \kappa Z'
    \end{align*}
     we get
   \begin{align*}
     (\kappa Z''-2\kappa'Z'+\kappa Z + \kappa)'
     &=  (-\kappa^2 w-2\kappa'Z'+ 2\kappa Z + 2\kappa)'   \\
     &= Z'(-2\kappa''+2\kappa) -2\kappa^2w'   + 2\kappa' (- \kappa w -Z'' +  Z+1) \\
     &\stackrel{\eqref{eq:ODE_kappa}}{=}  Z'(-2\kappa''+2\kappa-\kappa^3) \\
 	 &=   0
   \end{align*}
    so that $\kappa Z''-2\kappa'Z'+\kappa Z + \kappa$ must be constant. If we define 
    %\r{$C\in\R$ to be the  real number} 
    $C\in\C$ to be the complex number with $\kappa Z''-2\kappa'Z'+\kappa Z + \kappa\equiv
    2\mu C$, then we finally find for all $s_0\in\R$
    \begin{align*}
      2\mu C 
      &= \kappa(s_0) Z''(s_0)-2\kappa'(s_0)Z'(s_0)+\kappa(s_0)(Z(s_0)+1) \\
      %&= \kappa(s_0) (Z''(s_0)+Z(s_0)+1) -2\kappa'(s_0)Z'(s_0)\\
      &= 2\kappa(s_0) (Z(s_0)+1) -\kappa(s_0)^2 w(s_0)  -2\kappa'(s_0)Z'(s_0) \\
      &= \frac{P_2^2+(\gamma_1(s_0)-P_1)^2}{2P_2\gamma_2(s_0)}\cdot\Big(2\kappa(s_0) +2\kappa'(s_0) \sin(\phi(s_0))
       -  \kappa(s_0)^2 \cos(\phi(s_0))\Big) \\
        & + \frac{\gamma_2(s_0)}{2P_2}\cdot \Big(2\kappa(s_0) -2\kappa'(s_0) \sin(\phi(s_0))
       +  \kappa(s_0)^2 \cos(\phi(s_0))\Big) \\
       &+ \frac{\gamma_1(s_0)-P_1}{P_2}\cdot \Big(
       - \kappa(s_0)^2 \sin(\phi(s_0))-2\kappa'(s_0) \cos(\phi(s_0)) \Big).
    \end{align*}
  \end{proof}
  
  \begin{bem} ~
    \begin{itemize}
      \item[(a)] Differentiating \eqref{eq:ODE_Z} twice one finds $2Z^{(iv)} +  \big((3\kappa^2-4)Z'\big)' +
      (2-\kappa^2)(Z+1) =0$. This ODE is, in contrast to \eqref{eq:ODE_Z}, not singular at zeros of $\kappa$.
      In particular $Z$ is smooth even in the wavelike case where $\kappa$ has infinitely many zeros. 
      Since we will not make use of this ODE we omit the proof.
      \item[(b)] A similar result may be shown for elastic curves in the Euclidean plane that have recently
      been studied in \cite{DecGru_bvp,Man_bvp}. Such curves $\hat\gamma$, now parametrized by
      Euclidean arclength with curvature $\hat\kappa$, satisfy $\hat \kappa \hat Z''-2\hat\kappa'\hat Z'=
      const$ where $\hat Z(s):= (\gamma_1(s)-P_1)^2+(\gamma_2(s)-P_2)^2$. 
      % \hat Z''=-\hat\kappa\hat w, 
      % \hat w=\sin(\phi)(\hat \gamma_1-P_1)-\cos(\phi)(\hat\gamma_2-P_2)
      % \hat w'=\hat\kappa \hat Z'
    \end{itemize}
  \end{bem}

  \section{On Jacobi's elliptic functions} \label{sec:Jacobi}

    In this section we provide the definitions and main properties of Jacobi's elliptic functions and
    related functions that we will need in this paper. The  material is taken from the book by Byrd and
    Friedman \cite{ByrFri_handbook}. The elliptic integrals of the first and second kind are defined by 
    $$
      F(l,k) := \int_0^l \frac{1}{\sqrt{1-t^2}\sqrt{1-k^2t^2}}\,dt, \qquad
      E(l,k) := \int_0^l \frac{\sqrt{1-k^2t^2}}{\sqrt{1-t^2}}\,dt
    $$
    and in the special case $l=1$ we obtain the so-called complete elliptic integrals
    $K(k) := F(1,k)$, $E(k) := E(1,k)$. The derivatives of these functions are known to be (see 710.00 and
    710.02 in \cite{ByrFri_handbook}) 
    \begin{align} \label{eq:derivative_EK}
      K'(k) = \frac{E(k)-(1-k^2)K(k)}{k(1-k^2)},\qquad
      E'(k) = \frac{E(k)-K(k)}{k}.
    \end{align}
    As we will see in \eqref{eq:Jac_Inverse} these functions relate to Jacobi's elliptic functions
    $\sn(\cdot,k),\cn(\cdot,k),\dn(\cdot,k)$ (we omit the arguments for convenience) that can be defined as
    the unique solution of the initial value problem
    \begin{align}  \label{eq:Jacobi_ODE_1terOrdnung}
      \sn' = \cn\dn, \quad
      \cn'=-\sn\dn,\quad
      \dn' = -k^2\sn\cn,\quad
      \sn(0)=0,\;\cn(0)=\dn(0)=1. 
    \end{align}
    Thanks to the identities  
    \begin{align} \label{eq:Jacobi_trigonometric_identities}
      \cn^2+\sn^2 = 1,\qquad \dn^2+k^2\sn^2 = 1,
    \end{align}
    these functions are globally defined and satisfy
    \begin{align} \label{eq:Jacobi_ODE_2terOrdnung}
      \cn'' = (2k^2-1)\cn -2k^2\cn^3, \;
      \sn'' =  -(1+k^2)\sn +2k^2\sn^3,\;
      \dn'' = (2-k^2)\dn -2\dn^3.
    \end{align} 
    The function $\cn$ is even and $2K(k)-$antiperiodic, $\sn$ is odd and $2K(k)-$antiperiodic while $\dn$ is
    $2K(k)-$periodic. Moreover, $\sn:[-K(k),K(k)]\to [-1,1],\cn:[0,K(k)]\to [0,1]$ and $\dn:[0,K(k)]\to
    [\sqrt{1-k^2},1]$ are bijective with inverses given by
     \begin{align} \label{eq:Jac_Inverse}
      \sn^{-1}(z,k) = F(z,k),\quad
      \cn^{-1}(z,k) = F(\sqrt{1-z^2},k),\quad
      \dn^{-1}(z,k) = F(\sqrt{1-z^2}/k,k).
    \end{align}  
    In particular we have $\sn(K(k))=1,\cn(K(k))=0,\dn(K(k))=\sqrt{1-k^2}$ and addition formulas similar to
    the ones for sine and cosine can be proved. We will only need
    \begin{align} \label{eq:Jac_Additionstheoreme}
      \cn(s-K(k)) = \sqrt{1-k^2} \frac{\sn(s)}{\dn(s)},\;
      \sn(s-K(k)) = -\frac{\cn(s)}{\dn(s)},\;
      \dn(s-K(k)) = \frac{\sqrt{1-k^2}}{\dn(s)},
    \end{align}
    see 122.03 in \cite{ByrFri_handbook}. Next we define Heuman's Lambda function  $\Lambda_0$ via
    $$
      \Lambda_0(\arcsin(l),k):=\frac{2}{\pi} (E(k)F(l,k')+K(k)E(l,k')-K(k)F(l,k')),
    $$
    see 150.03 in \cite{ByrFri_handbook}. Here and in the text we will write $k':=\sqrt{1-k^2}$.     
    The formulas  710.11 and 730.04 imply
    \begin{align} \label{eq:derivative_Heuman}
      \begin{aligned}
       \frac{\partial}{\partial l} \Lambda_0(\arcsin(l),k)
       &= \frac{2(E(k)-(1-k^2)l^2K(k))}{\pi l'\sqrt{1-(1-k^2)l^2}},\\
       \frac{\partial}{\partial k} \Lambda_0(\arcsin(l),k)
       &= \frac{2(E(k)-K(k))ll'}{\pi k\sqrt{1-(1-k^2)l^2}}.
       \end{aligned} 
    \end{align}
    Additionally, we will need Jacobi's eta, theta and zeta functions
    $H,\Theta,\Theta_1,\zeta:\C\times(0,1)\to\C$ defined through the following formulas (see 144.01 and 
    1050.01 in \cite{ByrFri_handbook}):
    \begin{align*}
      &\Theta(z,k) := \vartheta_0(\frac{\pi z}{2 K(k)},q_k),
      &&H(z,k) := \vartheta_1(\frac{\pi z}{2 K(k)},q_k),  \\
      &\Theta_1(z,k) := \vartheta_3(\frac{\pi z}{2 K(k)},q_k), 
      &&\zeta(z,k) := \frac{\Theta_z(z,k)}{\Theta(z,k)}. %= \int_0^z \dn^2(x,k)\dx - \frac{E(k)}{K(k)}z. 
    \end{align*}
    Here, the Jacobian Theta Functions $\vartheta_0,\vartheta_1$ and the elliptic nome $q_k$ are given by
    \begin{align} \label{eq:JacobianTheta_series}
      \begin{aligned}
      \vartheta_0(z,q)
      &= 1 + 2 \sum_{n=1}^\infty (-1)^n q^{n^2}\cos(2nz), \\
      \vartheta_1(z,q)
      &= 2   \sum_{n=0}^\infty (-1)^n q^{(n+1/2)^2}\sin((2n+1)z), \\
%       \vartheta_2(z,q)
%       &= 2 q^{1/4} \sum_{n=0}^\infty q^{n(n+1)}\cos((2n+1)z), \\
      \vartheta_3(z,q)
      &= 1 + 2\sum_{n=1}^\infty q^{n^2}\cos(2nz), \\
      q_k  
      &= \exp(-\pi K(k')/K(k)).
       %= \exp\Big(-\pi \frac{\int_0^{\pi/2} \frac{1}{\sqrt{1-(1-k^2)\sin^2(x)}}\dx}{\int_0^{\pi/2}
       %\frac{1}{\sqrt{1-k^2\sin^2(x)}}\dx}\Big).
       \end{aligned}
    \end{align}
    Exploiting the corresponding properties for ordinary sine and cosine (defined in the complex plane) the
    above functions inherit symmetry properties and further identities such as
    $\Theta(z+K(k),k)=\Theta_1(z,k)$ listed on page 315f. in \cite{ByrFri_handbook}. We will give a precise
    reference when required. 
  
 \section{Orbitlike elasticae}
    
    In this section we collect some partially well-known facts about orbitlike elasticae. To this end we first
    analyze the distance function $Z$ in order to prove the explicit formulas
    for orbitlike elasticae in Theorem~\ref{thm:explicit_formula_orbitlike}.
     These formulas will enable us to prove the properties mentioned in
    Theorem~\ref{thm:LanSin} and to discuss the Dirichlet problem in section~\ref{sec:BVPs}. So from now on let
    always $\gamma:\R\to\Hh$ denote an orbitlike elastica with (hyperbolic)  curvature function
    \begin{equation} \label{eq:def_kappa_orbitlike}
       \kappa(s)=\frac{2}{\sqrt{2-k^2}}\dn(\frac{s+s_*}{\sqrt{2-k^2}},k)
    \end{equation}
    for $0<k<1,s_*\in\R$ and let $\phi,Z$ be defined as in \eqref{eq:ODE_solution},\eqref{eq:def_Z}. We recall
    $\mu= 4(1-k^2)/(2-k^2)^2$. In order to measure the angular progress of an elastica generated by this
    curvature function via \eqref{eq:ODE_solution} we will need
%    \begin{equation}\label{eq:def_rotationold}
%      \Delta\theta_k := \r{2\sqrt{1-k^2}\sqrt{2-k} \int_0^{K(k)}
%        \frac{\dn^2(s,k)}{1-k^2(2-k^2)\sn^2(s,k)} \,ds} 
%        \pi(1- \Lambda_0(\arcsin(k'),k))  + 2\sqrt{1-k^2}\sqrt{2-k^2}K(k)},
%    \end{equation}
   \begin{equation}\label{eq:def_rotation}
     \Delta\theta_k := \pi- \pi \Lambda_0(\arcsin(k'),k)  + 2\sqrt{1-k^2}\sqrt{2-k^2}K(k),
   \end{equation}
   a function that has been introduced and analyzed in Proposition 5.3 in~\cite{LanSing_the_total}. 
   We mention that Langer and Singer gave two expressions for $\Delta\theta_k$ on page 21 in
   \cite{LanSing_the_total} which, however, do not coincide. We chose the correct second expression
   as a definition. In their first expression the factor $(1-p^2)$ has to replaced by
   $\sqrt{1-p^2}$, as we will see in the proof of Proposition~\ref{prop:properties_vartheta_orbitlike}.

   \subsection{Preliminaries}
   
   The explicit formulas for orbitlike elasticae involve functions $w_1,w_2:\R\to\R$ that can be
   written down explicitly in terms of the special functions introduced in section~\ref{sec:Jacobi}.
   In order to get directly to the point, we only  list the crucial properties of $w_1,w_2$ but postpone the
   precise definition of $w_1,w_2$. The tedious computations which are necessary to prove these properties 
   as well the other results in this subsection will be given in
   section~\ref{subsec:proof_propositions_orbitlike}.
   
   \begin{prop}\label{prop:w1w2_orbitlike}
     For all $k\in (0,1)$ the ordinary differential equation $w''+2\dn^2(\cdot,k)w=0$ has a 
     fundamental system $\{w_1,w_2\}$ on $\R$ such that the following holds:
     \begin{itemize}
       \item[(i)] $w_1$ is odd about 0, $w_2$ is even about 0, $w_2(0)>0>w_1'(0)$, 
       \item[(ii)] $w_1(s)^2+w_2(s)^2=2-k^2 -\dn^2(s,k)$ on $\R$,
       \item[(iii)] $w_1(s)w_1'(s)+w_2(s)w_2'(s) = k^2\dn(s,k)\cn(s,k)\sn(s,k)$ on $\R$,
       %\item[(iv)] $w_1'(s)^2+w_2'(s)^2=1-k^2+\dn^4(s,k)$ on $\R$,
       \item[(iv)] $w_1(s)w_2'(s)-w_2(s)w_1'(s) = \sqrt{1-k^2}\sqrt{2-k^2}$ on $\R$, 
       \item[(v)] $(w_1+iw_2)(s+2lK(k)) = (w_1+iw_2)(s) e^{il\Delta\theta_k}$ for
       all $s\in\R,l\in\Z$,
%        \item[(vii)] $w_1(s+(2l-1)K(k))+iw_2(s+(2l-1)K(k))= (w_1(s)+iw_2(s))e^{il\Delta\theta_k}$ for
%        all $s\in\R,l\in\Z$.}  
	   \item[(vi)] $(w_1+iw_2)(-K(k)) = e^{i(\pi-\Delta\theta_k)/2}$.
     \end{itemize}
   \end{prop}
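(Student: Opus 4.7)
The plan is to define $w_1$ and $w_2$ simultaneously via the polar ansatz $W(s) := w_1(s) + i w_2(s) = r(s)\, e^{i\theta(s)}$, where $r(s) := \sqrt{2-k^2-\dn^2(s,k)}$ is strictly positive and smooth on $\R$ and, with the abbreviation $c := \sqrt{(1-k^2)(2-k^2)}$, $\theta(s) := \pi/2 + \int_0^s c/r(t)^2\,dt$. With this construction, property (ii) is built in as $|W|^2 = r^2 = 2-k^2-\dn^2(\cdot,k)$, (iii) follows immediately by differentiating (ii), and (iv) holds since $w_1 w_2' - w_2 w_1' = r^2 \theta' \equiv c$ by construction.

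To confirm that $W$ actually solves $W'' + 2\dn^2(\cdot,k)\,W = 0$, I split the equation into real and imaginary parts: the imaginary part reduces to $(r^2\theta')' = 0$, which is automatic, while the real part $r'' + 2\dn^2 r = r(\theta')^2 = c^2/r^3$ is equivalent to the algebraic identity $r^3 r'' + 2\dn^2 r^4 \equiv c^2$. This last identity is verified by a direct but slightly tedious computation using the derivative formulas \eqref{eq:Jacobi_ODE_1terOrdnung} together with the Pythagorean relations \eqref{eq:Jacobi_trigonometric_identities}. Property (i) then follows from symmetry: the evenness of $\dn^2$ forces $r$ to be even, whence $\theta(-s) = \pi - \theta(s)$, and thus $W(-s) = r(s)\,e^{i(\pi-\theta(s))} = -\ov{W(s)}$, i.e. $w_1(-s) = -w_1(s)$ and $w_2(-s) = w_2(s)$. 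The sign conditions $w_2(0) = \sqrt{1-k^2} > 0$ and $w_1'(0) = -\sqrt{2-k^2} < 0$ are immediate from the definitions.

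The Floquet identity (v) rests on the $2K(k)$-periodicity of $\dn^2(\cdot,k)$, which makes $r^2$ periodic with the same period. Hence the phase increment $\theta(s+2K(k)) - \theta(s) = \int_0^{2K(k)} c/r^2(t)\,dt$ is a constant $\Delta$ independent of $s$, and $W(s+2K(k)) = W(s)\,e^{i\Delta}$; iterating gives the formula for arbitrary $l\in\Z$. To identify $\Delta$ with $\Delta\theta_k$ from \eqref{eq:def_rotation}, use $2-k^2-\dn^2(t,k) = 1 - k^2 \cn^2(t,k) = (1-k^2) + k^2\sn^2(t,k)$ together with $\int_0^{2K(k)} f\,dt = 2\int_0^{K(k)} f\,dt$ (by periodicity and evenness) to rewrite the integral as a complete elliptic integral of the third kind, and then apply a reduction formula from \cite{ByrFri_handbook} expressing it via Heuman's Lambda function. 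I expect this explicit computation to be the main technical obstacle, and it will also confirm the typo correction in \cite{LanSing_the_total} noted just after \eqref{eq:def_rotation}.

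Finally, (vi) follows by evaluating $W$ at $s = -K(k)$: one has $r(-K(k)) = \sqrt{2-k^2-(1-k^2)} = 1$ from $\dn(K(k),k)^2 = 1-k^2$, while evenness of $r^2$ yields $\theta(-K(k)) - \pi/2 = -\int_0^{K(k)} c/r^2(t)\,dt = -\Delta\theta_k/2$, i.e. half of the full-period integral just identified in (v). Therefore $W(-K(k)) = e^{i(\pi-\Delta\theta_k)/2}$, as claimed.
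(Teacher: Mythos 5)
Your argument is correct, and it takes a genuinely different route from the paper. The paper constructs $w_1+iw_2$ from the Hermite--Halphen solutions of the Lam\'e equation, $w_\pm(s)=H(s\mp i\alpha_k,k)\Theta(s,k)^{-1}e^{s\zeta(\pm i\alpha_k,k)}$, cites Valent's result to identify $|w_+|^2$ with a multiple of $2-k^2-\dn^2$, and then obtains the Floquet multiplier in (v),(vi) by computing $2K(k)\zeta(i\alpha_k,k)+\pi i$ through a chain of theta-- and zeta--function identities from Byrd--Friedman. You instead reverse-engineer the solution from its prescribed modulus: the amplitude--phase ansatz $W=re^{i\theta}$ with $r^2=2-k^2-\dn^2$ and $r^2\theta'\equiv c=\sqrt{(1-k^2)(2-k^2)}$ builds (ii)--(iv) in by construction, reduces the ODE to the single algebraic identity $r^3r''+2\dn^2 r^4\equiv c^2$ (which I checked: with $u=\dn$, one gets $r^3r''=-2(2-k^2)^2u^2+(2-k^2)(1-k^2)+4(2-k^2)u^4-2u^6$ and $2u^2r^4$ supplies exactly the complementary terms), and derives (v),(vi) from the $2K(k)$-periodicity and evenness of $r$. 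This is more elementary and self-contained --- no theta functions at all --- at the price of having to evaluate the period integral $\int_0^{2K}c\,(1-k^2\cn^2)^{-1}dt$ explicitly. That integral is in fact the same one the paper evaluates in the appendix (proof of Proposition~\ref{prop:properties_vartheta_orbitlike}): the shift $t\mapsto t+K(k)$ together with $\dn(t)=\sqrt{1-k^2}/\dn(t+K(k))$ turns your integrand into $c\,\dn^2\big((2-k^2)\dn^2-(1-k^2)\big)^{-1}$, which the paper reduces via formula 412.04 of Byrd--Friedman to Heuman's Lambda function and hence to $\Delta\theta_k$. One small caution for the step you defer: in the form you wrote it, the parameter of the elliptic integral of the third kind is $\alpha^2=-k^2/(1-k^2)<0$ (circular case), so you need the corresponding circular-case reduction formula rather than 412.04, or you should perform the shift by $K(k)$ first; either way the identification $\Delta=\Delta\theta_k$ goes through.
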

   
   We remark that the equation $w''+2\dn^2(\cdot,k)w=0$ is a special case ($h=2,n=1$) of the so-called
   Lam\'{e} equation which is typically written in the form $w''=(n(n+1)k^2\sn^2(\cdot,k)-h)w$, where $h$ is
   called the Lam\'{e} eigenvalue. Its significance for elastic curves and knots was
   pointed out by Ivey and Singer \cite{IvSin_knot_types}. We refer to \cite{Ince_the_periodic} and to
   Whittaker's book \cite{Whit_modern_analysis} for more information about Lam\'{e}'s equation.  Now, being
   given $w_1,w_2$ we define
   \begin{align}\label{eq:def_Wj}
     W_j(s) := \frac{1}{\sqrt{2-k^2}} w_j\big(-K(k)+ \frac{s+s_*}{\sqrt{2-k^2}}\big) \qquad (j=1,2).
     %W_2(s) &:= w_2\big(-K(k)+ \frac{s+a}{\sqrt{2-k^2}}\big) 
   \end{align}
   We will make use of the following identities.
   \begin{prop}\label{prop:W1W2_orbitlike}
     Let $\kappa$ be given by \eqref{eq:def_kappa_orbitlike}. Then $\{W_1,W_2\}$ is a
     fundamental system of the ODE $W''+2\mu\kappa^{-2}W=0$ and the following identities hold on $\R$:
     \begin{itemize} 
       \item[(i)] $W_1^2+W_2^2 = \frac{\kappa^2-\mu}{\kappa^2}$,
       \item[(ii)] $W_1'= \frac{\mu \kappa'}{\kappa(\kappa^2-\mu)} W_1
      - \frac{\sqrt\mu \kappa^2}{2(\kappa^2-\mu)}W_2 $,
       \item[(iii)] $W_2' = \frac{\sqrt\mu \kappa^2}{2(\kappa^2-\mu)}W_1 + \frac{\mu
       \kappa'}{\kappa(\kappa^2-\mu)} W_2$,
      % \item[(iv)] $(W_1')^2+(W_2')^2
      % = \frac{1-k^2}{2-k^2} \frac{\kappa^4+4\mu}{\kappa^4}$,
       %= \frac{1-k^2}{2-k^2} \frac{\kappa^6+4\mu(\kappa')^2}{\kappa^4(\kappa^2-\mu)}$,
       \item[(iv)] $W_1W_2'-W_2W_1' = \frac{\sqrt{\mu}}{2}$,
       \item[(v)] $(W_1+iW_2)(s+2l\sqrt{2-k^2}K(k))=(W_1+iW_2)(s)e^{il\Delta\theta_k}$ for $s\in\R,l\in\Z$,
       \item[(vi)] $(W_1+iW_2)(s)= (-W_1+iW_2)(-s-2s_*+2l\sqrt{2-k^2}K(k)) e^{i(l-1)\Delta\theta_k}$
       for $s\in\R,l\in\Z$.
     \end{itemize}
   \end{prop}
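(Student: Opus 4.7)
The plan is to deduce every assertion in Proposition~\ref{prop:W1W2_orbitlike} directly from the corresponding statement of Proposition~\ref{prop:w1w2_orbitlike} by pushing the change of variable
\[
  t := -K(k) + \tfrac{s+s_*}{\sqrt{2-k^2}},\qquad W_j(s)=\tfrac{1}{\sqrt{2-k^2}}w_j(t),
\]
through each identity. The essential bridge between the two propositions is the addition formula $\dn(s-K(k),k)=\sqrt{1-k^2}/\dn(s,k)$ in \eqref{eq:Jac_Additionstheoreme}. Setting $u:=(s+s_*)/\sqrt{2-k^2}=t+K(k)$, it yields $\dn(t,k)=\sqrt{1-k^2}/\dn(u,k)$, which, combined with $\kappa=\frac{2}{\sqrt{2-k^2}}\dn(u,k)$ and the identity $\sqrt\mu=2\sqrt{1-k^2}/(2-k^2)$, gives the basic dictionary
\[
  \dn^2(t,k) = \frac{\mu(2-k^2)}{\kappa^2},\qquad k^2\sn(t,k)\cn(t,k) = \frac{2\sqrt{1-k^2}\,\kappa'}{k^2\kappa^2}\cdot k^2 = \frac{2\sqrt{1-k^2}\,\kappa'}{\kappa^2},
\]
the second identity arising by differentiating $\kappa$ and applying \eqref{eq:Jac_Additionstheoreme} once more. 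With these translations every coefficient that appears on the right-hand side of Proposition~\ref{prop:w1w2_orbitlike} becomes a rational expression in $\kappa,\kappa'$.

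With this in hand the ODE claim and (i), (iv), (v), (vi) are almost automatic. For the ODE, a double chain rule gives $W_j''(s)=(2-k^2)^{-3/2}w_j''(t)$, and inserting $w_j''=-2\dn^2(t,k)w_j$ plus the dictionary above produces $W_j''+2\mu\kappa^{-2}W_j=0$; the Wronskian computation of (iv) (via (iv) of Proposition~\ref{prop:w1w2_orbitlike} and $\sqrt\mu=2\sqrt{1-k^2}/(2-k^2)$) then yields $W_1W_2'-W_2W_1'=\sqrt\mu/2>0$, establishing linear independence. Identity (i) is a direct rewriting of $w_1^2+w_2^2=2-k^2-\dn^2(t,k)$. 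The quasi-periodicity (v) is obtained because $s\mapsto s+2l\sqrt{2-k^2}K(k)$ corresponds to $t\mapsto t+2lK(k)$, so (v) of Proposition~\ref{prop:w1w2_orbitlike} transfers verbatim.

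For (ii) and (iii) I would solve the linear system given by (iii) and (iv) of Proposition~\ref{prop:w1w2_orbitlike} for $w_1',w_2'$ using the Cramer-like combination
\[
  w_1' = \frac{w_1\,k^2\dn\sn\cn - w_2\sqrt{(1-k^2)(2-k^2)}}{w_1^2+w_2^2},\quad w_2' = \frac{w_2\,k^2\dn\sn\cn + w_1\sqrt{(1-k^2)(2-k^2)}}{w_1^2+w_2^2},
\]
then substitute the dictionary and the expression $w_1^2+w_2^2=(2-k^2)(\kappa^2-\mu)/\kappa^2$ derived from (i). Finally, converting $w_j=\sqrt{2-k^2}\,W_j$ and $W_j'=w_j'/(2-k^2)$ one checks that exactly the stated coefficients $\mu\kappa'/(\kappa(\kappa^2-\mu))$ and $\pm\sqrt\mu\kappa^2/(2(\kappa^2-\mu))$ emerge.

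The reflection identity (vi) is the most delicate: substituting gives
\[
  (-W_1+iW_2)\bigl(-s-2s_*+2l\sqrt{2-k^2}K(k)\bigr)=\tfrac{1}{\sqrt{2-k^2}}(-w_1+iw_2)\bigl(2(l-1)K(k)-t\bigr),
\]
and I would process the right-hand side by first pulling out $2(l-1)K(k)$ via the quasi-periodicity (v) (together with the observation that $\overline{(w_1+iw_2)(\sigma)}\cdot(-1)=(-w_1+iw_2)(\sigma)$, so the conjugate of (v) yields the conjugated phase), and then applying the parity (i) to collapse $(-w_1+iw_2)(-t)$ to $(w_1+iw_2)(t)$. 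Multiplying through by $e^{i(l-1)\Delta\theta_k}$ gives (vi). The main obstacle throughout is not conceptual but bookkeeping: keeping straight the simultaneous appearances of $\mu$, $2-k^2$, $\sqrt{1-k^2}$ and the shift by $K(k)$ in the arguments of the Jacobi functions; the addition formulas \eqref{eq:Jac_Additionstheoreme} and the relation $\sqrt\mu=2\sqrt{1-k^2}/(2-k^2)$ are the two reductions that make all coefficients come out as stated.
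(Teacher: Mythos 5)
Your proposal is correct and follows essentially the same route as the paper's proof in Section~\ref{subsec:proof_propositions_orbitlike}: the substitution $z=-K(k)+\tfrac{s+s_*}{\sqrt{2-k^2}}$, the dictionary $\dn(z,k)=\tfrac{2\sqrt{1-k^2}}{\sqrt{2-k^2}}\kappa^{-1}$ and $k^2\sn(z,k)\cn(z,k)=\tfrac{2\sqrt{1-k^2}\kappa'}{\kappa^2}$ obtained from \eqref{eq:Jac_Additionstheoreme}, the Cramer-type resolution of (iii),(iv) of Proposition~\ref{prop:w1w2_orbitlike} for $w_1',w_2'$, and quasi-periodicity combined with parity for (vi) are exactly the steps used there. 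The only differences are cosmetic (you carry the dictionary in squared form and process (vi) from the right-hand side rather than the left), so nothing further is needed.
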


  Next we choose an angle function $\vartheta$ as follows:
  \begin{align}\label{eq:def_theta}
    W_2(s)-iW_1(s) = \frac{%\sqrt{2-k^2}
                     \sqrt{\kappa^2-\mu}}{\kappa} e^{i\vartheta(s)}.
  \end{align}
  This definition makes sense due to Proposition~\ref{prop:W1W2_orbitlike}~(i).
  Notice that \eqref{eq:def_theta} determines $\vartheta$ only up to additive multiples of $2\pi$ so that we
  require $\vartheta$ to satisfy the inequality $-2\pi<\vartheta(-s_*)\leq 0$. Since many of our
  computations involve this function, it is reasonable to collect a few properties of it. 
  As a byproduct our proof reveals the discrepancy of the formulas for $\Delta\theta_k$ provided by Langer and
  Singer we mentioned earlier.

  \begin{prop}\label{prop:properties_vartheta_orbitlike}
    The following identities hold:
    $$
      \vartheta' = \frac{\sqrt\mu \kappa^2}{2(\kappa^2-\mu)},\qquad
       \matII{W_1}{W_2}{W_1'}{W_2'}
      = %\sqrt{2-k^2}
      \matII{0}{\frac{\sqrt{\kappa^2-\mu}}{\kappa}}{\frac{-\sqrt{\mu}\kappa}{2\sqrt{\kappa^2-\mu}}}{
      \frac{\mu\kappa'}{\kappa^2\sqrt{\kappa^2-\mu}}}
      \matII{\cos(\vartheta)}{\sin(\vartheta)}{-\sin(\vartheta)}{\cos(\vartheta)}.
    $$
    Moreover, we have for $s\in\R,l\in\Z$
    \begin{itemize}
	  \item[(i)] $\vartheta(-s_*+l\sqrt{2-k^2}K(k)) = \frac{l-1}{2}\Delta\theta_k$,
      \item[(ii)] $\vartheta(s+2l\sqrt{2-k^2}K(k))-\vartheta(s) = l\Delta\theta_k$,
      \item[(iii)] $\vartheta(-s_*+s)+\vartheta(-s_*-s) = -\Delta\theta_k$.
    \end{itemize} 
  \end{prop}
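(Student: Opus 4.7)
The plan is to first establish the differential formula for $\vartheta$ and the matrix identity, and then prove the three shift/symmetry relations (i)--(iii) in the order (ii), (iii), (i), since (iii) requires the normalization provided by (vi) of Proposition~\ref{prop:w1w2_orbitlike} and (i) is deduced from the other two.

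For the formula $\vartheta' = \frac{\sqrt\mu\,\kappa^2}{2(\kappa^2-\mu)}$, I would differentiate the defining relation $W_2-iW_1 = \frac{\sqrt{\kappa^2-\mu}}{\kappa}e^{i\vartheta}$ and substitute the formulas for $W_1',W_2'$ from Proposition~\ref{prop:W1W2_orbitlike}(ii),(iii) to obtain
\[
W_2'-iW_1' \;=\; \Bigl(\tfrac{\mu\kappa'}{\kappa(\kappa^2-\mu)} + i\tfrac{\sqrt\mu\,\kappa^2}{2(\kappa^2-\mu)}\Bigr)(W_2-iW_1),
\]
so the imaginary part of the logarithmic derivative yields $\vartheta'$ and the real part matches $\bigl(\log\tfrac{\sqrt{\kappa^2-\mu}}{\kappa}\bigr)'$, confirming consistency. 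The matrix identity then follows by reading off $W_1=-\tfrac{\sqrt{\kappa^2-\mu}}{\kappa}\sin\vartheta$, $W_2=\tfrac{\sqrt{\kappa^2-\mu}}{\kappa}\cos\vartheta$, differentiating, and using $\vartheta'$ to rearrange the result into the stated product of matrices.

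For (ii) I would argue that $\vartheta'$ is periodic in $s$ with period $2\tau:=2\sqrt{2-k^2}K(k)$ (since $\kappa$ is), so $\vartheta(s+2l\tau)-\vartheta(s) = l\int_0^{2\tau}\vartheta'$ is independent of $s$. The integral is evaluated by the substitution $u=(s+s_*)/\sqrt{2-k^2}$, reducing it to $\sqrt{2-k^2}\int_0^{2K(k)}\tfrac{k'\dn^2}{(2-k^2)\dn^2-k'^2}\,du$. Using $\dn^2 = 1-k^2\sn^2$ rewrites the denominator as $1-k^2(2-k^2)\sn^2$, i.e.\ the standard third-kind form with characteristic $n=k^2(2-k^2)$. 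Since $k^2<n<1$, the Byrd--Friedman reduction of $\Pi(n,k)$ in terms of $K$, $E$, and Heuman's $\Lambda_0(\arcsin(k'),k)$ collapses the result to precisely $\Delta\theta_k$ as in \eqref{eq:def_rotation}; the appearance of $\sqrt{1-k^2}$ rather than $(1-k^2)$ is the arithmetic correction to Langer--Singer alluded to in the text. This explicit computation is the main obstacle, since one must keep track of the coefficient $\sqrt{n/((1-n)(n-k^2))}$ and the auxiliary angle $\arcsin\sqrt{(n-k^2)/(n(1-k^2))}=\arcsin(1/\sqrt{2-k^2})$ and show they rearrange into the simple form of $\Delta\theta_k$.

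Having (ii), claim (iii) follows by specialising Proposition~\ref{prop:W1W2_orbitlike}(vi) at $l=0$ to $(W_2-iW_1)(s)=(W_2+iW_1)(-s-2s_*)\,e^{-i\Delta\theta_k}$; since $\dn$ is even the moduli on both sides coincide, so taking arguments gives $\vartheta(s)+\vartheta(-s-2s_*)\equiv -\Delta\theta_k\pmod{2\pi}$, and the left side has derivative zero in $s$ (by the formula for $\vartheta'$ and the evenness of $\kappa$ about $-s_*$), so the integer is constant. To fix it I evaluate at $s=-s_*$: Proposition~\ref{prop:w1w2_orbitlike}(vi) and \eqref{eq:def_Wj} give $(W_1+iW_2)(-s_*)=\frac{1}{\sqrt{2-k^2}}e^{i(\pi-\Delta\theta_k)/2}$, hence $(W_2-iW_1)(-s_*)=\frac{1}{\sqrt{2-k^2}}e^{-i\Delta\theta_k/2}$, so the normalisation $-2\pi<\vartheta(-s_*)\le 0$ (which one checks from the bounds on $\Delta\theta_k$) forces $\vartheta(-s_*)=-\Delta\theta_k/2$ and the integer is zero. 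Finally, for (i), the identity $\vartheta(-s_*)=-\Delta\theta_k/2$ is the case $l=0$; combining (iii) at $s=\tau$ with (ii) at $s=-s_*+\tau$, $l=-1$ gives $\vartheta(-s_*+\tau)=0$, which is $l=1$; and (ii) with shift $2L\tau$ propagates both base values to all even and odd $l$.
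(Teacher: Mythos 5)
Your proposal is correct and follows essentially the same route as the paper: the formula for $\vartheta'$ and the matrix identity come from differentiating the defining relation \eqref{eq:def_theta} and inserting Proposition~\ref{prop:W1W2_orbitlike}~(ii),(iii); the shift identities reduce to the complete integral $\int_0^{K(k)}\dn^2(u,k)\,(1-k^2(2-k^2)\sn^2(u,k))^{-1}\,du$, evaluated via the Byrd--Friedman/Heuman $\Lambda_0$ reduction (the paper applies 412.04 directly, which produces $\Lambda_0(\arcsin(k'),k)$ without passing through $\Pi(n,k)$); and the base value $\vartheta(-s_*)=-\Delta\theta_k/2$ is fixed exactly as you describe from $(w_1+iw_2)(-K(k))=e^{i(\pi-\Delta\theta_k)/2}$ and the normalization $-2\pi<\vartheta(-s_*)\le 0$. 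Your reordering (proving (ii) and (iii) first and deducing (i)) is an immaterial permutation of the paper's argument; the only slip is the auxiliary angle $\arcsin(1/\sqrt{2-k^2})$ you quote for the third-kind reduction --- in the circular case $k^2<n<1$ with $n=k^2(2-k^2)$ the relevant angle is $\arcsin\sqrt{(1-n)/(1-k^2)}=\arcsin(k')$, which is exactly what makes the integral collapse to $\Delta\theta_k$ with no further identity needed.
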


  The formula for $\vartheta'$ given above can also be found on p.565 in the paper of Bryant and Griffiths
  \cite{BryGri_Reduction}. The derivation of this formula, however, is not carried out in detail so that  
  our proof in section~\ref{subsec:proof_propositions_orbitlike} may be of interest.   
  
  \subsection{Explicit formulas}
  
   As mentioned earlier the key idea is to derive the explicit formulas for elasticae in the hyperbolic
   plane from formulas for the expression 
   $$
     Z(s;P) = \frac{(\gamma_1(s)-P_1)^2+(\gamma_2(s)-P_2)^2}{2\gamma_2(s)P_2}
   $$
   in dependence of both $s$ and $P$. We notice once again that formally we need not restrict ourselves to the
   geometrically meaningful case $P\in\Hh$, i.e. $P=(P_1,P_2)$ with $P_2>0$, but may also consider points with $P_2<0$ or
   even $P\in\C$ with $P_2\neq 0$, which will become important later in the discussion of wavelike elasticae.
   In order to accurately describe the function $Z$ we will need the following functions: 
   \begin{align} \label{eq:formula_xi}
     \begin{aligned}
     \xi^1(s)
      &:=  \matII{ W_1(s)}{W_2(s)}{ W_1'(s)}{
      W_2'(s)}^{-1}
      \vecII{\frac{1}{\kappa(s)}-\frac{\kappa(s)}{\mu}-\frac{\kappa'(s)}{\mu}\sin(\phi(s))
      +\frac{\kappa(s)^2}{2\mu}\cos(\phi(s))}{-\frac{1}{\kappa(s)}\sin(\phi(s))-\frac{\kappa'(s)}{\kappa(s)^2}},
      \\
      \xi^2(s)
	   &:=   \matII{ W_1(s)}{ W_2(s)}{ W_1'(s)}{
      W_2'(s)}^{-1}
      \vecII{\frac{1}{\kappa(s)}-\frac{\kappa(s)}{\mu}+\frac{\kappa'(s)}{\mu}\sin(\phi(s))
      -\frac{\kappa(s)^2}{2\mu}\cos(\phi(s))}{\frac{1}{\kappa(s)}\sin(\phi(s))-\frac{\kappa'(s)}{\kappa(s)^2}},
      \\
      \xi^3(s)
      &:=  \matII{ W_1(s)}{ W_2(s)}{ W_1'(s)}{
      W_2'(s)}^{-1}  \vecII{\frac{\kappa(s)^2}{2\mu}\sin(\phi(s))
      +\frac{\kappa'(s)}{\mu}\cos(\phi(s))}{\frac{1}{\kappa(s)}\cos(\phi(s))}.
      \end{aligned}
   \end{align}
   
   \begin{prop}\label{prop:formula_Z_orbitlike}
     For $P=(P_1,P_2)\in\R^2$ such that $P_2\neq 0$ we have 
     \begin{equation} \label{eq:formula_Z}
       Z(s;P)
       = -1 + \kappa(s)\big( AW_1(s) + BW_2(s) + C\big)
     \end{equation}
     where $C$ is given by \eqref{eq:def_C} and $A,B\in\R$ satisfy  
     \begin{align} \label{eq:formula_AB}
       \vecII{A}{B}
       &= \frac{(\gamma_1(s_0)-P_1)^2+P_2^2}{2\gamma_2(s_0)P_2} \xi^1(s_0)
         + \frac{\gamma_2(s_0)}{2P_2}\xi^2(s_0)
         + \frac{\gamma_1(s_0)-P_1}{P_2}\xi^3(s_0)
         \quad\forall s_0\in\R.
      \end{align}
   \end{prop}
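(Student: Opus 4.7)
The plan is to characterize both sides of \eqref{eq:formula_Z} as the unique solution of the same initial value problem for the second-order ODE \eqref{eq:ODE_Z} of Proposition~\ref{prop:ode_Z}.

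First, I would verify that for any $A,B\in\R$ the function $-1+\kappa(AW_1+BW_2+C)$ satisfies \eqref{eq:ODE_Z}, where $C$ is the constant from \eqref{eq:def_C}. Setting $U:=AW_1+BW_2+C$ and $Y:=\kappa U$, a direct computation via $Y'=\kappa'U+\kappa U'$ yields
\[
 \kappa Y''-2\kappa'Y'+\kappa Y = \kappa^2 U''+\bigl(\kappa\kappa''-2(\kappa')^2+\kappa^2\bigr)U.
\]
Using \eqref{eq:ODE_kappa} to substitute $\kappa''=\kappa-\kappa^3/2$ and $(\kappa')^2=\kappa^2-\kappa^4/4-\mu$, the coefficient of $U$ collapses to $2\mu$. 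Since $\{W_1,W_2\}$ is a fundamental system of $W''+2\mu\kappa^{-2}W=0$ by Proposition~\ref{prop:W1W2_orbitlike}, one has $U''=-2\mu\kappa^{-2}(U-C)$, so the right-hand side reduces to $2\mu C$, which is precisely \eqref{eq:ODE_Z}.

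Second, in the orbitlike case $\kappa = \tfrac{2}{\sqrt{2-k^2}}\dn(\cdot,k)\geq \tfrac{2\sqrt{1-k^2}}{\sqrt{2-k^2}}>0$ throughout $\R$, so \eqref{eq:ODE_Z} is a regular linear second-order ODE, and its solutions are therefore uniquely determined by the values $Z(s_0),Z'(s_0)$ at any chosen $s_0\in\R$. Matching the ansatz to these values produces the $2\times 2$ linear system
\[
 \matII{W_1(s_0)}{W_2(s_0)}{W_1'(s_0)}{W_2'(s_0)}\vecII{A}{B} \;=\; \vecII{\dfrac{Z(s_0)+1}{\kappa(s_0)}-C}{\dfrac{Z'(s_0)}{\kappa(s_0)}-\dfrac{\kappa'(s_0)(Z(s_0)+1)}{\kappa(s_0)^2}},
\]
whose coefficient matrix has determinant $\sqrt{\mu}/2\neq 0$ by Proposition~\ref{prop:W1W2_orbitlike}(iv) and is therefore invertible.

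The remaining step, which is also the main obstacle, is to match the right-hand side with the three groups of terms in \eqref{eq:formula_AB}. Writing $\alpha:=((\gamma_1(s_0)-P_1)^2+P_2^2)/(2P_2\gamma_2(s_0))$ and $\beta:=\gamma_2(s_0)/(2P_2)$, the definition \eqref{eq:def_Z} gives $Z(s_0)+1=\alpha+\beta$, while \eqref{eq:formula_Zprime} yields $Z'(s_0)=(\beta-\alpha)\sin(\phi(s_0))+\tfrac{\gamma_1(s_0)-P_1}{P_2}\cos(\phi(s_0))$. Substituting these expressions together with \eqref{eq:def_C} into the right-hand side of the system and regrouping by the coefficients $\alpha$, $\beta$, and $(\gamma_1(s_0)-P_1)/P_2$, the three resulting $2$-vectors match exactly the column vectors appearing in the definitions \eqref{eq:formula_xi} of $\xi^1(s_0)$, $\xi^2(s_0)$, $\xi^3(s_0)$. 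Multiplying by the inverse Wronskian then yields \eqref{eq:formula_AB}. The calculation is elementary but requires careful tracking of several cancellations among the terms involving $\sin(\phi)$, $\cos(\phi)$, $\kappa$, $\kappa'$, and $\mu$. Independence of $(A,B)$ from the choice of $s_0$ is automatic from uniqueness of solutions to the IVP.
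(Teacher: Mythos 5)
Your proposal is correct and is essentially the paper's own argument: both rest on Proposition~\ref{prop:ode_Z}, the fundamental system $\{W_1,W_2\}$ from Proposition~\ref{prop:W1W2_orbitlike}, and the same $2\times 2$ linear system at $s_0$ whose right-hand side regroups into $\xi^1,\xi^2,\xi^3$ (your claimed matching of coefficients of $\alpha$, $\beta$ and $(\gamma_1(s_0)-P_1)/P_2$ does check out, with no hidden cancellations). The only cosmetic difference is the direction: the paper shows directly that $W:=(Z+1)\kappa^{-1}-C$ solves $W''+2\mu\kappa^{-2}W=0$ and hence equals $AW_1+BW_2$, whereas you verify that the ansatz solves \eqref{eq:ODE_Z} and invoke uniqueness of the IVP for the (non-singular, since $\kappa>0$ in the orbitlike case) equation.
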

   \begin{proof}
%      \r{Let $C$ be given by \eqref{eq:def_C} so that $Z(\cdot;P)$ satisfies \eqref{eq:ODE_Z}. We define the
%      auxiliary functions
%      \begin{align} \label{eq:defYW}
%        y(s)  := \frac{Z(s;P)+1}{\kappa(s)}-C,\qquad   
%        w(s) := y(-s_*+\sqrt{2-k^2}(s+K(k))) \qquad (s\in\R).
%      \end{align}
%      Then the identities \eqref{eq:ODE_Z} and \eqref{eq:ODE_kappa} imply $y''(s) + 2\mu\kappa(s)^{-2} y(s)=0$.
%      Using \eqref{eq:def_kappa_orbitlike} as well as the identity
%      $\dn(s,k)\dn(s+K(k),k)=\sqrt{1-k^2}$  we get
%      \begin{align} \label{eq:ODE_Wold}
%        w''(s)
% 	   + 2\dn(s,k)^2 w(s) = 0.
%      \end{align}
%     By Proposition \ref{prop:w1w2_orbitlike} we can find $A,B\in\R$ such that $w(s)=Aw_1(s)+Bw_2(s)$ and thus 
%     \begin{align*}
%       Z(s;P)
%       &= -1  + \kappa(s)\big(y(s)+C\big)  \\
%       &= -1 + \kappa(s)\Big( w\big(-K(k)+\frac{s+s_*}{\sqrt{2-k^2}}\big) + C\Big)  \\
%       &= -1  + \kappa(s)\big( AW_1(s) + B W_2(s) + C\big).
%     \end{align*}
%     The formula for $A,B$ results from
%     \begin{align*}
%       \matII{ W_1(s_0)}{ W_2(s_0)}{ W_1'(s_0)}{ W_2'(s_0)}\vecII{A}{B}
%       =  \vecII{\frac{Z(s_0;P)+1}{\kappa(s_0)}-C}{\frac{Z'(s_0;P)}{\kappa(s_0)}-
%         \frac{\kappa'(s_0)(Z(s_0;P)+1)}{\kappa(s_0)^2}}  
%     \end{align*}
%     after plugging in the formulas for $Z(s_0;P),Z'(s_0;P)$ and $C$, see
%     \eqref{eq:def_Z},\eqref{eq:formula_Zprime},\eqref{eq:def_C}.}
    
     Let $C$ be given by \eqref{eq:def_C} so that $Z(\cdot;P)$ satisfies the ODE \eqref{eq:ODE_Z}. We define
     the auxiliary function $W(s) := (Z(s;P)+1)\kappa(s)^{-1}-C$. Then we get
     \begin{align*}
       W''(s) 
       &= \frac{1}{\kappa(s)^2} \Big( \kappa(s)Z''(s;P)-2\kappa'(s)Z'(s;P)
       + (Z(s;P)+1)\big(-\kappa''(s)+2\frac{\kappa'(s)^2}{\kappa(s)}\big) \Big) \\
       &\stackrel{\eqref{eq:ODE_Z}}{=} \frac{1}{\kappa(s)^2} \Big( 
       (Z(s;P)+1)\big(-\kappa(s)-\kappa''(s)+2\frac{\kappa'(s)^2}{\kappa(s)} \big)
       +2\mu C\Big) \\
       &\stackrel{\eqref{eq:ODE_kappa}}{=} -\frac{2\mu}{\kappa(s)^2} \Big(
       \frac{Z(s;P)+1}{\kappa(s)}-C\Big) \\
       &= -\frac{2\mu}{\kappa(s)^2}W(s).
     \end{align*}
     By Proposition \ref{prop:W1W2_orbitlike} we can find $A,B\in\R$ such that $W(s)=AW_1(s)+BW_2(s)$ and
     thus
    \begin{align*}
      Z(s;P)
      = -1  + \kappa(s)\big(W(s)+C\big)  
      = -1  + \kappa(s)\big( AW_1(s) + B W_2(s) + C\big)
    \end{align*}
    so that \eqref{eq:formula_Z} is proved. Solving this identity for $W_1,W_2$ and differentiating it with
    respect to~$s$ we get the following equations for $A,B$:
     \begin{align*}
      \matII{ W_1(s_0)}{ W_2(s_0)}{ W_1'(s_0)}{ W_2'(s_0)}\vecII{A}{B}
      = \vecII{\frac{Z(s_0;P)+1}{\kappa(s_0)}-C}{\big(\frac{Z(\cdot;P)+1}{\kappa}-C\big)'(s_0)} 
      =  \vecII{\frac{Z(s_0;P)+1}{\kappa(s_0)}-C}{\frac{Z'(s_0;P)}{\kappa(s_0)}-
        \frac{\kappa'(s_0)(Z(s_0;P)+1)}{\kappa(s_0)^2}}.  
    \end{align*}
    Plugging in the formulas for $Z(s_0;P),Z'(s_0;P)$ and $C$, see
    \eqref{eq:def_Z},\eqref{eq:formula_Zprime},\eqref{eq:def_C}.  
%     the formulas for  $Z(s_0;P),Z'(s_0;P)$ and $C$ from
%     \eqref{eq:def_Z},\eqref{eq:formula_Zprime},\eqref{eq:def_C} via
%     \begin{align*}
%       &\matII{ W_1(s_0)}{ W_2(s_0)}{ W_1'(s_0)}{ W_2'(s_0)}\vecII{A}{B} \\
%       &= \vecII{\frac{Z(s_0;P)+1}{\kappa(s_0)}-C}{\big(\frac{Z(\cdot;P)+1}{\kappa}-C\big)'(s_0)} \\
%       &= \vecII{\frac{Z(s_0;P)+1}{\kappa(s_0)}-C}{\frac{Z'(s_0;P)}{\kappa(s_0)}-
%         \frac{\kappa'(s_0)(Z(s_0;P)+1)}{\kappa(s_0)^2}} \\
%       &=
%       \vecII{\frac{P_2^2+(\gamma_1(s_0)-P_1)^2}{2P_2\gamma_2(s_0)}\cdot
%       \frac{1}{\kappa(s_0)} + \frac{\gamma_2(s_0)}{2P_2}\cdot\frac{1}{\kappa(s_0)}-C}{
%       \frac{P_2^2+(\gamma_1(s_0)-P_1)^2}{2P_2\gamma_2(s_0)}\cdot
%       \big(-\frac{\sin(\phi(s_0))}{\kappa(s_0)}-\frac{\kappa'(s_0)}{\kappa(s_0)^2}\big)
%       +
%       \frac{\gamma_2(s_0)}{2P_2}\cdot\big(\frac{\sin(\phi(s_0))}{\kappa(s_0)}-\frac{\kappa'(s_0)}{\kappa(s_0)^2}\big)
%       + \frac{\gamma_1(s_0)-P_1}{P_2}\cdot \frac{\cos(\phi(s_0))}{\kappa(s_0)}
%       } \\
%       &= \frac{P_2^2+(\gamma_1(s_0)-P_1)^2}{2P_2\gamma_2(s_0)}\cdot \xi^1(s_0)
%       + \frac{\gamma_2(s_0)}{2P_2} \xi^2(s_0)
%       + \frac{\gamma_1(s_0)-P_1}{P_2}\xi^3(s_0).
%     \end{align*}
%     }
   \end{proof}
   
   So for any given elastica $\gamma:\R\to\Hh$  the equations \eqref{eq:formula_Z},\eqref{eq:formula_AB}
   hold for all $s,s_0\in\R$ and all $P\in \R^2,P_2\neq 0$. Hence, we may derive explicit formulas for
   orbitlike elasticae \r{$(\gamma_1,\gamma_2)$} by equating the coefficients on each side of
   \eqref{eq:formula_Z}.
   
%    \r{\begin{thm}\label{thm:explicit_formula_orbitlikeold}
%      Let $\gamma$ be an orbitlike elastica in $\Hh$. Then there are $a_1,a_2,a_3,b_1,b_2,b_3\in\R$
%      such that for all $s\in\R$ the following holds:
%      \begin{align}\label{eq:explicit_formula_orbitlikeold}
%        \begin{aligned}
%        \gamma_1(s)
%        &=\frac{b_1  W_1(s)+b_2 W_2(s)+b_3}{a_1 W_1(s)+a_2 W_2(s)+a_3}, \\
%        \gamma_2(s)
%        &=\frac{1}{\kappa(s)(a_1  W_1(s)+a_2 W_2(s)+a_3)}.
%        \end{aligned}   
%      \end{align}
%      These numbers are characterized as the uniquely determined real numbers such that for all $s\in\R$ we
%      have $\gamma_2(s) (a_1,a_2)^T= \xi^1(s)$, $\gamma_2(s) (b_1,b_2)^T = \gamma_1(s) \xi^1(s)+  \gamma_2(s)
%      \xi^3(s)$  as well as
%      \begin{align} \label{eq:b3a3_1old}
%        \gamma_1(s) &= \frac{b_3}{a_3} +\frac{1}{2\mu
%        a_3}\big(\kappa(s)^2\sin(\phi(s))+2\kappa'(s)\cos(\phi(s))\big), \\
%        \label{eq:b3a3_2}
%        \gamma_2(s) &= \frac{\kappa(s)}{\mu a_3} 
%        + \frac{1}{2\mu a_3}\big(2\kappa'(s)\sin(\phi(s))-\kappa(s)^2\cos(\phi(s))\big).
%      \end{align}
%    \end{thm}}
   \begin{thm}\label{thm:explicit_formula_orbitlike}
     A curve $\gamma:\R\to \Hh$ is an orbitlike elastica parametrized by hyperbolic arclength if and
     only if we have
     \begin{align}
       \gamma_1(s)
       &=\frac{b_1  W_1(s)+b_2 W_2(s)+b_3}{a_1 W_1(s)+a_2 W_2(s)+a_3}, 
       \label{eq:explicit_formula_orbitlike_1}
       \\
       \gamma_2(s)
       &=\frac{1}{\kappa(s)(a_1  W_1(s)+a_2 W_2(s)+a_3)}
       \label{eq:explicit_formula_orbitlike_2}  
     \end{align}
     for $a_1,a_2,a_3,b_1,b_2,b_3\in\R$ such that 
     \begin{equation} \label{eq:a_1a_2a_3b_1b_2b_3}
       a_3= \sqrt{a_1^2+a_2^2}>0,\qquad 
        \vecII{b_1}{b_2} = \frac{b_3}{a_3} \vecII{a_1}{a_2} + \frac{1}{\sqrt{\mu}a_3} \vecII{-a_2}{a_1}.
     \end{equation}
     In this case we moreover have for all $s\in\R$  
     \begin{align} 
       \gamma_2(s)(a_1,a_2)^T&= \xi^1(s) \label{eq:a1a2b1b2_1}, \\ 
       \gamma_2(s) (b_1,b_2)^T &= \gamma_1(s) \xi^1(s)+ \gamma_2(s) \xi^3(s),
       \label{eq:a1a2b1b2_2} \\
       \gamma_1(s) &= \frac{b_3}{a_3} +\frac{1}{2\mu
       a_3}\big(\kappa(s)^2\sin(\phi(s))+2\kappa'(s)\cos(\phi(s))\big), 
       \label{eq:b3a3_1}
       \\
       \gamma_2(s) &= \frac{\kappa(s)}{\mu a_3} 
       + \frac{1}{2\mu a_3}\big(2\kappa'(s)\sin(\phi(s))-\kappa(s)^2\cos(\phi(s))\big).
       \label{eq:b3a3_2}
     \end{align}
   \end{thm}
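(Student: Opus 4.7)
The plan is to exploit Proposition~\ref{prop:formula_Z_orbitlike}, which gives $Z(s;P) + 1 = \kappa(s)(AW_1(s)+BW_2(s)+C)$ with $A,B,C$ depending on $P$ but not on $s$. On the other hand, formulas~\eqref{eq:formula_AB} and~\eqref{eq:def_C} express $A,B,C$ as rational functions of $P_1,P_2$ whose coefficients involve $\gamma(s_0),\phi(s_0),\kappa(s_0),\kappa'(s_0)$ and $\xi^j(s_0)$ at an arbitrary base point $s_0$. Since $A,B,C$ are independent of $s_0$, every coefficient in this $P$-expansion must be a constant function of $s_0$; this is the engine of the whole argument.

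Expanding $\binom{A}{B}$ and $C$ in the monomials $P_2,\,1/P_2,\,P_1/P_2,\,P_1^2/P_2$, the $P_2$- and $P_1^2/P_2$-coefficients of $\binom{A}{B}$ both produce $\xi^1(s_0)/\gamma_2(s_0)=(a_1,a_2)^T$ for a constant vector $(a_1,a_2)^T$, which is~\eqref{eq:a1a2b1b2_1}. The $P_1/P_2$-coefficient combined with this yields $\gamma_1(s_0)(a_1,a_2)^T+\xi^3(s_0)=(b_1,b_2)^T$, equivalently~\eqref{eq:a1a2b1b2_2}. The corresponding analysis of $C$ gives $(\kappa+\kappa'\sin\phi-\tfrac{\kappa^2}{2}\cos\phi)/\gamma_2=\mu a_3$ from the $P_2$-coefficient (hence~\eqref{eq:b3a3_2}) and~\eqref{eq:b3a3_1} from the $P_1/P_2$-coefficient.

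With~\eqref{eq:a1a2b1b2_1}--\eqref{eq:b3a3_2} in place, the explicit formulas follow immediately. Substituting $\xi^1=\gamma_2(a_1,a_2)^T$ into the first row of the matrix equation defining $\xi^1$ yields $\gamma_2(a_1W_1+a_2W_2) = \tfrac{1}{\kappa} - \tfrac{\kappa}{\mu} - \tfrac{\kappa'}{\mu}\sin\phi + \tfrac{\kappa^2}{2\mu}\cos\phi$, and adding $\gamma_2 a_3$ from~\eqref{eq:b3a3_2} gives $\gamma_2(a_1W_1+a_2W_2+a_3)=1/\kappa$, which is~\eqref{eq:explicit_formula_orbitlike_2}. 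A parallel computation using~\eqref{eq:a1a2b1b2_2}, the definition of $\xi^3$ and~\eqref{eq:b3a3_1} produces $\gamma_1(a_1W_1+a_2W_2+a_3)=b_1W_1+b_2W_2+b_3$, which is~\eqref{eq:explicit_formula_orbitlike_1}.

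The principal obstacle is verifying the algebraic constraint~\eqref{eq:a_1a_2a_3b_1b_2b_3}. I would compute $(\gamma_1-b_3/a_3)^2+\gamma_2^2$ in two ways. First, combining the squares of~\eqref{eq:b3a3_1} and~\eqref{eq:b3a3_2} with the ODE $(\kappa')^2+\tfrac{\kappa^4}{4}=\kappa^2-\mu$ from~\eqref{eq:ODE_kappa} gives $\mu^2 a_3^2\bigl[(\gamma_1-b_3/a_3)^2+\gamma_2^2\bigr]=2\mu a_3\gamma_2\kappa-\mu$. Second, inserting~\eqref{eq:explicit_formula_orbitlike_1}--\eqref{eq:explicit_formula_orbitlike_2} and using $W_1^2+W_2^2=(\kappa^2-\mu)/\kappa^2$ from Proposition~\ref{prop:W1W2_orbitlike}(i) expresses the same quantity as a rational function in $W_1,W_2$. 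Equating the two expressions and matching the coefficients of $1,\,W_1^2,\,W_2^2,\,W_1W_2$ forces $a_3^2=a_1^2+a_2^2$ together with $(a_3b_1-a_1b_3)^2=a_2^2/\mu$, $(a_3b_2-a_2b_3)^2=a_1^2/\mu$ and $(a_3b_1-a_1b_3)(a_3b_2-a_2b_3)=-a_1a_2/\mu$, which are exactly the relations claimed (a global sign reflects the orientation of the elastica). The converse direction then amounts to a direct verification---via Proposition~\ref{prop:W1W2_orbitlike}---that any curve of the stated form with constants satisfying~\eqref{eq:a_1a_2a_3b_1b_2b_3} is parametrized by hyperbolic arclength and has curvature $\kappa$; this is tedious but presents no conceptual difficulty.
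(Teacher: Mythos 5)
Your derivation of \eqref{eq:explicit_formula_orbitlike_1}--\eqref{eq:b3a3_2} is essentially the paper's argument: both exploit that the coefficients in the expansion of $A,B,C$ from \eqref{eq:formula_AB},\eqref{eq:def_C} in the monomials $P_2,P_1/P_2,P_1^2/P_2,1/P_2$ must be independent of the base point $s_0$ (the paper packages this as equating coefficients of $P_2^2$ and $P_1$ after multiplying \eqref{eq:formula_Z} by $2P_2\gamma_2(s)$), and your back-substitution into the matrix equations defining $\xi^1,\xi^3$ recovers the explicit formulas in the same way. Where you genuinely diverge is the verification of the constraint \eqref{eq:a_1a_2a_3b_1b_2b_3}: the paper writes $\xi^1,\xi^3$ in terms of the rotation $\mathcal{R}(-\vartheta)$ from Proposition~\ref{prop:properties_vartheta_orbitlike} and computes $\skp{\xi^1}{\xi^1}$, $\skp{\xi^1}{\xi^3}$ and $\skp{(\xi^1)^\perp}{\xi^3}$, which yields $a_1^2+a_2^2=a_3^2$, $b_1a_1+b_2a_2=b_3a_3$ and, crucially, the \emph{signed} identity $-a_2b_1+a_1b_2=a_3/\sqrt{\mu}$. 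Your two-fold evaluation of $(\gamma_1-b_3/a_3)^2+\gamma_2^2$ is more elementary (it bypasses the $\vartheta$-machinery entirely), and the computation is correct: matching coefficients of $W_1^2,W_2^2,W_1W_2$ does force $a_3^2=a_1^2+a_2^2$.

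There is, however, a real (though repairable) gap at the end of that step. Your coefficient matching only determines the squares and the product of $c_1:=a_3b_1-a_1b_3$ and $c_2:=a_3b_2-a_2b_3$, hence only $(c_1,c_2)=\pm\tfrac{1}{\sqrt{\mu}}(-a_2,a_1)$ and $a_3=\pm\sqrt{a_1^2+a_2^2}$, whereas \eqref{eq:a_1a_2a_3b_1b_2b_3} asserts specific signs. The parenthetical ``a global sign reflects the orientation'' does not dispose of this: throughout the section the orientation is fixed by taking the curvature to be the positive function \eqref{eq:def_kappa_orbitlike}, and replacing $(c_1,c_2)$ by $-(c_1,c_2)$ produces the reflection of $\gamma$ about the line $x=b_3/a_3$, whose curvature is $-\kappa$; with the wrong sign the ``if'' direction of the theorem is false. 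The positivity of $a_3$ is easy to recover from \eqref{eq:b3a3_2}, since $2\kappa+2\kappa'\sin(\phi)-\kappa^2\cos(\phi)\geq 2\kappa-2\sqrt{\kappa^2-\mu}>0$ (this is exactly the paper's argument), but fixing the sign of $(c_1,c_2)$ needs an extra input --- for instance the paper's computation of $-a_2b_1+a_1b_2=\skp{(\xi^1)^\perp}{\xi^3}/\gamma_2$, or a first-order check comparing $\gamma_1'$ computed from \eqref{eq:explicit_formula_orbitlike_1} with $\gamma_2\cos(\phi)$ at a single point. Your assessment of the converse direction is fair: the paper's verification is indeed a direct, if lengthy, computation hinging on defining $\phi$ by \eqref{eq:formula_phi} and checking the system \eqref{eq:ODE_solution}.
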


   \begin{proof}
    % \r{We consider the identity \eqref{eq:formula_Z} for fixed (but arbitrary) $s,s_0\in\R$.} \\
     The strategy of the proof is the following: First we assume $\gamma$ to be an orbitlike elastica 
     in~$\Hh$ parametrized by hyperbolic arclength and derive the formulas
     \eqref{eq:explicit_formula_orbitlike_1}--\eqref{eq:b3a3_2}. This will prove the ''only if''-part of our
     claim.   
     Then we check that the formulas \eqref{eq:explicit_formula_orbitlike_1}--\eqref{eq:a_1a_2a_3b_1b_2b_3}
     indeed define orbitlike elasticae which, as solutions of \eqref{eq:ODE_kappa}, are parametrized by
     hyperbolic arclength.
     %   At the end we will
     %check that these formulas indeed define orbitlike elasticae that are parametrized by hyperbolic
     %arclength.
     
     \medskip
     
     So let us first assume that $\gamma$ is an orbitlike elastica in $\Hh$ so that identity
     \eqref{eq:formula_Z} holds for all $s,s_0\in\R$ by Proposition~\ref{prop:formula_Z_orbitlike}.  We multiply \eqref{eq:formula_Z}
     with $2P_2\gamma_2(s)$ and obtain
     \begin{align*}
      (\gamma_1(s)-P_1)^2+\gamma_2(s)^2+P_2^2 
      = 2\gamma_2(s)\kappa(s)P_2\big( A W_1(s) + B  W_2(s) +  C \big) 
     \end{align*}
     where $A,B,C$ are given by the formulas \eqref{eq:formula_AB},\eqref{eq:def_C}. 
     Equating the coefficients of the monomial $P_2^2$ we get
     \begin{align*}
       1 &= \frac{\gamma_2(s)\kappa(s)}{\gamma_2(s_0)} \Big( \xi^1(s_0)_1 W_1(s) + \xi^1(s_0)_2 W_2(s) \\
       &\quad + \frac{\kappa(s_0)}{\mu}+\frac{\kappa'(s_0)}{\mu}\sin(\phi(s_0))
       - \frac{\kappa(s_0)^2}{2\mu}\cos(\phi(s_0))\Big)  \\
       &= \gamma_2(s)\kappa(s)\big(a_1 W_1(s)+a_2W_2(s)+a_3\big)
     \end{align*}
%      \r{so that the formula for $\gamma_2$ is shown. Notice that the coefficients
%      $a_1,a_2,a_3$ are necessarily constant since $s,s_0$ can be chosen arbitrarily.} \\
     and thus \eqref{eq:explicit_formula_orbitlike_2} is proved. Notice that the above identity holds for
     all $s_0\in\R$, which proves the formulas \eqref{eq:a1a2b1b2_1},\eqref{eq:b3a3_2} for $a_1,a_2,a_3$. 
     Similarly, equating the coefficients of $P_1$ we get
     \begin{align*}
       -2\gamma_1(s) 
       &= \frac{2\gamma_2(s)\kappa(s)}{\gamma_2(s_0)} \Big(
       - \big(\gamma_1(s_0)\xi^1(s_0)+\gamma_2(s_0)\xi^3(s_0)\big)_1 W_1(s)  \\
       &-\big(\gamma_1(s_0)\xi^1(s_0)+\gamma_2(s_0)\xi^3(s_0)\big)_2 W_2(s) \\
        &- \gamma_1(s_0) 
         \Big(\frac{\kappa(s_0)}{\mu}+\frac{\kappa'(s_0)}{\mu}\sin(\phi(s_0)) -
         \frac{\kappa(s_0)^2}{2\mu}\cos(\phi(s_0))\Big) \\
         &-\gamma_2(s_0)\Big(-\frac{\kappa(s_0)^2}{2\mu}\sin(\phi(s_0))-\frac{\kappa'(s_0)}{\mu}\cos(\phi(s_0))\Big)
         \Big) \\
        &= -2\gamma_2(s)\kappa(s)\big(b_1W_1(s)+b_2W_2(s)+b_3\big).
     \end{align*}
    %  \r{This yields the formula for $\gamma_1$ and we are done.} \\
      In the same way as above this implies the formulas
      \eqref{eq:explicit_formula_orbitlike_1},\eqref{eq:a1a2b1b2_2},\eqref{eq:b3a3_1} involving
      $b_1,b_2,b_3$. It therefore remains to prove \eqref{eq:a_1a_2a_3b_1b_2b_3} to finish the ''only
      if''-part. To this end we introduce the matrices
  \begin{align*}
    \mathcal{W}:=\matII{W_1}{W_2}{W_1'}{W_2'},\qquad
    \mathcal{R}(\vartheta):=\matII{\cos(\vartheta)}{\sin(\vartheta)}{-\sin(\vartheta)}{\cos(\vartheta)}.
  \end{align*}
  Then Proposition~\ref{prop:properties_vartheta_orbitlike} gives the following:
    \begin{align*}
      \mathcal{W}
      &= %\sqrt{2-k^2}
      \matII{0}{\frac{\sqrt{\kappa^2-\mu}}{\kappa}}{\frac{-\sqrt{\mu}\kappa}{2\sqrt{\kappa^2-\mu}}}{
      \frac{\mu\kappa'}{\kappa^2\sqrt{\kappa^2-\mu}}} \mathcal{R}(\vartheta),\qquad
      \mathcal{W}^{-1}
      = %\frac{1}{\sqrt{2-k^2}} 
      \mathcal{R}(-\vartheta)
      \matII{\frac{2\sqrt{\mu}\kappa'}{\kappa^2\sqrt{\kappa^2-\mu}}}{-\frac{2\sqrt{\kappa^2-\mu}}{\sqrt{\mu}\kappa}}
      {\frac{\kappa}{\sqrt{\kappa^2-\mu}}}{0}. \\
  \intertext{Combining this with \eqref{eq:formula_xi} and $(\kappa')^2=\kappa^2-\frac{1}{4}\kappa^4-\mu$ we get} 
     \xi^1
%      &= \frac{1}{\sqrt{2-k^2}}\mathcal{R}(-\vartheta)
%       \vecII{\frac{1}{2\sqrt{\mu(\kappa^2-\mu)}}(2\kappa'\cos(\phi)+
%       \frac{4(\kappa^2-\mu-(\kappa')^2)}{\kappa^2}\sin(\phi))}{
%       \frac{1}{2\mu\sqrt{\kappa^2-\mu}}(2\mu-2\kappa^2-2\kappa\kappa'\sin(\phi)+\kappa^3\cos(\phi))} \\
      &= %\frac{1}{\sqrt{2-k^2}}
      \mathcal{R}(-\vartheta)
      \vecII{\frac{1}{2\sqrt{\mu(\kappa^2-\mu)}}(2\kappa'\cos(\phi)+\kappa^2\sin(\phi))}{ 
      \frac{1}{2\mu\sqrt{\kappa^2-\mu}}(2\mu-2\kappa^2-2\kappa\kappa'\sin(\phi)+\kappa^3\cos(\phi))
      },  \\
      \xi^3
%       &= \frac{1}{\sqrt{2-k^2}}\mathcal{R}(-\vartheta)
%       \vecII{\frac{1}{2\sqrt{\mu(\kappa^2-\mu)}}(2\kappa'\sin(\phi) +
%       \frac{4(\kappa')^2-4\kappa^2+4\mu}{\kappa^2}\cos(\phi))}{
%       \frac{\kappa}{2\mu\sqrt{\kappa^2-\mu}}(\kappa^2\sin(\phi)+2\kappa'\cos(\phi)) } \\
      &= %\frac{1}{\sqrt{2-k^2}}
      \mathcal{R}(-\vartheta)
      \vecII{\frac{1}{2\sqrt{\mu(\kappa^2-\mu)}}(2\kappa'\sin(\phi)-\kappa^2\cos(\phi))}{ 
      \frac{\kappa}{2\mu\sqrt{\kappa^2-\mu}}(\kappa^2\sin(\phi)+2\kappa'\cos(\phi))
      }.  
  \end{align*} 
  Writing $\skp{\cdot}{\cdot}$ for the standard inner product in $\R^2$ we get from 
  $(\kappa')^2=\kappa^2-\frac{\kappa^4}{4}-\mu$ and 
  the fact that the rotation matrices $\mathcal{R}(-\vartheta)$ are Euclidean isometries 
  \begin{align} 
    \skp{\xi^1}{\xi^1}
    &= \frac{(2\kappa'\cos(\phi)+\kappa^2\sin(\phi))^2}{4\mu (\kappa^2-\mu)}
      + \frac{(2\mu-2\kappa^2-2\kappa\kappa'\sin(\phi)+\kappa^3\cos(\phi))^2}{4\mu^2 (\kappa^2-\mu)}
      \notag    \\
    &= \frac{\cos^2(\phi)(\kappa^4+4\mu) +
    \sin^2(\phi)(4\kappa^2-\kappa^4) - 4\kappa^2\kappa'\sin(\phi)\cos(\phi)}{4\mu^2 }  \notag \\
    &\;+\frac{ 4\kappa^2-4\mu - 4\kappa^3 \cos(\phi)+8\kappa\kappa'\sin(\phi)}{4\mu^2} \notag \\
    &=
    \frac{(2\kappa+2\kappa'\sin(\phi)-\kappa^2\cos(\phi))^2}{4\mu^2} \notag \\
    &\stackrel{\eqref{eq:b3a3_2}}{=}  a_3^2\gamma_2^2 .  \label{eq:formulas_scpr_xi_1}\\
    \intertext{Similar computations yield $((\xi^1)^\perp:=(-\xi^1_2,\xi^1_1))$}
    \skp{\xi^1}{\xi^3}
    &= \frac{\kappa^2\kappa'\cos^2(\phi)
      - 2 \kappa^2\kappa'\sin^2(\phi)
      + (\kappa^4-4(\kappa')^2)\sin(\phi)\cos(\phi)
    }{4\mu^2} \notag \\
    &\;+ \frac{-2\kappa^3\sin(\phi)
      - 4\kappa\kappa'\cos(\phi)
    }{4\mu^2} \notag \\
    &= -  \frac{(\kappa^2\sin(\phi)+2\kappa'\cos(\phi))
    (2\kappa+2\kappa'\sin(\phi)-\kappa^2\cos(\phi))}{4\mu^2 } \notag \\
    &\stackrel{\eqref{eq:b3a3_1},\eqref{eq:b3a3_2}}{=}  a_3(b_3-\gamma_1 a_3),  
    \label{eq:formulas_scpr_xi_2}\\
%     \skp{\xi^3}{\xi^3}
%     &= \frac{\kappa^2(\mu\kappa^2+4(\kappa')^2)\cos^2(\phi)
%     + (4\mu(\kappa')^2+\kappa^6)\sin^2(\phi)
%     + 4(\kappa^2-\mu)\kappa^2\kappa'\sin(\phi)\cos(\phi)
%     }{4\mu^2(2-k^2)(\kappa^2-\mu)} \notag \\
%     &= \frac{ (4\kappa^2-\kappa^4)\cos^2(\phi)
%     + (\kappa^4+4\mu)\sin^2(\phi)
%     + 4\kappa^2\kappa'\sin(\phi)\cos(\phi)
%     }{4\mu^2(2-k^2)} \notag \\
%     &= \frac{1}{2-k^2}\Big( \frac{1}{\mu}+ \Big(\frac{\kappa^2\sin(\phi)+2\kappa'\cos(\phi)}{2\mu}\Big)^2
%     \Big).  
    \skp{(\xi^1)^\perp}{\xi^3} 
    &= \frac{(2\mu-2\kappa^2-2\kappa\kappa'\sin(\phi)+\kappa^3\cos(\phi))
    (2\kappa'\sin(\phi)-\kappa^2\cos(\phi))
    }{ 4\mu^{3/2}(\kappa^2-\mu)} \notag \\
    &\;+
    \frac{(2\kappa'\cos(\phi)+\kappa^2\sin(\phi))(\kappa^3\sin(\phi)+2\kappa\kappa'\cos(\phi))
    }{4\mu^{3/2}(\kappa^2-\mu)} \notag \\
    &= \frac{2\kappa+2\kappa'\sin(\phi)-\kappa^2\cos(\phi)}{2\mu^{3/2} (2-k^2)} \notag \\
    &\stackrel{\eqref{eq:b3a3_2}}{=} \frac{a_3\gamma_2}{\sqrt\mu}.
     \label{eq:formulas_scpr_xi_3}
  \end{align}
   This implies 
  \begin{align*}
    a_1^2+a_2^2
    \stackrel{\eqref{eq:a1a2b1b2_1}}{=} \frac{1}{\gamma_2^2}\skp{\xi^1}{\xi^1} 
   \stackrel{\eqref{eq:formulas_scpr_xi_1}}{=}   a_3^2.
  \end{align*}
  Hence, we find the formula for $a_3$ from \eqref{eq:a_1a_2a_3b_1b_2b_3} since  $a_3$ is positive due to
  \begin{align*}
    a_3
    \stackrel{\eqref{eq:b3a3_2}}{=} 
    \frac{2\kappa+2\kappa'\sin(\phi)-\kappa^2\cos(\phi)}{2\mu \gamma_2} 
    \geq \frac{2\kappa-\sqrt{(2\kappa')^2+\kappa^4}}{2\mu \gamma_2} 
    \stackrel{\eqref{eq:ODE_kappa}}{=} \frac{\kappa-\sqrt{\kappa^2-\mu}}{\mu\gamma_2}
    > 0.
  \end{align*}
  Similarly we get 
  \begin{align*}
    b_1a_1+b_2a_2
     &\hspace{-3mm}\stackrel{\eqref{eq:a1a2b1b2_1},\eqref{eq:a1a2b1b2_2}}{=}
      \frac{\gamma_1}{\gamma_2^2}\skp{\xi^1}{\xi^1} + \frac{1}{ \gamma_2}\skp{\xi^1}{\xi^3}  
     \stackrel{\eqref{eq:formulas_scpr_xi_1},\eqref{eq:formulas_scpr_xi_2}}{=}
       \gamma_1a_3^2  +  a_3(b_3-\gamma_1 a_3)  
      =  b_3a_3, \\
    -b_1a_2+b_2a_1
    &\hspace{-3mm}\stackrel{\eqref{eq:a1a2b1b2_1},\eqref{eq:a1a2b1b2_2}}{=}
      \frac{1}{\gamma_2}\skp{(\xi^1)^\perp}{\xi^3}  
    \stackrel{\eqref{eq:formulas_scpr_xi_3}}{=} \frac{a_3}{\sqrt\mu}.
  \end{align*}
  Combining these equations we obtain the formulas for $b_1,b_2$ and \eqref{eq:a_1a_2a_3b_1b_2b_3} is proved. 
%    ORBITLIKE:
%   \begin{align*}
%     a_1^2+a_2^2
%     &= \frac{1}{\gamma_2^2}|\xi^1|^2 = \frac{a_3^2}{2-k^2} \\
%     b_1^2+b_2^2
%     &= |\frac{\gamma_1}{\gamma_2}\xi^1+\xi^3|^2 
%     = \ldots = \frac{b_3^2+\frac{1}{\mu}}{2-k^2} \\
%     a_2b_1-a_1b_2 
%     &= \ldots = - \frac{a_3}{\sqrt{\mu}(2-k^2)} \\
%     a_1b_1+a_2b_2 = \frac{b_3a_3}{2-k^2} \\
%     a_3 &= \sqrt{2-k^2}\sqrt{a_1^2+a_2^2} \\
%     b_1 &= \frac{\sqrt\mu b_3a_1-a_2}{\sqrt\mu a_3} \\
%     b_2 &= \frac{\sqrt\mu b_3a_2+a_1}{\sqrt\mu a_3} 
%   \end{align*} 
 
     \medskip

     Now we prove the sufficiency of our conditions, so let $a_1,\ldots,b_3$ be given as in
     \eqref{eq:a_1a_2a_3b_1b_2b_3} we define $\gamma_1,\gamma_2$ by the formulas
     \eqref{eq:explicit_formula_orbitlike_1},\eqref{eq:explicit_formula_orbitlike_2}. Motivated 
     by the formulas~\eqref{eq:b3a3_1},\eqref{eq:b3a3_2}  derived in the ''only if''-part we define
     $\phi$ via
     \begin{equation} \label{eq:formula_phi}
         \vecII{\sin(\phi)}{\cos(\phi)}
         := \frac{\mu a_3}{2(\kappa^2-\mu)}\matII{\kappa^2}{2\kappa'}{2\kappa'}{-\kappa^2}
         \vecII{\gamma_1-\frac{b_3}{a_3}}{\gamma_2-\frac{\kappa}{\mu a_3}}. 
     \end{equation}
     We have to verify that such a definition makes sense by checking that the Euclidean norm of the vector on
     the right hand side is 1. We introduce the notation   
     $$
       aW:=a_1W_1+a_2W_2,\quad a^\perp W:=-a_2W_1+a_1W_2,  \quad bW:= b_1W_1+b_2W_2
     $$ 
     so that we can use the following facts:
     $$
       \gamma_1 \stackrel{\eqref{eq:explicit_formula_orbitlike_1}}{=} \frac{bW+b_3}{aW+a_3},\quad
       \gamma_2 \stackrel{\eqref{eq:explicit_formula_orbitlike_2}}{=} \frac{1}{\kappa(aW+a_3)},\quad
       b \stackrel{\eqref{eq:a_1a_2a_3b_1b_2b_3}}{=} \frac{b_3}{a_3} a + \frac{1}{\sqrt\mu a_3} a^\perp,\quad
       a_3bW-b_3 aW \stackrel{\eqref{eq:a_1a_2a_3b_1b_2b_3}}{=} \frac{1}{\sqrt\mu} a^\perp W.
     $$
     This implies
     \begin{align*}
       &\frac{\mu^2 a_3^2}{4(\kappa^2-\mu)^2} \Big( 
       \Big( \kappa^2\big(\gamma_1-\frac{b_3}{a_3}\big)+2\kappa'\big(\gamma_2-\frac{\kappa}{\mu
       a_3}\big)\Big)^2 
       + \Big( 
       2\kappa'\big(\gamma_1-\frac{b_3}{a_3}\big)-2\kappa^2\big(\gamma_2-\frac{\kappa}{\mu
       a_3}\big)   \Big)^2   \Big) \\
       &= \frac{\mu^2 a_3^2 (\kappa^4+4(\kappa')^2)}{4(\kappa^2-\mu)^2} 
       \Big( \big(\gamma_1-\frac{b_3}{a_3}\big)^2+ \big(\gamma_2-\frac{\kappa}{\mu
       a_3}\big)^2\Big)  \\
       &= \frac{\mu^2 a_3^2}{\kappa^2-\mu} 
       \Big( \big( \frac{a_3bW-b_3aW}{a_3(aW+a_3)}\big)^2+ \big( \frac{-\kappa^2aW +
       (\mu-\kappa^2)a_3}{\mu\kappa a_3(aW+a_3)} \big)^2\Big)    \\
       &= \frac{\big( \kappa\mu(a_3bW-b_3aW)\big)^2+ \big(-\kappa^2aW +
       (\mu-\kappa^2)a_3  \big)^2}{\kappa^2(\kappa^2-\mu)(aW+a_3)^2} \\
       &= \frac{ \kappa^2\mu(a^\perp W)^2
         + \kappa^4(aW)^2 +2\kappa^2(\kappa^2-\mu)a_3aW+ (\kappa^2-\mu)^2a_3^2
        }{\kappa^2(\kappa^2-\mu)(aW+a_3)^2} \\
       &= \frac{ \kappa^2\mu |a|^2|W|^2 
         + (\kappa^4-\kappa^2\mu)(aW)^2 +2\kappa^2(\kappa^2-\mu)a_3aW+ (\kappa^2-\mu)^2a_3^2
        }{\kappa^2(\kappa^2-\mu)(aW+a_3)^2} \\
       &= \frac{(aW)^2 +2a_3aW+   a_3^2}{(aW+a_3)^2} \\ 
       &= 1.
     \end{align*}
     Here we have used $\kappa^2|a|^2|W|^2=(\kappa^2-\mu)a_3^2$, see  
     \eqref{eq:a_1a_2a_3b_1b_2b_3} and Proposition~\ref{prop:W1W2_orbitlike}~(i).
     It remains to check that $(\gamma_1,\gamma_2,\phi)$ satisfies
     the ODE~\eqref{eq:ODE_solution}. To this end, we first rewrite the formulas for $\sin(\phi),\cos(\phi)$
     from \eqref{eq:formula_phi} in the following way:
     \begin{align*}
       \sin(\phi)
       &= \frac{\mu a_3}{2(\kappa^2-\mu)} \Big(
         \kappa^2\big( \frac{bW+b_3}{aW+a_3}-\frac{b_3}{a_3}\big)
         +2\kappa'\big( \frac{1}{\kappa(aW+a_3)}-\frac{\kappa}{\mu a_3}\big)
       \Big) \\
       &= \frac{\mu a_3}{2(\kappa^2-\mu)} \Big(
         \frac{\kappa^2( bW a_3-aW b_3)}{a_3(aW+a_3)}
         + \frac{2\kappa'( \mu a_3-\kappa^2 (aW+a_3))}{\mu\kappa a_3(aW+a_3)}  \Big) \\
       &= \frac{\mu a_3}{2(\kappa^2-\mu)} \Big(
         \frac{\kappa^2 a^\perp W}{\sqrt\mu a_3(aW+a_3)}
         + \frac{2\kappa'( (\mu-\kappa^2) a_3- \kappa^2 aW)}{\mu\kappa a_3(aW+a_3)}  \Big) \\
       &=  \frac{\sqrt\mu \kappa^3 a^\perp W
         - 2\kappa^2\kappa' aW
         - 2\kappa'(\kappa^2-\mu) a_3}{2\kappa(\kappa^2-\mu)(aW+a_3)},  \\
        \cos(\phi)
        &= \frac{2\sqrt\mu \kappa' a^\perp W + \kappa^3 aW +
        \kappa^2(\kappa^2-\mu)a_3}{2\kappa(\kappa^2-\mu)(aW+a_3)}.
     \end{align*}
     For the calculation of $\gamma_1',\gamma_2'$ we use the formula
     \begin{align*}
       W' = \frac{\mu\kappa'}{\kappa(\kappa^2-\mu)} W + \frac{\sqrt\mu \kappa^2}{2(\kappa^2-\mu)}W^\perp
     \end{align*}
     from Proposition~\ref{prop:W1W2_orbitlike}~(ii),(iii). We obtain
     \begin{align*}
       \frac{\gamma_2'}{\gamma_2}
       &= - \frac{\kappa'}{\kappa} - \frac{aW'}{aW+a_3} \\
       &=  - \frac{\kappa'}{\kappa} - \frac{2\mu\kappa' aW -
       \sqrt\mu\kappa^3 a^\perp W }{2\kappa(\kappa^2-\mu)(aW+a_3)} \\
       &= \frac{ \sqrt\mu \kappa^3 a^\perp W - 2\kappa^2\kappa' aW
       - 2\kappa'(\kappa^2-\mu)a_3}{2\kappa(\kappa^2-\mu)(aW+a_3)} \\
       &= \sin(\phi). \\
       \intertext{Similarly, $|a|^2=a_1^2+a_2^2=a_3^2$ implies}
       \frac{\gamma_1'}{\gamma_2}
       &= \kappa(aW+a_3) \Big( \frac{bW+b_3}{aW+a_3}\Big)' \\
       &= \kappa bW' -\kappa aW' \frac{bW+b_3}{aW+a_3} \\
       &= \kappa W' \big( -\frac{a^\perp W}{\sqrt\mu a_3(aW+a_3)} a + \frac{1}{\sqrt\mu
       a_3}a^\perp \big) \\
       &= \frac{(2\sqrt\mu\kappa' W + \kappa^3 W^\perp)
       (- (a^\perp W) a + (aW+a_3)a^\perp)}{2a_3(aW+a_3)(\kappa^2-\mu)}    \\
       &= \frac{\kappa^3 (a^\perp W)^2+\kappa^3 (aW)^2
        + a_3(\kappa^3 aW + 2\sqrt\mu\kappa' a^\perp W)}{2a_3(aW+a_3)(\kappa^2-\mu)}   \\
        &= \frac{\kappa^3 |a|^2|W|^2
        + a_3(\kappa^3 aW + 2\sqrt\mu\kappa' a^\perp W)}{2a_3(aW+a_3)(\kappa^2-\mu)}     \\
        &= \frac{a_3\kappa(\kappa^2-\mu) + \kappa^3 aW + 2\sqrt\mu\kappa' a^\perp W}{2(aW+a_3)(\kappa^2-\mu)} 
        \\
        &= \cos(\phi).
     \end{align*}
     Here, once again we used $\kappa^3|a|^2|W|^2 = \kappa(\kappa^2-\mu)$.      
     So we see that the equations for $\gamma_1,\gamma_2$ from \eqref{eq:ODE_solution} are satisfied and it
     remains to verify the ODE for $\phi$. Having defined $\phi$ in \eqref{eq:formula_phi} in such a way that
     \eqref{eq:b3a3_1},\eqref{eq:b3a3_2} holds, we may differentiate these equations in order to derive the
     formula for $\phi'$. Differentiating \eqref{eq:b3a3_1} and using $\gamma_1'=\gamma_2\cos(\phi)$ 
     as well as  \eqref{eq:b3a3_2} we find
     \begin{align*}
       2\kappa'(\kappa-\phi')\sin(\phi)
       + (\kappa^2\phi'+2\kappa-\kappa^3)\cos(\phi)
       = 2\kappa\cos(\phi)+2\kappa'\sin(\phi)\cos(\phi)-\kappa^2\cos^2(\phi) 
     \end{align*} 
     Differentiating \eqref{eq:b3a3_2} and using $\gamma_2'=\gamma_2\sin(\phi)$ as well as \eqref{eq:b3a3_2}
     we get
     \begin{align*}
       -2\kappa'(\kappa-\phi')\cos(\phi)
       + (\kappa^2\phi'+2\kappa-\kappa^3)\sin(\phi)
       = -2\kappa'\cos(\phi)+2\kappa \sin(\phi) -\kappa^2\sin(\phi)\cos(\phi) 
     \end{align*} 
     Multiplying the first equation with $\sin(\phi)$, the second with $-\cos(\phi)$ and adding up the
     resulting equations we arrive at $2\kappa'(\kappa-\phi')=2\kappa'\cos(\phi)$ and thus
     $\phi'+\cos(\phi)=\kappa$. So $(\gamma_1,\gamma_2,\phi)$ is a solution to the the ODE
     \eqref{eq:ODE_solution} and hence an orbitlike elastica, which is all we had to prove.   
    \end{proof}
    
%     \r{
%    \begin{bem} \label{bem:coefficients_orbitlike} ~
%      \begin{itemize}
%        \item[(a)] The identities \eqref{eq:b3a3_1},\eqref{eq:b3a3_2} show that the angle function of an
%        orbitike elastica $\gamma$ is a smooth function satisfying  
%        \begin{equation} \label{eq:formula_phi}
%          \vecII{\sin(\phi)}{\cos(\phi)}
%          = \frac{\mu a_3}{2(\kappa^2-\mu)}\matII{\kappa^2}{2\kappa'}{2\kappa'}{-\kappa^2}
%          \vecII{\gamma_1-\frac{b_3}{a_3}}{\gamma_2-\frac{\kappa}{\mu a_3}}. 
%        \end{equation}
%        It is therefore uniquely determined up to additive integer multiples of $2\pi$.
%        Moreover, one finds that $a_3$ is always positive.
%        \item[(b)]  The six numbers $a_1,a_2,a_3,b_1,b_2,b_3$ are determined by the initial conditions
%        of the curve. Later, in Proposition~\ref{prop:dependencies}, we will prove three identities showing
%        that the ''choice'' of $a_1,\ldots,b_3$ represents exactly three degrees of freedom. This is not
%        surprising given the fact that for a given curvature function $\kappa$ the solutions
%        of~\eqref{eq:ODE_solution} are uniquely determined by fixing three initial conditions.
%      \end{itemize}
%    \end{bem}}
   
   Theorem~\ref{thm:explicit_formula_orbitlike} shows that an orbitlike elastica with curvature
       is determined by fixing the three coefficients $a_1,a_2,b_3$. This is not surprising given the fact
       that the solutions of~\eqref{eq:ODE_solution} are uniquely determined by fixing   three initial
       conditions for $\gamma_1,\gamma_2,\phi$. 
     
  \subsection{Oscillation between circles} \label{subsec:oscillation}
    
    In \cite{LanSing_Curves_in_the}, Theorem~3 the authors state that closed orbitlike elasticae oscillate
    between circles. In accordance with the general scope of this paper we intend to give quantitative
    information related to this phenomenon, i.e. we determine the parameters of the circles. 
%    \r{Moreover, we see
%    that almost all orbitlike elasticae (i.e. also nonclosed ones) have this property. 
%     The main observation is that an inequality of the form $Z(s;P)\geq -1+\rho>0$ is equivalent to saying
%     that $\gamma$ lies outside the Euclidean ball with center $(P_1,\rho P_2)$ and radius $\sqrt{\rho^2-1}|P_2|$.
%     An analogous statement holds if the opposite inequality holds.} \\
    In order to do this we have to explain how the function $Z$ can be used how to prove such a
    property. From the definition of $Z$ in \eqref{eq:def_Z} we deduce the following: The inequality
    $Z(s;P)\geq -1+\rho>0$ holds if and only if $\gamma$ lies outside the Euclidean ball with center $(P_1,\rho P_2)$ and
    radius $\sqrt{\rho^2-1}|P_2|$. The same way, we have $Z(s;P)\leq -1+\rho$ if and only if $\gamma$ lies
    inside this ball. This property will be crucial for proving the statement that an orbitlike elastica is
    confined between two balls in Proposition~\ref{prop:enclosures}. Notice moreover that the above
    reasoning shows  that hyperbolic circles with center $P$ and radius $\Arcosh(\rho)$ are
    nothing but Euclidean balls with center $(P_1,\rho P_2)$ and radius $\sqrt{\rho^2-1}|P_2|$ since both are
    characterized by the identity $Z(s;P)\equiv -1+\rho$, see~\eqref{eq:metric}. First we single out the point
    $P$ for which we have $A=B=0$ in formula \eqref{eq:formula_Z}. 
       
   \begin{prop} \label{prop:choice_of_P}
      For $P_1=\frac{b_3}{a_3},P_2=\frac{1}{\sqrt{\mu}a_3}$ we have for all $s\in\R$  
      $$
        Z(s;P) = -1 +  \frac{\kappa(s)}{\sqrt\mu}.
      $$ 
   \end{prop}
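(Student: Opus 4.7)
The plan is to verify the identity by a direct substitution into the defining expression
$$Z(s;P) = \frac{(\gamma_1(s)-P_1)^2+(\gamma_2(s)-P_2)^2}{2P_2\gamma_2(s)},$$
using the explicit representations of $\gamma_1,\gamma_2$ from Theorem~\ref{thm:explicit_formula_orbitlike}. Writing $aW:=a_1W_1+a_2W_2$ and $a^\perp W:=-a_2W_1+a_1W_2$ as in the proof of that theorem, the constraint $b_j=(b_3/a_3)a_j+(\sqrt{\mu}a_3)^{-1}a_j^\perp$ from \eqref{eq:a_1a_2a_3b_1b_2b_3} yields the identity $a_3bW - b_3aW = \mu^{-1/2}\,a^\perp W$, so
$$\gamma_1 - P_1 = \frac{bW+b_3}{aW+a_3} - \frac{b_3}{a_3} = \frac{a^\perp W}{\sqrt{\mu}\,a_3(aW+a_3)},$$
and similarly
$$\gamma_2 - P_2 = \frac{1}{\kappa(aW+a_3)} - \frac{1}{\sqrt{\mu}\,a_3} = \frac{\sqrt{\mu}\,a_3 - \kappa(aW+a_3)}{\sqrt{\mu}\,\kappa\, a_3(aW+a_3)}.$$

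Next I square and add, multiplying through by $\mu\kappa^2a_3^2(aW+a_3)^2$. The numerator becomes
$$\kappa^2(a^\perp W)^2 + \bigl(\sqrt{\mu}\,a_3 - \kappa(aW+a_3)\bigr)^2.$$
The crucial simplification is Proposition~\ref{prop:W1W2_orbitlike}(i), which (combined with $a_1^2+a_2^2=a_3^2$) gives $(aW)^2+(a^\perp W)^2 = |a|^2|W|^2 = a_3^2(\kappa^2-\mu)/\kappa^2$, so that $\kappa^2(a^\perp W)^2 + \kappa^2(aW)^2 = a_3^2(\kappa^2-\mu)$. Expanding the square and regrouping,
$$\kappa^2(a^\perp W)^2 + \kappa^2(aW+a_3)^2 + \mu a_3^2 - 2\sqrt{\mu}\,\kappa a_3(aW+a_3) = 2\kappa a_3(\kappa-\sqrt{\mu})(aW+a_3),$$
where the right-hand side is obtained after cancellation of the $a_3^2(2\kappa^2-\mu)$ and $\mu a_3^2$ contributions against $2\sqrt{\mu}\,\kappa a_3^2$, and combining the $aW$-terms.

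Dividing by the denominator and by $2P_2\gamma_2 = 2\bigl(\sqrt{\mu}\,\kappa a_3(aW+a_3)\bigr)^{-1}$ then gives
$$Z(s;P) = \frac{2\kappa a_3(\kappa-\sqrt{\mu})(aW+a_3)}{\mu\kappa^2 a_3^2(aW+a_3)^2}\cdot\frac{\sqrt{\mu}\,\kappa a_3(aW+a_3)}{2} = \frac{\kappa-\sqrt{\mu}}{\sqrt{\mu}} = -1 + \frac{\kappa(s)}{\sqrt{\mu}},$$
as asserted. The only step requiring attention is the algebraic collapse in the numerator, which is driven entirely by the orthogonality-type identity $|a|^2|W|^2=a_3^2(\kappa^2-\mu)/\kappa^2$; no further properties of $W_1,W_2$ are needed. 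Consequently, the main (and only real) obstacle is bookkeeping, and no appeal to the Wronskian identities \textnormal{(iv)} or the translation formulas \textnormal{(v),(vi)} of Proposition~\ref{prop:W1W2_orbitlike} is required.
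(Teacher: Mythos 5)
Your proof is correct and follows essentially the same route as the paper's: both substitute the explicit formulas from Theorem~\ref{thm:explicit_formula_orbitlike} into the definition of $Z$, reduce $\gamma_1-P_1$ to $a^\perp W/(\sqrt{\mu}\,a_3(aW+a_3))$ via \eqref{eq:a_1a_2a_3b_1b_2b_3}, and collapse the numerator using the single identity $\kappa^2|a|^2|W|^2=(\kappa^2-\mu)a_3^2$ coming from Proposition~\ref{prop:W1W2_orbitlike}(i) and $a_1^2+a_2^2=a_3^2$. Your closing remark that no Wronskian or translation identities are needed is accurate and matches the paper's computation.
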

   \begin{proof} 
     We will use the notation introduced in the proof of Theorem~\ref{thm:explicit_formula_orbitlike}. For
     the above choice of $P_1,P_2$ we deduce from Theorem~\ref{thm:explicit_formula_orbitlike} and from
     $\kappa^2|a|^2||W|^2=(\kappa^2-\mu)a_3^2$
     \begin{align*}
       \big(\gamma_1-P_1\big)^2+\big(\gamma_1-P_2\big)^2
       &= \Big(\frac{a_3bW-b_3aW}{a_3(aW+a_3)}\Big)^2 + 
        \Big( \frac{(\sqrt\mu-\kappa)a_3-\kappa aW}{\sqrt\mu \kappa a_3(aW+a_3)}\Big)^2 \\
       &= \frac{\kappa^2 (a^\perp W)^2
          +(\kappa-\sqrt\mu)^2a_3^2
          +2\kappa(\kappa-\sqrt\mu)a_3aW
          + \kappa^2(aW)^2
       }{\mu\kappa^2 a_3^2(aW+a_3)^2}  \\
       &= \frac{\kappa^2 |a|^2|W|^2 +2\kappa(\kappa-\sqrt\mu)a_3aW+(\kappa-\sqrt\mu)^2a_3^2
       }{\mu\kappa^2 a_3^2(aW+a_3)^2}  \\
       &= \frac{2\kappa(\kappa-\sqrt\mu) a_3^2 +2\kappa a_3(\kappa-\sqrt\mu)a_3aW}{\mu\kappa^2
       a_3^2(aW+a_3)^2}
       \\
       &= \frac{2}{\sqrt\mu a_3 \kappa(aW+a_3)} \big(-1+\frac{\kappa}{\sqrt\mu}\big)  \\
       &= 2P_2\gamma_2\big(-1+\frac{\kappa}{\sqrt\mu}\big),
     \end{align*}
     which is all we had to show.
  \end{proof}

  Following the reasoning from the beginning of this section this leads to the following result:
   
  \begin{prop}\label{prop:enclosures}
    We have $\gamma(\R)\subset \overline{B_{R_1}(Q_1)}\sm B_{R_2}(Q_2)$ where
    \begin{align*}
      Q_1 &= \Big( \frac{b_3}{a_3},\frac{\sqrt{2+2\sqrt{1-\mu}}}{\mu a_3}\Big),\qquad
      R_1 = \frac{\sqrt{2-\mu+2\sqrt{1-\mu}}}{\mu a_3}, \\
      Q_2 &= \Big( \frac{b_3}{a_3},\frac{\sqrt{2-2\sqrt{1-\mu}}}{\mu a_3}\Big),\qquad
      R_2 = \frac{\sqrt{2-\mu-2\sqrt{1-\mu}}}{\mu a_3}.
    \end{align*}
    The curve $\gamma$ touches $\partial B_{R_1}(Q_1)$ exactly when $\kappa$ attains its maximum
    and it touches $\partial B_{R_2}(Q_2)$ precisely when $\kappa$ attains its minimum. 
  \end{prop}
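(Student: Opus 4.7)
The plan is to read off the enclosure from Proposition~\ref{prop:choice_of_P} applied to the distinguished base point $P=(b_3/a_3,\,1/(\sqrt\mu a_3))$, using the geometric interpretation of the level sets of $Z(\cdot;P)$ that was explained in the opening paragraphs of Section~\ref{subsec:oscillation}: the level set $\{Z(s;P)=-1+\rho\}$ is precisely the Euclidean circle with center $(P_1,\rho P_2)$ and radius $\sqrt{\rho^2-1}\,|P_2|$, so $Z(s;P)\le -1+\rho$ (resp. $\ge$) characterizes being inside (resp. outside) that circle.

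By Proposition~\ref{prop:choice_of_P}, with this choice of $P$ we have $Z(s;P)+1=\kappa(s)/\sqrt\mu$. Since $\kappa(s)=\frac{2}{\sqrt{2-k^2}}\dn(\cdot,k)$ oscillates between
\[
  \kappa_{\min}=\frac{2\sqrt{1-k^2}}{\sqrt{2-k^2}} \quad\text{and}\quad \kappa_{\max}=\frac{2}{\sqrt{2-k^2}},
\]
we obtain the sharp two-sided bound
\[
  -1+\rho_{\min}\le Z(s;P)\le -1+\rho_{\max},\qquad \rho_{\min}:=\kappa_{\min}/\sqrt\mu,\;\rho_{\max}:=\kappa_{\max}/\sqrt\mu,
\]
with equality exactly at the extremal points of $\kappa$. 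Translated via the geometric interpretation above, this says that $\gamma(\mathbb R)$ is contained in the closed Euclidean ball $\overline{B_{R_1}(Q_1)}$ with $Q_1=(P_1,\rho_{\max}P_2)$, $R_1=\sqrt{\rho_{\max}^2-1}\,P_2$, and lies outside the open ball $B_{R_2}(Q_2)$ with $Q_2=(P_1,\rho_{\min}P_2)$, $R_2=\sqrt{\rho_{\min}^2-1}\,P_2$. The touching statement is immediate from the equality cases.

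It remains to verify that these $(Q_j,R_j)$ agree with the formulas in the statement. This is a short computation using $\mu=4(1-k^2)/(2-k^2)^2$: one checks first that $\sqrt{1-\mu}=k^2/(2-k^2)$, hence
\[
  1\pm\sqrt{1-\mu}=\frac{2}{2-k^2}\;\text{or}\;\frac{2(1-k^2)}{2-k^2},
\]
so that $\sqrt{2\pm 2\sqrt{1-\mu}}$ reproduces $\kappa_{\max}$ and $\kappa_{\min}$ up to the factor appearing in $P_2$. A similar identity $2-\mu\pm 2\sqrt{1-\mu}=4/(2-k^2)^2$ or $4(1-k^2)^2/(2-k^2)^2$ matches $(\rho_{\max}^2-1)$ and $(\rho_{\min}^2-1)$ against $R_1$ and $R_2$ after multiplying by $P_2^2=1/(\mu a_3^2)$.

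There is no real obstacle here; the only care required is keeping the bookkeeping between the quantities $\rho_{\min},\rho_{\max}$ and the parameters $\mu,k$ straight. Since the geometric content is wholly encoded in Proposition~\ref{prop:choice_of_P} and the extremal values of $\dn(\cdot,k)$, the argument is essentially a one-line application followed by an elementary algebraic check.
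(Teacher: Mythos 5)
Your proposal is correct and follows exactly the paper's own argument: apply Proposition~\ref{prop:choice_of_P} to get $Z(s;P)=-1+\kappa(s)/\sqrt\mu$, bound $\kappa$ between its extremal values $2\sqrt{1-k^2}/\sqrt{2-k^2}$ and $2/\sqrt{2-k^2}$, and translate the resulting two-sided bound on $Z$ into the enclosure between the two Euclidean balls via the level-set interpretation, with the algebraic identity $\sqrt{1-\mu}=k^2/(2-k^2)$ matching the stated formulas. No gaps.
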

  \begin{proof}
    
    We recall $\mu=\frac{4(1-k^2)}{(2-k^2)^2}$ and hence $k^2=\frac{2}{\mu}+\frac{2\sqrt{1-\mu}}{\mu}$.
    Then, choosing $P=(P_1,P_2)$ as in Proposition \ref{prop:choice_of_P} we obtain the inequalities
    \begin{align*}
      Z(s;P)
      &\leq -1 + \frac{\max \kappa}{\sqrt{\mu}} 
      \stackrel{\eqref{eq:def_kappa_orbitlike}}{=} 
      -1 + \frac{2}{\sqrt{2-k^2}\sqrt\mu}
      = -1 + \frac{\sqrt{2+2\sqrt{1-\mu}}}{\sqrt{\mu}}
      =:-1+\rho_1,   \\
      Z(s;P)
      &\geq -1 + \frac{\min \kappa}{\sqrt{\mu}} 
      \stackrel{\eqref{eq:def_kappa_orbitlike}}{=} 
      -1 + \frac{2\sqrt{1-k^2}}{\sqrt{2-k^2}\sqrt\mu}
      = -1 + \frac{\sqrt{2-2\sqrt{1-\mu}}}{\sqrt{\mu}}
      =: -1+\rho_2.
    \end{align*}
    Hence we deduce $\gamma(\R)\subset \overline{B_{R_1}(Q_1)}\sm B_{R_2}(Q_2)$ for 
    $$
      Q_1:= (P_1,\rho_1 P_2),\quad 
      Q_2:= (P_1,\rho_2 P_2),\quad 
      R_1:= \sqrt{\rho_1^2-1}|P_2|,\quad
      R_2:= \sqrt{\rho_2^2-1}|P_2|,
    $$
    which proves the result.
  \end{proof} 
  
  We point out that Eichmann determined the parameters of these circles with a different method without
  knowing explicit formulas for the elasticae, see section~5.4 in~\cite{Eich_nonuniqueness}.

 \subsection{Closed elasticae and winding numbers} \label{subsec:closed_elasticae}
 
 In this section we \r{briefly} discuss which orbitlike elasticae close up and how they can be
 characterized by their winding numbers. To this end we will use the ''rotation'' 
 $\Delta\theta_k$ from \eqref{eq:def_rotation}.
 
 \begin{prop}[see Proposition 5.3 in \cite{LanSing_the_total}] \label{prop:rotation}
   The function $k\mapsto \Delta\theta_k$ decreases on $(0,1)$ from its upper limit $\sqrt{2}\pi$ to its lower
   limit $\pi$.
 \end{prop}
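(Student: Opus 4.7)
The plan is to establish strict monotonicity by deriving an explicit formula for $\frac{d}{dk}\Delta\theta_k$ and to treat the endpoint limits separately. The monotonicity will eventually reduce to a sign question for a standard combination of the complete elliptic integrals $E$ and $K$.

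First I would differentiate $\Lambda_0(\arcsin(k'),k)$ by the chain rule, using the partial derivative formulas \eqref{eq:derivative_Heuman} evaluated at $l=k'$ (so that $l'=k$ and $1-(1-k^2)l^2=k^2(2-k^2)$) together with $dk'/dk=-k/k'$. After collecting terms I expect the identity
\[
-\pi\frac{d}{dk}\Lambda_0(\arcsin(k'),k)=\frac{2k\bigl(E(k)+k'^2K(k)\bigr)}{k'\sqrt{2-k^2}}.
\]
Next I would differentiate $2k'\sqrt{2-k^2}\,K(k)$ by the product rule, substituting $K'(k)=(E(k)-k'^2K(k))/(kk'^2)$ from \eqref{eq:derivative_EK}. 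Bringing everything over the common denominator $kk'\sqrt{2-k^2}$, the various terms proportional to $k'^2K$ should cancel cleanly and produce the compact formula
\[
\frac{d}{dk}\Delta\theta_k=\frac{2\bigl(2E(k)-(2-k^2)K(k)\bigr)}{k\sqrt{1-k^2}\sqrt{2-k^2}}.
\]

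The main step is then to prove that $f(k):=2E(k)-(2-k^2)K(k)$ is negative on $(0,1)$. Since $E(0)=K(0)=\pi/2$ one has $f(0)=0$. A short computation using \eqref{eq:derivative_EK} gives the simplification $f'(k)=-k\bigl(E(k)-k'^2K(k)\bigr)/k'^2$, and the standard integral representation
\[
E(k)-k'^2K(k)=\int_0^{\pi/2}\frac{k^2\cos^2(\theta)}{\sqrt{1-k^2\sin^2(\theta)}}\,d\theta
\]
shows $E(k)-k'^2K(k)>0$ for $k\in(0,1)$. Hence $f'<0$ on $(0,1)$ and $f(0)=0$ together imply $f(k)<0$ on $(0,1)$, so $\Delta\theta_k$ is strictly decreasing. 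I expect the algebraic simplification producing the clean formula above to be the most delicate part of the argument.

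It remains to identify the endpoint limits. As $k\to 1^-$ we have $k'\to 0$, hence $\arcsin(k')\to 0$ and $\Lambda_0(\arcsin(k'),k)\to 0$; the logarithmic growth $K(k)\sim\log(4/k')$ is beaten by the factor $k'$, so $\sqrt{1-k^2}\sqrt{2-k^2}K(k)\to 0$ and consequently $\Delta\theta_k\to\pi$. As $k\to 0^+$ the elementary term $2\sqrt{1-k^2}\sqrt{2-k^2}K(k)$ tends to $\sqrt 2\,\pi$, while in the defining formula of $\Lambda_0$ the only divergent quantity $F(k',k')\sim\log(4/k)$ enters through the factor $E(k)-K(k)=O(k^2)$ and therefore contributes $0$ in the limit; the remaining piece $(2/\pi)K(k)E(k',k')$ tends to $(2/\pi)\cdot(\pi/2)\cdot 1=1$, so $\pi\Lambda_0(\arcsin(k'),k)\to\pi$. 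Combining these pieces yields $\Delta\theta_k\to\pi-\pi+\sqrt 2\,\pi=\sqrt 2\,\pi$, which together with the monotonicity completes the proposition.
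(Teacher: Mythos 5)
Your proposal is correct and follows essentially the same route as the paper: differentiate the defining formula using \eqref{eq:derivative_Heuman} and \eqref{eq:derivative_EK}, arrive at the identical expression $\frac{d}{dk}\Delta\theta_k=\frac{2(2E(k)+(k^2-2)K(k))}{k\sqrt{1-k^2}\sqrt{2-k^2}}$, show the numerator vanishes at $k=0$ and has negative derivative, and evaluate the endpoint limits. The only cosmetic difference is that you certify $E(k)-(1-k^2)K(k)>0$ by an integral representation where the paper iterates the ``vanishes at $0$ with negative derivative'' argument once more; both are fine.
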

 \begin{proof}
   
   We recall  
   $$
     \Delta\theta_k = \pi - \pi\Lambda_0(\arcsin(k'),k)+2\sqrt{1-k^2}\sqrt{2-k^2}K(k).
   $$
   The derivative of this function with respect to $k$ can be computed as follows. From the formulas
   \eqref{eq:derivative_EK},\eqref{eq:derivative_Heuman} we get
   \begin{align*}
     \frac{d}{d k} \big(\Lambda_0(\arcsin(k'),k)\big)
     &= -\frac{1}{\sqrt{1-k^2}} \frac{2(E(k)-(k')^4K(k))}{\pi\sqrt{1-(k')^4}} 
       + \frac{2(E(k)-K(k))k'}{\pi\sqrt{1-(k')^4}}  \\
     &= \frac{2}{\pi k'\sqrt{2k^2-k^4}}\big(-E(k)+(k')^4K(k)+(k')^2E(k)-(k')^2K(k)\big) \\
     &= -\frac{2k}{\pi k'\sqrt{2-k^2}}(E(k)+(k')^2K(k)), \\
    \frac{d}{d k} \big(\sqrt{1-k^2}\sqrt{2-k^2}K(k) \big)
     &=  -\frac{k\sqrt{2-k^2}}{\sqrt{1-k^2}}K(k)  -\frac{k\sqrt{1-k^2}}{\sqrt{2-k^2}}K(k) \\
     &\; +
     \sqrt{1-k^2}\sqrt{2-k^2} \frac{E(k)-(k')^2K(k)}{k(k')^2} \\
     &= \frac{\sqrt{2-k^2}}{k\sqrt{1-k^2}}E(k) + \frac{k^4-2}{k\sqrt{1-k^2}\sqrt{2-k^2}}K(k).
   \end{align*}
   From these identities we get
   $$
    \frac{d}{d k} \big( \Delta\theta_k\big) 
    = \frac{2(2E(k)+(k^2-2)K(k))}{k\sqrt{1-k^2}\sqrt{2-k^2}},
   $$
   and we have to show that the numerator is negative. In fact, the function $g(k):=2E(k)+(k^2-2)K(k)$
   satisfies $g(0)=0$ as well as $g'(k)=k (k')^{-2} (-E(k)+(k')^2K(k))<0$. Indeed, $h(k):=-E(k)+(k')^2K(k)$
   satisfies $h(0)=0$ as well as $h'(k)=-kK(k)<0$, see \eqref{eq:derivative_EK} for the formulas for
   $E'(k),K'(k)$. This proves $h<0$, hence $g'<0,g<0$ on $(0,1)$ and thus the
   monotonicity of $k\mapsto \Delta\theta_k$. Finally, evaluating $\Delta\theta_k$ for $k=0,k=1$ 
   and using 
   $K(0)=\frac{\pi}{2},\lim_{k\to 1} K(k)\sqrt{1-k^2}=0$ from 112.01 as well as
   $\Lambda_0(0,1)=0,\Lambda_0(\frac{\pi}{2},0)=1$ from 151.01 in \cite{ByrFri_handbook}
   we find the values $\sqrt 2 \pi$ respectively $\pi$ so that the claim is proved.
 \end{proof}
 
 In the following result we show that an orbitlike elastica $\gamma$ closes up if and
 only if $\Delta\theta_k$ is a rational multiple of $2\pi$. Recall that $\gamma$ and $k$ are related to
 each other by the curvature function~\eqref{eq:def_kappa_orbitlike}. Therefore we define $k_{m,n}\in (0,1)$
 to be the unique solution of the equation
 \begin{equation}\label{eq:closednedd_condition}
     \Delta\theta_k = \frac{2\pi m}{n}
 \end{equation} 
 where $m,n\in\N$ are natural numbers such that $1<\frac{2m}{n}<\sqrt 2$, see Proposition~\ref{prop:rotation}.
 The associated elastica with initial conditions $\gamma_1(0)=0,\gamma_2(0)=1,\phi(0)=0$ and $s_*=0$, say,
 will be denoted by $\gamma_{m,n}$. Notice that this choice is somewhat arbitrary since in fact none of the
 following results changes when these data are changed. We now prove that for $k=k_{m,n}$   the
 elastica is closed with hyperbolic length  
 $$
   \mathcal{L}_{m,n}:=2n\sqrt{2-k_{m,n}^2}K(k_{m,n}).
 $$ 
%  In order to calculate the winding number of $\gamma_{m,n}$ we \\
%  \r{introduce a smooth function $\vartheta$ as
%  follows:
%  \begin{align}\label{eq:def_theta_old}
%    W_2(s)-iW_1(s) = \frac{\sqrt{2-k^2}\sqrt{\kappa^2-\mu}}{\kappa} e^{i\vartheta(s)},
%  \end{align}
%  see \eqref{eq:def_Wj} and Proposition~\ref{prop:w1w2_orbitlike}~(ii).}\\
%  will use the function $\vartheta$ introduced in \eqref{eq:def_theta}
%  via
%  $$
%    W_2(s)-iW_1(s) = \frac{\sqrt{2-k^2}\sqrt{\kappa^2-\mu}}{\kappa} e^{i\vartheta(s)}.
%  $$
%  }
    
   \begin{prop}\label{prop:closed_and_winding_number}
     The elastica $\gamma$ is closed if and only if $k=k_{m,n}$
     for (w.l.o.g.) coprime integers $m,n\in\N$. In this case $\gamma=\gamma_{m,n}$ closes up after $n$
     periods of its curvature function and its winding number around
     $(P_1,P_2)=(\frac{b_3}{a_3},\frac{1}{\sqrt{\mu}a_3})$ is $m$.
   \end{prop}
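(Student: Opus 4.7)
Set $T := 2\sqrt{2-k^2}K(k)$, the minimal period of $\kappa$. I first handle closedness. Since $\gamma$ determines $\kappa$, any closed period must be an integer multiple of $T$, say $nT$. By Proposition~\ref{prop:W1W2_orbitlike}~(v) we have $(W_1 + iW_2)(s + nT) = e^{in\Delta\theta_k}(W_1 + iW_2)(s)$, so the condition $\gamma_2(s+nT) = \gamma_2(s)$ from~\eqref{eq:explicit_formula_orbitlike_2} reduces (using $\kappa(s+nT) = \kappa(s)$) to
\[
  \mathrm{Re}\bigl[(a_1 - ia_2)(W_1 + iW_2)(s)\,(e^{in\Delta\theta_k} - 1)\bigr] \equiv 0.
\]
Writing $W_1 + iW_2 = \tfrac{\sqrt{\kappa^2-\mu}}{\kappa}\,e^{i(\vartheta+\pi/2)}$ (cf.~\eqref{eq:def_theta}) and recalling from Proposition~\ref{prop:properties_vartheta_orbitlike} that $\vartheta$ is strictly monotone, the trace of $(a_1 - ia_2)(W_1+iW_2)(s)$ is genuinely two-dimensional in $\C$, so the above identity forces $e^{in\Delta\theta_k} = 1$, i.e.\ $n\Delta\theta_k = 2\pi m$ for some $m \in \N$. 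By Proposition~\ref{prop:rotation} this uniquely identifies $k = k_{m,n}$ with $1 < 2m/n < \sqrt{2}$, and minimality of $n$ is equivalent to $\gcd(m, n) = 1$. Conversely, if $k = k_{m,n}$ with coprime $m, n$, then $e^{in\Delta\theta_k} = 1$ yields $W_j(s + nT) = W_j(s)$, and direct substitution into~\eqref{eq:explicit_formula_orbitlike_1}-\eqref{eq:explicit_formula_orbitlike_2} shows $\gamma(s + nT) = \gamma(s)$, giving the minimal closed period $\mathcal{L}_{m,n} = nT$.

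For the winding number around $P = (b_3/a_3, 1/(\sqrt{\mu}\,a_3))$, identify $\R^2 \cong \C$ via $(x, y) \mapsto x + iy$ and set $Z_*(s) := (a_1 - ia_2)(W_1 + iW_2)(s) = a_3\tfrac{\sqrt{\kappa^2-\mu}}{\kappa}e^{i\psi(s)}$, where $\psi$ differs from $\vartheta$ by a constant (so $\psi$ increases by exactly $n\Delta\theta_k = 2\pi m$ over $[0, \mathcal{L}_{m,n}]$). A direct rearrangement of~\eqref{eq:explicit_formula_orbitlike_1}-\eqref{eq:explicit_formula_orbitlike_2}, using the algebraic identity $a_3\, bW - b_3\, aW = a^\perp W /\sqrt{\mu}$ that follows from~\eqref{eq:a_1a_2a_3b_1b_2b_3}, yields after simplification
\[
  \gamma(s) - P \;=\; \frac{-i}{\sqrt{\mu}\,\kappa(s)\bigl(aW(s) + a_3\bigr)} \;\bigl[\sqrt{\kappa(s)^2 - \mu}\,e^{i\psi(s)} + (\kappa(s) - \sqrt{\mu})\bigr].
\]
Since $\kappa_{\min} = \sqrt{\mu}\sqrt{2-k^2} > \sqrt{\mu}$ implies $\kappa > \sqrt{\mu}$, and $aW + a_3 > 0$ from $\gamma_2 > 0$, the prefactor is $-i$ times a positive real function; its argument is thus constantly $-\pi/2$ and contributes no winding. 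Consequently the winding of $\gamma$ around $P$ over $[0, \mathcal{L}_{m,n}]$ coincides with the winding of the bracket $F(s) := \sqrt{\kappa^2 - \mu}\,e^{i\psi(s)} + (\kappa(s) - \sqrt{\mu})$ around $0 \in \C$.

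To compute that winding I use the homotopy $F_\lambda(s) := \sqrt{\kappa^2 - \mu}\,e^{i\psi(s)} + \lambda(\kappa(s) - \sqrt{\mu})$, $\lambda \in [0, 1]$. A short computation gives
\[
  |F_\lambda|^2 \;=\; (\kappa - \sqrt{\mu})\bigl[(\kappa - \sqrt{\mu})\lambda^2 + 2\sqrt{\kappa^2 - \mu}\,\cos\psi\cdot\lambda + (\kappa + \sqrt{\mu})\bigr],
\]
and the quadratic in $\lambda$ inside the brackets has discriminant $4(\kappa^2 - \mu)(\cos^2\psi - 1) \leq 0$ with (possible) real roots $\mp\sqrt{(\kappa + \sqrt{\mu})/(\kappa - \sqrt{\mu})}$, both of absolute value $>1$; hence $F_\lambda(s) \neq 0$ on $\R \times [0, 1]$. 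Homotopy invariance of the winding number therefore gives that $F = F_1$ and $F_0 = \sqrt{\kappa^2 - \mu}\,e^{i\psi}$ have the same winding around $0$, and the latter equals $\frac{1}{2\pi}(\psi(\mathcal{L}_{m,n}) - \psi(0)) = n\Delta\theta_k/(2\pi) = m$. The main technical obstacle is the algebraic rearrangement producing the clean single-phase form of $\gamma - P$; once that is in hand, the closedness part is immediate from the periodicity identity for $W_1 + iW_2$ and the winding part reduces to a routine homotopy argument.
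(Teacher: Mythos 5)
Your proposal is correct and follows essentially the same route as the paper: closedness is reduced to the periodicity of $\kappa$ and the monodromy relation $(W_1+iW_2)(s+nT)=e^{in\Delta\theta_k}(W_1+iW_2)(s)$, and the winding number is obtained by the same straight-line homotopy between $\gamma-P$ and the auxiliary curve $\eta=\gamma_1-\tfrac{b_3}{a_3}+i\big(\gamma_2\kappa/\sqrt\mu-\tfrac{1}{\sqrt\mu a_3}\big)$ (your prefactor times $F_\lambda$ is exactly $(1-\lambda)\eta+\lambda(\gamma-P)$), whose winding is read off from $\vartheta$. The only substantive difference is the admissibility check for the homotopy: you verify $|F_\lambda|^2>0$ by an explicit discriminant computation in $\lambda$, whereas the paper checks that the imaginary part is nonzero wherever the real part vanishes --- both work, and yours is arguably the cleaner of the two.
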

   \begin{proof}
     
     We first assume that $\gamma$ is closed, i.e. we have $\gamma(s+T)=\gamma(s)$ for all
     $s\in\R$ and some $T>0$. From Proposition \ref{prop:choice_of_P} we get that $T$ must be a multiple of
     the period of the curvature function, i.e. $T=2n\sqrt{2-k^2}K(k)$ for some $n\in\N$. Additionally,
     Theorem \ref{thm:explicit_formula_orbitlike} implies $W_1(s)=W_1(s+T),W_2(s)=W_2(s+T)$ because of 
     $$
       \det\matII{a_1}{a_2}{b_1}{b_2}
       = a_1 b_2-b_1a_2 
       \stackrel{\eqref{eq:a_1a_2a_3b_1b_2b_3}}{=} \frac{a_1^2+a_2^2}{\sqrt\mu a_3} > 0.
     $$
     Hence, by Proposition~\ref{prop:W1W2_orbitlike}~(v), we have $n\Delta\theta_k\in
     2\pi\Z$. Since $\Delta\theta_k$ is positive we find an $m\in\N$ such that $n\Delta\theta_k=2\pi m$, i.e.
     $k=k_{m,n}$ and $T=\mathcal{L}_{m,n}$ for some $m,n\in\N$ such that $1<\frac{2m}{n}<\sqrt 2$. This proves
     the necessity of these conditions. Vice versa, $k=k_{m,n}$ is also sufficient
     for $\gamma_{m,n}$ to be closed. Indeed, from \eqref{eq:def_kappa_orbitlike} and
     Proposition~\ref{prop:W1W2_orbitlike}~(v) we get that $\kappa,W_1,W_2$ are $\mathcal{L}_{m,n}$-periodic. Hence, $\gamma$
     is $\mathcal{L}_{m,n}$-periodic thanks to the explicit formulas from
     Theorem~\ref{thm:explicit_formula_orbitlike}. 
     
     \medskip

     It remains to calculate the winding number of $\gamma:=\gamma_{m,n}$, set $k:=k_{m,n}$. To this end we
     use the homotopy invariance of the winding number and first calculate the winding number for a more
     convenient curve, namely (in complex notation)
     $$
       \eta(s):= \gamma_1(s)-\frac{b_3}{a_3} + i\cdot
       \Big(\gamma_2(s)\frac{\kappa(s)}{\sqrt{\mu}}-\frac{1}{\sqrt{\mu}a_3}\Big).
     $$
    From Theorem~\ref{thm:explicit_formula_orbitlike} we get, using the notation from the proof of
    Theorem~\ref{thm:explicit_formula_orbitlike},
     \begin{align*}
        \eta
       &= \frac{bW+b_3}{aW+a_3}-\frac{b_3}{a_3} + i \Big( \frac{1}{\sqrt\mu (aW+a_3)} - \frac{1}{\sqrt\mu
       a_3} \Big) \\
       &= \frac{bWa_3 - b_3aW}{a_3(aW+a_3)} - i \frac{aW}{\sqrt\mu a_3(aW+a_3)} \\
       &\stackrel{\eqref{eq:a_1a_2a_3b_1b_2b_3}}{=}
         \frac{a^\perp W}{\sqrt\mu a_3(aW+a_3)} - i \frac{aW}{\sqrt\mu a_3(aW+a_3)} \\
       &\stackrel{\eqref{eq:def_theta}}{=} \frac{a_2\sin(\vartheta)+a_1\cos(\vartheta)}{N}
       + i\cdot \frac{a_1\sin(\vartheta)-a_2\cos(\vartheta)}{N}
     \end{align*}
     where $N:= \frac{\sqrt\mu\kappa}{\sqrt{\kappa^2-\mu}} a_3(aW+a_3)$ is a positive and
     $2n\sqrt{2-k^2}K(k)$-periodic function. Hence, we get from
     Proposition~\ref{prop:properties_vartheta_orbitlike}~(ii)  
     \begin{align*}
       \frac{1}{2\pi i} \int_0^{2n\sqrt{2-k^2}K(k)} \frac{\eta'(s)}{\eta(s)}\,ds 
       &= \frac{1}{2\pi i} \int_0^{2n\sqrt{2-k^2}K(k)} \big( - \frac{N'(s)}{N(s)} + i\vartheta'(s)\big) \,ds\\
       &= \frac{1}{2\pi} \big(\vartheta(2n\sqrt{2-k^2}K(k))-\vartheta(0)\big)  \\
       &=  \frac{n}{2\pi}  \Delta\theta_k, \\
       &=  m.
     \end{align*}
     We now define the homotopy 
     $$
       (t,s)\mapsto \gamma_1(s)-\frac{b_3}{a_3} + i\cdot \Big(
       \gamma_2(s)\big(t\cdot \frac{\kappa(s)}{\sqrt\mu}+1-t\big)-\frac{1}{\sqrt\mu a_3} \Big) 
     $$
     that connects the curve $\eta$ (at $t=1$) and the curve $\gamma-P_1-iP_2$ (at $t=0$). We intend to show
     that this homotopy is admissible, i.e. it is zero-free. To this end we choose $s\in\R$  such that the 
     real part vanishes at $s$ and deduce that the imaginary part is non-zero. For a zero $s\in\R$ of the real
     part we have $a_3 \gamma_1(s)=b_3$, hence  
     \begin{align*}
       a^\perp W(s)
       &\stackrel{\eqref{eq:a_1a_2a_3b_1b_2b_3}}{=} \sqrt\mu a_3 (a_3bW(s) - b_3aW(s)) \\
       &= \sqrt\mu (a_3(bW(s)+b_3)-b_3(aW(s)+a_3)) \\
       &\stackrel{\eqref{eq:explicit_formula_orbitlike_1}}{=} \sqrt\mu  (aW(s)+a_3)(a_3\gamma_1(s)- b_3) \\
       &= 0.
     \end{align*}
     Due to \eqref{eq:a_1a_2a_3b_1b_2b_3} and Proposition~\ref{prop:W1W2_orbitlike}~(i) this implies 
     $aW(s)= \pm |a||W(s)|= \pm a_3  \sqrt{\kappa(s)^2-\mu}/\kappa(s)$ and thus
     $$
       \gamma_2(s) 
       \stackrel{\eqref{eq:explicit_formula_orbitlike_2}}{=} \frac{1}{\kappa(s)(aW(s)+a_3)}
       = \frac{1}{a_3(\pm\sqrt{\kappa(s)^2-\mu}+\kappa(s))}.
     $$
     From this we deduce
     \begin{align*}
       \gamma_2(s)\big(t\cdot \frac{\kappa(s)}{\sqrt\mu}+1-t\big)-\frac{1}{\sqrt\mu a_3}
       &= \frac{t(\kappa(s)-\sqrt\mu)+\sqrt\mu -(\pm\sqrt{\kappa(s)^2-\mu}+\kappa(s))}{\sqrt\mu
       a_3(\pm\sqrt{\kappa(s)^2-\mu}+\kappa(s))}    \\
       &= \frac{\sqrt{\kappa(s)-\sqrt\mu} \cdot \big( (t-1)\sqrt{\kappa(s)-\sqrt\mu}\mp
       \sqrt{\kappa(s)+\sqrt{\mu}} \big)}{\sqrt\mu a_3(\pm\sqrt{\kappa(s)^2-\mu}+\kappa(s))}
       \\
       &\neq 0
     \end{align*}
     because of $|(t-1)\sqrt{\kappa(s)-\sqrt\mu}|\leq \sqrt{\kappa(s)-\sqrt\mu}<\sqrt{\kappa(s)+\sqrt\mu}$.
     Hence, the homotopy is admissible so that the winding number of $\gamma$ around $P$ is $m$.
 \end{proof}

%    
%   \r{The quantity on the right hand side of \eqref{eq:vartheta_prime} can also
%   be found on p.565 in the paper of Bryant and Griffiths \cite{BryGri_Reduction}. The derivation of this
%   formula, however, is not carried out in detail. So the above proof provides an alternative
%   derivation of the formula.} 

  \subsection{Selfintersections}
  
  In this section we compute the number of selfintersections for any given closed orbitlike elastica
  $\gamma_{m,n}$ from the last section. Such a selfintersection occurs if there are $s,t$ with $0\leq s<t<\mathcal{L}_{m,n}$
  and $\gamma_{m,n}(s)=\gamma_{m,n}(t)$. The number of $s$, for which such $t\in (s,\mathcal{L}_{m,n})$ 
  exist, is defined as the number of selfintersections. Our aim is to prove the
  assertion of Theorem~3 in \cite{LanSing_Curves_in_the}, namely that the number of selfintersections of the
  closed orbitlike elasticae $\gamma_{m,n}$ equals $n(m-1)$. Our computations
  even reveal that these intersections are simple in the sense that for any given such $s$ there is precisely
  one $t\in (s,\mathcal{L}_{m,n})$ with $\gamma(s)=\gamma(t)$. We remark that in the nonclosed case
  essentially the same proof shows that the elasticae selfintersect infinitely many times. We recall that
  for convenience we chose $\gamma_{m,n}$ such that the formulas for $\kappa,W_1,W_2,\vartheta$ hold
  with $s_*=0$. 
%   \\
%   \r{So our result, again confirming the observations by Langer and Singer (Theorem~3 in 
%   \cite{LanSing_Curves_in_the}) and strengthening their claims, is the following:} \\

   \begin{prop}
     Let $m,n\in \N$ be coprime such that $1<\frac{2m}{n}<\sqrt 2$. Then the closed orbitlike elastica
     $\gamma_{m,n}$ has precisely $n(m-1)$ points of selfintersection characterized by the unique solutions
     $0\leq s<\mathcal{L}_{m,n}$ of the $n(m-1)$ equations
      \begin{equation}\label{eq:selfintersections}
         \frac{\vartheta(s)}{\pi} =\frac{(l-1)m}{n}-p 
      \end{equation} 
      where $l,p\in\Z$ satisfy the following conditions:
%       \begin{align} \label{eq:selfintersection_lp_conditions}
%         \begin{aligned}
%         l&=1,\ldots,n-1,  &&p=1,\ldots, \big\lfloor\frac{lm}{n}\big\rfloor,\\
%         l&= n, && p=1,\ldots,m-1, \\
%         l&=n+1,\ldots,2n-1, &&p=1,\ldots, \big\lfloor\frac{(2n-l)m}{n}\big\rfloor.
%         \end{aligned}
%       \end{align}
      \begin{align} \label{eq:selfintersection_lp_conditions}
        \begin{aligned}
        l\in\{1,\ldots,2n-1\},\qquad
        p\in \big\{1,\ldots, \Big\lceil \frac{\min\{l,2n-l\}m}{n}\Big\rceil -1 \big\}.
        \end{aligned}
      \end{align}
   \end{prop}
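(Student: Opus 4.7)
The plan is to reduce $\gamma_{m,n}(s)=\gamma_{m,n}(t)$ to two independent conditions on $\kappa$ and $\vartheta$, and then to count the resulting solutions. Setting $P=(b_3/a_3,1/(\sqrt{\mu}a_3))$, Proposition~\ref{prop:choice_of_P} gives $d_{\Hh}(\gamma(s),P)=\Arcosh(\kappa(s)/\sqrt\mu)$, a strictly monotone function of $\kappa(s)$, so $\gamma(s)=\gamma(t)$ forces $\kappa(s)=\kappa(t)$. Moreover, the complex computation in the proof of Proposition~\ref{prop:closed_and_winding_number} can be summarised as
\[
\gamma_1(s)-\tfrac{b_3}{a_3}+i\Big(\gamma_2(s)\tfrac{\kappa(s)}{\sqrt\mu}-\tfrac{1}{\sqrt\mu a_3}\Big)=\frac{(a_1-ia_2)\,e^{i\vartheta(s)}}{N(s)},\qquad N(s)>0,
\]
and since $|a_1-ia_2|=a_3>0$, under the extra hypothesis $\kappa(s)=\kappa(t)$ the equality $\gamma(s)=\gamma(t)$ becomes equivalent to $\vartheta(s)\equiv\vartheta(t)\pmod{2\pi}$ by taking both moduli and arguments. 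Hence the selfintersections are exactly the pairs $(s,t)$ with $0\le s<t<\mathcal{L}_{m,n}$ satisfying jointly $\kappa(s)=\kappa(t)$ and $\vartheta(s)\equiv\vartheta(t)\pmod{2\pi}$.

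Since $\kappa$ is even and $2\sqrt{2-k^2}K(k)$-periodic by \eqref{eq:def_kappa_orbitlike} with $s_*=0$, the first condition forces $t=\pm s+2l\sqrt{2-k^2}K(k)$ for some $l\in\Z$. In the $+$-case, Proposition~\ref{prop:properties_vartheta_orbitlike}~(ii) gives $\vartheta(t)-\vartheta(s)=l\Delta\theta_k=2\pi lm/n$, which lies in $2\pi\Z$ only if $n\mid l$, excluded for $l\in\{1,\dots,n-1\}$ by $\gcd(m,n)=1$. In the $-$-case, combining Proposition~\ref{prop:properties_vartheta_orbitlike}~(iii) (with $s_*=0$) and (ii) yields $\vartheta(t)=(l-1)\Delta\theta_k-\vartheta(s)$, whence the $\vartheta$-congruence collapses to $2\vartheta(s)\equiv(l-1)\Delta\theta_k\pmod{2\pi}$, which after dividing by $2\pi$ is exactly \eqref{eq:selfintersections}.

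The admissible range of $s$ for fixed $l$ is pinned down by $0\le s<-s+2l\sqrt{2-k^2}K(k)<\mathcal{L}_{m,n}$, which gives $s\in(\max(0,2(l-n))\sqrt{2-k^2}K(k),\,l\sqrt{2-k^2}K(k))$; the left endpoint is further excluded when $l=n$ because $s=0$ would produce $t=\mathcal{L}_{m,n}$. Proposition~\ref{prop:properties_vartheta_orbitlike} ensures that $\vartheta$ is strictly increasing (its derivative $\sqrt\mu\kappa^2/(2(\kappa^2-\mu))$ is positive since $\kappa^2>\mu$ for orbitlike curvatures) and that $\vartheta(j\sqrt{2-k^2}K(k))/\pi=(j-1)m/n$, so $\vartheta(s)/\pi$ traverses an interval of length $\min(l,2n-l)m/n$. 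Counting the integers $p$ for which $(l-1)m/n-p$ lies in this interval, and noting that $\min(l,2n-l)m/n\in\Z$ holds for $l\in\{1,\dots,2n-1\}$ only at $l=n$ (again by $\gcd(m,n)=1$), one finds in every case exactly $\lceil\min(l,2n-l)m/n\rceil-1$ admissible values of $p$, which is precisely \eqref{eq:selfintersection_lp_conditions}. Strict monotonicity of $\vartheta$ also yields a one-to-one correspondence $s\leftrightarrow t$, so all selfintersections are simple double points.

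Summing the above count gives
\[
\sum_{l=1}^{2n-1}\Big(\Big\lceil\tfrac{\min(l,2n-l)m}{n}\Big\rceil-1\Big)=2\sum_{l=1}^{n-1}\Big(\Big\lceil\tfrac{lm}{n}\Big\rceil-1\Big)+(m-1),
\]
and the classical reciprocity identity $\sum_{l=1}^{n-1}\lfloor lm/n\rfloor=(m-1)(n-1)/2$, valid whenever $\gcd(m,n)=1$, collapses the right-hand side to $n(m-1)$. The main obstacle is the first reduction: one has to justify carefully that $\gamma(s)=\gamma(t)$ forces $\kappa(s)=\kappa(t)$ (hence enabling the $\vartheta$-step), because the map displayed above is not by itself injective in $\gamma$ without that extra information. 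Once this structural step is secured, the remaining work is elementary arithmetic organised by the monotonicity of $\vartheta$.
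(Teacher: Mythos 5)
Your overall route coincides with the paper's: reduce $\gamma(s)=\gamma(t)$ to the two conditions $\kappa(s)=\kappa(t)$ and $\vartheta(s)\equiv\vartheta(t)\pmod{2\pi}$ (the paper phrases this as $\kappa(s)=\kappa(t)$, $W_1(s)=W_1(t)$, $W_2(s)=W_2(t)$, which is equivalent via \eqref{eq:def_theta}), discard the translation case $t=s+2l\sqrt{2-k^2}K(k)$ by coprimality, obtain \eqref{eq:selfintersections} in the reflection case from Proposition~\ref{prop:properties_vartheta_orbitlike}, and count admissible $p$ for each $l$ using the monotonicity of $\vartheta$ and its values at multiples of $\sqrt{2-k^2}K(k)$. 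Your evaluation of the final sum through $\sum_{l=1}^{n-1}\lfloor lm/n\rfloor=(m-1)(n-1)/2$ is a legitimate variant of the paper's reflection trick and gives the same total.

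There is, however, one genuine gap. You count $n(m-1)$ admissible pairs $(l,p)$, but the proposition asserts $n(m-1)$ \emph{distinct} self-intersection parameters $s$, and the admissible $s$-ranges for different $l$ overlap (e.g.\ $l=1$ and $l=2$ both allow small $s$). Two pairs $(l,p)\neq(\tilde l,\tilde p)$ produce the same $s$ exactly when $\frac{(l-1)m}{n}-p=\frac{(\tilde l-1)m}{n}-\tilde p$, i.e.\ when $n\mid (l-\tilde l)$; your appeal to the strict monotonicity of $\vartheta$ only shows that each prescribed value of $\vartheta(s)$ has at most one preimage, not that the $n(m-1)$ prescribed values are pairwise different. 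The paper closes this with a short arithmetic step: writing $l-\tilde l=qn$ with $q\geq 0$, the case $q\geq 2$ contradicts $l\leq 2n-1$, and the case $q=1$ forces $p=\tilde p+m$, which violates $p\leq\lceil \min\{l,2n-l\}m/n\rceil-1$. Without this, you have only shown that the self-intersections are among the solutions of the $n(m-1)$ equations (hence at most $n(m-1)$ of them), and the assertion that all of them are simple double points is likewise unsupported.
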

   \begin{proof}
     The equality $\gamma(s)=\gamma(t)$ for $0\leq s<t<\mathcal{L}_{m,n}$ is equivalent to
     $Z(s;P)=Z(t;P)$ for all $P\in\R^2,P_2\neq 0$. Proposition~\ref{prop:formula_Z_orbitlike} and
     Proposition~\ref{prop:choice_of_P} show that this is in turn equivalent to
     $\kappa(s)=\kappa(t),W_1(s)=W_1(t),W_2(s)=W_2(t)$. So our aim is to show that these equalities and
     $0\leq s<t<\mathcal{L}_{m,n}$ are equivalent to 
     \eqref{eq:selfintersections},\eqref{eq:selfintersection_lp_conditions}. The equation
     $\kappa(s)=\kappa(t)$ and the explicit form of the curvature function
     from~\eqref{eq:def_kappa_orbitlike} give 
     $$
       \text{(i)}\; s= t+2l\sqrt{2-k_{m,n}^2}K(k_{m,n}) \quad\text{or}\quad
       \text{(ii)}\; s =- t +2l\sqrt{2-k_{m,n}^2}K(k_{m,n})        
     $$
     for some $l\in\Z$. In the case (i) we have  by Proposition
     \ref{prop:W1W2_orbitlike}~(v) 
     $$
       W_1(s)+iW_2(s)
       = (W_1(t)+iW_2(t))e^{il\Delta\theta_{k_{m,n}}}
       = (W_1(s)+iW_2(s))e^{i\frac{2\pi lm}{n}}.
     $$
     Since $m,n$ are coprime and $s<t$ this implies $l\in -n\N$ and hence 
     $$
       s  
       = t  + 2l\sqrt{2-k_{m,n}^2}K(k_{m,n})
       \leq t  - 2n\sqrt{2-k_{m,n}^2}K(k_{m,n})
       = t-\mathcal{L}_{m,n}
       < 0,
     $$
     a contradiction. So this is impossible. In the case (ii) we get from
     Proposition~\ref{prop:W1W2_orbitlike}~(vi)
     \begin{align*} 
       i|W(s)|e^{i\vartheta(s)}
       &\stackrel{\eqref{eq:def_theta}}{=} i(W_2(s)-iW_1(s)) \\   
       &= W_1(s)+iW_2(s) \\
%        &= w_1\big(-K(k)+\frac{-(t+a)+2l\sqrt{2-k^2}K(k)}{\sqrt{2-k^2}}\big) \\
%        &+ i w_2\big(-K(k)+\frac{-(t+a)+2l\sqrt{2-k^2}K(k)}{\sqrt{2-k^2}}\big) \\
%        &= \Big( w_1\big(K(k)+\frac{-(t+a)}{\sqrt{2-k^2}}\big)
%        + i w_2\big(K(k)+\frac{-(t+a)}{\sqrt{2-k^2}}\big)\Big) e^{i(l+1)\Delta\Theta(k)}\\
%        &= \Big( -w_1\big(-K(k)+\frac{t+a}{\sqrt{2-k^2}}\big)
%        + i w_2\big(-K(k)+\frac{t+a}{\sqrt{2-k^2}}\big)\Big) e^{i\frac{2\pi(l+1)m}{n}}\\
       &\stackrel{\ref{prop:W1W2_orbitlike}.(vi)}{=} 
         \big( -W_1(t) + i W_2(t)\big)e^{i(l-1)\Delta\theta_{k_{m,n}}}\\
       &= \big( -W_1(s) + i W_2(s)\big)e^{i\frac{2\pi(l-1)m}{n}}\\
       &= i\overline{\big( W_2(s) -i W_1(s)\big)}e^{i\frac{2\pi(l-1)m}{n}}\\
       &= i |W(s)| e^{i(-\vartheta(s)+\frac{2\pi(l-1)m}{n})}. 
     \end{align*} 
     This shows that intersection points occur precisely at those $s\in [0,\mathcal{L}_{m,n})$ where
     \eqref{eq:selfintersections} holds for some $p\in\Z$.
     Notice that $\vartheta$ is increasing by Proposition~\ref{prop:properties_vartheta_orbitlike} 
     so that a unique solution $s$ in $\R$ exists for any given $p\in\Z$. So it remains to show that the
     condition~\eqref{eq:selfintersection_lp_conditions} is equivalent to $0\leq s<t<\mathcal{L}_{m,n}$ 
     and that these conditions are satisfied for precisely $n(m-1)$ distinct pairs $(l,p)$. To this end
     we use Proposition~\ref{prop:properties_vartheta_orbitlike}~(i). Thanks to this result the condition
     $s\geq 0$ is equivalent to
     \begin{equation}\label{eq:selfintersections1}
       \vartheta(s) \geq \vartheta(0) = -\frac{\Delta\theta_k}{2}=-\frac{\pi m}{n}.
     \end{equation}
     The condition $t<\mathcal{L}_{m,n}$ is equivalent to $s>2(l-n)\sqrt{2-k^2}K(k)$ and thus to 
     \begin{equation}\label{eq:selfintersections2}
       \vartheta(s)
       > \vartheta(2(l-n)\sqrt{2-k_{m,n}^2}K(k_{m,n}))
       = \frac{2l-2n-1}{2}\Delta\theta_k
       = \frac{\pi(2l-2n-1)m}{n}.
     \end{equation}   
     The last condition $s<t$ holds if and only if $s<l\sqrt{2-k^2}K(k)$ and thus precisely if 
     \begin{equation}\label{eq:selfintersections3}
       \vartheta(s)<\vartheta(l\sqrt{2-k_{m,n}^2}K(k_{m,n}))= \frac{\pi(l-1)m}{n}.
     \end{equation}
     Altogether the conditions \eqref{eq:selfintersections1}--\eqref{eq:selfintersections3} and 
     $\vartheta(s) = \frac{\pi(l-1)m}{n} - \pi p$ lead to the conditions 
     $$
       0<p\leq \frac{lm}{n}\quad\text{and}\quad p< \frac{m(2n-l)}{n}. 
     $$
     Since $m,n$ are coprime these inequalities for integers $p,l\in\Z$ yield the conditions
     \eqref{eq:selfintersection_lp_conditions}. Notice that the solutions are
     different from each other because $m$ and $n$ are coprime. Indeed, if $(l,p),(\tilde l,\tilde p)$ are two
     pairs satisfying \eqref{eq:selfintersections},\eqref{eq:selfintersection_lp_conditions}, then $l-\tilde
     l$ is a multiple of $n$, say $l-\tilde l=qn$ for $q\in\N_0$. In the case $q\geq 2$ we get a
     contradiction due to 
     $$
       l = qn+\tilde l \geq 2n+\tilde l > 2n-1.
     $$
     In the case $q=1$ we find $m=p-\tilde p$ and thus a contradiction results from 
     $$
       p = \tilde p+m \geq 1+m > \Big\lceil \frac{\min\{l,2n-l\}m}{n}\Big\rceil -1.
     $$
     This proves that we have precisely as many intersection points as pairs $(l,p)$ satisfying
     \eqref{eq:selfintersection_lp_conditions}. So it remains to show that the number of these pairs is
     $n(m-1)$.
     By \eqref{eq:selfintersection_lp_conditions} the number of equations is 
     $$
       \sum_{l=1}^{2n-1} \big( \big\lceil \frac{\min\{l,2n-l\}m}{n}\big\rceil -1 \big) 
       =   2\sum_{l=1}^{n-1} \big\lceil \frac{lm}{n}\big\rceil + m -(2n-1)
     $$
     Since $m,n$ are coprime the sum can be computed as follows:
     \begin{align*}
       \sum_{l=1}^{n-1} \big\lceil\frac{lm}{n}\big\rceil
       &= \sum_{l=1}^{n-1} \big\lceil\frac{(n-l)m}{n}\big\rceil 
       \;=\; \sum_{l=1}^{n-1} \big( m + \big\lceil -\frac{lm}{n}\big\rceil \big) \\
       &= m(n-1) + \sum_{l=1}^{n-1} \big(-\big\lceil\frac{lm}{n}\big\rceil+1\big) \\ 
       &= (m+1)(n-1) - \sum_{l=1}^{n-1} \big\lceil\frac{lm}{n}\big\rceil.
     \end{align*}
     Hence, the number of equations is $m-(2n-1)+(m+1)(n-1)=n(m-1)$, which is all we had to show. 
   \end{proof}

   In Figure~\ref{table:elasticae} some conformally invariant quantities of the closed
   elasticae $\gamma_{m,n}$ for small $m,n$ are given. The table contains $k_{m,n}$, the hyperbolic length
   $\mathcal{L}_{m,n}$, the number of selfintersections $\mathcal{S}_{m,n}=n(m-1)$ and the Willmore energy 
   \begin{align*}
     \mathcal{W}_{m,n} 
     := \mathcal{W}(\Sigma_{\gamma_{m,n}}) 
     = \frac{\pi}{2} \int_0^{\mathcal{L}_{m,n}} \kappa^2 \,ds  
     %= \frac{2n\pi}{2-k_{m,n}^2}\int_0^{2\sqrt{2-k_{m,n}^2}K(k_{m,n})} \dn^2\big(
     %\frac{s}{\sqrt{2-k_{m,n}^2}},k_{m,n}\big) \,ds} \\
     = \frac{4n\pi E(k_{m,n})}{\sqrt{2-k_{m,n}^2}}.
   \end{align*}
   
    Given that all selfintersections of the $\gamma_{m,n}$ are simple, the energy levels
    $\mathcal{W}_{m,n}$ are much higher than the lower bounds given by the Li--Yau inequality from Theorem~6
    in~\cite{LiYau_conformal}. 
   
   \begin{figure}[h]  
\centering
\begin{tabular}{|c|c|c|c|c|c|c|c}
\hline
$n$ & $m$ & $k_{m,n}$ & $\mathcal{W}_{m,n}$ &  $\mathcal{L}_{m,n}$ & $\mathcal{S}_{m,n}$ \\
\hline
3 & 2 & 0.9362    & ~39.96 &  ~15.77 & ~~3  \\
5 & 3 & 0.9918   & ~63.83  &   ~34.80 & ~10  \\
7 & 4 & 0.9972   & ~88.58 &   ~55.81 & ~21 \\
8 & 5 & 0.9819   & 103.35 &    ~49.96 & ~32 \\
9 & 5 & 0.9986   & 113.54  & ~78.23 & ~36 \\
10 & 7 & 0.7463   & 138.23 &   ~45.78 & ~60 \\
11 & 6 & 0.9992 &  138.57 & 101.73  & ~55  \\
11 & 7 & 0.9745 &  143.08 & ~65.50 & ~66 \\
12 & 7 & 0.9954 & 152.33 &  ~90.20  & ~72  \\
13 & 7 & 0.9995 &  163.63 & 126.12 & ~78 \\
13 & 8 & 0.9865 & 167.09 & ~84.57 & ~91  \\
13 & 9 & 0.8349 &  177.95 & ~61.46 & 104 \\
14 & 9 & 0.9691 & 182.90 &   ~81.15  & 112  \\
\hline
\end{tabular} \qquad
\begin{tabular}{|c|c|c|c|c|c|c| }
\hline
$n$ & $m$ & $k_{m,n}$ & $\mathcal{W}_{m,n}$ &  $\mathcal{L}_{m,n}$ & $\mathcal{S}_{m,n}$ \\
\hline
% 14 & 9 & .9690916080 & 182.899139169391 &   81.15209132  & 112  \\
% 15 & 8 & .9996639442 & 188.7196932 &   151.2387979  & 105   \\
% 16 & 9 & .9980820881 & 202.090072501453 &   133.7435701 & 128   \\
% 16 & 11 & .8664018662 & 217.773436973930 &   77.17557568  & 160  \\
% 17 & 9 & .9997585091 & 213.8193688 &   177.0000832  & 136   \\
% 17 & 10 & .9945276689 & 216.1310230 &   124.8566276  & 153   \\
% 17 & 11 & .9649822179 & 222.7656050 &   96.84804366  & 170   \\
% 17 & 12 & .5326936991 & 236.8733394 &   75.92205108  & 187   \\
% 18 & 11 & .9881815012 & 230.888774827248 &    119.2836882 & 180   \\
% 19 & 10 & .9998193541 & 238.9270486 &   203.3232152  & 171   \\
% 19 & 11 & .9961392108 & 240.8955662 &   145.8930825  & 190   \\
% 19 & 12 & .9778915644 & 246.3915186 &   115.4176922  & 209   \\
% 19 & 13 & .8828933532 & 257.6383098 &   92.90380050  & 228   \\
15 & 8 & 0.9997 & 188.72 &   151.24  & 105   \\
16 & 9 & 0.9981 & 202.09 &   133.74 & 128   \\
16 & 11 & 0.8664 & 217.77 &  ~77.18 & 160  \\
17 & 9 & 0.9998 & 213.82 &   177.00  & 136   \\
17 & 10 & 0.9945 & 216.13 &   124.86  & 153   \\
17 & 11 & 0.9650 & 222.77 &   ~96.85  & 170   \\
17 & 12 & 0.5327 & 236.87 &   ~75.92  & 187   \\
18 & 11 & 0.9881 & 230.89 &    119.28 & 180   \\
19 & 10 & 0.9998 & 238.93 &   203.32  & 171   \\
19 & 11 & 0.9961 & 240.90 &   145.89  & 190   \\
19 & 12 & 0.9779 & 246.39 &   115.42  & 209   \\
19 & 13 & 0.8829 & 257.64 &   ~92.90  & 228   \\
20 & 11 & 0.9990 & 252.08 &  252.09 & 200 \\
\hline
\end{tabular} 
  \caption{Table of quantities associated with $\gamma_{m,n}$}
  \label{table:elasticae}
\end{figure}

\subsection{Stability}
   
   In this section we investigate stability properties of $\gamma_{m,n}$. As pointed out by Langer and
   Singer (see equation (1.4) in \cite{LanSing_the_total}) the stability of $\gamma_{m,n}$
   is determined by the integral
    $$
      I_{m,n}(\phi)
      %:= \mathcal{W}''(\gamma)[\phi N_\gamma,\phi N_\gamma]
      := \int_0^{\mathcal{L}_{m,n}} 2\phi''(s)^2-(5\kappa(s)^2-4)\phi'(s)^2+
      (6\kappa'(s)^2-\kappa(s)^4+3\kappa(s)^2 + 2)\phi(s)^2 \,ds. 
    $$
    %Here, $\mathcal{L}_{m,n}:=2n\sqrt{2-k_{m,n}^2}K(k_{m,n})$ is the hyperbolic length of $\gamma_{m,n}$.
    The functional $I_{m,n}$ measures the second variation of the Willmore energy $\mathcal{W}$ in
    direction $\phi N$ where $N$ is the normal vector field associated with $\gamma_{m,n}$ and
    $\phi$ is assumed to be $\mathcal{L}_{m,n}$-periodic and smooth. The elastica $\gamma_{m,n}$ is said to be
    stable if the above functional is nonnegative, otherwise we will say that it is unstable. 
    In the following we show that approximately for $0<k_{m,n}\leq
    0.6869145$ and $n$ large enough $\gamma_{m,n}$ is unstable. Our
    instability criterion results from an appropriate choice of a monochromatic test function, i.e. we choose
    $\phi(s):= \cos(2\pi js/\mathcal{L}_{m,n})$ for some $j\in\N_0$. 
    
    \begin{lem}\label{lem:instability}
      The curve $\gamma_{m,n}$ is unstable provided 
      \begin{align*}
        n > \frac{A(k_{m,n})+\sqrt{A(k_{m,n})^2+4B(k_{m,n})C(k_{m,n})}}{2C(k_{m,n})}
      \end{align*}
      where
      \begin{align*}
        A(k) &:= \frac{\pi\sqrt{5E(k)-(2-k^2)K(k)}}{(2-k^2)K(k)^{3/2}},\\
        B(k) &:=  \frac{\pi^2}{(2-k^2)K(k)^2}, \\
        C(k) &:=  \frac{\sqrt{(16(1-k^2)K(k)^2-44(2-k^2)E(k)K(k)+75E(k)^2)_+}}{\sqrt{3}(2-k^2)K(k)}.
      \end{align*}
    \end{lem}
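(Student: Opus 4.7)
The plan is to test the second variation $I_{m,n}$ against the one-parameter family of monochromatic trial functions $\phi_j(s):=\cos(\omega_j s)$ with $\omega_j:=2\pi j/\mathcal{L}_{m,n}$ for suitable positive integers $j$, and to exhibit one for which $I_{m,n}(\phi_j)<0$. The key preliminary remark is that, by~\eqref{eq:def_kappa_orbitlike}, the curvature $\kappa$ has minimal period $\mathcal{L}_{m,n}/n=2\sqrt{2-k_{m,n}^2}K(k_{m,n})$. Consequently $\kappa^2$, $\kappa^4$ and $(\kappa')^2$ are all $\mathcal{L}_{m,n}/n$-periodic, and their Fourier expansions on $[0,\mathcal{L}_{m,n}]$ are supported in $n\mathbb{Z}$. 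Expanding $\cos^2(\omega_j s)=\tfrac12+\tfrac12\cos(2\omega_j s)$ and $\sin^2(\omega_j s)=\tfrac12-\tfrac12\cos(2\omega_j s)$ in the integrand defining $I_{m,n}(\phi_j)$, every oscillatory cross-term integrates to zero provided $n\nmid 2j$.

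It then remains to evaluate the non-oscillatory part in terms of the period averages $\overline{\kappa^2}$, $\overline{\kappa^4}$ and $\overline{(\kappa')^2}$. Using $\int_0^{2K(k)}\dn^2 u\,du=2E(k)$ together with the standard reduction $\int_0^{2K(k)}\dn^4 u\,du=\tfrac{2}{3}\bigl[2(2-k^2)E(k)-(1-k^2)K(k)\bigr]$, and the relation $\overline{(\kappa')^2}=\overline{\kappa^2}-\overline{\kappa^4}/4-\mu$ coming from~\eqref{eq:ODE_kappa}, a tedious but elementary simplification yields the closed form
\begin{equation*}
  I_{m,n}(\phi_j)\;=\;\mathcal{L}_{m,n}\,\Bigl[\bigl(\omega_j^2-A(k_{m,n})^2/B(k_{m,n})\bigr)^2-C(k_{m,n})^2\Bigr],\qquad \omega_j^2=j^2 B(k_{m,n})/n^2.
\end{equation*}
Two algebraic identities underlie this: the $\omega_j^2$-coefficient $2-\tfrac52\overline{\kappa^2}$ reduces to $-2A^2/B$, and the constant term $\tfrac{9}{2}\overline{\kappa^2}-\tfrac{5}{4}\overline{\kappa^4}+1-3\mu$ reduces to $A^4/B^2-C^2$. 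Verifying the second one requires clearing the common denominator $3(2-k^2)^2K$ and recognizing, via the factorization $3(2-k^2)^2-16(1-k^2)=3k^4+4k^2-4=(3k^2-2)(k^2+2)$, that adding $C^2$ collapses the numerator into the perfect square $((2-k^2)K-5E)^2/((2-k^2)^2K^2)=A^4/B^2$. This algebraic reduction is the main technical hurdle.

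Granted this identity, instability follows by choosing $j$ to be the nearest positive integer to $nA/B$, perturbing by $\pm 1$ if necessary to secure $n\nmid 2j$ (a harmless modification for $n$ large). Then $|jB/n-A|\le B/(2n)$, so
\[
   |\omega_j^2-A^2/B|=|jB/n-A|\cdot|jB/n+A|/B\;\le\;\frac{A}{n}+\frac{B}{4n^2}\;\le\;\frac{A}{n}+\frac{B}{n^2}.
\]
The hypothesis $n>(A+\sqrt{A^2+4BC})/(2C)$ is precisely $Cn^2-An-B>0$, equivalently $A/n+B/n^2<C$. Combined with the previous display this gives $|\omega_j^2-A^2/B|<C$, hence $I_{m,n}(\phi_j)<0$, showing that $\gamma_{m,n}$ is unstable.
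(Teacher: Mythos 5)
Your proposal follows essentially the same route as the paper's proof: monochromatic test functions $\cos(2\pi js/\mathcal{L}_{m,n})$, the observation that the Fourier spectrum of $\kappa^2$, $\kappa^4$, $(\kappa')^2$ is supported in $n\Z$ so that only period averages survive when $n\nmid 2j$, the same elliptic-integral evaluations of $\overline{\kappa^2}$ and $\overline{(\kappa')^2}$, and the same quadratic-in-$\omega_j^2$ closed form (your $A^2/B$ and $C^2$ are exactly the paper's $\alpha_0/4$ and $(\alpha_0^2-8\beta_0)/16$, and your factorization $(3k^2-2)(k^2+2)$ is the one appearing in the paper's $\beta_0$).

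There is one quantitative loose end. Your estimate $|jB/n-A|\le B/(2n)$ uses that $j$ is the nearest integer to $nA/B$, but when that integer violates $n\nmid 2j$ you replace it by $j\pm1$, after which $|j-nA/B|$ may be as large as $3/2$ and the displayed bound fails. Since the lemma is a statement for each fixed $n$ with explicit constants, the parenthetical ``harmless for $n$ large'' does not close this; in the exceptional case your argument only yields $|\omega_j^2-A^2/B|\le 2A/n+B/n^2$, which requires a stronger hypothesis on $n$. (For comparison, the paper rounds to $\lfloor\cdot\rfloor$ or $\lceil\cdot\rceil$, i.e.\ accepts error $\le 1$, and its computation therefore really gives the sufficient condition with $2A$ in place of $A$; your nearest-integer choice is exactly what is needed to match the stated constant, so the divisibility case is the only point requiring repair.)
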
 
    \begin{proof}
      Set $k:=k_{m,n}, T:= \mathcal{L}_{m,n}/2\pi=n\sqrt{2-k^2}K(k)/\pi$. We define the Fourier
      coefficients $\alpha_j,\beta_j$ via
      \begin{align*}
        5\kappa(s)^2-4
        = \sum_{j=0}^\infty \alpha_j \cos\Big( \frac{js}{T}\Big),\qquad
        6\kappa'(s)^2-\kappa(s)^4+3\kappa(s)^2 +   2
        = \sum_{j=0}^\infty \beta_j \cos\Big( \frac{js}{T}\Big).
      \end{align*}
      From 110.03 respectively\footnote{We remark that 361.01 in \cite{ByrFri_handbook} contains a typo:
      $E(u)$ has to be replaced by $E(\sn(u,k),k)$ in our notation and by  
      $E(\arcsin(\sn u),k)$ in the notation used in \cite{ByrFri_handbook}.} 361.01 in \cite{ByrFri_handbook}
      we get
      \begin{align*}
        \int_0^{2\pi T} \kappa(s)^2\,ds
        &\stackrel{\eqref{eq:def_kappa_orbitlike}}{=} 
          \frac{4}{\sqrt{2-k^2}} \int_0^{2nK(k)} \dn(s,k)^2\,ds \\
        &= \frac{8n}{\sqrt{2-k^2}} \int_0^{K(k)} \dn(s,k)^2\,ds  \\
        &= \frac{8n E(k)}{\sqrt{2-k^2}}, \\
        \int_0^{2\pi T} \kappa'(s)^2\,ds
        &= \frac{4k^4}{(2-k^2)^{3/2}} \int_0^{2nK(k)} \sn(s,k)^2\cn(s,k)^2\,ds \\
        &= \frac{8nk^4}{(2-k^2)^{3/2}} \int_0^{K(k)} \sn(s,k)^2\cn(s,k)^2\,ds \\
        &= \frac{8n\big((2-k^2)E(k)-2(1-k^2)K(k)\big)}{3(2-k^2)^{3/2}}. 
      \end{align*}
      From these identities and $\kappa^4=4(\kappa^2-(\kappa')^2-\mu)$ we deduce
      \begin{align*}
        \alpha_0
        &= \frac{1}{2\pi T} \int_0^{2\pi T} (5\kappa(s)^2-4)\,ds 
        \;=\; \frac{20E(k)}{(2-k^2)K(k)}-4,\\
        \beta_0
        &= \frac{1}{2\pi T} \int_0^{2\pi T} (10\kappa'(s)^2-\kappa(s)^2 +   2+4\mu)\,ds \\
        &= 10\cdot \frac{2\big( (2-k^2)E(k)-4(1-k^2)K(k)\big)}{3(2-k^2)^2K(k)}
           - \frac{4E(k)}{(2-k^2)K(k)} + 2+4\mu \\
        &= \frac{28(2-k^2)E(k)-80(1-k^2)K(k)}{3(2-k^2)^2K(k)} +
           \frac{24-24k^2+2k^4}{(2-k^2)^2} \\
        &= \frac{28(2-k^2)E(k)+2(3k^2-2)(k^2+2)K(k)}{3(2-k^2)^2K(k)}.
      \end{align*}
      Now we calculate $I_{m,n}(\phi)$ for the function $\phi(s):= \cos(js/T)$ with $2j\notin
      n\N_0$.
      Since $\alpha_j=\beta_j=0$ for $j\notin n\N_0$ and 
      $$
        \phi(s)^2
        = \frac{1}{2}\big(1+\cos(2js/T)\big),\quad
        \phi'(s)^2=\frac{j^2}{2T^2}\big(1-\cos(2js/T)\big),\quad
        \phi''(s)^2 = \frac{j^4}{2T^4}\big(1+\cos(2js/T)\big)
      $$ 
      we may evaluate the integrals as follows:
      \begin{align*}
        \int_0^{\mathcal{L}_{m,n}} 2\phi''(s)^2\,ds
        &= \frac{2\pi^4 j^4}{(\sqrt{2-k^2}nK(k))^3}, \\
        \int_0^{\mathcal{L}_{m,n}} (5\kappa(s)^2-4)\phi'(s)^2\,ds
        &= \frac{\alpha_0 \pi^2 j^2 }{\sqrt{2-k^2}nK(k)}, \\
        \int_0^{\mathcal{L}_{m,n}} (6\kappa'(s)^2-\kappa(s)^4+3\kappa(s)^2 + 2)\phi(s)^2 \,ds
        &=  \beta_0  \sqrt{2-k^2}nK(k) 
      \end{align*}
      and thus 
      $$
        I_{m,n}(\phi) 
        = 2\sqrt{2-k^2}nK(k) \cdot \Big(
        \Big(\big(\frac{\pi j}{\sqrt{2-k^2}nK(k)}\big)^2- \frac{\alpha_0}{4}\Big)^2 +
        \frac{8\beta_0-\alpha_0^2}{16} \Big). 
      $$
      So the natural choice is 
      $$
        j=\big\lfloor \frac{\sqrt{\alpha_0}\sqrt{2-k^2}nK(k)}{2\pi} \big\rfloor \quad\text{or}\quad
        j= \lceil \frac{\sqrt{\alpha_0}\sqrt{2-k^2}nK(k)}{2\pi} \rceil\quad\text{s.t. } 2j\notin n\N_0.
      $$
      Since one of these $j$ in fact satisfies $2j\notin n\N_0$ 
      (because of $n\geq 3$ due to $\frac{2m}{n}\in (1,\sqrt 2),m\in\N$)
      we obtain $I_{m,n}(\phi)<0$ provided
      
      $$
        \Big(\big(\frac{\sqrt\alpha_0}{2}+\frac{\pi}{\sqrt{2-k^2}nK(k)}\big)^2- \frac{\alpha_0}{4}\Big)^2 +
        \frac{8\beta_0-\alpha_0^2}{16},
      $$
      which in turn is equivalent to 
      $$
         \frac{\pi\sqrt{\alpha_0}}{\sqrt{2-k^2}nK(k)} 
         + \frac{\pi^2}{(2-k^2)n^2K(k)^2}
         < \frac{\sqrt{(\alpha_0^2-8\beta_0)_+}}{4}.  
      $$
      Since this inequality can be rewritten as $\frac{A(k)}{n} + \frac{B(k)}{n^2}<C(k)$ we get the result.
    \end{proof}
    
    This criterion proves the instability of infinitely many of the $\gamma_{m,n}$ but also infinitely many of
    them are not covered. Only one of the curves $\gamma_{m,n}$ listed  in Table
    \ref{table:elasticae} (namely for $n=17,m=12$) can be proved to be unstable via Lemma~\ref{lem:instability}.
    Notice that the above criterion only works for $C(k)>0$ and this is only the case for $0\leq k<0.6869145$.
    With more computational effort the instability of more curves $\gamma_{m,n}$ can be shown both
    analytically and numerically. For instance it is possible to analytically determine the constrained
    minimizer of $I$ in the set of functions $\phi$ given by 
    $$
      \phi(s)=\sum_{j=0}^{n-1} a_j \cos\Big( \frac{js}{T}\Big) + \sum_{j=1}^{n-1}
         b_j \sin\Big( \frac{js}{T}\Big).
    $$
    A reasonable constraint is given by $a_0^2+b_0^2+\ldots+a_{n-1}^2+b_{n-1}^2=1$. The constrained minimizers are of the form
    $a_j \cos\big(\frac{js}{T}\big)+a_{n-j}\cos\big(\frac{(n-j)s}{T }\big)$ 
    or $b_j \sin\big(\frac{js}{T}\big)+b_{n-j}\sin\big(\frac{(n-j)s}{T }\big)$ for some
    $j\in\{1,\ldots,n-1\}$ so that there is a higher chance to find negative values for $I_{m,n}(\phi)$.
    However, the analytical expressions for these minima are much more complicated than the one from Lemma
    \ref{lem:instability} and we could not detect any substantial gain with respect to the monochromatic
    ansatz functions when applying the resulting criterion to elasticae with large $k$. Numerically, one
    clearly obtains much better results by choosing sufficiently large test function spaces 
%     For instance,
%     evaluating $I_{m,n}(\phi)$ numerically for $\phi(s)=\sn\big( \frac{4jK(k)s}{\mathcal{L}_{m,n}}\big)$ yields quite good results when we choose $j=n$. In such a way one obtains that $\gamma_{m,n}$ is unstable whenever 
%     $(n,m)\in\{(10,8),(13,9),(16,11),(17,12),(19,13)\}$. Again, $I_{m,n}(\phi)$ can in principle be evaluated
%     analytically. Choosing larger and larger test function spaces one is lead to the 
    and the natural conjecture is that all closed orbitlike elasticae are unstable.

  \subsection{Approximating the Clifford torus}

  Let us finally provide an approximation result for the Clifford torus which we believe to be helpful
  in the understanding of Willmore surfaces of revolution and in particular of the curves $\gamma_{m,n}$ from
  Theorem~\ref{thm:LanSin}. By a gap theorem due to Mondino and Nguyen \cite{MonNgu_gap_theorem} one knows
  that the Clifford torus can not be approximated by Willmore tori with respect to the Willmore energy
  functional. As a consequence it may only be approximated in a weaker sense. We will use the notion
  of Hausdorff convergence defined by $\Sigma_n\to\Sigma$ in the Hausdorff sense if and only if
  $d_H(\Sigma_n,\Sigma)\to 0$ as $n\to\infty$ where 
  $$
    d_H(\Sigma',\Sigma)
    = \max \big\{ \sup_{z'\in\Sigma'}\inf_{z\in\Sigma} d_{\Hh}(z',z), \sup_{z\in\Sigma}\inf_{z'\in\Sigma'}
    d_{\Hh}(z',z)\big\}
  $$
  and $\Sigma,\Sigma',\Sigma_n$ are subsets of the (metric) hyperbolic space $(\Hh,d_{\Hh})$.
  Our previous considerations yield the following:
  
  \begin{lem} \label{lem:approximation_results}
    Let $(m_n)$ be a sequence such that $1<2m_n/n<\sqrt 2$ for all $n\in\N$. Then, as $n\to\infty$,
    the sequence $(\Sigma_{\gamma_{m_n,n}})$ diverges with respect to the Willmore energy and it converges
    (up to M\"obius transformations) to the Clifford torus in the Hausdorff sense if and only if
    $2m_n/n\to \sqrt{2}$.
    In this case, almost all of the Willmore surfaces are unstable.
  \end{lem}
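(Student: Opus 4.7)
The plan is to treat the three assertions of the lemma in turn. Divergence of the Willmore energy follows at once from the formula $\mathcal{W}_{m,n}=4n\pi E(k_{m,n})/\sqrt{2-k_{m,n}^2}$ obtained earlier and the uniform bounds $E(k)\geq 1$, $\sqrt{2-k^2}\leq\sqrt 2$ valid on $(0,1)$: this forces $\mathcal{W}_{m_n,n}\geq 2\sqrt 2\pi n\to\infty$, independently of the sequence $(m_n)$.

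For the Hausdorff convergence I would first use the strict monotonicity in Proposition~\ref{prop:rotation} to translate the condition $2m_n/n\to\sqrt 2$ into the equivalent conditions $k_{m_n,n}\to 0$ and $\mu_n:=4(1-k_{m_n,n}^2)/(2-k_{m_n,n}^2)^2\to 1$. Proposition~\ref{prop:enclosures} then supplies a shrinking annulus $\overline{B_{R_1^{(n)}}(Q_1^{(n)})}\setminus B_{R_2^{(n)}}(Q_2^{(n)})$ containing the curve with $R_1^{(n)},R_2^{(n)}\to 1/a_3^{(n)}$ and $Q_1^{(n)},Q_2^{(n)}\to(b_3^{(n)}/a_3^{(n)},\sqrt 2/a_3^{(n)})$ as $\mu_n\to 1$. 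Since $\gamma_{m_n,n}$ touches both boundary circles (at the extrema of $\kappa$) and, by Proposition~\ref{prop:closed_and_winding_number}, winds $m_n\to\infty$ times around the inner one, the curve fills the collapsing annulus densely and hence Hausdorff-converges to the limiting circle. By specialising Proposition~\ref{prop:choice_of_P} to $\kappa\equiv\sqrt 2,\mu=1$, this limit circle is the level set $\{Z(\cdot;P)\equiv\sqrt 2-1\}$, i.e.\ a hyperbolic circle of hyperbolic radius $\Arcosh(\sqrt 2)$; after applying the hyperbolic isometry moving its centre to $(0,\sqrt 2+1)$ this coincides with the explicit parametrisation of the Clifford torus recalled in Section~3. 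For the converse, if $2m_n/n\not\to\sqrt 2$, then along a subsequence $\mu_n\to\mu_*<1$, leaving a definite annular gap between $R_1^{(n)}$ and $R_2^{(n)}$; any Hausdorff limit would then have positive thickness in the transverse direction and so could not be a hyperbolic circle, ruling out convergence to the Clifford torus even up to M\"obius transformations (which preserve the class of hyperbolic circles).

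For the instability statement I would apply Lemma~\ref{lem:instability} along the sequence $k_{m_n,n}\to 0$. A short computation using $K(0)=E(0)=\pi/2$ yields $A(0)=\sqrt 3$, $B(0)=2$, $C(0)=\tfrac12$, so all three functions are continuous at $k=0$ and $C$ is bounded away from zero, whence the sufficient threshold $(A+\sqrt{A^2+4BC})/(2C)$ tends to $\sqrt 3+\sqrt 7$ as $n\to\infty$. Since $n\to\infty$ eventually dominates this bounded threshold, Lemma~\ref{lem:instability} yields instability for all but finitely many $n$. The main technical obstacle lies in the Hausdorff convergence step: the collapse $R_1-R_2\to 0$ alone does not imply Hausdorff convergence of the curves, and one must combine the touching property of Proposition~\ref{prop:enclosures} with the quantitative winding information of Proposition~\ref{prop:closed_and_winding_number} to rule out concentration of $\gamma_{m_n,n}$ on a proper subarc of the limit circle, effectively requiring an equidistribution argument along the circular direction.
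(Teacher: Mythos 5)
Your proposal is correct and follows essentially the same route as the paper: divergence from the explicit formula for $\mathcal{W}_{m,n}$, the equivalence $2m_n/n\to\sqrt 2\Leftrightarrow k_{m_n,n}\to 0\Leftrightarrow\mu\to 1$ via Proposition~\ref{prop:rotation}, Hausdorff convergence via the collapsing annuli of Proposition~\ref{prop:enclosures} (after normalising $a_3,b_3$ by M\"obius transformations), and instability via Lemma~\ref{lem:instability}. You actually supply two details the paper's proof leaves implicit — the winding-number argument ensuring the curves do not concentrate on a subarc of the limit circle, and the computation $A(0)=\sqrt 3$, $B(0)=2$, $C(0)=\tfrac12$ showing the instability threshold stays bounded as $k\to 0$ — both of which check out.
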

  \begin{proof}
    From $\mathcal{W}_{m,n}\geq \sqrt{2}n\pi^2\to \infty$ as $n\to\infty$ we immediately get the
    divergence property.
    Proposition~\ref{prop:enclosures} shows that the set $\Sigma_{\gamma_{m_n,n}}=\gamma_{m,n}(\R)\subset\Hh$
    converges to the circle in the Hausdorff sense if and only if $\mu=\mu_{k_{m,n}}\to 1$. 
    Indeed, only in this cases the radii and the centers of the inner and outer circles from
    Proposition~\ref{prop:enclosures} converge to each other. Notice that applying suitable M\"obius
    transformations to $\Sigma_{\gamma_{m_n,n}}$ we can ensure that the $b_3,a_3$--parameters of
    $\gamma_{m_n,n}$ from Theorem~\ref{thm:explicit_formula_orbitlike}
    are the same for all $n$. By $\mu_{k_{m,n}}=4(1-k_{m,n}^2)/(2-k_{m,n}^2)^2$ we observe that
    $\mu_{k_{m_n}}\to 1$ happens precisely when $k_{m_n}\to 0$ which, by Proposition~\ref{prop:rotation}
    and the definition of $k_{m,n}$, is equivalent to $2m_n/n\to \sqrt{2}$. Lemma~\ref{lem:instability} shows
    that almost all $\gamma_{m,n}$ are unstable elasticae which proves the result.
  \end{proof}
  
   The consequence is that the Clifford torus may be well approximated by orbitlike surfaces of revolutions
   the shape of which, however, becomes more and more complicated as they approach the torus. Similarly, one
   can study the limit $2m_n/n\to 1$, which corresponds to $k_{m_n}\to 1$. In this case 
   Proposition~\ref{prop:enclosures} tells us that there is no Hausdorff convergence at all, but the catenoid
   is, as a subset of $\Hh$, contained in the set of accumlation points of the curves $\gamma_{m_n,n}(\R)$.
   The latter follows from the fact that the solution of the initial value problem~\eqref{eq:ODE_solution}
   converges locally (but not globally) uniformly towards the catenoid solution as the curvature
   functions approach $\sqrt 2 \sech(s)$ locally uniformly.

 \section{Wavelike elasticae}
 
   In this section we consider wavelike elasticae associated with curvature functions 
   \begin{equation} \label{eq:def_kappa_wavelike}
     \kappa(s) = \frac{2k}{\sqrt{2k^2-1}} \cn\Big( \frac{s+s_*}{\sqrt{2k^2-1}},k\Big),\qquad\text{where }
     \frac{1}{\sqrt 2}<k<1,\;s_*\in\R. 
   \end{equation}
   The functions $\phi$ and $Z(\cdot,P)$ are defined as before and we set
   $$
       \mu:= -(\kappa')^2+\kappa^2-\frac{1}{4}\kappa^4  = -\frac{4k^2(1-k^2)}{(2k^2-1)^2}.
   $$ 
   The derivation of explicit formulas for these elasticae follows the same lines as for the orbitlike ones.
   First we provide the counterpart of Proposition \ref{prop:w1w2_orbitlike}. Its proof will be given in
   section~\ref{subsec:proof_propositions_wavelike}.
   
   \begin{prop}\label{prop:w1w2_wavelike}
     For all $k\in (0,1)$ the ordinary differential equation $w''- \frac{2(1-k^2)}{\cn^2(s,k)}w=0$ has a
     fundamental system $\{w_1,w_2\}$ on $(-K(k),K(k))$ such that the following holds:
     \begin{itemize}
       \item[(i)] $w_1$ is odd about 0, $w_2$ is even about 0, $w_2(0)>0>w_1'(0)$,
       \item[(ii)] $\cn^2(s,k)(w_2(s)^2 - w_1(s)^2) = 1-k^2+(2k^2-1)\cn^2(s,k)$ on $(-K(k),K(k))$,
       \item[(iii)] $\cn^3(s,k)(w_2(s)w_2'(s) - w_1(s)w_1'(s)) = (1-k^2)\sn(s,k)\dn(s,k)$ on $(-K(k),K(k))$,
       %\item[(iv)] $\cn^4(s,k)(w_2'(s)^2-w_1'(s)^2) =
       %(1-k^2)(1-k^2-k^2\cn^4(s,k))$ on $(-K(k),K(k))$,
       \item[(iv)] $w_2(s)w_1'(s)-w_1(s)w_2'(s)= -k\sqrt{(1-k^2)(2k^2-1)}$ on $(-K(k),K(k))$, 
       \item[(v)] The functions $\cn(\cdot,k)w_1,\cn(\cdot,k)w_2$ have unique smooth extensions $\hat
       w_1,\hat w_2:\R\to\R$ such that the functions $w_1:=\frac{\hat
       w_1}{\cn(\cdot,k)},w_2:=\frac{\hat w_2}{\cn(\cdot,k)}$ build a fundamental system on
       each connected component of $\R\sm\{\kappa=0\}$. Moreover, $\hat w_2$ is positive on $\R$, $s\hat
       w_1(s)$ is negative on $\R\sm\{0\}$ and, as $s\to\pm\infty$, we have $\hat w_1(s)\to \mp\infty,\hat
       w_2(s)\to\infty$ exponentially and $\hat w_1(s)/\hat w_2(s)\to \mp 1$.  
     \end{itemize}
   \end{prop}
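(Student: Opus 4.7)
My plan is to parallel the orbitlike analysis (Proposition~\ref{prop:w1w2_orbitlike}), with extra care at the zeros of $\cn(\cdot,k)$. Set $q(s):=2(1-k^2)/\cn^2(s,k)$, which is smooth and even on $(-K(k),K(k))$, and define $w_1,w_2$ as the unique solutions of $w''=qw$ with the initial data dictated by (i),(ii),(iv) at $s=0$: namely $w_1(0)=0$, $w_1'(0)=-\sqrt{(1-k^2)(2k^2-1)}$, $w_2(0)=k$, $w_2'(0)=0$. Existence on $(-K(k),K(k))$ is standard; the evenness of $q$ together with uniqueness applied to the reflected IVPs gives the symmetries in~(i); and the absence of a $w'$-term makes the Wronskian $w_1w_2'-w_1'w_2$ constant, whose value $k\sqrt{(1-k^2)(2k^2-1)}$ at $s=0$ gives~(iv).

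For (ii) and (iii) I would use the following observation: if $w_1,w_2$ both satisfy $w''=qw$, then \emph{every} quadratic form $U=\alpha w_1^2+\beta w_1w_2+\gamma w_2^2$ in the solutions satisfies the third-order linear ODE
\[
   U'''-4qU'-2q'U=0,
\]
as is immediate from $w_j''=qw_j$. In particular $P:=w_2^2-w_1^2$ satisfies it. The candidate right-hand side of~(ii) divided by $\cn^2$, namely $R:=(1-k^2)/\cn^2+(2k^2-1)=q/2+(2k^2-1)$, also satisfies this ODE, which reduces to checking that $q''-3q^2-4(2k^2-1)q$ is constant; a routine calculation using $\cn''=(2k^2-1)\cn-2k^2\cn^3$ and $(\cn')^2=\sn^2\dn^2$ yields the value $-4k^2(1-k^2)$. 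A quick check then gives $P(0)=R(0)=k^2$, $P'(0)=R'(0)=0$, and $P''(0)=R''(0)=2(1-k^2)$, so by uniqueness for the third-order IVP $P\equiv R$, which is~(ii) after multiplication by $\cn^2$. Differentiating~(ii) and using $\cn'=-\sn\dn$ immediately yields~(iii).

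The main obstacle is~(v). At each singular point $s_*=(2m+1)K(k)$, I would substitute $t:=s-s_*$ and use $\cn(s_*+t)=-\sqrt{1-k^2}\sn(t)/\dn(t)$ to rewrite the ODE locally as $\tilde w''=\bigl(\tfrac{2}{t^2}+\tfrac{2(1-2k^2)}{3}+O(t^2)\bigr)\tilde w$, with indicial exponents $-1$ and $2$. Because these differ by the positive integer $3$, the Frobenius solution at $r=-1$ could a priori contain a logarithmic term; the computational heart of~(v) is to verify, by expanding the Frobenius recursion through order $t^3$, that the relevant obstruction vanishes, so both local solutions are pure Laurent/power series with leading behaviour $t^{-1}$ and $t^2$ respectively. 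Multiplying by $\cn(s)\sim -\sqrt{1-k^2}\,t$ converts these into analytic germs of orders $1$ and $3$, so $\cn w_1$ and $\cn w_2$ extend real-analytically across every singularity to global functions $\hat w_1,\hat w_2$ on $\R$, and $\{\hat w_1/\cn,\hat w_2/\cn\}$ remains a fundamental system on each regular component by Wronskian continuity from $s=0$. The sign claims follow from the rewritten form $\hat w_2^2-\hat w_1^2=(1-k^2)+(2k^2-1)\cn^2>0$ of~(ii), combined with $\hat w_2(0)=k>0$, the oddness of $\hat w_1$, and its negative slope at $0$; and the exponential asymptotics with $\hat w_1/\hat w_2\to\mp 1$ come from Floquet analysis of the (now regular) pair $(\hat w_1,\hat w_2)$ under the $2K(k)$-period, the two Floquet multipliers being necessarily real and reciprocal---the precise sign in the limit being most cleanly read off from the explicit theta-function representation of $w_1,w_2$ that emerges in the subsequent derivation of the explicit formulas for wavelike elasticae.
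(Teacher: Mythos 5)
Your treatment of (i)--(iv) is correct and genuinely different from the paper's: you build $w_1,w_2$ from an initial value problem at $s=0$ and obtain (ii) from the classical third-order equation $U'''-4qU'-2q'U=0$ satisfied by products of solutions of $w''=qw$, checking that $R=q/2+(2k^2-1)$ solves the same equation (your constant $q''-3q^2-4(2k^2-1)q=-4k^2(1-k^2)$ checks out) and matching $P,P',P''$ at $0$. The paper instead writes $w_1,w_2$ explicitly as Hermite--Halphen combinations $w_\pm(s)=\Theta_1(s\pm\alpha_k,k)\Theta(s,k)^{-1}e^{\mp s\zeta(\alpha_k,k)}$ with $\dn^2(\alpha_k,k)=k^2$ and quotes Proposition~6 of Valent's paper for both the fundamental-system property and the product identity $w_+w_-\propto 1-k^2+(2k^2-1)\cn^2$. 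Your route is more elementary and self-contained for these four items.

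The genuine gap is in (v), which in the paper is carried entirely by the explicit representation and which your argument does not actually establish. First, the Frobenius computation at the regular singular points $s_*\in(2\Z+1)K(k)$ --- exponents $-1$ and $2$, resonant with difference $3$ --- is only announced: you say the logarithmic obstruction ``is to be verified'' but do not verify it, and this is exactly the nontrivial content of the smooth extendability of $\cn\,w_j$ across the zeros of $\cn$. Second, the asymptotic claims are not deducible from what you have proved. The identity $\hat w_2^2-\hat w_1^2=1-k^2+(2k^2-1)\cn^2$ is bounded and positive, which is equally consistent with elliptic (bounded, quasi-periodic) monodromy; nothing in your argument shows the Floquet multipliers satisfy $|\mathrm{tr}|>2$, so the exponential growth of $\hat w_1,\hat w_2$ and the limits $\hat w_1/\hat w_2\to\mp1$ do not follow from ``Floquet analysis'' alone. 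You ultimately defer these signs to ``the explicit theta-function representation that emerges later,'' but that representation is precisely the paper's proof device, not something your construction produces; in the paper the growth is read off directly from the factors $e^{\mp s\zeta(\alpha_k,k)}$ with $\zeta(\alpha_k,k)>0$ and the positivity of $\Theta,\Theta_1$ on $\R$. The same issue affects the global claim that $s\hat w_1(s)<0$ on $\R\setminus\{0\}$: oddness plus $\hat w_1'(0)<0$ only gives the sign near the origin, and you give no argument excluding further zeros of $\hat w_1$. To close the proof along your lines you would need to either carry out the resonance computation and supply a hyperbolicity argument for the monodromy, or fall back on the explicit solutions --- at which point you have reproduced the paper's approach.
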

 
   Similarly to the orbitlike case we define
   
   \begin{align}\label{eq:def_Wj_wavelike}
     W_j(s) := \frac{1}{\sqrt{2k^2-1}}w_j\big( \frac{s+s_*}{\sqrt{2k^2-1}}\big),\quad 
     \hat W_j(s) := \frac{2k}{2k^2-1} \hat w_j\big( \frac{s+s_*}{\sqrt{2k^2-1}}\big).
     \qquad (j=1,2)
     %W_2(s) &:= w_2\big(-K(k)+ \frac{s+a}{\sqrt{2-k^2}}\big) 
   \end{align}
   Notice that Part (v) of the previous proposition ensures that $W_1,W_2$ are well-defined and smooth on
   $\R\sm\{\kappa=0\}$ whereas $\hat W_1,\hat W_2$ are well-defined and smooth on $\R$ with $\hat
   W_j(s)=\kappa(s)W_j(s)$ whenever $\kappa(s)\neq 0$. The exact formulas for $\hat w_1,\hat w_2$ show that
   $\hat W_1,\hat W_2$ look like shifted and rescaled versions of $-\sinh$ and $\cosh$. As in the orbitlike
   case we collect some properties of $W_1,W_2,\kappa$. Since the proof is completely analogous to the one of
   Proposition~\ref{prop:W1W2_orbitlike} and the validity of the identities may be checked using a computer,
   we omit it.

   \begin{prop}\label{prop:W1W2_wavelike}
     Let $\kappa$ be given by \eqref{eq:def_kappa_wavelike}. Then on each connected component of $\R\sm\{0\}$
     the functions $W_1,W_2$ build a fundamental system of $W''+2\mu\kappa^{-2}W=0$ such that the following
     identities hold:
     \begin{itemize}
       \item[(i)] $W_2^2-W_1^2 = \frac{\kappa^2-\mu}{\kappa^2}$,
       \item[(ii)] $W_1'= \frac{\mu \kappa'}{\kappa(\kappa^2-\mu)} W_1
      - \frac{\sqrt{|\mu|}\kappa^2}{2(\kappa^2-\mu)}W_2$,
       \item[(iii)] $W_2'= - \frac{\sqrt{|\mu|}\kappa^2}{2(\kappa^2-\mu)}W_1+
       \frac{\mu \kappa'}{\kappa(\kappa^2-\mu)} W_2$,
       %\item[(iv)] $(W_1')^2+(W_2')^2
       %= \frac{1-k^2}{2-k^2} \frac{\kappa^4+4\mu}{\kappa^4}$,
       %= \frac{1-k^2}{2-k^2} \frac{\kappa^6+4\mu(\kappa')^2}{\kappa^4(\kappa^2-\mu)}$,
       \item[(iv)] $W_2W_1'-W_1W_2' = - \frac{\sqrt{|\mu|}}{2}$.
   \end{itemize}
   Moreover, $\hat W_2$ is positive on $\R$ and we have $\hat W_1(s)/\hat W_2(s) \to \mp 1$ as
   $s\to\pm\infty$.
   \end{prop}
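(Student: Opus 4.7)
The proof strategy mirrors Proposition~\ref{prop:W1W2_orbitlike}: one transfers every identity of Proposition~\ref{prop:w1w2_wavelike} through the affine reparametrisation $\tau(s):=(s+s_*)/\sqrt{2k^2-1}$, so that $W_j(s)=w_j(\tau(s))/\sqrt{2k^2-1}$ and $\hat W_j(s)=(2k/(2k^2-1))\,\hat w_j(\tau(s))$, while replacing Jacobi combinations by expressions in $\kappa,\kappa'$ via
\begin{equation*}
  \kappa^2 = \tfrac{4k^2}{2k^2-1}\cn^2(\tau,k),\qquad \kappa\kappa'=-\tfrac{4k^2}{(2k^2-1)^{3/2}}\sn\cn\dn,\qquad \sqrt{|\mu|}=\tfrac{2k\sqrt{1-k^2}}{2k^2-1}.
\end{equation*}

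First I would handle the Lam\'e-type ODE, (i), and (iv). The chain rule together with $w_j''=2(1-k^2)\cn^{-2}(\tau,k)w_j$ from Proposition~\ref{prop:w1w2_wavelike} collapses to $W_j''=-2\mu\kappa^{-2}W_j$ after one substitution, so $\{W_1,W_2\}$ forms a fundamental system on each connected component of $\R\setminus\{\kappa=0\}$. Dividing Proposition~\ref{prop:w1w2_wavelike}(ii) by $(2k^2-1)\cn^2(\tau,k)$ and recognising $(1-k^2)/[(2k^2-1)\cn^2(\tau,k)]=-\mu/\kappa^2$ produces (i), and (iv) is a one-line rescaling of Proposition~\ref{prop:w1w2_wavelike}(iv).

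The core is (ii) and (iii). Differentiating the already-established identity (i) yields $W_2W_2'-W_1W_1'=\mu\kappa'/\kappa^3$ (this also matches Proposition~\ref{prop:w1w2_wavelike}(iii) after rescaling), which together with (iv) gives the linear system
\begin{equation*}
  \begin{pmatrix} W_2 & -W_1 \\ -W_1 & W_2 \end{pmatrix}\begin{pmatrix} W_1' \\ W_2' \end{pmatrix} = \begin{pmatrix} -\sqrt{|\mu|}/2 \\ \mu\kappa'/\kappa^3 \end{pmatrix},
\end{equation*}
whose determinant equals $W_2^2-W_1^2=(\kappa^2-\mu)/\kappa^2$ by (i) and is nonzero away from $\{\kappa=0\}$. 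Inverting the system produces exactly (ii) and (iii) after cancellation. The asymptotic claims on $\hat W_j$ follow directly from Proposition~\ref{prop:w1w2_wavelike}(v): positivity of $\hat w_2$ and the signs of $\hat w_1$ transfer verbatim through the positive scaling factor $2k/(2k^2-1)$, and since $s\mapsto\tau(s)$ is an affine bijection with positive slope, the limit $\hat w_1/\hat w_2\to\mp 1$ as $\tau\to\pm\infty$ translates with the same signs.

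The only real obstacle is the bookkeeping needed to match the output of the linear system with the stated coefficients in (ii) and (iii); there are no conceptual hurdles beyond the orbitlike case, which is precisely why the author regards the proof as ``completely analogous'' and omits it.
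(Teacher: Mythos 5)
Your argument is correct and follows exactly the route the paper indicates: the paper omits this proof as ``completely analogous'' to Proposition~\ref{prop:W1W2_orbitlike}, whose appendix proof likewise rescales the $w_j$-identities via the affine substitution and recovers the derivative formulas by solving the same $2\times 2$ linear system built from the Wronskian and the derivative of the norm identity. All your substitutions ($\kappa^2$, $\kappa\kappa'$, $\sqrt{|\mu|}$ in terms of $\cn,\sn,\dn$) and the resulting coefficients in (ii),(iii) check out, so nothing is missing.
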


  The angular progress $\vartheta$ of a wavelike elastica can again be measured by evaluating an integral
  depending only on the curvature. By Proposition~\ref{prop:W1W2_wavelike}~(i)
  and~\eqref{eq:def_Wj_wavelike} there is a smooth function $\vartheta$ such that
  \begin{align} \label{eq:def_theta_wavelike}
      \hat W_2(s)-i\hat W_1(s) = \sqrt{\kappa(s)^2-\mu} \big( \cosh(\vartheta(s))+i
      \sinh(\vartheta(s)) \big).
  \end{align} 
  Again $\vartheta$ is thus defined only up to additive multiples of $2\pi$ so that we may require $\vartheta$
  to satisfy the inequality $-2\pi<\vartheta(-s_*)\leq 0$. In section~\ref{subsec:proof_propositions_wavelike}
  we will derive the following properties of $\vartheta$.

   \begin{prop}\label{prop:properties_vartheta_wavelike}
    For $W_1,W_2,\vartheta$ as above the following identities hold on $\R\sm\{\kappa=0\}$:
    $$
      \vartheta'
       = \frac{\sqrt{|\mu|} \kappa^2}{2(\kappa^2-\mu)},\quad
       \matII{W_1}{W_2}{W_1'}{W_2'}
      =  
      \matII{0}{\frac{\sqrt{\kappa^2-\mu}}{\kappa}}{-\frac{\sqrt{|\mu|}\kappa}{2\sqrt{\kappa^2-\mu}}}{
      \frac{\mu\kappa'}{\kappa^2\sqrt{\kappa^2-\mu}}}
      \matII{\cosh(\vartheta)}{-\sinh(\vartheta)}{-\sinh(\vartheta)}{\cosh(\vartheta)}.
    $$
  \end{prop}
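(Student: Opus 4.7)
The plan is to mirror the strategy used for the orbitlike counterpart in Proposition~\ref{prop:properties_vartheta_orbitlike}, with the circular trigonometric functions replaced by their hyperbolic analogues. On any connected component of $\R\sm\{\kappa=0\}$ we have $\hat W_j=\kappa W_j$, so the defining relation \eqref{eq:def_theta_wavelike} becomes
\[
\kappa\bigl(W_2-iW_1\bigr)=\sqrt{\kappa^2-\mu}\bigl(\cosh(\vartheta)+i\sinh(\vartheta)\bigr).
\]
Separating real and imaginary parts and dividing by $\kappa$ yields the explicit expressions
\[
W_1=-\frac{\sqrt{\kappa^2-\mu}}{\kappa}\sinh(\vartheta),\qquad W_2=\frac{\sqrt{\kappa^2-\mu}}{\kappa}\cosh(\vartheta),
\]
which are exactly the entries in the first row of the matrix product claimed in the proposition.

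Next I would derive the formula for $\vartheta'$. Setting $f:=\sqrt{\kappa^2-\mu}/\kappa$, differentiation of the two expressions above and a straightforward bookkeeping using $\cosh^2-\sinh^2=1$ shows that all $f'$ terms cancel in the combination $W_1W_2'-W_2W_1'$, leaving
\[
W_1W_2'-W_2W_1'=f^2\,\vartheta'=\frac{\kappa^2-\mu}{\kappa^2}\vartheta'.
\]
Comparing this with Proposition~\ref{prop:W1W2_wavelike}~(iv), which asserts $W_2W_1'-W_1W_2'=-\sqrt{|\mu|}/2$, immediately gives $\vartheta'=\frac{\sqrt{|\mu|}\kappa^2}{2(\kappa^2-\mu)}$, as claimed.

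For the second row of the matrix identity I would substitute the explicit formulas for $W_1,W_2$ into the ODE identities (ii) and (iii) of Proposition~\ref{prop:W1W2_wavelike}. This produces
\[
W_1'=-\frac{\mu\kappa'}{\kappa^2\sqrt{\kappa^2-\mu}}\sinh(\vartheta)-\frac{\sqrt{|\mu|}\kappa}{2\sqrt{\kappa^2-\mu}}\cosh(\vartheta),
\]
and analogously for $W_2'$. These expressions match the second row of the claimed matrix product entry by entry, completing the proof.

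No conceptual obstacle arises; the work is purely computational and mimics the orbitlike argument. The one point that deserves a bit of care is that $W_1,W_2$ are only defined on $\R\sm\{\kappa=0\}$, so the identities are initially established on each connected component separately; the normalization condition $-2\pi<\vartheta(-s_*)\le 0$ on the component containing $-s_*$ pins down $\vartheta$ uniquely there, and on the other components $\vartheta$ is determined by \eqref{eq:def_theta_wavelike} up to additive multiples of $2\pi i$ which are irrelevant for the derivative and matrix identities proved here.
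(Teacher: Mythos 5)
Your proposal is correct and follows essentially the same route as the paper: both start from $W_1=-c\sinh(\vartheta)$, $W_2=c\cosh(\vartheta)$ with $c=\sqrt{\kappa^2-\mu}/\kappa$ on each component of $\R\sm\{\kappa=0\}$ and combine this with Proposition~\ref{prop:W1W2_wavelike} to read off $\vartheta'$ and the matrix identity. The only cosmetic difference is that you extract $\vartheta'$ from the Wronskian identity (iv), whereas the paper obtains $\vartheta'$ and $c'$ simultaneously by matching the differentiated parametrization against the first-order system (ii),(iii); both are equally valid.
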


  We mention that the formula for $\vartheta'$ also appears on p.569 in \cite{BryGri_Reduction}, but no
  detailed proof is given there. As in the orbitlike case the above results enable us to derive
  explicit formulas first for $Z(\cdot;P)$ (now for $P\in \C^2$) and then for the curve itself. The
  counterpart of Proposition~\ref{prop:formula_Z_orbitlike} reads as follows.  
   
   \begin{prop}\label{prop:formula_Z_wavelike}
     For $P\in\C^2,P_2\neq 0$ we have 
     \begin{equation} \label{eq:formula_Z_wavelike}
       Z(s;P)
       = -1 + A \hat W_1(s) + B\hat W_2(s) + C\kappa(s) 
     \end{equation}
     where $A,B,C\in\C$ are given by \eqref{eq:formula_AB},\eqref{eq:def_C} whenever $\kappa(s_0)\neq 0$.
   \end{prop}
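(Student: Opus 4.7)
The plan is to mirror the argument of Proposition~\ref{prop:formula_Z_orbitlike} on a single connected component of $\R \setminus \{\kappa = 0\}$, and then to propagate the resulting formula to all of $\R$ by an analyticity argument.

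The first step is to fix $s_0 \in \R$ with $\kappa(s_0) \neq 0$ and to let $I$ be the connected component of $\R \setminus \{\kappa = 0\}$ containing $s_0$. With $C$ given by~\eqref{eq:def_C}, define the auxiliary function $W := (Z(\cdot;P) + 1)\kappa^{-1} - C$ on $I$. The chain of computations from the proof of Proposition~\ref{prop:formula_Z_orbitlike} uses only the ODEs~\eqref{eq:ODE_Z} and~\eqref{eq:ODE_kappa}, both of which remain valid in the wavelike setting (with the wavelike value $\mu = -4k^2(1-k^2)/(2k^2-1)^2$), and therefore carries over verbatim to yield $W'' + 2\mu\kappa^{-2}W = 0$ on $I$. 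Proposition~\ref{prop:W1W2_wavelike} provides a fundamental system $\{W_1, W_2\}$ of this ODE on $I$, so $W = AW_1 + BW_2$ for unique $A, B \in \C$. Solving the resulting $2 \times 2$ system at $s_0$ exactly as in the orbitlike case yields formula~\eqref{eq:formula_AB}, and multiplying through by $\kappa$ and using $\hat W_j = \kappa W_j$ on $I$ produces
\begin{equation*}
  Z(s;P) + 1 = A\hat W_1(s) + B\hat W_2(s) + C\kappa(s) \qquad \text{on } I.
\end{equation*}

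The main obstacle is to propagate this identity across the zeros of $\kappa$ to all of $\R$. The singular fundamental system $\{W_1, W_2\}$ is unavailable there, and the constants produced from~\eqref{eq:formula_AB} on different components could a priori differ. To overcome this, one observes that both sides of the displayed identity are smooth on all of $\R$: the right-hand side by Proposition~\ref{prop:w1w2_wavelike}(v), and the left-hand side by the Remark following Proposition~\ref{prop:ode_Z}. Rewriting~\eqref{eq:ODE_Z} as the second-order linear equation $\kappa u'' - 2\kappa' u' + \kappa u = 2\mu C$ with real-analytic coefficients on $\R$ (recall that $\kappa$ is a rescaled Jacobi $\cn$), one verifies using $\hat W_j = \kappa W_j$ on $I$ together with~\eqref{eq:ODE_kappa} that $A\hat W_1 + B\hat W_2 + C\kappa$ also satisfies this equation on $I$; smoothness of both sides and of the coefficients then extends the equation to $\R$ by continuity. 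Since any smooth solution of a linear ODE with analytic coefficients is itself real-analytic, the difference of the two sides is an analytic function on $\R$ vanishing on the open interval $I$, and therefore vanishes identically.
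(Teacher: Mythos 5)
Your first paragraph reproduces the paper's argument essentially verbatim: on a component $I$ of $\R\setminus\{\kappa=0\}$ one shows, exactly as in Proposition~\ref{prop:formula_Z_orbitlike}, that $W=(Z(\cdot;P)+1)\kappa^{-1}-C$ solves $W''+2\mu\kappa^{-2}W=0$, expands it in the fundamental system of Proposition~\ref{prop:W1W2_wavelike}, and multiplies by $\kappa$. You are also right that the only genuinely new difficulty is propagating the resulting identity across the zeros of $\kappa$; the paper disposes of this in one sentence (the smoothness of $Z,\hat W_1,\hat W_2$ forces the coefficients obtained on the different components to coincide), whereas you argue by unique continuation of real-analytic functions.

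The gap is in your justification of analyticity. The principle you invoke, ``any smooth solution of a linear ODE with analytic coefficients is real-analytic'', is valid only where the leading coefficient is invertible, i.e.\ where the equation can be rewritten as $u''=au'+bu+c$ with $a,b,c$ analytic. The equation $\kappa u''-2\kappa'u'+\kappa u=2\mu C$ is singular precisely at the zeros of $\kappa$ --- exactly the points you need to cross --- and for singular equations the principle is false: $u(x)=e^{-1/x^2}$ for $x>0$, $u(x)=0$ for $x\le 0$, is a smooth, non-analytic solution of the linear equation $x^3u'=2u$ with analytic coefficients on all of $\R$. (A smaller slip of the same kind: continuity extends the identity from $I$ only to $\overline{I}$, not to $\R$; to know that $A\hat W_1+B\hat W_2+C\kappa$ satisfies the equation on the \emph{other} components you must invoke that $\hat W_j/\kappa$ is a solution of the Lam\'e-type equation on every component, which is part of Proposition~\ref{prop:w1w2_wavelike}(v), not a consequence of continuity.) The unique-continuation idea is salvageable, but the analyticity should be obtained directly rather than from the singular ODE: $\hat W_1,\hat W_2,\kappa$ are real-analytic by their explicit theta-function expressions, and $Z(\cdot;P)$ is a rational expression in $\gamma_1,\gamma_2$, which solve the non-degenerate analytic system \eqref{eq:ODE_solution} and are therefore real-analytic. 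With that substitution your argument closes and gives a clean alternative to the paper's coefficient-matching; as written, however, the decisive step rests on a false general statement.
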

   \begin{proof}
      Defining $W(s) := \frac{Z(s;P)+1}{\kappa(s)}-C$ one finds 
     as in Proposition~\ref{prop:formula_Z_orbitlike} $W''(s)+\frac{2\mu}{\kappa(s)^2}W(s)=0$ whenever
     $s+s_*\in ((2j-1)K(k),(2j+1)K(k))$ for some $j\in\Z$. With Proposition \ref{prop:W1W2_wavelike} we
     get \eqref{eq:formula_Z_wavelike} so that the formula for $Z,A,B$ follows as in the orbitlike case. Notice
     that the smoothness of $Z,\hat W_1,\hat W_2$ guarantees that the coefficients $A,B,C$ are the same on all
     intervals. 
   \end{proof}
   
   \begin{thm}\label{thm:explicit_formula_wavelike}
     A curve $\gamma:\R\to \Hh$ is a  wavelike elastica parametrized by hyperbolic arclength if and
     only if we have
     \begin{align} \label{eq:explicit_formula_wavelike}
       \begin{aligned}
       \gamma_1(s)
       &=\frac{b_1  \hat W_1(s)+b_2 \hat W_2(s)+b_3 \kappa(s)}{a_1 \hat W_1(s)+a_2 \hat W_2(s)+a_3\kappa(s)},   \\
       \gamma_2(s)
       &=\frac{1}{a_1 \hat W_1(s)+a_2\hat W_2(s) + a_3\kappa(s)}.
       \end{aligned}
     \end{align}
     for $a_1,a_2,a_3,b_1,b_2,b_3\in\R$ such that 
     \begin{align} \label{eq:wavelike_a_1a_2a_3b_1b_2b_3}
       \begin{aligned}
       &\text{either}\qquad a_3\neq 0, \;\; a_2=\sqrt{a_1^2+a_3^2},\; \;
       \vecII{b_1}{b_2} = \frac{b_3}{a_3} \vecII{a_1}{a_2} - \frac{1}{\sqrt{|\mu|}a_3} \vecII{a_2}{a_1} \\
       &\text{or }\qquad\quad\,
        a_3=0,\;\; a_2=|a_1|,\;\; b_1=\sign(a_1)b_2,\;\; |b_3| =  \frac{1}{\sqrt{|\mu|}}.
        \end{aligned} 
     \end{align}
     In this case the identities \eqref{eq:a1a2b1b2_1},\eqref{eq:a1a2b1b2_2} hold on $\R\sm\{\kappa=0\}$ and for all $s\in\R$ we have
     \begin{align} 
       a_3\gamma_1(s) &= b_3 +\frac{1}{2\mu}\big(\kappa(s)^2\sin(\phi(s))+2\kappa'(s)\cos(\phi(s))\big),  
       \label{eq:wavelike_b3a3_1}  \\
       a_3\gamma_2(s) &= \frac{\kappa(s)}{\mu} 
       + \frac{1}{2\mu}\big(2\kappa'(s)\sin(\phi(s))-\kappa(s)^2\cos(\phi(s))\big).
       \label{eq:wavelike_b3a3_2}
     \end{align}
   \end{thm}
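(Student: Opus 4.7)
The strategy follows the proof of Theorem~\ref{thm:explicit_formula_orbitlike} but requires two structural adjustments. First, since the wavelike curvature $\kappa$ has infinitely many zeros, the functions $W_1,W_2$ blow up there, and I work throughout with the smooth extensions $\hat W_j=\kappa W_j$ from Proposition~\ref{prop:w1w2_wavelike}~(v); this is precisely why $\hat W_1,\hat W_2,\kappa$ (rather than $W_1,W_2,1$) appear in the denominators of \eqref{eq:explicit_formula_wavelike}. Second, the identity $W_2^2-W_1^2=(\kappa^2-\mu)/\kappa^2$ from Proposition~\ref{prop:W1W2_wavelike}~(i) is \emph{Lorentzian} rather than Euclidean, and the angle $\vartheta$ parametrizes hyperbolic rather than circular rotations (Proposition~\ref{prop:properties_vartheta_wavelike}). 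The resulting sign flips propagate through the algebra and produce in particular the additional degenerate case $a_3=0$ in \eqref{eq:wavelike_a_1a_2a_3b_1b_2b_3}.

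For the ``only if'' direction I fix $s_0$ with $\kappa(s_0)\neq 0$, invoke Proposition~\ref{prop:formula_Z_wavelike}, and multiply the resulting identity by $2P_2\gamma_2(s)$ to obtain a polynomial identity in $P_1,P_2$. Matching the coefficients of $P_2^2$ and of $P_1$ yields the representation \eqref{eq:explicit_formula_wavelike} together with \eqref{eq:a1a2b1b2_1}, \eqref{eq:a1a2b1b2_2}, \eqref{eq:wavelike_b3a3_1}, \eqref{eq:wavelike_b3a3_2} (with $\xi^j$ defined as in \eqref{eq:formula_xi} but with $\hat W_j$ replacing $\kappa W_j$); by smoothness these identities extend across the zeros of $\kappa$. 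To derive the algebraic constraints I use Proposition~\ref{prop:properties_vartheta_wavelike} together with $(\kappa')^2=\kappa^2-\kappa^4/4-\mu$ to express $\xi^1,\xi^3$ as hyperbolic rotations of explicit $\kappa,\phi$-dependent vectors. Since a hyperbolic rotation preserves the Lorentzian form $u_2^2-u_1^2$, the wavelike counterparts of the computations \eqref{eq:formulas_scpr_xi_1}--\eqref{eq:formulas_scpr_xi_3} immediately give $a_2^2-a_1^2=a_3^2$ together with the linear relations determining $b_1,b_2$ in terms of $a_1,a_2,a_3,b_3$, with $a_2>0$ inherited from $\gamma_2>0$ and $\hat W_2>0$. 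This produces the first alternative of \eqref{eq:wavelike_a_1a_2a_3b_1b_2b_3}. In the degenerate case $a_3=0$ the same relations collapse to $a_2=|a_1|$ and $b_1=\sign(a_1)b_2$, and evaluating \eqref{eq:wavelike_b3a3_1},\eqref{eq:wavelike_b3a3_2} at a zero of $\kappa$ (where $(\kappa')^2=-\mu$) forces $\sin\phi=0$, $|\cos\phi|=1$ and hence $|b_3|=1/\sqrt{|\mu|}$.

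For the converse, given coefficients satisfying \eqref{eq:wavelike_a_1a_2a_3b_1b_2b_3}, define $\gamma_1,\gamma_2$ via \eqref{eq:explicit_formula_wavelike} and, when $a_3\neq 0$, introduce $\phi$ by the obvious analogue of \eqref{eq:formula_phi}. The trigonometric identity $\sin^2\phi+\cos^2\phi=1$ then follows from the wavelike Pythagorean constraint $a_2^2-a_1^2=a_3^2$ combined with $\hat W_2^2-\hat W_1^2=\kappa^2-\mu$, which plays exactly the bookkeeping role that $\kappa^2|a|^2|W|^2=(\kappa^2-\mu)a_3^2$ did in the orbitlike proof. Verification of the ODE system \eqref{eq:ODE_solution} then proceeds by precisely the differentiations carried out in the orbitlike case, with the sign changes in Proposition~\ref{prop:W1W2_wavelike}~(ii)--(iv) accounted for in the cross-term cancellations. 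The case $a_3=0$ is handled by direct substitution using $a_2=|a_1|$ and $|b_3|\sqrt{|\mu|}=1$. The main obstacle is bookkeeping: one must track the Lorentzian sign flips consistently so that the wavelike analogues of the squared expressions retain correct signs, and separately treat the new degenerate case $a_3=0$, which has no counterpart in the orbitlike setting and requires the limit argument at zeros of $\kappa$ to pin down $|b_3|$.
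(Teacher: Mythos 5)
Your proposal is correct and follows essentially the same route as the paper: the ``only if'' part by matching coefficients of $P_2^2$ and $P_1$ in the identity from Proposition~\ref{prop:formula_Z_wavelike}, the algebraic constraints via the hyperbolic rotation matrices and the Lorentzian form (with $a_2\geq|a_1|$ extracted from the positivity of $\hat W_2$ and the limits $s\to\pm\infty$), and the converse by defining $\phi$ through the analogue of \eqref{eq:formula_phi} and verifying \eqref{eq:ODE_solution}. The only cosmetic difference is that you pin down $|b_3|=1/\sqrt{|\mu|}$ in the $a_3=0$ case by evaluating at a zero of $\kappa$, whereas the paper obtains $b_3^2=1/|\mu|$ from \eqref{eq:wavelike_b3a3_1},\eqref{eq:wavelike_b3a3_2} and $\kappa^4+4(\kappa')^2-4\kappa^2=-4\mu$ at an arbitrary $s$; both are valid.
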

   \begin{proof}
   The proof follows the same lines as in the orbitlike case, so we only sketch the main differences. 
     Assuming first that $\gamma$ is a wavelike elastica in $\Hh$ we get from
     Proposition~\ref{prop:formula_Z_wavelike}
     \begin{align*}
      (\gamma_1(s)-P_1)^2+\gamma_2(s)^2+P_2^2 
      = 2\gamma_2(s)P_2\big( A \hat W_1(s) + B  \hat W_2(s) +  C\kappa(s)\big) 
     \end{align*}
      for all $s\in\R$ and $A,B,C$ given by \eqref{eq:formula_AB},\eqref{eq:def_C}. As in the orbitlike case
      this leads to
     \begin{align*}
       1 = \gamma_2(s)\big(a_1 \hat W_1(s)+a_2\hat W_2(s)+a_3\kappa(s)\big),\quad
       \gamma_1(s) 
        &= \gamma_2(s)\big(b_1\hat W_1(s)+b_2\hat W_2(s)+b_3\kappa(s)\big)
     \end{align*}
     where the constants $a_1,a_2,a_3,b_1,b_2,b_3$ satisfy
     \eqref{eq:a1a2b1b2_1},\eqref{eq:a1a2b1b2_2},\eqref{eq:b3a3_1},\eqref{eq:b3a3_2} on $\R\sm\{\kappa=0\}$.
     The latter two equations imply \eqref{eq:wavelike_b3a3_1},\eqref{eq:wavelike_b3a3_2} for all $s\in\R$. 
     It remains to prove \eqref{eq:wavelike_a_1a_2a_3b_1b_2b_3} to finish the ''only if''-part. 
 
       \medskip

  To this end we introduce the matrices
  \begin{align*}
    \mathcal{W}:=\matII{W_1}{W_2}{W_1'}{W_2'},\qquad
    \mathcal{R}_h(\vartheta):=\matII{\cosh(\vartheta)}{-\sinh(\vartheta)}{-\sinh(\vartheta)}{\cosh(\vartheta)}.
  \end{align*}
  From Proposition~\ref{prop:properties_vartheta_wavelike} we get
    \begin{align*}
      \mathcal{W}
      &=  
      \matII{0}{\frac{\sqrt{\kappa^2-\mu}}{\kappa}}{-\frac{\sqrt{|\mu|}\kappa}{2\sqrt{\kappa^2-\mu}}}{\frac{\mu\kappa'}{\kappa^2\sqrt{\kappa^2-\mu}}}
      \mathcal{R}_h(\vartheta), \qquad
      \mathcal{W}^{-1}
      =  \mathcal{R}_h(-\vartheta)
      \matII{-\frac{2\sqrt{|\mu|}\kappa'}{\kappa^2\sqrt{\kappa^2-\mu}}}{-\frac{2\sqrt{\kappa^2-\mu}}{\sqrt{|\mu|}\kappa}}
      {\frac{\kappa}{\sqrt{\kappa^2-\mu}}}{0},  \\
    \intertext{and combining these formulas with the ones for $\xi^1,\xi^3$ from \eqref{eq:formula_xi} we get}
      \xi^1
      &=  \mathcal{R}_h(-\vartheta)
      \vecII{\frac{1}{2\sqrt{|\mu|(\kappa^2-\mu)}}(2\kappa'\cos(\phi)+\kappa^2\sin(\phi))}{
      \frac{1}{2\mu\sqrt{\kappa^2-\mu}}(2\mu-2\kappa^2-2\kappa\kappa'\sin(\phi)+\kappa^3\cos(\phi))
      },  \\
      \xi^3
      &=  \mathcal{R}_h(-\vartheta)
      \vecII{\frac{1}{2\sqrt{|\mu|(\kappa^2-\mu)}}(2\kappa'\sin(\phi)-\kappa^2\cos(\phi))}{
      \frac{\kappa}{2\mu\sqrt{\kappa^2-\mu}}(\kappa^2\sin(\phi)+2\kappa'\cos(\phi))
      }.
  \intertext{
  Writing $\skp{\cdot}{\cdot}_h$ for the inner product in $\Hh$ and $(w_1,w_2)^\perp=(w_2,w_1)$ we get from
  $(\kappa')^2=\kappa^2-\frac{\kappa^4}{4}-\mu$ and the fact that the rotation matrices
  $\mathcal{R}_h(\vartheta)$ are hyperbolic isometries}
  \skp{\xi^1}{\xi^1}_h
      &= -\frac{(2\kappa+2\kappa'\sin(\phi)-\kappa^2\cos(\phi))^2 }{4\mu^2}
      = - a_3^2\gamma_2^2,\\
      \skp{\xi^1}{\xi^3}_h
      &=
      \frac{(\kappa^2\sin(\phi)+2\kappa'\cos(\phi))(2\kappa+2\kappa'\sin(\phi)-\kappa^2\cos(\phi))}{4\mu^2}
      = \gamma_2 a_3(a_3\gamma_1-b_3),   \\
      \skp{(\xi^1)^\perp}{\xi^3}_h
      &= -\frac{2\kappa+2\kappa'\sin(\phi)-\kappa^2\cos(\phi)}{2\mu\sqrt{|\mu|}}
       = -\frac{a_3\gamma_2}{\sqrt{|\mu|}}.
  \end{align*}
   This implies
  \begin{align*}
    a_1^2-a_2^2
    &= \skp{a}{a}_h
     \stackrel{\eqref{eq:a1a2b1b2_1}}{=} \frac{1}{\gamma_2^2} \skp{\xi^1}{\xi^1}_h
     = -a_3^2, \\
    a_1b_1-a_2b_2
    &= \skp{a}{b}_h
    \stackrel{\eqref{eq:a1a2b1b2_1},\eqref{eq:a1a2b1b2_2}}{=} \frac{\gamma_1}{\gamma_2^2} \skp{\xi^1}{\xi^1}_h
    + \frac{1}{\gamma_2} \skp{\xi^1}{\xi^3}_h = -b_3a_3, \\
   a_2b_1-a_1b_2
    &= \skp{a^\perp}{b}_h
     \stackrel{\eqref{eq:a1a2b1b2_1},\eqref{eq:a1a2b1b2_2}}{=} \frac{1}{\gamma_2}\skp{(\xi^1)^\perp}{\xi^3}_h
    = -\frac{a_3}{\sqrt{|\mu|}}.
  \end{align*}
  Moreover, we have $a_2\geq |a_1|$ because, by Proposition~\ref{prop:W1W2_wavelike}, $\hat W_2$ is positive 
  and thus 
  $$
    0 \leq \limsup_{s\to\pm\infty} \gamma_2(s)\hat W_2(s) 
    = \limsup_{s\to\pm\infty} \frac{1}{a_1 \frac{\hat W_1(s)}{\hat
    W_2(s)}+a_2+\frac{a_3\kappa(s)}{\hat W_2(s)}}
    = \frac{1}{\mp a_1+a_2}. 
  $$ 
  Hence, in the case $a_3\neq 0$ we obtain the conditions from
  \eqref{eq:wavelike_a_1a_2a_3b_1b_2b_3}.
  In the case $a_3=0$ the above equations imply $a_2=|a_1|,b_1=\sign(a_1)b_2$ and 
  \eqref{eq:wavelike_b3a3_1},\eqref{eq:wavelike_b3a3_2} yield 
  \begin{align*}
    b_3^2
    &= \frac{(\kappa^2\sin(\phi)+2\kappa'\cos(\phi))^2}{4\mu^2}
    = \frac{\kappa^4+4(\kappa')^2-(2\kappa'\sin(\phi)-\kappa^2\cos(\phi))^2}{4\mu^2} \\
    &= \frac{\kappa^4+4(\kappa')^2-4\kappa^2}{4\mu^2}
    = \frac{1}{|\mu|}.
  \end{align*} 
  So \eqref{eq:wavelike_a_1a_2a_3b_1b_2b_3} is entirely proved.
  
  \medskip
   
  Now let $a_1,\ldots,b_3$ be given as in \eqref{eq:wavelike_a_1a_2a_3b_1b_2b_3} and
  $\gamma=(\gamma_1,\gamma_2)$ the associated elastica. In analogy to 
  \eqref{eq:formula_phi} the angle function $\phi$ is defined via
     \begin{equation*} 
         \vecII{\sin(\phi)}{\cos(\phi)}
         := \frac{\mu}{2(\kappa^2-\mu)}\matII{\kappa^2}{2\kappa'}{2\kappa'}{-\kappa^2}
         \vecII{a_3\gamma_1- b_3}{a_3\gamma_2-\frac{\kappa}{\mu}},
     \end{equation*}
   Such a definition is possible because of 
   $\kappa^2((aW)^2-(a^\perp W)^2)= \kappa^2(a_1^2-a_2^2)(W_1^2-W_2^2)=(\kappa^2-\mu)a_3^2$
     \begin{align*}
       &\frac{\mu^2}{4(\kappa^2-\mu)^2} \Big(
       \Big( \kappa^2\big(a_3\gamma_1- b_3\big)+2\kappa'\big(a_3\gamma_2-\frac{\kappa}{\mu
       }\big)\Big)^2
       + \Big(
       2\kappa'\big(a_3\gamma_1- b_3\big)-2\kappa^2\big(a_3\gamma_2-\frac{\kappa}{\mu}\big)   \Big)^2   \Big) \\
       &= \frac{\big( \kappa|\mu|(a_3bW-b_3aW)\big)^2+ \big(-\kappa^2aW +
       (\mu-\kappa^2)a_3  \big)^2}{\kappa^2(\kappa^2-\mu)(aW+a_3)^2} \\
       &= \frac{ |\mu|\kappa^2(a^\perp W)^2
         + \kappa^4(aW)^2 +2\kappa^2(\kappa^2-\mu)a_3aW+ (\kappa^2-\mu)^2a_3^2
        }{\kappa^2(\kappa^2-\mu)(aW+a_3)^2} \\
       &= \frac{ \kappa^2\mu |a|_h^2|W|_h^2
         + (\kappa^4-\kappa^2\mu)(aW)^2 +2\kappa^2(\kappa^2-\mu)a_3aW+ (\kappa^2-\mu)^2a_3^2
        }{\kappa^2(\kappa^2-\mu)(aW+a_3)^2} \\
       &= \frac{(aW)^2 +2a_3aW+ a_3^2}{(aW+a_3)^2} \\
       &= 1.
     \end{align*}
     It therefore remains to check that $(\gamma_1,\gamma_2,\phi)$ satisfies the ODE~\eqref{eq:ODE_solution}.
     We have
     \begin{align*}
       \sin(\phi)
       &= \frac{\mu}{2(\kappa^2-\mu)} \Big(
         \kappa^2\big( a_3\frac{bW+b_3}{aW+a_3}-b_3\big)
         +2\kappa'\big( \frac{a_3}{\kappa(aW+a_3)}-\frac{\kappa}{\mu}\big)
       \Big) \\
       &= \frac{\mu}{2(\kappa^2-\mu)} \Big(
         -\frac{\kappa^2 a^\perp W}{\sqrt{|\mu|} (aW+a_3)}
         + \frac{2\kappa'( (\mu-\kappa^2) a_3- \kappa^2 aW)}{\mu\kappa a_3(aW+a_3)}  \Big) \\
       &=  \frac{\sqrt{|\mu|} \kappa^3 a^\perp W
         - 2\kappa^2\kappa' aW
         - 2\kappa'(\kappa^2-\mu) a_3}{2\kappa(\kappa^2-\mu)(aW+a_3)},  \\
        \cos(\phi)
        &= \frac{2\sqrt{|\mu|} \kappa' a^\perp W + \kappa^3 aW +
        \kappa^2(\kappa^2-\mu)a_3}{2\kappa(\kappa^2-\mu)(aW+a_3)}.
     \end{align*}
     Next we use
     \begin{align*}
       W' = \frac{\mu\kappa'}{\kappa(\kappa^2-\mu)} W - \frac{\sqrt{|\mu|}\kappa^2}{2(\kappa^2-\mu)}W^\perp
     \end{align*}
     from Proposition~\ref{prop:W1W2_wavelike}. We obtain
     \begin{align*}
       \frac{\gamma_2'}{\gamma_2}
       &=  - \frac{\kappa'}{\kappa} - \frac{2\mu\kappa' aW +
       \sqrt{|\mu|} \kappa^3 a^\perp W }{2\kappa(\kappa^2-\mu)(aW+a_3)} \\
       &= \frac{-\sqrt{|\mu|} \kappa^3 a^\perp W - 2\kappa^2\kappa' aW
       - 2\kappa'(\kappa^2-\mu)a_3}{2\kappa(\kappa^2-\mu)(aW+a_3)} \\
       &= \sin(\phi), \\
       \frac{\gamma_1'}{\gamma_2}
       &= \kappa W' \frac{b(aW+a_3)-a(bW+b_3)}{aW+a_3} \\
       &= \kappa W' \frac{ (a^\perp W) a -(aW+a_3)a^\perp}{\sqrt{|\mu|} a_3(aW+a_3)} \\
       &= \frac{(-2\sqrt{|\mu|}\kappa' W - \kappa^3 W^\perp)
       ( (a^\perp W) a - (aW+a_3)a^\perp)}{2a_3(aW+a_3)(\kappa^2-\mu)}    \\
       &= \frac{-\kappa^3 (a^\perp W)^2+\kappa^3 (aW)^2
        + a_3(\kappa^3 aW + 2\sqrt{|\mu|}\kappa' a^\perp W)}{2a_3(aW+a_3)(\kappa^2-\mu)}   \\
        &= \frac{\kappa^3 |a|_h^2|W|_h^2
        + a_3(\kappa^3 aW + 2\sqrt{|\mu|}\kappa' a^\perp W)}{2a_3(aW+a_3)(\kappa^2-\mu)}     \\
        &= \frac{a_3\kappa(\kappa^2-\mu) + \kappa^3 aW + 2\sqrt{|\mu|}\kappa' a^\perp
        W}{2(aW+a_3)(\kappa^2-\mu)} \\
        &= \cos(\phi).
     \end{align*}
     The verification of $\phi'+\cos(\phi)=\kappa$ is the same as in the orbitlike case so that $\gamma$ is
     indeed a wavelike elastica.
   \end{proof}
%     
%     \r{\begin{bem} \label{bem:coefficients_wavelike} ~
%      \begin{itemize}
%        \item[(a)] As in the orbitlike case the angle function $\phi$ is given by \eqref{eq:formula_phi}.
%        \item[(b)] The small differences between the formulas from the orbitlike case and the above ones come
%        from the fact that $\xi^1,\xi^2,\xi^3$ are only well-defined for $s\in\R$ s.t. $\kappa(s)\neq 0$, see
%        \eqref{eq:formula_xi}. Moreover, in contrast to the orbitlike case $a_3=0$ is possible so that
%        \eqref{eq:b3a3_1},\eqref{eq:b3a3_2} becomes \eqref{eq:b3a3_1_wavelike},\eqref{eq:b3a3_2_wavelike}.
%        \item[(c)] As in the orbitlike case the coefficients $a_1,\ldots,b_3$ are linked to each other and
%        represent exactly three degrees of freedom, see Proposition~\ref{prop:dependencies}.
%      \end{itemize}
%     \end{bem}}
        
   The oscillating behaviour of wavelike elasticae is again a consequence of a special choice for $P$ in
   Proposition~\ref{prop:formula_Z_wavelike}. For the moment we only consider the case $a_3\neq 0$ and discuss
   the exceptional case $a_3=0$ later in Proposition~\ref{prop:wavelike_a30}.
       
   \begin{prop} \label{prop:choice_of_P_wavelike}
      Assume $a_3\neq 0$. For $P=(\frac{b_3}{a_3},\pm \frac{i}{\sqrt{|\mu|}a_3})$ we have 
      $$
        Z(s;P) = -1 \pm  i \frac{\kappa(s)}{\sqrt{|\mu|}}.
      $$ 
   \end{prop}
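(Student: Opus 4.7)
The plan is to follow the same strategy as in the orbitlike analogue (Proposition~\ref{prop:choice_of_P}), namely to plug the explicit formulas from Theorem~\ref{thm:explicit_formula_wavelike} directly into the definition \eqref{eq:def_Z} of $Z$ and simplify. The two new ingredients compared to the orbitlike case are: first, the ``hyperbolic'' character of Proposition~\ref{prop:W1W2_wavelike}~(i),~(iv) (minus signs replacing pluses), which propagates into the constraint $a_2^2-a_1^2=a_3^2$ from \eqref{eq:wavelike_a_1a_2a_3b_1b_2b_3}; and second, the fact that $P_2$ is now purely imaginary, which together with $\mu<0$ will produce a key cancellation.

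To keep the notation close to the orbitlike proof, I would abbreviate $\tilde aW:=a_1\hat W_1+a_2\hat W_2$ and $\tilde a^\perp W:=a_2\hat W_1+a_1\hat W_2$ so that $\gamma_1=(\tilde bW+b_3\kappa)/(\tilde aW+a_3\kappa)$ and $\gamma_2=1/(\tilde aW+a_3\kappa)$. Substituting the formulas $b_1=(b_3/a_3)a_1-a_2/(\sqrt{|\mu|}a_3)$, $b_2=(b_3/a_3)a_2-a_1/(\sqrt{|\mu|}a_3)$ from \eqref{eq:wavelike_a_1a_2a_3b_1b_2b_3} collapses $a_3\tilde bW-b_3\tilde aW$ to $-\tilde a^\perp W/\sqrt{|\mu|}$, yielding
\[
 \gamma_1-\tfrac{b_3}{a_3}=\frac{-\tilde a^\perp W}{\sqrt{|\mu|}\,a_3(\tilde aW+a_3\kappa)},\qquad
 \gamma_2\mp \tfrac{i}{\sqrt{|\mu|}\,a_3}=\frac{\sqrt{|\mu|}\,a_3\mp i(\tilde aW+a_3\kappa)}{\sqrt{|\mu|}\,a_3(\tilde aW+a_3\kappa)}.
\]

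The next step is the ``hyperbolic Pythagorean'' identity: from Proposition~\ref{prop:W1W2_wavelike}~(i) and $\hat W_j=\kappa W_j$ together with $a_2^2-a_1^2=a_3^2$ one obtains $(\tilde aW)^2-(\tilde a^\perp W)^2=a_3^2(\kappa^2-\mu)$. Squaring and adding the two fractions above, the numerator becomes
\[
 (\tilde a^\perp W)^2+|\mu|a_3^2\mp 2i\sqrt{|\mu|}\,a_3(\tilde aW+a_3\kappa)-(\tilde aW+a_3\kappa)^2
\]
which, using the Pythagorean identity together with $\mu+|\mu|=0$, telescopes to $-2a_3(\tilde aW+a_3\kappa)(\kappa\pm i\sqrt{|\mu|})$. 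Dividing by $2P_2\gamma_2=\pm 2i/(\sqrt{|\mu|}\,a_3(\tilde aW+a_3\kappa))$ then leaves $-(\kappa\pm i\sqrt{|\mu|})/(\pm i\sqrt{|\mu|})=-1\pm i\kappa/\sqrt{|\mu|}$, which is the claimed formula.

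The only potential pitfall is bookkeeping: one has to be careful with the sign of $\mu$, the choice of sign in $P_2$, and the fact that $\hat W_1,\hat W_2$ and $\kappa$ may vanish, although the final identity $Z(s;P)=-1\pm i\kappa(s)/\sqrt{|\mu|}$ is smooth in $s$ and therefore extends across the zeros of $\kappa$ by continuity (as also guaranteed by Remark~(a) after Proposition~\ref{prop:ode_Z}). I would state the computation only under the standing assumption $a_3\ne 0$ that appears in the proposition; the degenerate case $a_3=0$ corresponds to geodesic-like configurations treated separately in Proposition~\ref{prop:wavelike_a30}.
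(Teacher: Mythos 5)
Your computation is correct and follows essentially the same route as the paper: both proofs substitute the explicit formulas of Theorem~\ref{thm:explicit_formula_wavelike} into the definition of $Z$, use the relation $a_3 bW - b_3 aW = -a^\perp W/\sqrt{|\mu|}$ coming from \eqref{eq:wavelike_a_1a_2a_3b_1b_2b_3}, and invoke the hyperbolic identity $\kappa^2\big((a^\perp W)^2-(aW)^2\big)=(\mu-\kappa^2)a_3^2$ together with $\mu<0$ to produce the cancellation. The only cosmetic difference is that you work with $\hat W_j=\kappa W_j$ throughout (which handles the zeros of $\kappa$ automatically), whereas the paper writes the same quantities in terms of $W_j$ and $aW+a_3$.
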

   \begin{proof}
      As in the proof of the previous theorem we use $\kappa^2((a^\perp W)^2-(aW)^2 )
     = (\mu-\kappa^2)a_3^2$ by \eqref{eq:wavelike_a_1a_2a_3b_1b_2b_3} and
     Proposition~\ref{prop:W1W2_wavelike}~(i). Then the assertion follows from
     \begin{align*} 
      &\Big(\gamma_1-\frac{b_3}{a_3}\Big)^2+\Big(\gamma_2\mp \frac{i}{\sqrt{|\mu|}a_3}\Big)^2 \\
     &= \frac{(a^\perp W)^2}{|\mu|a_3^2(aW+a_3)^2} +
       \frac{|\mu|a_3^2-\kappa^2(aW+a_3)^2}{\kappa^2|\mu|a_3^2(aW+a_3)^2} \mp
       \frac{2i\gamma_2}{\sqrt{|\mu|}a_3} \\
       &= \frac{\kappa^2((a^\perp W)^2-(aW)^2) - \mu a_3^2-2\kappa^2
       a_3aW - \kappa^2a_3^2}{\kappa^2|\mu|a_3^2(aW+a_3)^2} \mp \frac{2i\gamma_2}{\sqrt{|\mu|}a_3} \\
       &= -\frac{2\kappa^2a_3^2-2\kappa^2 a_3aW}{\kappa^2|\mu|a_3^2(aW+a_3)^2} \mp
       \frac{2i\gamma_2}{\sqrt{|\mu|}a_3}    \\
       &= -\frac{2}{|\mu|a_3(aW+a_3)} \mp
       \frac{2i\gamma_2}{\sqrt{|\mu|}a_3}    \\
       &\stackrel{\eqref{eq:explicit_formula_orbitlike_2}}{=} 
         -\frac{2\kappa\gamma_2}{|\mu|a_3} \mp   \frac{2i\gamma_2}{\sqrt{|\mu|}a_3}    \\
       &= \pm \frac{2i\gamma_2}{\sqrt{|\mu|}a_3} \big( -1 \pm i
       \frac{\kappa}{\sqrt{|\mu|}} \big) \\
       &=2P_2\gamma_2\big( -1 \pm i
       \frac{\kappa}{\sqrt{|\mu|}} \big).
     \end{align*}
  \end{proof}

    These formulas for the distance function again allow us to deduce that the elastica is enclosed between
    two circles. It is remarkable that in the case of wavelike elasticae the centers of these circles lie on
    opposite sides of the horizontal axis and thus do not entirely belong to the hyperbolic space. We refer to
    section~5.3 in Eichmann's paper \cite{Eich_nonuniqueness} for a derivation of these bounds
    based on the analysis of Killing vector fields along the elasticae.  
     
  \begin{prop}\label{prop:oscillations_wavelike}
    Assume $a_3\neq 0$. Then we have $\gamma(\R)\subset \overline{B_{R_1}(Q_1)}\sm B_{R_2}(Q_2)$ where
    \begin{align*}
      Q_1 &= \Big(\frac{b_3}{a_3},~~~\frac{\sqrt{2+2\sqrt{1-\mu}}}{|\mu| |a_3|}\Big),\qquad
      R_1 = \frac{\sqrt{2-\mu+2\sqrt{1-\mu}}}{|\mu| |a_3|}, \\
      Q_2 &= \Big( \frac{b_3}{a_3},-\frac{\sqrt{2+2\sqrt{1-\mu}}}{|\mu| |a_3|}\Big),\qquad
      R_2 = \frac{\sqrt{2-\mu+2\sqrt{1-\mu}}}{|\mu| |a_3|}.
    \end{align*}
    The curve $\gamma$ touches $\partial B_{R_1}(Q_1)$ exactly when $\kappa$ attains its maximum
    and it touches $\partial B_{R_2}(Q_2)$ precisely when $\kappa$ attains its minimum. Moreover, we have
    \begin{align} \label{eq:limits_wavelike}
      \begin{aligned}
      \Big\{\lim_{s\to\infty} \gamma(s), \lim_{s\to -\infty} \gamma(s)\Big\}
      &= \partial B_{R_1}(Q_1) \cap \partial B_{R_2}(Q_2) \\ 
      &= \Big\{ \Big(\frac{b_3}{a_3}-\frac{1}{\sqrt{|\mu|} a_3},0\Big), 
      \Big(\frac{b_3}{a_3}+\frac{1}{\sqrt{|\mu|} a_3},0 \Big) \Big\}.
      \end{aligned} 
    \end{align} 
  \end{prop}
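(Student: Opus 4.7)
The argument closely parallels Proposition~\ref{prop:enclosures}, the key difference being that the distinguished points $P^{\pm}:=(b_3/a_3,\pm i/(\sqrt{|\mu|}a_3))$ provided by Proposition~\ref{prop:choice_of_P_wavelike} lie in $\C^2\setminus\Hh$, so $Z(\cdot;P^{\pm})$ has no direct geometric meaning. The plan is to feed the explicit formula $Z(s;P^{\pm})=-1\pm i\kappa(s)/\sqrt{|\mu|}$ from that proposition into the definition~\eqref{eq:def_Z} and separate real and imaginary parts. After multiplying through by $2P_2^{\pm}\gamma_2(s)$ and expanding $(\gamma_2-P_2^{\pm})^2$, the imaginary parts cancel automatically and the real parts combine into a single real \emph{master equation}
\[
\bigl(\gamma_1(s)-\tfrac{b_3}{a_3}\bigr)^2+\gamma_2(s)^2+\frac{2\kappa(s)\gamma_2(s)}{|\mu|a_3}=\frac{1}{|\mu|a_3^2},\qquad s\in\R.
\]

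Setting $\kappa_{\max}:=2k/\sqrt{2k^2-1}=\max|\kappa|$, rewriting as $(\gamma_1-b_3/a_3)^2+\gamma_2^2=1/(|\mu|a_3^2)-2\kappa\gamma_2/(|\mu|a_3)$, and using $|\kappa|\le\kappa_{\max}$ together with $\gamma_2>0$ (so that $\mp\kappa\sign(a_3)\le\kappa_{\max}$) produces two inequalities which, after completing the square in $\gamma_2$, take the form
\[
\bigl(\gamma_1-\tfrac{b_3}{a_3}\bigr)^2+\bigl(\gamma_2-\tfrac{\kappa_{\max}}{|\mu||a_3|}\bigr)^2\le R_1^2,\qquad
\bigl(\gamma_1-\tfrac{b_3}{a_3}\bigr)^2+\bigl(\gamma_2+\tfrac{\kappa_{\max}}{|\mu||a_3|}\bigr)^2\ge R_2^2.
\]
A short computation using $1-\mu=(2k^2-1)^{-2}$ confirms $\kappa_{\max}/(|\mu||a_3|)=\sqrt{2+2\sqrt{1-\mu}}/(|\mu||a_3|)$ and $\kappa_{\max}^2/(\mu^2a_3^2)+1/(|\mu|a_3^2)=(2-\mu+2\sqrt{1-\mu})/(\mu^2a_3^2)=R_1^2=R_2^2$, identifying the circles with the centers and radii in the statement. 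Equality in either inequality forces $\kappa(s)=\pm\kappa_{\max}$, i.e.\ $\kappa$ attains one of its two extrema.

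For the limits at $\pm\infty$, I would divide numerator and denominator in~\eqref{eq:explicit_formula_wavelike} by $\hat W_2(s)$. By Proposition~\ref{prop:w1w2_wavelike}(v), $\hat W_1/\hat W_2\to\mp1$ and $\hat W_2\to+\infty$, while $\kappa$ stays bounded. Since $a_3\neq 0$ gives $a_2=\sqrt{a_1^2+a_3^2}>|a_1|$, the denominator blows up so that $\gamma_2(s)\to 0$, and the numerator gives
\[
\gamma_1(s)\longrightarrow\frac{b_2\mp b_1}{a_2\mp a_1}\stackrel{\eqref{eq:wavelike_a_1a_2a_3b_1b_2b_3}}{=}\frac{b_3}{a_3}\pm\frac{1}{\sqrt{|\mu|}\,a_3}.
\]
These two limit points lie on $\{x_2=0\}$; substituting $x_2=0$ into $(x_1-b_3/a_3)^2+(x_2-y_0)^2=R_1^2$ (with $y_0=(Q_1)_2$) and using $R_1^2-y_0^2=1/(|\mu|a_3^2)$ shows they lie on $\partial B_{R_1}(Q_1)$; by the symmetry $Q_2=(Q_1)^{\text{refl}}$ and $R_1=R_2$ they also lie on $\partial B_{R_2}(Q_2)$, and a direct count confirms these are the only two intersection points of the two boundary circles.

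The proof is therefore almost entirely routine once the master equation is available; the only real obstacle is sign-bookkeeping, namely keeping track of the two choices $P^{\pm}$, the sign of $a_3$, and the resulting correspondence between attaining $\max\kappa$ versus $\min\kappa$ and touching each of the two enclosing circles.
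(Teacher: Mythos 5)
Your proposal follows essentially the same route as the paper: it plugs the distinguished complex points $P^{\pm}=(b_3/a_3,\pm i/(\sqrt{|\mu|}a_3))$ from Proposition~\ref{prop:choice_of_P_wavelike} into the definition of $Z$, extracts the real ``master equation'', completes the square using $|\kappa|\le 2k/\sqrt{2k^2-1}=\sqrt{2+2\sqrt{1-\mu}}$ to obtain the two enclosing circles with equality exactly at the extrema of $\kappa$, and then computes the limits via the explicit formulas~\eqref{eq:explicit_formula_wavelike} together with $\hat W_1/\hat W_2\to\mp1$, $\hat W_2\to\infty$ and the relations~\eqref{eq:wavelike_a_1a_2a_3b_1b_2b_3}. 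This is the paper's argument (your explicit tracking of $\sign(a_3)$ and the check that the two limit points are precisely $\partial B_{R_1}(Q_1)\cap\partial B_{R_2}(Q_2)$ are minor refinements of details the paper leaves implicit).
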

  \begin{proof}
    Let $P=(P_1,P_2)$ be given as in Proposition \ref{prop:choice_of_P_wavelike}. By definition
    of $Z(\cdot;P)$ and $P_2^2=-|P_2|^2$ we find
    $(\gamma_1-P_1)^2+\gamma_2^2-|P_2|^2=2\gamma_2|P_2|\frac{\kappa}{\sqrt{|\mu|}}$ and thus
    \begin{align*}
      (\gamma_1-P_1)^2+(\gamma_2+\rho|P_2|)^2 &\geq |P_2|^2(1+\rho^2), \\
      (\gamma_1-P_1)^2+(\gamma_2-\rho|P_2|)^2 &\leq |P_2|^2(1+\rho^2),\\
      \text{where }\quad
      \rho:= \frac{\max |\kappa|}{\sqrt{|\mu|}}
      &= \frac{2k}{\sqrt{2k^2-1}\sqrt{|\mu|}} = \frac{\sqrt{2+2\sqrt{1-\mu}}}{\sqrt{|\mu|}}.
    \end{align*}
    Here equality holds if and only if $\kappa$ attains its minimum/maximum, respectively. 
    So if we set
    $$
      Q_1:=(P_1,\rho|P_2|),\quad Q_2:=(P_1,-\rho|P_2|),\quad R_1:=R_2:=|P_2|\sqrt{1+\rho^2}
    $$
    then $\gamma(\R)\subset \overline{B_{R_1}(Q_1)}\sm B_{R_2}(Q_2)$, which is the first statement of the
    theorem. From the explicit formulas from Theorem \ref{thm:explicit_formula_wavelike} and
    Proposition~\ref{prop:W1W2_wavelike} we infer $\gamma_2(s) \to  0$ as $s\to \pm\infty$ and 
    \begin{align*}
      \lim_{s\to\pm\infty}
      \gamma_1(s) 
       \stackrel{\eqref{eq:explicit_formula_wavelike}}{=} 
        \lim_{s\to\pm\infty} \frac{b_1 \frac{\hat W_1(s)}{\hat W_2(s)}+b_2+b_3 \frac{\kappa(s)}{\hat W_2(s)}
         }{a_1    \frac{\hat W_1(s)}{\hat W_2(s)}+a_2+a_3 \frac{\kappa(s)}{\hat W_2(s)}} 
         = \frac{\mp b_1 +b_2}{\mp a_1 +a_2} 
       \stackrel{\eqref{eq:wavelike_a_1a_2a_3b_1b_2b_3}}{=} 
         \frac{b_3}{a_3} \pm \frac{1}{\sqrt{|\mu|} a_3}.
    \end{align*}
  \end{proof} 
  
  So we conclude as in \cite{LanSing_the_total} that wavelike elasticae with
  $a_3\neq 0$ oscillate around the halfcircle around $(b_3/a_3,0)$ with radius
  $1/(\sqrt{|\mu|}a_3)$. As we show now, a somewhat similar phenomenon occurs in the case $a_3=0$ . Here, 
  $\gamma$ is an unbounded curve lying inside a cone with apex  $(b_1/a_1,0)$ and opening angle
  $2\arctan( \sqrt{2+2\sqrt{1-\mu}}/\sqrt{|\mu|})$ and it oscillates around the vertical axis of the cone,
  see Figure~\ref{Fig:orbitlike_oscillations} or Fig.~3 in the Eichmann's paper \cite{Eich_nonuniqueness}
  where the same phenomenon has been observed. The precise statement for the case $a_3=0$ is the following:

  \begin{prop} \label{prop:wavelike_a30}
    Assume $a_3=0$. Then the elastica $\gamma$ from Theorem~\ref{thm:explicit_formula_wavelike} satisfies
    $$
      \gamma(\R) 
      \subset \mathcal{C}:= \Big\{ (x,y)\in\Hh : \frac{\big|x-\frac{b_1}{a_1}\big|}{y} \leq 
      \frac{\sqrt{2+2\sqrt{1-\mu}}}{\sqrt{|\mu|}} \Big\}.
    $$
    It touches the boundary of the cone $\mathcal{C}$ at points where $\kappa$ attains its maximum or minimum.
    Moreover, we have  
    $$
      \big\{\lim_{s\to\infty} \gamma_2(s), \lim_{s\to -\infty} \gamma_2(s)\big\} = \{0,\infty\}. 
    $$
  \end{prop}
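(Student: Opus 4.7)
The plan is to work directly with the explicit formulas in Theorem~\ref{thm:explicit_formula_wavelike} specialized to the degenerate case $a_3=0$, where $a_2=|a_1|$, $b_1=\sign(a_1)b_2$ and $|b_3|=1/\sqrt{|\mu|}$. Without loss of generality I may assume $a_1>0$ (otherwise reflect), so $a_2=a_1$ and $b_2=b_1$. Substituting these into \eqref{eq:explicit_formula_wavelike} I obtain
\begin{align*}
  \gamma_1(s)-\frac{b_1}{a_1}
  = \frac{b_1(\hat W_1+\hat W_2)+b_3\kappa - \frac{b_1}{a_1}a_1(\hat W_1+\hat W_2)}{a_1(\hat W_1+\hat W_2)}
  = \frac{b_3\kappa(s)}{a_1(\hat W_1(s)+\hat W_2(s))},
\end{align*}
and comparing with $\gamma_2(s)=1/(a_1(\hat W_1+\hat W_2))$ I get the clean identity
\begin{align*}
  \frac{\gamma_1(s)-\frac{b_1}{a_1}}{\gamma_2(s)} = b_3\kappa(s).
\end{align*}

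From the formula $\mu=-4k^2(1-k^2)/(2k^2-1)^2$ a short computation gives $\sqrt{1-\mu}=1/(2k^2-1)$ and hence
\begin{align*}
  \max_{s\in\R}|b_3\kappa(s)|
  = \frac{1}{\sqrt{|\mu|}}\cdot\frac{2k}{\sqrt{2k^2-1}}
  = \frac{\sqrt{2+2\sqrt{1-\mu}}}{\sqrt{|\mu|}}.
\end{align*}
This establishes the cone inclusion $\gamma(\R)\subset\mathcal{C}$, and equality is attained precisely at those $s$ where $\kappa$ attains $\pm\max|\kappa|$, i.e.\ where $\kappa$ attains its maximum or its minimum.

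For the limits of $\gamma_2$ I use the asymptotic information from Proposition~\ref{prop:w1w2_wavelike}~(v) transferred to $\hat W_1,\hat W_2$ via \eqref{eq:def_Wj_wavelike}: $\hat W_2(s)\to\infty$, $\hat W_1(s)\to\mp\infty$ exponentially as $s\to\pm\infty$, and $\hat W_1(s)/\hat W_2(s)\to\mp 1$. The key observation is that by analogy with $-\sinh$ and $\cosh$, the combination $\hat W_1(s)+\hat W_2(s)$ is of the form $\hat W_2(s)\bigl(1+\hat W_1(s)/\hat W_2(s)\bigr)$, and the refined asymptotics (the fundamental system solves a second-order ODE whose solutions, on the unbounded intervals, behave like $e^{\pm cs}$ multiplied by bounded oscillating factors) show that $\hat W_1+\hat W_2$ tends to $0$ at one end and to $+\infty$ at the other end. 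Therefore $a_1(\hat W_1+\hat W_2)$ tends to $0$ at one end and to $+\infty$ at the other, so $\gamma_2=1/(a_1(\hat W_1+\hat W_2))$ satisfies $\{\lim_{s\to\infty}\gamma_2(s),\lim_{s\to-\infty}\gamma_2(s)\}=\{0,\infty\}$, as claimed.

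The only nontrivial step is the last one: extracting sharp (non-indeterminate) asymptotics for $\hat W_1+\hat W_2$ out of the statements $\hat W_1/\hat W_2\to\mp 1$ and $\hat W_2\to\infty$. This must be done by returning to the ODE $w''-\tfrac{2(1-k^2)}{\cn^2(s,k)}w=0$ on the half-lines $(K(k),\infty)$ and $(-\infty,-K(k))$ (after the substitution from \eqref{eq:def_Wj_wavelike}), or equivalently by using the explicit elementary representation of $\hat w_1,\hat w_2$ underlying Proposition~\ref{prop:w1w2_wavelike}~(v), which yields expressions of $-\sinh/\cosh$-type and makes the cancellation in $\hat w_1+\hat w_2$ transparent.
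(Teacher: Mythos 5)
Your proposal is correct and follows essentially the same route as the paper: the same identity $\gamma_1-\tfrac{b_1}{a_1}=\sign(b_3)\kappa\gamma_2/\sqrt{|\mu|}$ obtained from \eqref{eq:wavelike_a_1a_2a_3b_1b_2b_3}, the same evaluation of $\max|\kappa|=\sqrt{2+2\sqrt{1-\mu}}$, and the same asymptotics of $\hat W_1,\hat W_2$ for the limits of $\gamma_2$. The one step you rightly flag as nontrivial (the indeterminate form in $\hat W_1+\hat W_2$, which the paper's one-line computation of $\gamma_2\hat W_2$ also leaves partly implicit at the end where the limit is $+\infty$) is indeed settled exactly as you indicate: by the explicit representation in the proof of Proposition~\ref{prop:w1w2_wavelike}, $\hat w_1+\hat w_2$ is a positive multiple of $w_+$, i.e.\ a positive periodic function times $e^{-s\zeta(\alpha_k,k)}$, so it decays to $0$ at one end and blows up at the other.
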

  \begin{proof}
    The conditions~\eqref{eq:wavelike_a_1a_2a_3b_1b_2b_3} from Theorem~\ref{thm:explicit_formula_wavelike}
    imply 
    $$
      \gamma_1(s)
      = \frac{b_1(\hat W_1(s)+\sign(a_1) \hat W_2(s))+b_3 \kappa(s)}{a_1 \hat W_1(s)+|a_1| \hat W_2(s)}
      %= \frac{b_1}{a_1} + \frac{b_3\kappa(s)}{a_1 \hat W_1(s)+|a_1|\hat W_2(s)}
      = \frac{b_1}{a_1} + \frac{\sign(b_3)\kappa(s) \gamma_2(s)}{\sqrt{|\mu|}}.
    $$
    This identity and 
    $$
      - \min_\R \kappa = \max_\R \kappa = \frac{2k}{\sqrt{2k^2-1}} = \sqrt{2+2\sqrt{1-\mu}}
    $$
    proves the first two assertions. Additionally, Proposition~\ref{prop:W1W2_wavelike} implies $\hat
    W_2(s)\to\infty$ as $s\to\pm\infty$ so that the claim follows from 
    $$
      \gamma_2(s)\hat W_2(s) 
      = \frac{\hat W_2(s)}{a_1\hat W_1(s)+|a_1|\hat W_2(s)}
      \to \frac{1}{\mp a_1+|a_1|} \quad\text{as }s\to\pm\infty. 
    $$ 
  \end{proof}

  \begin{figure}[!htb] 
    \centering 
    \subfigure[$a_3\neq 0$ 
    ]{
      \includegraphics[scale=.38]{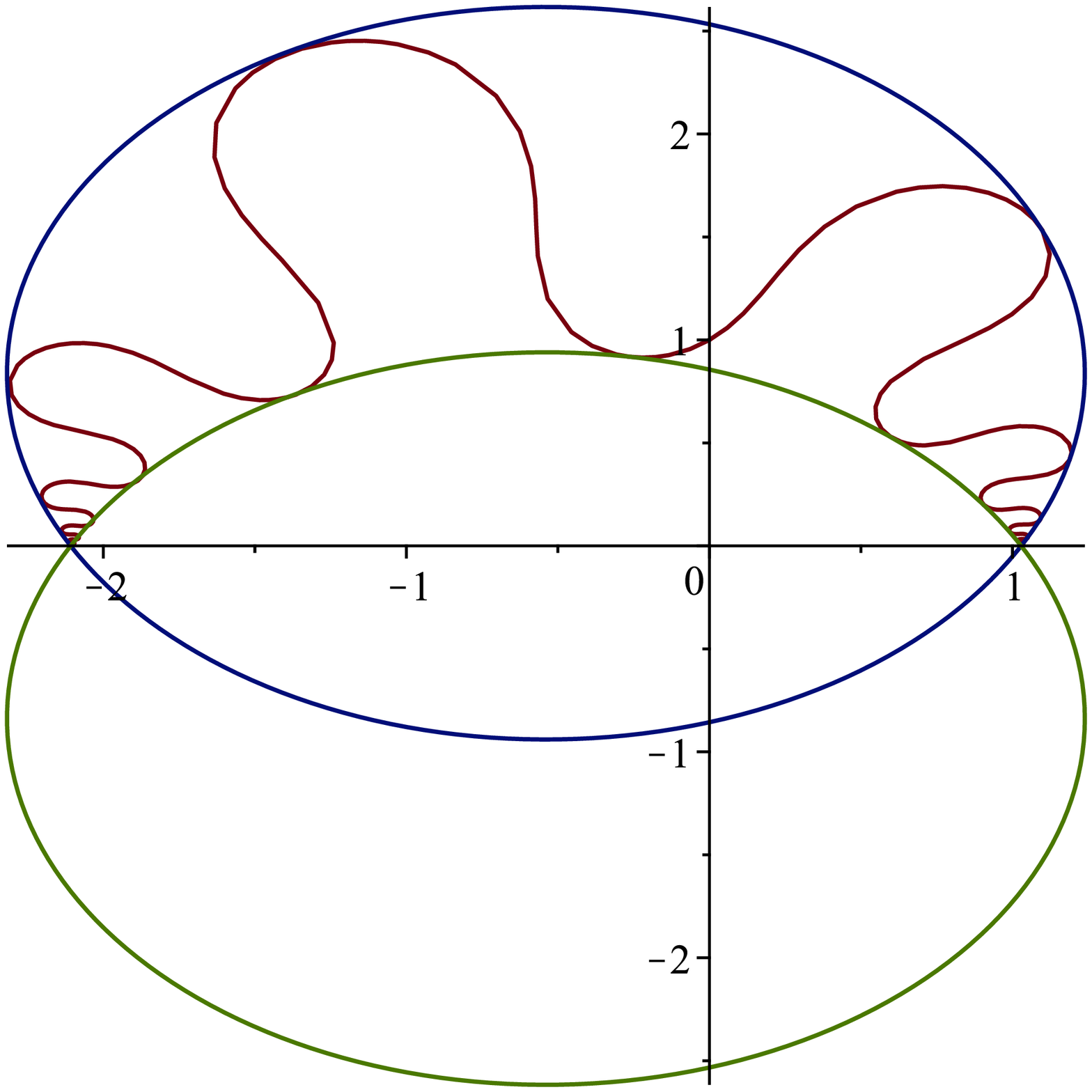} 
    }
    \;
    \subfigure[$a_3=0$ 
    ]{ 
      \includegraphics[scale=.38]{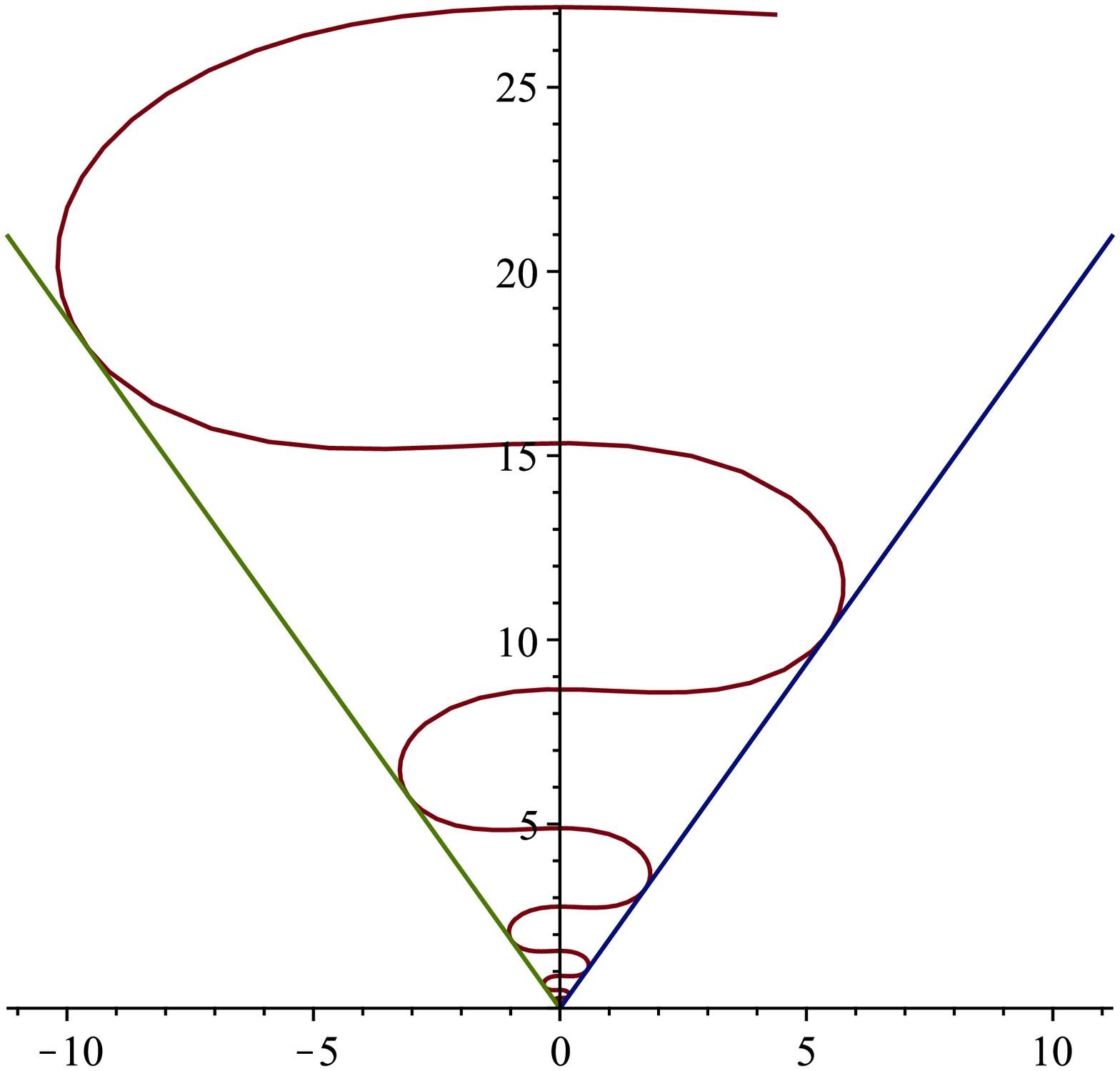} 
    } 
    \caption{Wavelike elasticae in $\Hh$ }
    \label{Fig:orbitlike_oscillations}
  \end{figure}

  \section{The Dirichlet boundary value problem, Proof of Theorem~\ref{thm:Symmetry_breaking}
  and~\ref{thm:Symmetry}} \label{sec:BVPs}
  
  In this section we investigate the Dirichlet problem for orbitlike elasticae. As in our previous
  considerations we assume the elastica to be positively oriented so that its curvature function is
  positive and given by \eqref{eq:def_kappa_orbitlike}.  Using the natural parametrization of elasticae this
  amounts to solving the equations
  \begin{equation} \label{eq:DirichletBVP2}
    \mathcal{W}'(\Sigma_\gamma)=0,\quad  \gamma(0)=(A_1,A_2),\quad \gamma(L)=(B_1,B_2),\quad
    e^{i\phi(0)}=e^{i\phi_A},\quad e^{i\phi(L)}=e^{i\phi_B} 
  \end{equation}
  for given initial/end positions $(A_1,A_2),(B_1,B_2)\in \Hh$ and directions measured
  by $\phi_A,\phi_B\in\R$. Recall that $\gamma,\phi$ are related to each other and to the curvature function
  $\kappa$ via the ODE system \eqref{eq:ODE_solution}. For the sake of shortness we only deal with orbitlike
  elasticae $\gamma$, since the analysis of the Dirichlet problem for wavelike elasticae can be done
  similarly. 
%   \r{First we show that the coefficients $a_1,\ldots,b_3$ appearing in the explicit formulas 
%   from Theorem~\ref{thm:explicit_formula_orbitlike} and Theorem~\ref{thm:explicit_formula_wavelike} depend
%   on each other. This is made precise in the following proposition.}
%   
%   \r{
%   \begin{prop} \label{prop:dependencies}
%     Let $a_1,a_2,a_3,b_1,b_2,b_3\in\R$ be given.
%     \begin{itemize}
%       \item[(i)] The formula \eqref{eq:explicit_formula_orbitlike} defines an orbitlike elasticae if and only
%       if
%       $$
%         a_3=\sqrt{2-k^2}\sqrt{a_1^2+a_2^2}>0,\qquad 
%         (b_1,b_2) = \frac{b_3}{a_3}\cdot (a_1,a_2) + \frac{1}{\sqrt{\mu}a_3}\cdot (-a_2,a_1).
%       $$
%       \item[(ii)] The formula \eqref{eq:explicit_formula_wavelike} defines a wavelike elasticae with
%       $a_3\neq 0$ if and only if
%       $$
%         a_2=\sqrt{a_1^2+\frac{a_3^2}{2k^2-1}} >0,\qquad 
%         (b_1,b_2) = \frac{b_3}{a_3}\cdot (a_1,a_2) - \frac{1}{\sqrt{|\mu|}a_3}\cdot (a_2,a_1).
%       $$
%     \end{itemize} 
%   \end{prop}
%   }
%   
%   \r{The quite technical proof of these relations is given in the appendix, see
%   subsection~\ref{subsec:dependencies}.
%   Notice that Proposition~\ref{prop:dependencies} implies that \eqref{eq:DirichletBVP2}
%   is a system of six equations for six unknowns. Indeed, the curvature function has two parameters
%   $s_*,k$, see~\eqref{eq:def_kappa_orbitlike}, the hyperbolic length $L$ represents another unknown and, given
%   the above result, $a_1,\ldots,b_3$ correspond to three additional unknowns.} \\
  We recall 
  \begin{equation} \label{eq:DirProblem_W_2W_1}
    \kappa(s)=\frac{2}{\sqrt{2-k^2}}\dn(\frac{s+s_*}{\sqrt{2-k^2}},k),\qquad
    W_2(s)-iW_1(s) = \frac{\sqrt{\kappa(s)^2-\mu}}{\kappa(s)}e^{i\vartheta(s)}.
  \end{equation}
  It is convenient to use the following shorthand notations:
   
  \begin{align}\label{eq:sigmas}
    \begin{aligned}
     &\sigma_1 := \frac{1-\mu(a_3^2A_2^2+(a_3A_1-b_3)^2)}{2A_2a_3}, 
     &&\sigma_2:= \sqrt\mu (A_1a_3-b_3),\\
     &\sigma_3 :=   \frac{1-\mu(a_3^2B_2^2+(a_3B_1-b_3)^2)}{2B_2a_3}, 
     &&\sigma_4:=\sqrt\mu (B_1a_3-b_3).
     \end{aligned}
  \end{align}
  The following theorem reduces the Dirichlet problem for orbitlike elasticae to a
  system of two equations with two unknowns. Although this system is rather complicated, we will see that
  it may be significantly simplified when special cases are considered.  
  The strategy is to determine the coefficients $a_1,\ldots,b_3$ from
  Theorem~\ref{thm:explicit_formula_orbitlike} as well as the hyperbolic length $L$ in dependence of $s_*,k$
  and the Dirichlet data so that two equations for $s_*,k$ are left to solve. For notational
  convenience we suppress the dependency of the coefficients $a_1,\ldots,b_3$ and of
  $\sigma_1,\ldots,\sigma_1,\kappa(0),\kappa'(0),\mu$ on $s_*\in\R$ and $k\in (0,1)$.
  
  \begin{thm}  \label{thm:DirProblemI}
    Let $(A_1,A_2),(B_1,B_2)\in\Hh,\phi_A,\phi_B\in\R$ be given. 
    For $s_*\in\R,k\in (0,1)$ let the functions $\kappa,\vartheta$ be defined by
    \eqref{eq:def_kappa_orbitlike},\eqref{eq:def_theta}. Then the elastica given by
    \eqref{eq:explicit_formula_orbitlike_1},\eqref{eq:explicit_formula_orbitlike_2} solves the Dirichlet
    problem \eqref{eq:DirichletBVP2} if and only if there is an $l\in\Z$ such that
    \begin{align} 
      &a_3 = \frac{2\kappa(0)+2\kappa'(0)\sin(\phi_A)-\kappa(0)^2\cos(\phi_A)}{2\mu A_2}, 
      \label{eq:DirProblem_choice_a3}\\
      &b_3 = a_3 A_1 - \frac{\kappa(0)^2\sin(\phi_A)+2\kappa'(0)\cos(\phi_A)}{2\mu}, 
      \label{eq:DirProblem_choice_b3}  \\
%      &\vecII{a_1}{a_2} = \frac{a_3}{\sqrt{2-k^2}\sqrt{\sigma_1^2+\sigma_2^2}} \cdot
%      \Big( \cos(\vartheta(0))\vecII{\sigma_2}{\sigma_1}+\sin(\vartheta(0))\vecII{-\sigma_1}{\sigma_2}\Big),
%      \label{eq:DirProblem_choice_a1a2}  \\
     & a_1+ia_2 = a_3
     \frac{\sigma_2+i\sigma_1}{\sqrt{\sigma_1^2+\sigma_2^2}}e^{i\vartheta(0)}, 
     \label{eq:DirProblem_choice_a1a2}\\
     %&\vecII{b_1}{b_2}
     %= \frac{b_3}{a_3} \vecII{a_1}{a_2} + \frac{1}{\sqrt\mu a_3}\vecII{-a_2}{a_1},
     &b_1+ib_2 = \big( \frac{b_3}{a_3}+\frac{i}{\sqrt\mu a_3}\big)(a_1+ia_2)
     \label{eq:DirProblem_choice_b1b2} \\
     &L= -s_*+\sqrt{2-k^2}\Big( 2lK(k)-\sigma\dn^{-1}\Big(\frac{\sqrt{2-k^2}}{2}\cdot 
         \frac{1+\mu(B_2^2a_3^2+(a_3B_1-b_3)^2)}{2B_2a_3} ,k\Big)\Big)
         \label{Gl:DirProblem_III}
     \intertext{for $\sigma:= \sign\Big(\mu(a_3B_1-b_3)\cos(\phi_B) -\frac{1+\mu(-a_3^2B_2^2+ (a_3B_1-b_3)^2)}{2B_2a_3}
      \sin(\phi_B)\Big)$ and $s_*,k$ satisfy the equations}
      &\Big(\frac{1+\mu(B_2^2a_3^2+(a_3B_1-b_3)^2)}{2B_2a_3}\Big)^2 %\notag \\ 
      = 2\mu(a_3B_1-b_3)\sin(\phi_B)  \notag \\
     &\hspace{6.2cm} + \frac{1+\mu(-B_2^2a_3^2+ (a_3B_1-b_3)^2)}{B_2a_3}\cos(\phi_B),  
      \label{Gl:DirProblem_I}\\
     &e^{i(\vartheta(L)-\vartheta(0))}
      = \frac{(\sigma_1-i\sigma_2)(\sigma_3+i\sigma_4)}{
       \sqrt{(\sigma_1^2+\sigma_2^2)(\sigma_3^2+\sigma_4^2)}}. \label{Gl:DirProblem_II} 
   \end{align}
  \end{thm}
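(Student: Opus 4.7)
The plan is to view Theorem~\ref{thm:explicit_formula_orbitlike} as a parametrization of all positively oriented orbitlike elasticae by $(k,s_*)\in(0,1)\times\R$ together with the six coefficients $a_1,\dots,b_3$ constrained by \eqref{eq:a_1a_2a_3b_1b_2b_3}. The Dirichlet problem then becomes a system in the unknowns $k,s_*,a_1,\dots,b_3,L$, and my task is to translate each of the six boundary conditions into one of the equations of the theorem, showing along the way that every step is reversible so that both the ``if'' and the ``only if'' directions are handled simultaneously.

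First I would evaluate \eqref{eq:b3a3_1} and \eqref{eq:b3a3_2} at $s=0$ with $(\gamma_1(0),\gamma_2(0),\phi(0))=(A_1,A_2,\phi_A)$ and read off $a_3,b_3$ from the resulting $2\times 2$ linear system, producing \eqref{eq:DirProblem_choice_a3} and \eqref{eq:DirProblem_choice_b3}. To fix the direction of $(a_1,a_2)$ I would combine the algebraic identity $(a_1+ia_2)(W_1-iW_2)=(a_1W_1+a_2W_2)-i(-a_2W_1+a_1W_2)$ with the polar form \eqref{eq:def_theta} of $W_2-iW_1$. Evaluating \eqref{eq:explicit_formula_orbitlike_1}, \eqref{eq:explicit_formula_orbitlike_2} at $s=0$ gives explicit expressions for $a_1W_1(0)+a_2W_2(0)$ and $-a_2W_1(0)+a_1W_2(0)$ purely in terms of the $A$-data and $a_3,b_3,\kappa(0),\mu$; inverting the rotation by $\vartheta(0)$ and using $|a_1+ia_2|=a_3$ from \eqref{eq:a_1a_2a_3b_1b_2b_3} then yields \eqref{eq:DirProblem_choice_a1a2}, while \eqref{eq:DirProblem_choice_b1b2} is just a direct restatement of \eqref{eq:a_1a_2a_3b_1b_2b_3}.

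Next I would analyze the conditions at $s=L$. Evaluating \eqref{eq:b3a3_1} and \eqref{eq:b3a3_2} at $s=L$ gives two linear equations for $(\kappa(L),\kappa'(L))$; squaring and adding them and using the conservation law $(\kappa')^2=\kappa^2-\kappa^4/4-\mu$ from \eqref{eq:ODE_kappa} eliminates $\kappa'(L)$ and yields, after discarding the spurious negative root,
\[
\kappa(L)=\frac{1+\mu\bigl(a_3^2B_2^2+(a_3B_1-b_3)^2\bigr)}{2a_3B_2}.
\]
Plugging this value of $\kappa(L)$ back into either of the two linear equations reproduces the compatibility relation \eqref{Gl:DirProblem_I}. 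Equation \eqref{Gl:DirProblem_III} then follows by inverting the curvature formula \eqref{eq:def_kappa_orbitlike}: since $\dn(\cdot,k)$ is even and $2K(k)$-periodic with injective restriction to $[0,K(k)]$, every preimage of $\tfrac{1}{2}\sqrt{2-k^2}\,\kappa(L)$ under $\dn(\cdot,k)$ is of the form $2lK(k)-\sigma\dn^{-1}(\cdot,k)$ for some $l\in\Z$ and some $\sigma\in\{+1,-1\}$, and a short manipulation of the two linear equations at $s=L$ shows that the expression inside $\sign(\cdots)$ in the definition of $\sigma$ actually equals $\kappa'(L)$, which pins down the correct branch.

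Finally, \eqref{Gl:DirProblem_II} expresses the consistency of the two expressions for $a_1+ia_2$ obtained by applying the derivation of \eqref{eq:DirProblem_choice_a1a2} once at $s=0$ and once at $s=L$: equating them yields
\[
e^{i(\vartheta(L)-\vartheta(0))}=\frac{(\sigma_2+i\sigma_1)(\sigma_4-i\sigma_3)}{\sqrt{(\sigma_1^2+\sigma_2^2)(\sigma_3^2+\sigma_4^2)}},
\]
which by direct expansion of the numerator coincides with the right-hand side of \eqref{Gl:DirProblem_II}. The main obstacle throughout is the bookkeeping of phases and branches; in particular, verifying that the algebraic $\sigma$ appearing in \eqref{Gl:DirProblem_III} really agrees with $\sign(\kappa'(L))$ and that all identifications continue to hold across the (isolated) zeros of $\kappa'$ requires some care, but no genuinely new ideas beyond those already developed in the preceding sections.
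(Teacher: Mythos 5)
Your proposal follows the paper's proof essentially step for step: $a_3,b_3$ are read off from \eqref{eq:b3a3_1},\eqref{eq:b3a3_2} at $s=0$; $\kappa(L)$ is obtained by taking moduli and using $\kappa^4+4(\kappa')^2=4(\kappa^2-\mu)$ (note the quartic terms cancel so the resulting equation is \emph{linear} in $\kappa(L)$ --- there is no negative root to discard); $\sigma=\sign(\kappa'(L))$ fixes the branch of $\dn^{-1}$ in \eqref{Gl:DirProblem_III}; and \eqref{Gl:DirProblem_II} comes from equating the two polar expressions for $a_1+ia_2$ at $s=0$ and $s=L$, exactly as in the paper. The only place where the paper does more than you acknowledge is the ``if'' direction, which it carries out as a separate substantive verification (establishing $\sigma_1^2+\sigma_2^2=(\kappa(0)^2-\mu)A_2^2a_3^2$ and $aW(0)+ia^{\perp}W(0)=(\sigma_1+i\sigma_2)/(\kappa(0)A_2)$ and then checking the endpoint and angle conditions) rather than by an appeal to reversibility, but this is bookkeeping of the kind you already flag and involves no idea missing from your outline.
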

  \begin{proof}
    In a first step we prove that every solution of \eqref{eq:DirichletBVP2} satisfies
    \eqref{eq:DirProblem_choice_a3}--\eqref{Gl:DirProblem_II} for some $l\in\Z$. In the
    second step we show that these conditions indeed provide solutions of the Dirichlet problem. So let
    $\gamma$ be a solution of the Dirichlet problem with angle function $\phi$. By 
    Theorem~\ref{thm:explicit_formula_orbitlike} the formulas
    \eqref{eq:b3a3_1},\eqref{eq:b3a3_2} and  $e^{i\phi(0)}=e^{i\phi_A},e^{i\phi(L)}=e^{i\phi_B}$ yield
    \begin{align*}
      a_3 &= \frac{2\kappa(0)+2\kappa'(0)\sin(\phi_A)-\kappa(0)^2\cos(\phi_A)}{2\mu A_2}, \\
      a_3 &= \frac{2\kappa(L)+2\kappa'(L)\sin(\phi_B)-\kappa(L)^2\cos(\phi_B)}{2\mu B_2}, \\
      b_3 &= a_3 A_1 - \frac{\kappa(0)^2\sin(\phi_A)+2\kappa'(0)\cos(\phi_A)}{2\mu}, \\
      b_3 &= a_3 B_1 - \frac{\kappa(L)^2\sin(\phi_B)+2\kappa'(L)\cos(\phi_B)}{2\mu}.
    \end{align*}
    These equations imply \eqref{eq:DirProblem_choice_a3},\eqref{eq:DirProblem_choice_b3} as well as
    \begin{align}
      \matII{\cos(\phi_A)}{-\sin(\phi_A)}{\sin(\phi_A)}{\cos(\phi_A)}\vecII{\kappa(0)^2}{2\kappa'(0)}
      &= \vecII{2\kappa(0)-2\mu A_2 a_3}{2\mu(a_3A_1-b_3)}, 
      \label{eq:DirProblem_Formula1a}\\
      \matII{\cos(\phi_B)}{-\sin(\phi_B)}{\sin(\phi_B)}{\cos(\phi_B)}\vecII{\kappa(L)^2}{2\kappa'(L)}
      &= \vecII{2\kappa(L)-2\mu B_2 a_3}{2\mu(a_3B_1-b_3)}.
      \label{eq:DirProblem_Formula1b}
    \end{align}
    Taking the modulus on each side and using $\kappa^4+4(\kappa')^2=4(\kappa^2-\mu)$ we find
    that the quadratic term $4\kappa(0)^2,4\kappa(L)^2$ cancels and we get
    \begin{align}
       \kappa(0) &= \frac{1+\mu(a_3^2A_2^2+(a_3A_1-b_3)^2)}{2A_2a_3},
       \label{eq:DirProblem_Formula2a} \\
       \kappa(L) &= \frac{1+\mu(a_3^2B_2^2+(a_3B_1-b_3)^2)}{2B_2a_3}.
       \label{eq:DirProblem_Formula2b}
    \end{align}
    Plugging these formulas into \eqref{eq:DirProblem_Formula1a},\eqref{eq:DirProblem_Formula1b} we 
    may solve for $\kappa(0)^2,2\kappa'(0),\kappa(L)^2,2\kappa'(L)$ and find
    \begin{align}  
       \kappa(0)^2
    	 &= 2\mu(a_3A_1-b_3)\sin(\phi_A)+ \frac{1+\mu(-a_3^2A_2^2+ (a_3A_1-b_3)^2)}{A_2a_3}\cos(\phi_A),   
    	 \label{eq:DirProblem_Formula3a} \\
      \kappa(L)^2
    	 &= 2\mu(a_3B_1-b_3)\sin(\phi_B)+ \frac{1+\mu(-a_3^2B_2^2+ (a_3B_1-b_3)^2)}{B_2a_3}\cos(\phi_B),   
    	 \label{eq:DirProblem_Formula3b}
    	 \\
    	2\kappa'(0)
    	&= 2\mu(a_3A_1-b_3)\cos(\phi_A) -\frac{1+\mu(-a_3^2A_2^2+ (a_3A_1-b_3)^2)}{A_2a_3} \sin(\phi_A),
    	\label{eq:DirProblem_Formula4a} \\	 
    	2\kappa'(L)
    	&= 2\mu(a_3B_1-b_3)\cos(\phi_B) -\frac{1+\mu(-a_3^2B_2^2+ (a_3B_1-b_3)^2)}{B_2a_3} \sin(\phi_B).
    	\label{eq:DirProblem_Formula4b}
    \end{align}
    Since the values for $\kappa(L)^2$ given by
    \eqref{eq:DirProblem_Formula2b},\eqref{eq:DirProblem_Formula3b} have to coincide, we get
    \eqref{Gl:DirProblem_I} as a necessary condition. Moreover, using the explicit formula for $\kappa$ from
    \eqref{eq:DirProblem_W_2W_1} as well as \eqref{eq:DirProblem_Formula2b},\eqref{eq:DirProblem_Formula4b} we
    obtain the formula for the hyperbolic length $L$ from \eqref{Gl:DirProblem_III}.
   
   \medskip

   It remains to derive
   \eqref{eq:DirProblem_choice_a1a2},\eqref{eq:DirProblem_choice_b1b2},\eqref{Gl:DirProblem_II}. 
   The equation \eqref{eq:DirProblem_choice_b1b2} is a direct consequence of \eqref{eq:a_1a_2a_3b_1b_2b_3}.
   From $\gamma(0)=(A_1,A_2)$ and $\gamma(L)=(B_1,B_2)$ and the explicit formulas for $\gamma_1,\gamma_2$ from
   Theorem~\ref{thm:explicit_formula_orbitlike} and in particular the relation
   $b=\frac{b_3}{a_3}a+\frac{1}{\sqrt\mu a_3}a^\perp$ from \eqref{eq:a_1a_2a_3b_1b_2b_3} imply 
   \begin{align*}
     \matII{a_1}{a_2}{-a_2}{a_1}\vecII{W_1(0)}{W_2(0)}
     =
     \frac{1}{A_2\kappa(0)}  \vecII{\sigma_1}{\sigma_2}, \qquad
     \matII{a_1}{a_2}{-a_2}{a_1}\vecII{W_1(L)}{W_2(L)}
     = \frac{1}{B_2\kappa(L)}\vecII{\sigma_3}{\sigma_4}.
   \end{align*}
   Notice that we used here $\sigma_1=1-a_3A_2\kappa(0),\sigma_2=1-a_3A_2\kappa(L)$, see
   \eqref{eq:sigmas} and \eqref{eq:DirProblem_Formula2a},\eqref{eq:DirProblem_Formula2b}.
   Using $a_1^2+a_2^2=a_3^2$ from~\eqref{eq:a_1a_2a_3b_1b_2b_3} we may solve this linear problem and arrive at
   \begin{align} \label{eq:DirProblem_equationsforW_1W_2}
	\vecII{W_1(0)}{W_2(0)}
	= \frac{1}{a_3^2A_2\kappa(0)}\vecII{ a_1\sigma_1 - a_2\sigma_2}{ a_2\sigma_1 + a_1\sigma_2},
	\qquad
      \vecII{W_1(L)}{W_2(L)}
      = \frac{1}{a_3^2B_2\kappa(L)}  \vecII{ a_1\sigma_3 - a_2\sigma_4}{ a_2\sigma_3 + a_1\sigma_4}.
    \end{align}
%     \begin{align*}
%       %\kappa(0)
%       %= \frac{1+\mu(a_3^2A_2^2+(A_1a_3-b_3)^2)}{2A_2a_3},\qquad  
%       \kappa(L)
%       = \frac{1+\mu(a_3^2B_2^2+(B_1a_3-b_3)^2)}{2B_2a_3}.
%     \end{align*}  
   From \eqref{eq:DirProblem_W_2W_1} and \eqref{eq:DirProblem_equationsforW_1W_2} we get
   \begin{align*}
     e^{i\vartheta(0)}
     =  \frac{a_2\sigma_1+a_1\sigma_2+i(a_2\sigma_2-a_1\sigma_1)}{
     \sqrt{(a_1^2+a_2^2)(\sigma_1^2+\sigma_2^2)}},   \qquad
     e^{i\vartheta(L)}
     =  \frac{a_2\sigma_3+a_1\sigma_4+i(a_2\sigma_4-a_1\sigma_3)}{
     \sqrt{(a_1^2+a_2^2)(\sigma_3^2+\sigma_4^2)}}.   
   \end{align*}
   The first equation and $a_1^2+a_2^2=a_3^2$ imply \eqref{eq:DirProblem_choice_a1a2}. Dividing 
   both equations we finally end up with \eqref{Gl:DirProblem_II}. This finishes the ''only if''-part of our
   statement. 
   
   \medskip
   
   Now we assume the conditions \eqref{eq:DirProblem_choice_a3}--\eqref{Gl:DirProblem_II} to hold and
   define $\gamma_1,\gamma_2$ by the formulas
   \eqref{eq:explicit_formula_orbitlike_1},\eqref{eq:explicit_formula_orbitlike_2}. Then, by definition of
   $a_1,a_2,b_1,b_2$, the conditions \eqref{eq:a_1a_2a_3b_1b_2b_3} hold so that $(\gamma_1,\gamma_2)$ is an
   orbitlike elastica by Theorem~\ref{thm:explicit_formula_orbitlike} with angle function $\phi$ 
   that satisfies \eqref{eq:formula_phi}. Exploiting the identities~\eqref{eq:b3a3_1},\eqref{eq:b3a3_2} 
   we have
    \begin{align}\label{eq:sigma12_sq}
      \begin{aligned}
      \sigma_1^2+\sigma_2^2
      &= (1-A_2a_3\kappa(0))^2 + \mu(A_1a_3-b_3)^2 \\
      &=
      \frac{(2\mu-2\kappa(0)^2-2\kappa(0)\kappa'(0)\sin(\phi_A)+\kappa(0)^2\cos(\phi_A))^2}{4\mu^2} \\
      &+ \frac{\mu(\kappa(0)^2\sin(\phi_A)+2\kappa'(0)\cos(\phi_A))^2}{4\mu^2}\\
      &= \frac{(\kappa(0)^2-\mu)(2\kappa(0)+2\kappa'(0)\sin(\phi_A)-\kappa(0)^2\cos(\phi_A))^2}{4\mu^2} \\
      &= (\kappa(0)^2-\mu)A_2^2a_3^2,
      \end{aligned} 
    \end{align}
    where we used once again $4(\kappa')^2=4\kappa^2-\kappa^4-4\mu$. 
    This entails 
    \begin{align}\label{eq:aW(0)}
      \begin{aligned}
      &\;aW(0)+ia^\perp W(0) \\
      &= (W_2(0)-iW_1(0))(a_2+ia_1) \\
      &\stackrel{\eqref{eq:def_theta},\eqref{eq:DirProblem_choice_a1a2}}{=} 
        \frac{\sqrt{\kappa(0)^2-\mu}}{\kappa(0)}e^{i\vartheta(0)}\cdot  a_3
        \frac{\sigma_1+i\sigma_2}{\sqrt{\sigma_1^2+\sigma_2^2}}e^{-i\vartheta(0)}   \\
      &\stackrel{\eqref{eq:sigma12_sq}}{=} \frac{\sigma_1+i\sigma_2}{\kappa(0)A_2}. 
      \end{aligned}
    \end{align}
    From this identity and  
    $\sigma_1=1-a_3A_2\kappa(0),\sigma_2= \sqrt\mu (A_1a_3-b_3)$ we deduce  
    \begin{align*}
       \gamma_2(0)
      &\stackrel{\eqref{eq:explicit_formula_orbitlike_2}}{=} \frac{1}{\kappa(0)(aW(0)+a_3)}
      = \frac{1}{\kappa(0)(\frac{\sigma_1}{\kappa(0)A_2}+a_3)} 
      = A_2, \\
      \gamma_1(0) 
      &\stackrel{\eqref{eq:explicit_formula_orbitlike_1}}{=} \frac{bW(0)+b_3}{aW(0)+a_3} \\ 
      &= A_2\kappa(0)(bW(0)+b_3) \\
      &\stackrel{\eqref{eq:DirProblem_choice_b1b2}}{=} 
        A_2\kappa(0) \Big( \frac{b_3}{a_3} aW(0)+ \frac{1}{\sqrt\mu a_3}
      a^\perp W(0)+b_3\Big) \\
      &\stackrel{\eqref{eq:aW(0)}}{=} \frac{b_3}{a_3} \sigma_1 + \frac{1}{\sqrt\mu a_3} \sigma_2 +
      b_3A_2\kappa(0)  \\
      &= A_1.
    \end{align*}
    The same way we find $\sigma_3^2+\sigma_4^2 = (\kappa(L)^2-\mu)B_2^2a_3^2$ and deduce
    \begin{align*}
      aW(L)+ia^\perp W(L) 
      &= (a_2+ia_1)(W_2(L)-iW_1(L)) \\
      &\stackrel{\eqref{eq:DirProblem_W_2W_1}}{=} (a_2+ia_1)(W_2(0)-iW_1(0))
      \frac{\kappa(0)\sqrt{\kappa(L)^2-\mu}}{\kappa(L)\sqrt{\kappa(0)^2-\mu}} e^{i(\vartheta(L)-\vartheta(0))} \\
      &\stackrel{\eqref{Gl:DirProblem_II}}{=} \big(aW(0)+ia^\perp W(0)\big)
      \frac{\kappa(0)\sqrt{\kappa(L)^2-\mu}}{\sqrt{\kappa(0)^2-\mu}\kappa(L)}
      \frac{(\sigma_1-i\sigma_2)(\sigma_3+i\sigma_4)}{\sqrt{\sigma_1^2+\sigma_2^2}\sqrt{\sigma_3^2+\sigma_4^2}}
      \\
      &\stackrel{\eqref{eq:aW(0)}}{=} 
      \frac{\sqrt{\kappa(L)^2-\mu}(\sigma_1+i\sigma_2)}{A_2\kappa(L)\sqrt{\kappa(0)^2-\mu}}
      \frac{(\sigma_1-i\sigma_2)(\sigma_3+i\sigma_4)}{\sqrt{\sigma_1^2+\sigma_2^2}\sqrt{\sigma_3^2+\sigma_4^2}}
      \\
      &=   \frac{\sqrt{\kappa(L)^2-\mu}}{A_2\kappa(L)\sqrt{\kappa(0)^2-\mu}}
      \frac{\sqrt{\sigma_1^2+\sigma_2^2}(\sigma_3+i\sigma_4)}{\sqrt{\sigma_3^2+\sigma_4^2}}   \\
      &\stackrel{\eqref{eq:sigma12_sq}}{=}   \frac{\sigma_3+i\sigma_4}{B_2\kappa(L)}.
    \end{align*}
    As above we deduce $\gamma_1(L)=B_1,\gamma_2(L)=B_2$ so that it remains to show
    $e^{i\phi(0)}=e^{i\phi_A},e^{i\phi(L)}=e^{i\phi_B}$. To this end we prove the formulas
    \eqref{eq:DirProblem_Formula2a}--\eqref{eq:DirProblem_Formula4b} for $\kappa(0),\ldots,2\kappa'(L)$ from
    the ''only if''-part. First we note that the formula for $\kappa(L)$ from
    \eqref{eq:DirProblem_Formula2b} holds by our definition of $L$ from \eqref{Gl:DirProblem_III}. Then the
    formula for $\kappa(L)^2$ from \eqref{eq:DirProblem_Formula3b} is a direct consequence of assumption
    \eqref{Gl:DirProblem_I} and the formula for $2\kappa'(L)$ from \eqref{eq:DirProblem_Formula4b} follows
    from $\sign(\kappa'(L))=\sigma$ and 
    \begin{align*}
      4\kappa'(L)^2
      &= 4\kappa(L)^2-4\mu -\kappa(L)^4 \\
      &= 4\cdot \frac{1+\mu(B_2^2a_3^2+(a_3B_1-b_3)^2)}{2B_2a_3}-4\mu \\
      &\;-   \Big(2\mu(a_3B_1-b_3)\sin(\phi_B)+ \frac{1+\mu(-a_3^2B_2^2+
        (a_3B_1-b_3)^2)}{B_2a_3}\cos(\phi_B)\Big)^2 \\
      &= 4\Big(\mu(a_3B_1-b_3)\cos(\phi_B)
        -\frac{1+\mu(-a_3^2B_2^2+ (a_3B_1-b_3)^2)}{B_2a_3} \sin(\phi_B)\Big)^2.
    \end{align*}
    The formulas for $\kappa(0),\kappa(0)^2,2\kappa'(0)$ follow from the definition of $a_3,b_3$ and
    $4(\kappa')^2=4\kappa^2-\kappa^4-4\mu$. With these formulas we
    deduce $e^{i\phi(0)}=e^{i\phi_A},e^{i\phi(L)}=e^{i\phi_B}$
    from~\eqref{eq:b3a3_1},\eqref{eq:b3a3_2}, which is all what remained to be shown. 
  \end{proof}
  
  We remark that the dependency of the equation \eqref{Gl:DirProblem_II} with respect to $l$ can be made
  ''more explicit'' because of $\vartheta(s+2l\sqrt{2-k^2}K(k))=\vartheta(s)+l\Delta\theta_k$ for all
  $s\in\R$, see Proposition~\ref{prop:properties_vartheta_orbitlike}~(ii). In the following corollary we
  apply the above theorem to the symmetric Dirichlet  boundary value problem for positively oriented orbitlike
  elasticae from Theorem~\ref{thm:Symmetry}. We recall that positive orientation refers to the fact that we
  only discuss orbitlike elasticae with positive curvature function.

  \begin{cor}\label{cor:Symmetry}
    Assume $A_1=-B_1\neq 0, A_2=B_2>0,\phi_A+\phi_B\in 2\pi\Z$ such that
    $$
      \pm\big(\frac{A_2}{A_1}\sin(\phi_A) + \cos(\phi_A)\big) \notin (0,2).
    $$
    Then all positively $(\pm=+)$ respectively negatively $(\pm=-)$ oriented orbitlike solutions
    of~\eqref{eq:DirichletBVP} are symmetric.
  \end{cor}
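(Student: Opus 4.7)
The plan is to apply Theorem~\ref{thm:DirProblemI} with the symmetric boundary data and reduce the statement to showing that the parameter $b_3$ of Theorem~\ref{thm:explicit_formula_orbitlike} must vanish. Geometrically, $b_3/a_3$ is the horizontal coordinate of the centers of the bounding circles of the elastica (Proposition~\ref{prop:enclosures}), so $b_3=0$ places the axis of oscillation on the $y$-axis and the required symmetry will drop out of the explicit formulas. The negatively oriented case reduces to the positively oriented one via the horizontal reflection $\gamma\mapsto(-\gamma_1,\gamma_2)$, which flips the sign of the curvature, sends the boundary data to the equally symmetric set $(-A_1,A_2)$, $(-B_1,B_2)$, $\pi-\phi_A$, $\pi-\phi_B$, and converts the ``$-$'' hypothesis into the ``$+$'' hypothesis for the reflected data. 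Hence it suffices to handle positively oriented orbitlike solutions.

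For the key step I will use the consistency identities \eqref{eq:DirProblem_Formula2a}--\eqref{eq:DirProblem_Formula4b} derived inside the proof of Theorem~\ref{thm:DirProblemI}. Subtracting \eqref{eq:DirProblem_Formula2a} from \eqref{eq:DirProblem_Formula2b} gives $\kappa(0)-\kappa(L)=-2\mu A_1 b_3/A_2$; subtracting \eqref{eq:DirProblem_Formula3a} from \eqref{eq:DirProblem_Formula3b} (using $\sin\phi_B=-\sin\phi_A$ and $\cos\phi_B=\cos\phi_A$) gives $\kappa(0)^2-\kappa(L)^2=-(4\mu b_3/A_2)(A_2\sin\phi_A+A_1\cos\phi_A)$. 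Factoring the left-hand side of the second identity and dividing by $b_3$ under the assumption $b_3\neq 0$ yields
\[
\tfrac{1}{2}\big(\kappa(0)+\kappa(L)\big)=\cos\phi_A+\tfrac{A_2}{A_1}\sin\phi_A.
\]
But $\kappa$ takes values strictly between $\min\kappa=2\sqrt{1-k^2}/\sqrt{2-k^2}>0$ and $\max\kappa=2/\sqrt{2-k^2}<2$ for every $k\in(0,1)$, so the left-hand side lies in $(0,2)$. The hypothesis excludes this, forcing $b_3=0$.

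With $b_3=0$, equations \eqref{eq:DirProblem_Formula2a}--\eqref{eq:DirProblem_Formula2b} immediately give $\kappa(0)=\kappa(L)$, and subtracting \eqref{eq:DirProblem_Formula4a} from \eqref{eq:DirProblem_Formula4b} gives $\kappa'(L)=-\kappa'(0)$. Since the pair $(\dn,\sn\cn)$ determines the $\dn$-argument uniquely modulo the period, this forces $L+2s_*=2\ell\sqrt{2-k^2}K(k)$ for some $\ell\in\Z$, and in particular $\kappa(L-s)\equiv\kappa(s)$. Proposition~\ref{prop:W1W2_orbitlike}(vi) applied with this $\ell$ then yields $(W_1+iW_2)(L-s)=(-W_1(s)+iW_2(s))\,e^{i(\ell-1)\Delta\theta_k}$, so $W_j(L-s)$ is an explicit real-linear combination of $W_1(s),W_2(s)$ depending on $\psi:=(\ell-1)\Delta\theta_k$.

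Inserting these into Theorem~\ref{thm:explicit_formula_orbitlike}, and using $b_3=0$ together with \eqref{eq:a_1a_2a_3b_1b_2b_3} (which forces $(b_1,b_2)=(-a_2,a_1)/(\sqrt{\mu}\,a_3)$) and $a_1^2+a_2^2=a_3^2$, the two identities $\gamma_1(L-s)=-\gamma_1(s)$ and $\gamma_2(L-s)=\gamma_2(s)$ collapse to the single requirement $(a_1,a_2)\parallel(-\sin(\psi/2),\cos(\psi/2))$. To verify this, note that with $b_3=0$ and $A_1=-B_1,A_2=B_2$ one has $\sigma_3=\sigma_1,\sigma_4=-\sigma_2$, so \eqref{Gl:DirProblem_II} collapses to $e^{i(\vartheta(L)-\vartheta(0))}=e^{-2i\alpha}$, where $\alpha:=\arg(\sigma_1+i\sigma_2)$. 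On the other hand, Proposition~\ref{prop:properties_vartheta_orbitlike}(iii) at $s=-s_*$ gives $\vartheta(-2s_*)=-\Delta\theta_k-\vartheta(0)$, and Proposition~\ref{prop:properties_vartheta_orbitlike}(ii) then yields $\vartheta(L)-\vartheta(0)=\psi-2\vartheta(0)$. Matching the two expressions gives $\alpha\equiv\vartheta(0)-\psi/2\pmod{\pi}$, and substituting this into \eqref{eq:DirProblem_choice_a1a2} produces exactly $a_1+ia_2=\pm a_3\,e^{i(\pi/2+\psi/2)}$, which is the required condition. The main obstacle is the algebraic manipulation establishing $b_3=0$; thereafter the remaining steps are direct applications of identities already compiled earlier in the paper.
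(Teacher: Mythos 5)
Your proposal is correct and follows essentially the same route as the paper: the same horizontal reflection reduces the negatively oriented case to the positively oriented one, and the identical pair of identities extracted from \eqref{eq:DirProblem_Formula2a}--\eqref{eq:DirProblem_Formula3b} forces $b_3=0$ under the stated hypothesis, after which $\kappa(0)=\kappa(L)$, $\kappa'(0)=-\kappa'(L)$ pins down $L=-2s_*+2\ell\sqrt{2-k^2}K(k)$ exactly as in the paper. The only difference is in packaging the final step: the paper shows $\gamma_1(L/2)=\sin(\phi(L/2))=0$ together with the symmetry of $\kappa$ about $L/2$ and invokes uniqueness for the ODE system \eqref{eq:ODE_solution}, whereas you verify the reflection identities $\gamma_1(L-s)=-\gamma_1(s)$, $\gamma_2(L-s)=\gamma_2(s)$ directly from the explicit formulas via Proposition~\ref{prop:W1W2_orbitlike}~(vi) --- but both conclusions rest on the same phase relation between $\arg(\sigma_1+i\sigma_2)$ and $\vartheta$ coming from \eqref{Gl:DirProblem_II} and Proposition~\ref{prop:properties_vartheta_orbitlike}, so the content is the same.
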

  \begin{proof}
    We first reduce the problem to the case of positively oriented solutions. So let $\gamma$ be a
  negatively oriented solution of~\eqref{eq:DirichletBVP} with $-\big( \frac{A_2}{A_1}\sin(\phi_A) +
    \cos(\phi_A)\big) \notin (0,2)$. Let $-\kappa$ be its curvature function and $\phi$ its associated angle
    function via~\eqref{eq:ODE_solution}. Then $\tilde\gamma:=(-\gamma_1,\gamma_2)$ is a
   positively oriented solution of the same boundary value problem with 
  $(A_1,A_2,\phi_A)$ replaced by $(-A_1,A_2,\pi-\phi_A)$ because the curvature of $\tilde\gamma$ is $\kappa$
  and its angle function $\tilde\phi$ satisfies $\phi+\tilde\phi\in \pi(2\Z+1)$, see~\eqref{eq:ODE_solution}.
  In particular the initial data of $\tilde\gamma,\tilde\phi$ satisfy 
    $$
      \frac{\tilde\gamma_2(0)}{\tilde\gamma_1(0)} \sin(\tilde\phi(0)) + \cos(\tilde\phi(0))
      = \frac{A_2}{-A_1} \sin(\pi-\phi_A) + \cos(\pi-\phi_A)
      = - \big(\frac{A_2}{A_1} \sin(\phi_A) + \cos(\phi_A)\big)
      \notin (0,2)
    $$
    as well as $(\pi-\phi_A)+(\pi-\phi_B)\in 2\pi\Z$. 
    So once we have shown that the symmetry result is true for positively oriented orbitlike solutions, we
    also obtain that $\tilde\gamma$ and hence $\gamma$ is symmetric. So it remains to prove the result for
    positively oriented orbitlike solutions. 
    
    \medskip
    
     Let $\gamma$ be a positively oriented solution of the given Dirichlet problem so  that
    $a_1,\ldots,b_3,L$ satisfy \eqref{eq:DirProblem_choice_a3}--\eqref{Gl:DirProblem_II} by 
    Theorem~\ref{thm:DirProblemI}. We intend to show that $\gamma$ is symmetric about the point $s=L/2$ where
    $L$ denotes the hyperbolic length of the elastica. This will be done in the following steps: First we show
    $b_3=0$, then we verify that
    $\gamma_1(\tfrac{L}{2})=\sin(\phi(\tfrac{L}{2}))=0$ and that $\kappa$ is symmetric about
    $s=\tfrac{L}{2}$. Invoking the unique solvability of the ODE system~\eqref{eq:ODE_kappa} with such
    initial data we then deduce 
    $$
      \gamma_1(L/2+s)=-\gamma_1(L/2-s),\quad
      \gamma_2(L/2+s)=\gamma_2(L/2-s),\quad
      \phi(L/2+s)+\phi(L/2-s)\in 2\pi\Z
    $$
    and the proof is finished. 
    
    \medskip
    
    So we first prove $b_3=0$. Using $A_1=-B_1,A_2=B_2,\phi_A+\phi_B\in 2\pi\Z$ and the formulas for
    $\kappa(0),\kappa(0)^2,\kappa(L),\kappa(L)^2$ from 
    \eqref{eq:DirProblem_Formula2a},\eqref{eq:DirProblem_Formula2b},\eqref{eq:DirProblem_Formula3a},\eqref{eq:DirProblem_Formula3b}
    we deduce
    $$
      \kappa(0)-\kappa(L)=-2\mu b_3 \frac{A_1}{A_2},\qquad
      \kappa(0)^2-\kappa(L)^2=-4\mu b_3 \big( \sin(\phi_A)+\frac{A_1}{A_2}\cos(\phi_A)\big)
    $$
    So if we had $b_3\neq 0$ then $0<\kappa\leq \|\kappa\|_\infty=\frac{2}{\sqrt{2-k^2}}<2$ would imply 
    $$
      (0,4) \ni \kappa(0)+\kappa(L) = 2\big( \frac{A_2}{A_1}\sin(\phi_A)+\cos(\phi_A)\big) \notin (0,4),
    $$
    a contradiction. This proves $b_3=0$. 
    
    \medskip
    
    From $b_3=0$ we get $\sigma_1=\sigma_3,\sigma_2=-\sigma_4$ by~\eqref{eq:sigmas} 
    as well as $\kappa(0)=\kappa(L),\kappa'(0)=-\kappa'(L)$,
    see~\eqref{eq:DirProblem_Formula2a}--\eqref{eq:DirProblem_Formula4b}. So there is an $l\in\Z$ such that 
    $$
      L = -2s_* + 2l\sqrt{2-k^2}K(k) 
    $$
    by the explicit form of the curvature function from~\eqref{eq:DirProblem_W_2W_1}. 
    In particular, $\kappa$ is symmetric about $\tfrac{L}{2}$ and our next aim is to show
    $\gamma_1(\tfrac{L}{2})=\sin(\phi(\tfrac{L}{2}))=0$. To this end we notice that \eqref{Gl:DirProblem_II}
    implies 
    \begin{equation}\label{eq:cor:angle_condition}
      e^{i(\vartheta(L)-\vartheta(0))} 
      = \frac{(\sigma_1-i\sigma_2)^2}{\sigma_1^2+\sigma_2^2}, \quad\text{hence }
      \frac{\sigma_1-i\sigma_2}{\sqrt{\sigma_1^2+\sigma_2^2}}
      = \pm e^{i(\vartheta(L)-\vartheta(0))/2}
      = \pm e^{i(\vartheta(L/2)-\vartheta(0))}.
    \end{equation}
    Indeed, the formula for $L$ and Proposition~\ref{prop:properties_vartheta_orbitlike} imply
    $$
      \vartheta(L)-\vartheta(0)
       = \vartheta(-2s_*)+l\Delta\theta_k-\vartheta(0) 
       = (l-1)\Delta\theta_k-2\vartheta(0) 
      = 2  \big( \vartheta(\tfrac{L}{2}) -\vartheta(0)\big).
    $$
    Additionally, by the choice of $a_1,a_2$ from~\eqref{eq:DirProblem_choice_a1a2} we get
    \begin{align*}
      a^\perp W(\tfrac{L}{2})
      &= \Re\Big( (a_1+ia_2)\overline{(W_2+iW_1)(\tfrac{L}{2})}\Big) \\
      &= a_3
      \Re\Big(
      \frac{\sigma_2+i\sigma_1}{\sqrt{\sigma_1^2+\sigma_2^2}}e^{i\vartheta(0)}\cdot
      \frac{\sqrt{\kappa(\tfrac{L}{2})^2-\mu}}{\kappa(\tfrac{L}{2})} 
       e^{i(-\vartheta(L/2)))} \Big)  \\
      &\stackrel{\eqref{eq:cor:angle_condition}}{=} \frac{ a_3\sqrt{\kappa(\tfrac{L}{2})^2-\mu}}{\kappa(\tfrac{L}{2})}
      \Re\big( \pm i e^{i\vartheta(L/2))}e^{-i\vartheta(L/2))} \big) \\
      &= 0.
    \end{align*}
    As a consequence, $b_3=0$ implies
    $$
      \gamma_1(\tfrac{L}{2})
      \stackrel{\eqref{eq:explicit_formula_orbitlike_1}}{=} 
        \frac{bW(\tfrac{L}{2})+b_3}{aW(\tfrac{L}{2})+a_3}
       \stackrel{\eqref{eq:a_1a_2a_3b_1b_2b_3}}{=} 
         \frac{1}{\sqrt\mu a_3} \frac{a^\perp
       W(\tfrac{L}{2})}{aW(\tfrac{L}{2})+a_3} = 0
    $$
    so that $\kappa'(\tfrac{L}{2})=0$ and~\eqref{eq:a1a2b1b2_2} yield 
    $$
      0 
      = \gamma_1(\tfrac{L}{2})
      = \frac{b_3}{a_3} +\frac{1}{2\mu a_3}
      \big(\kappa(\tfrac{L}{2})^2\sin(\phi(\tfrac{L}{2}))+2\kappa'(\tfrac{L}{2})\cos(\phi(\tfrac{L}{2}))\big)
      = \frac{\kappa(\tfrac{L}{2})^2\sin(\phi(\tfrac{L}{2}))}{2\mu a_3}, 
    $$ 
    whence $\sin(\phi(L/2))=0$. As outlined at the beginning  of the proof we conclude that the elastica is
    symmetric. 
  \end{proof}
 
  \begin{cor} \label{cor:Symmetry_breaking}
    Let $A_1=B_1=0,A_2=B_2>0,\phi_A=\phi_B=0$. Then all orbitlike solutions of the Dirichlet problem
    \eqref{eq:DirichletBVP2} are given by 
    $$
      k=k_{m,n},\; L=\mathcal{L}_{m,n},\; s_*\in\R \qquad\text{where }m,n\in\N \text{ satisfy
      }1<\frac{2m}{n}<\sqrt 2. 
    $$
    These solutions are symmetric if and only if $s_*\in \sqrt{2-k^2}K(k)\Z$. In particular, there are
    uncountably many nonsymmetric solutions. 
  \end{cor}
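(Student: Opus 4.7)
The plan is to apply Theorem~\ref{thm:DirProblemI} to the specific boundary data $A_1=B_1=0,\,A_2=B_2,\,\phi_A=\phi_B=0$ and then extract the desired characterization of orbitlike solutions.

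First I would substitute the boundary data into formulas \eqref{eq:DirProblem_Formula2a}--\eqref{eq:DirProblem_Formula4b} from the proof of Theorem~\ref{thm:DirProblemI} to obtain $\kappa(0)=\kappa(L)$ and $\kappa'(0)=\kappa'(L)$. Combined with $\gamma(0)=\gamma(L)$ and $\phi(0)\equiv\phi(L)\pmod{2\pi}$, the five state variables $(\gamma_1,\gamma_2,\phi,\kappa,\kappa')$ of the coupled system \eqref{eq:ODE_solution},\eqref{eq:ODE_kappa} agree at $s=0$ and $s=L$, so uniqueness forces $\gamma$ to extend to a closed orbitlike elastica. Proposition~\ref{prop:closed_and_winding_number} then yields $k=k_{m,n}$ and $L=\mathcal{L}_{m,n}$ for some (not necessarily coprime) $m,n\in\N$ with $1<\tfrac{2m}{n}<\sqrt{2}$.

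Next I would establish the converse: for any such $k=k_{m,n}$, $L=\mathcal{L}_{m,n}$ and arbitrary $s_*\in\R$, the coefficients $a_3,b_3$ defined by \eqref{eq:DirProblem_choice_a3},\eqref{eq:DirProblem_choice_b3} and $a_1,a_2,b_1,b_2$ defined by \eqref{eq:DirProblem_choice_a1a2},\eqref{eq:DirProblem_choice_b1b2} satisfy the remaining equations \eqref{Gl:DirProblem_I} and \eqref{Gl:DirProblem_II}. Equation \eqref{Gl:DirProblem_I} at $s=L$ collapses to the identity already built into the definition of $a_3$ at $s=0$, using $\kappa(L)=\kappa(0)$, which follows from $L/\sqrt{2-k^2}=2nK(k_{m,n})$ and the $2K(k)$-periodicity of $\dn$. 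For \eqref{Gl:DirProblem_II} I would invoke Proposition~\ref{prop:properties_vartheta_orbitlike}~(ii) together with the defining relation \eqref{eq:closednedd_condition} to compute $\vartheta(L)-\vartheta(0)=n\Delta\theta_{k_{m,n}}=2\pi m$, while the right-hand side of \eqref{Gl:DirProblem_II} equals $1$ because $(\sigma_3,\sigma_4)=(\sigma_1,\sigma_2)$ by the symmetry of the boundary data. Theorem~\ref{thm:DirProblemI} then furnishes a solution for each $s_*\in\R$.

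For the symmetry characterization I would argue along the lines of Corollary~\ref{cor:Symmetry}: any symmetric solution must have $\kappa$ symmetric about $s=L/2$, and since $\dn(\cdot,k)$ is symmetric precisely about the points of $K(k)\Z$, $\kappa$ is symmetric about $L/2$ if and only if $(L/2+s_*)/\sqrt{2-k^2}\in K(k)\Z$, which with $L=2n\sqrt{2-k^2}K(k)$ reduces to $s_*\in\sqrt{2-k^2}K(k)\Z$. Conversely, for such $s_*$ one has $\kappa'(0)=0$, hence $b_3=0$ by \eqref{eq:DirProblem_choice_b3}, and then the second half of the proof of Corollary~\ref{cor:Symmetry} applies verbatim to yield $\gamma_1(L/2)=\sin(\phi(L/2))=0$ and therefore the claimed symmetry of $\gamma$. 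The main obstacle lies in the converse direction of the second paragraph: one must check carefully that with this highly symmetric boundary data, \eqref{Gl:DirProblem_I} and \eqref{Gl:DirProblem_II} impose no additional constraint beyond $k=k_{m,n}$, leaving $s_*$ as a genuine free parameter and producing uncountably many distinct (and, for generic $s_*$, nonsymmetric) solutions.
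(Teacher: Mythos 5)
Your proposal is correct and follows essentially the same route as the paper: both reduce the problem to Theorem~\ref{thm:DirProblemI}, observe that with this boundary data \eqref{Gl:DirProblem_I} holds identically while $\sigma_1=\sigma_3,\sigma_2=\sigma_4$ and $\kappa(0)=\kappa(L),\kappa'(0)=\kappa'(L)$ turn \eqref{Gl:DirProblem_II} into $L=2n\sqrt{2-k^2}K(k)$ and $n\Delta\theta_k\in 2\pi\Z$, i.e.\ $k=k_{m,n}$, and both characterize symmetry through the symmetry of $\kappa$ about $s=L/2$. Your detour via ODE uniqueness and Proposition~\ref{prop:closed_and_winding_number} to see that the solutions are exactly the closed elasticae is precisely the observation the paper records parenthetically, so it is a presentational rather than substantive difference.
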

  \begin{proof}
    Let the coefficients $a_1,\ldots,b_3$ and $\sigma,L$ be given as in the theorem so that it remains to
    identify all $s_*\in\R,k\in (0,1),l\in\Z$ such that
    \eqref{Gl:DirProblem_I},\eqref{Gl:DirProblem_II} are satisfied.
    Due to $\sin(\phi_B)=0,\cos(\phi_B)=1$ and
    the formulas for $\kappa(L),\kappa(L)^2$ from
    \eqref{eq:DirProblem_Formula2b},\eqref{eq:DirProblem_Formula3b} the equation \eqref{Gl:DirProblem_I} is
    satisfied for any choice of $s_*,k,l$. For our boundary data we have $\sigma_1=\sigma_3,\sigma_2=\sigma_4=-\sqrt\mu b_3$
    so that equation \eqref{Gl:DirProblem_II} is equivalent to $\vartheta(L)-\vartheta(0)= 2m\pi$ for some
    $m\in\N$. Moreover, the formulas
    \eqref{eq:DirProblem_Formula2a},\eqref{eq:DirProblem_Formula2b},\eqref{eq:DirProblem_Formula4a},\eqref{eq:DirProblem_Formula4b}
    imply $\kappa(0)=\kappa(L),\kappa'(0)=\kappa'(L)$ and thus $L=2n\sqrt{2-k^2}K(k)$ for some $n\in\N$.
    So we deduce from Proposition~\ref{prop:properties_vartheta_orbitlike}~(i)
    $$
      2m\pi = \vartheta(L)-\vartheta(0)=n\Delta\theta_k,\quad
      \text{hence } 1<\frac{2m}{n}<\sqrt 2,\; k=k_{m,n}, L=\mathcal{L}_{m,n}.
    $$
    (In other words, the elastica closes up at $s=L$, see section~\ref{subsec:closed_elasticae}.)
    This yields the above characterization of the solutions. Moreover, as in the previous theorem one 
    checks that these solutions are symmetric if and only if $\kappa$ is symmetric about $s=\frac{L}{2}$,
    which is true precisely when $s_*\in\sqrt{2-k^2}K(k)\Z$. 
    Since this is a countable set of real numbers, we obtain uncountably many
    solutions for any choice $s_*\notin \sqrt{2-k^2}K(k)\Z$, see~Figure~\ref{Fig:Symmetry_breaking}.
  \end{proof}

  \begin{figure}[!htb]  
    \centering
    \subfigure[$s_*=0.2,(n,m)=(3,2),k=k_{m,n}$ 
    ]{
      \includegraphics[scale=.38]{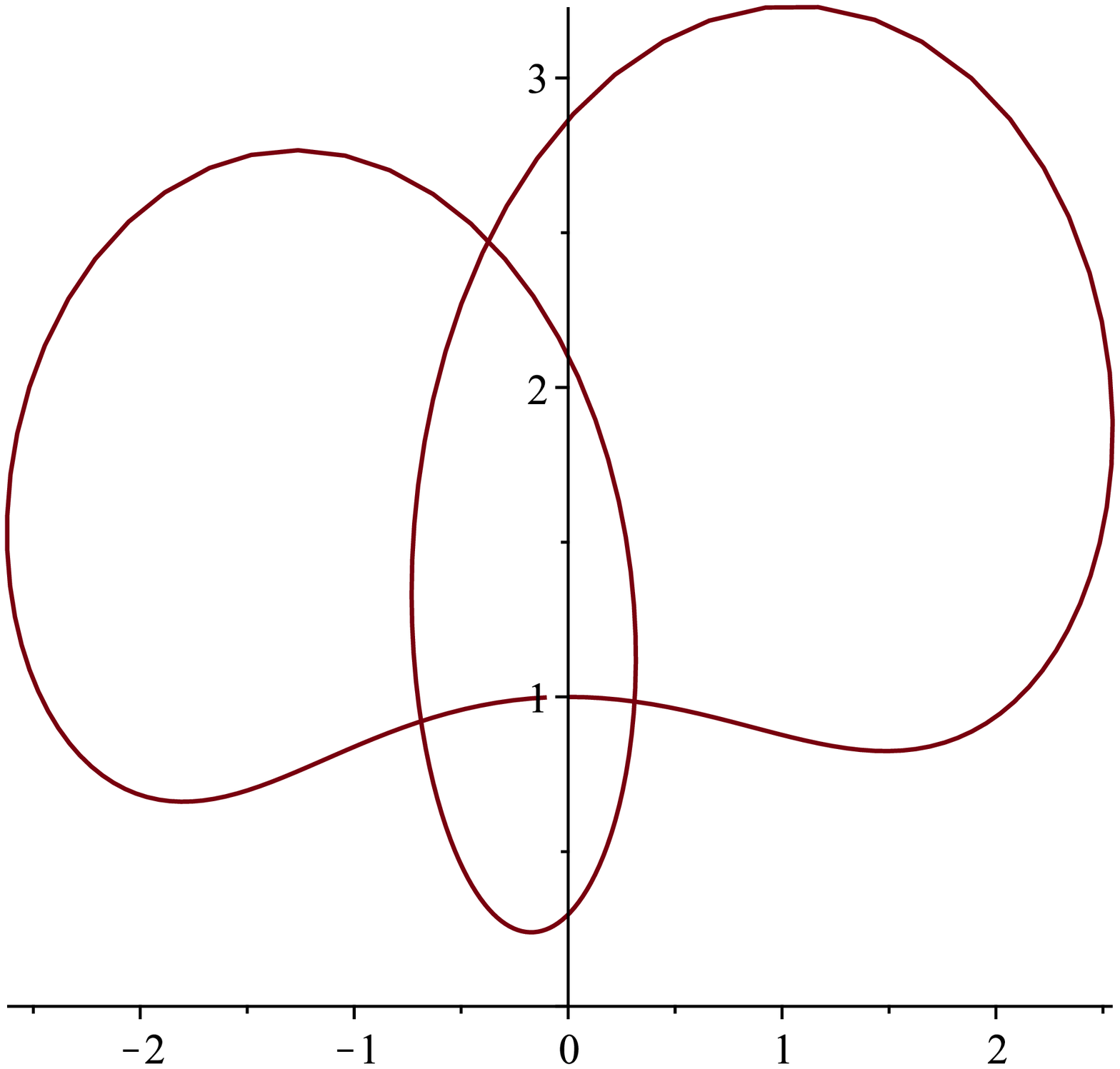} 
    }
    \;
    \subfigure[$s_*=-0.5,(n,m)=(5,3),k=k_{m,n}$ 
    ]{ 
      \includegraphics[scale=.38]{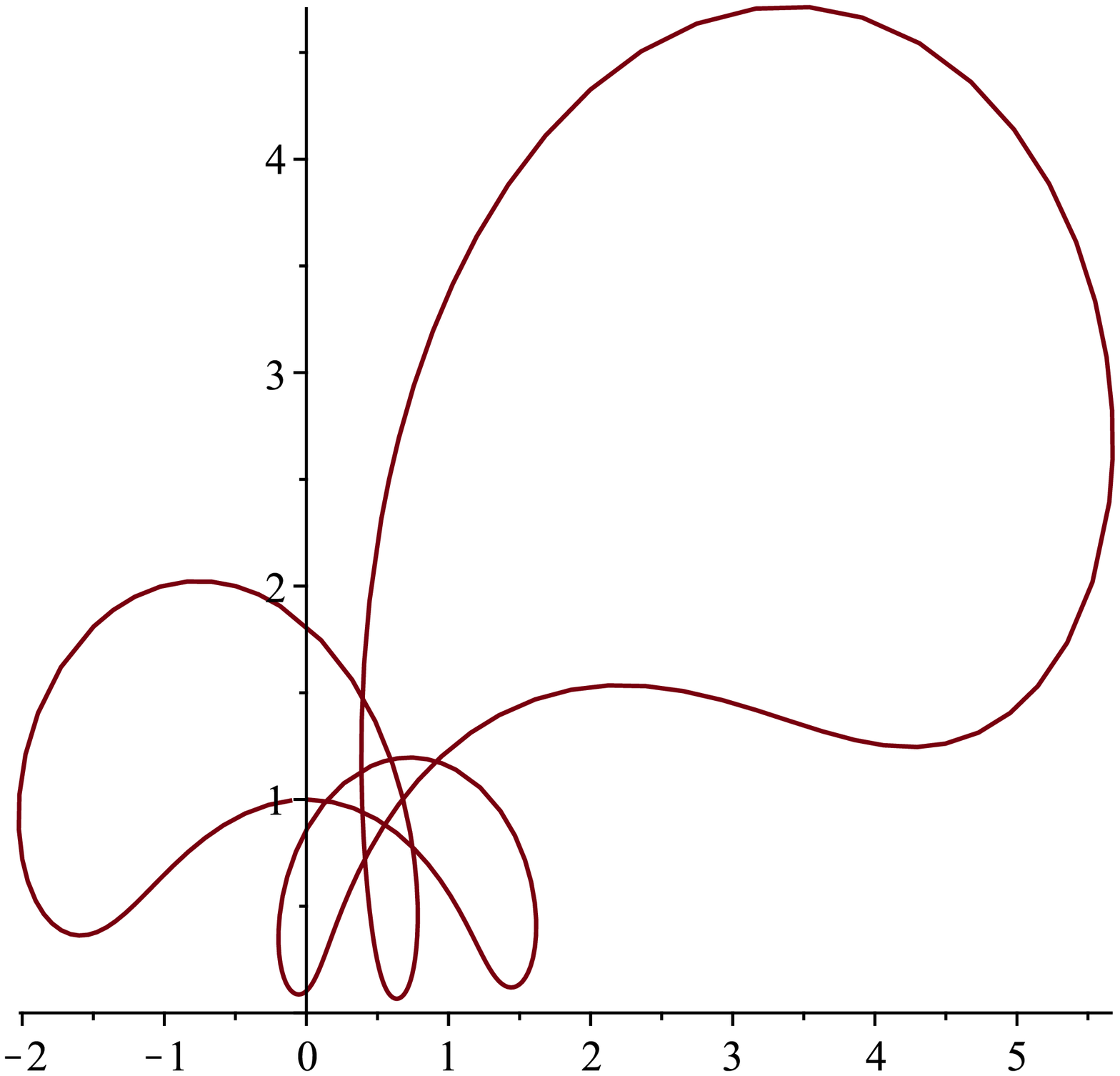} 
    } 
    \caption{Symmetry breaking closed elasticae in $\Hh$}
    \label{Fig:Symmetry_breaking}
  \end{figure}

  \section{Appendix}
    
    \subsection{Proof of Proposition \ref{prop:w1w2_orbitlike}, Proposition~\ref{prop:W1W2_orbitlike} and
    Proposition~\ref{prop:properties_vartheta_orbitlike}} \label{subsec:proof_propositions_orbitlike}
   
   In order to prove Proposition \ref{prop:w1w2_orbitlike} we have to show that for all $k\in (0,1)$ the
   ODE $w''+2\dn^2(\cdot,k)w=0$ has a fundamental system $\{w_1,w_2\}$ on $\R$ such that
     \begin{itemize}
       \item[(i)] $w_1$ is odd about 0, $w_2$ is even about 0, $w_2(0)>0>w_1'(0)$, 
       \item[(ii)] $w_1(s)^2+w_2(s)^2=2-k^2 -\dn^2(s,k)$ on $\R$,
       \item[(iii)] $w_1(s)w_1'(s)+w_2(s)w_2'(s) = k^2\dn(s,k)\cn(s,k)\sn(s,k)$ on $\R$,
       \item[(iv)] $w_1'(s)^2+w_2'(s)^2=1-k^2+\dn^4(s,k)$ on $\R$,
       \item[(v)] $w_1(s)w_2'(s)-w_2(s)w_1'(s) = \sqrt{1-k^2}\sqrt{2-k^2}$ on $\R$, 
       \item[(vi)] $(w_1+iw_2)(s+2lK(k))= (w_1+iw_2)(s)e^{il\Delta\theta_k}$ for
       all $s\in\R,l\in\Z$.
       \item[(vii)] $(w_1+iw_2)(-K(k))= e^{i(\pi-\Delta\theta_k)/2}$.
     \end{itemize}
    
   \medskip
   
   %\r{Here is the proof of these claims:} \\
   In the proof we make use of the special functions introduced in section~\ref{sec:Jacobi}.  
   From \cite{Whit_modern_analysis}, p.570--575 we obtain that every solution of $w''+2\dn^2(\cdot,k)w=0$ 
   is a linear combination of the so-called Halphen-Hermite solutions $w_+,w_-:\R\to\C$ (first discovered in
   \cite{Her_extrait,Hal_fonc_ell}) which are given by
    \begin{equation} \label{eq:Hermite_Halphen}
       w_\pm(s) = \frac{H(s\mp i\alpha,k)}{\Theta(s,k)}e^{s\zeta(\pm i\alpha,k)}\quad\text{for }
       \alpha\in\C \text{ s.t.} \dn^2(i\alpha,k)=2-k^2.
   \end{equation}
   Here, the functions $H,\Theta,\zeta:\C\times(0,1)\to\C$ denote Jacobi's eta,
    theta and zeta functions from section~\ref{sec:Jacobi}. 
    By Jacobi's imaginary transformation (see 161.01 in \cite{ByrFri_handbook}) we have  
    $$
      \dn(i\alpha,k)^2 
      = \frac{\dn^2(\alpha,k')}{\cn^2(\alpha,k')}
      \stackrel{\eqref{eq:Jacobi_trigonometric_identities}}{=}
      \frac{1-(k')^2\sn^2(\alpha,k')}{1-\sn^2(\alpha,k')} 
      = 1 + \frac{k^2\sn^2(\alpha,k')}{1-\sn^2(\alpha,k')} \qquad (k':=\sqrt{1-k^2})
    $$
    so that one possible choice is $\alpha=\alpha_k$ where 
    %(see 130.02 in \cite{ByrFri_handbook}) 
    $\sn(\alpha_k,k')=k'$, i.e. $\alpha_k:=F(k',k')$, see \eqref{eq:Jac_Inverse}. 
    From the series representation of these special functions in \eqref{eq:JacobianTheta_series} we deduce 
    $w_-=\bar{w}_+$ so that 141.01 and 1051.02 in \cite{ByrFri_handbook} imply that $\Re(w_\pm)$ is odd and
    $\Im(w_\pm)$ is even. We then define $w_1,w_2:\R\to\R$ through the formula
    \begin{equation}\label{eq:def_w1w2}
      w_1(s)+iw_2(s) := \frac{\sqrt{1-k^2}}{\Im(w_+(0))} w_+(s)
    \end{equation}
    so that (i) holds.
    
    \medskip
    
    By Proposition 6 in \cite{Val_Heun} for $m_1=m_2=m_3=0,N=1,m_0=1,s=\frac{1}{2},\sigma=2$ the product of
    the two Hermite-Halphen solutions, i.e. $w_+w_-=|w_+|^2$ and thus $w_1^2+w_2^2$, is a real multiple of
    the function $2-k^2-\dn^2(\cdot,k)$. Hence, \eqref{eq:def_w1w2} and the oddness
    of $w_1$ yield $w_1(0)=0,w_2(0)=\sqrt{1-k^2}$ so that (ii) holds. Differentiating (ii) and
    using \eqref{eq:Jacobi_ODE_1terOrdnung},\eqref{eq:Jacobi_ODE_2terOrdnung}
    we get (iii) and (iv). Part (v) follows from the constancy of the Wronskian and
    $w_1(0)=w_2'(0)=0,w_2(0)=\sqrt{1-k^2},w_1'(0)=-\sqrt{2-k^2}$ by (i),(ii),(iv).
%     \begin{align*}
%        2((w_1')^2+(w_2')^2)-4\dn^2(\cdot,k)(w_1^2+w_2^2)
%        &= (w_1^2+w_2^2)'' \\
%        &= (2-k^2-\dn^2(\cdot,k))'' \\
% %        &= -2( (\dn')^2+\dn\dn'') \\
% %        &= -2( k^2-1 + (2-k^2)\dn^2 -\dn^4 + (2-k^2)\dn^2-2\dn^4   ) \\
%        &= -2( k^2-1 + 2(2-k^2)\dn^2(\cdot,k) -3\dn^4(\cdot,k)).
%      \end{align*}
%      After plugging in (ii) we get (iii).
     
     \medskip
     
     We now prove (vi) and (vii). Using 1051.04 in \cite{ByrFri_handbook} we get 
     \begin{align*}
       w_+(s+2lK(k))
       &= \frac{H(s+2lK(k)-i\alpha_k,k)}{\Theta(s+2lK(k),k)}e^{(s+2lK(k))\zeta(i\alpha_k,k)} \\
       &= \frac{(-1)^l H(s-i\alpha_k,k)}{\Theta(s,k)}e^{s\zeta(i\alpha_k,k)} e^{2lK(k)\zeta(i\alpha_k,k)}
       \\
       &= w_+(s) e^{l(2K(k)\zeta(i\alpha_k,k)+\pi i)}. 
     \end{align*}
     Using various identities from \cite{ByrFri_handbook} (see below for details) we find
    \begin{align*}
      &\;  K(k)\zeta(i\alpha_k,k) \\
      &= K(k)i\cdot \Big( -\zeta(\alpha_k,k') -  \frac{\pi  \alpha_k}{2K(k)K'(k)}
      +\frac{\sn(\alpha_k,k')\dn(\alpha_k,k')}{\cn(\alpha_k,k')}  \Big)    \\
      &= K(k)i\cdot \Big( - \zeta(\alpha_k,k') - \frac{\pi F(k',k')}{2K(k)K'(k)} 
       + \sqrt{1-k^2}\sqrt{2-k^2}  \Big) \\
     &=    K(k)i\cdot \Big( - E(k',k') + \frac{E(k')}{K(k')}F(k',k') - \frac{\pi F(k',k')}{2K(k)K'(k)} 
       + \sqrt{1-k^2}\sqrt{2-k^2}  \Big)    \\
     &=   i\cdot \Big( - E(k',k')K(k) + \frac{F(k',k')}{K(k')}(E(k')K(k)-\frac{\pi}{2})   +
     \sqrt{1-k^2}\sqrt{2-k^2}K(k)  \Big)    \\
     &=    i\cdot \Big( - E(k',k')K(k) + \frac{F(k',k')}{K(k')}(K(k')(K(k)-E(k)))   +
     \sqrt{1-k^2}\sqrt{2-k^2}K(k)  \Big)   \\
     &=   i\cdot \Big( - (E(k',k')K(k) +  F(k',k')(E(k)-K(k)))   +
     \sqrt{1-k^2}\sqrt{2-k^2}K(k)  \Big)   \\
     &=    i\cdot \Big( - \frac{\pi}{2}\Lambda_0(\arcsin(k'),k)  + \sqrt{1-k^2}\sqrt{2-k^2}K(k) 
     \Big)   \\
     &= i\cdot \frac{1}{2}(\Delta\theta_k-\pi).
   \end{align*}
   In the first equality we used 143.02, in the second we used %\r{the definition of $\alpha_k$}
   $\sn(\alpha_k,k')=k'$, in the third 130.02 and  140.01, then 110.10 in the fifth and
    150.03 in the seventh and the eigth equality results from the definition of $\Delta\theta_k$,
    see~\eqref{eq:def_rotation}. This proves (vi). 
    Finally, we have
    \begin{align*}
      (w_1+iw_2)(-K(k)) 
      \stackrel{\eqref{eq:Hermite_Halphen}}{=} %\frac{H(K(k)-i\alpha_k,k)}{\Theta(K(k),k)} 
        e^{-K(k)\zeta(i\alpha_k,k)} 
      %&= \frac{H(K(k)-i\alpha_k,k)}{\Theta(K(k),k)} e^{i(\pi-\Delta\theta_k)/2} \\
      =  e^{i(\pi-\Delta\theta_k)/2} 
    \end{align*}  
    because of $(w_1^2+w_2^2)(-K(k))=1$ by Proposition~\ref{prop:w1w2_orbitlike}~(ii) and because 
    $H(-K(k)-i\alpha_k,k)$, $\Theta(-K(k),k)$ are real and positive for all $k\in (0,1)$ by
    \eqref{eq:JacobianTheta_series}.   \qed
    
   \medskip

   Next we prove Proposition~\ref{prop:W1W2_orbitlike}, i.e. that the functions $W_1,W_2$ given by 
   $$
     W_j(s)= \frac{1}{\sqrt{2-k^2}} w_j(-K(k)+\frac{s+s_*}{\sqrt{2-k^2}}) \quad (j=1,2)
   $$
   satisfy the following identities:
   \begin{itemize}
       \item[(i)] $W_1^2+W_2^2 = \frac{\kappa^2-\mu}{\kappa^2}$,
       \item[(ii)] $W_1'= \frac{\mu \kappa'}{\kappa(\kappa^2-\mu)} W_1
      - \frac{\sqrt\mu \kappa^2}{2(\kappa^2-\mu)}W_2 $,
       \item[(iii)] $W_2' = \frac{\sqrt\mu \kappa^2}{2(\kappa^2-\mu)}W_1 + \frac{\mu
       \kappa'}{\kappa(\kappa^2-\mu)} W_2$,
      % \item[(iv)] $(W_1')^2+(W_2')^2
      % = \frac{1-k^2}{2-k^2} \frac{\kappa^4+4\mu}{\kappa^4}$,
       %= \frac{1-k^2}{2-k^2} \frac{\kappa^6+4\mu(\kappa')^2}{\kappa^4(\kappa^2-\mu)}$,
       \item[(iv)] $W_1W_2'-W_2W_1' = \frac{\sqrt\mu}{2}$,
       \item[(v)] $(W_1+iW_2)(s+2l\sqrt{2-k^2}K(k))=(W_1+iW_2)(s)e^{il\Delta\theta_k}$ for $s\in\R,l\in\Z$,
       \item[(vi)] $(W_1+iW_2)(s)= (-W_1+iW_2)(-s-2s_*+2l\sqrt{2-k^2}K(k)) e^{i(l-1)\Delta\theta_k}$
       for $s\in\R,l\in\Z$.
  \end{itemize}
  
  \medskip

   For the proof we set $z:=-K(k)+ \frac{s+s_*}{\sqrt{2-k^2}}$ so that
      \eqref{eq:def_kappa_orbitlike},\eqref{eq:def_Wj} give
     \begin{align} \label{eq:formulas_z_vs_s}
       \begin{aligned}
       w_j(z)= \sqrt{2-k^2}W_j(s),&\quad w_j'(z)= (2-k^2)W_j'(s) \quad (j=1,2)\\
       \kappa(s) &= \frac{2}{\sqrt{2-k^2}}\dn(z+K(k),k),\\
       \kappa'(s) &= -\frac{2k^2}{2-k^2}\sn(z+K(k),k)\cn(z+K(k),k).
       \end{aligned}
     \end{align}
     Using the identities from \eqref{eq:Jac_Additionstheoreme} we get
     \begin{align} \label{eq:formulas_dnsncn}
       \begin{aligned}
       \dn(z,k)
       &= \frac{\sqrt{1-k^2}}{\dn(z+K(k),k)}
       \stackrel{\eqref{eq:formulas_z_vs_s}}{=}  \frac{2\sqrt{1-k^2}}{\sqrt{2-k^2}} \frac{1}{\kappa(s)}, \\
       \sn(z,k)\cn(z,k)
       &= -  \frac{\sqrt{1-k^2} \sn(z+K(k),k)\cn(z+K(k),k)}{\dn^2(z+K(k),k)}
       \stackrel{\eqref{eq:formulas_z_vs_s}}{=} \frac{2\sqrt{1-k^2}}{k^2} \frac{\kappa'(s)}{\kappa(s)^2}.
       \end{aligned}
     \end{align}
     In particular, using that $w_1,w_2$ are linearly independent by Proposition~\ref{prop:W1W2_orbitlike} (i)
     as well as
     \begin{align*}
       W_j''(s)
       = \frac{1}{(2-k^2)^{3/2}}w_j''(z)
       = - \frac{2}{(2-k^2)^{3/2}} \dn^2(z,k) w_j(z)
       \stackrel{\eqref{eq:formulas_dnsncn}}{=} - 2\mu\kappa(s)^{-2} W_j(s)
       \quad\text{for $j=1,2$},
     \end{align*}
     we get that $\{W_1,W_2\}$ is a fundamental system of the given ODE.
     Now we exploit \eqref{eq:formulas_z_vs_s},\eqref{eq:formulas_dnsncn} and
     Proposition~\ref{prop:w1w2_orbitlike} in order to prove (i)-(vi).
     The first item follows from 
     \begin{align*}
       W_1(s)^2+W_2(s)^2
       &\stackrel{\eqref{eq:formulas_z_vs_s}}{=} \frac{w_1(z)^2+w_2(z)^2}{2-k^2}
       \stackrel{\ref{prop:w1w2_orbitlike}.(ii)}{=} 1-\frac{\dn^2(z,k)}{2-k^2}
       %&= 2-k^2-\frac{4(1-k^2)}{(2-k^2)\kappa(s)^2}
       \stackrel{\eqref{eq:formulas_dnsncn}}{=}
       \frac{\kappa(s)^2-\mu}{\kappa(s)^2}. \\
     \intertext{The assertion (ii) is true because of}
       W_1'(s)
       &\stackrel{\eqref{eq:formulas_z_vs_s}}{=} \frac{1}{2-k^2} w_1'(z) \\
       &= \frac{1}{2-k^2}
       \frac{w_1(z)(w_1w_1'+w_2w_2')(z)-w_2(z)(w_1w_2'-w_2w_1')(z)}{(w_1^2+w_2^2)(z)} \\
       &\hspace{-1.2cm}\stackrel{\ref{prop:w1w2_orbitlike}.(ii)-(iv)}{=} \frac{1}{(2-k^2)^{3/2}}
         \frac{k^2\dn(z,k)\cn(z,k)\sn(z,k) W_1(s)- \sqrt{1-k^2}\sqrt{2-k^2}W_2(s)}{W_1(s)^2+W_2(s)^2}
         \\
      &\stackrel{\eqref{eq:formulas_dnsncn}}{=}\frac{4(1-k^2)}{(2-k^2)^2} \frac{\kappa'(s)}{\kappa(s)^3}
      \frac{W_1(s)}{W_1(s)^2+W_2(s)^2} - \frac{\sqrt{1-k^2}}{2-k^2} \frac{W_2(s)}{W_1(s)^2+W_2(s)^2}  \\
      &\stackrel{\ref{prop:W1W2_orbitlike}.(i)}{=} \frac{\mu\kappa'(s)}{\kappa(s)(\kappa(s)^2-\mu)} W_1(s)
      - \frac{\sqrt\mu \kappa(s)^2}{2(\kappa(s)^2-\mu)}W_2(s). \\
     \intertext{Similarly, (iii) follows from}
       W_2'(s)
       &= \frac{1}{2-k^2}
       \frac{w_2(z)(w_1w_1'+w_2w_2')(z)+w_1(z)(w_1w_2'-w_2w_1')(z)}{(w_1^2+w_2^2)(z)} \\
       &= \frac{\sqrt\mu \kappa(s)^2}{2(\kappa(s)^2-\mu)}W_1(s) +
       \frac{\mu\kappa'(s)}{\kappa(s)(\kappa(s)^2-\mu)} W_2(s). \\
     \intertext{
%      From (ii) and (iii) we infer}
%        W_1'(s)^2+W_2'(s)^2
%        &= \Big( \Big(\frac{\sqrt\mu \kappa(s)^2}{2(\kappa(s)^2-\mu)}\Big)^2+
%        \Big(\frac{\mu\kappa'(s)}{\kappa(s)(\kappa(s)^2-\mu)} \Big)^2 \Big)(W_1(s)^2+W_2(s)^2) \\
%        &=  \frac{\mu(\kappa(s)^6+4\mu\kappa'(s)^2)}{4\kappa(s)^2(\kappa(s)^2-\mu)^2} (W_1(s)^2+W_2(s)^2) \\
%        &\stackrel{\ref{prop:W1W2_orbitlike}.(i)}{=} \frac{1-k^2}{2-k^2}
%        \frac{\mu(\kappa(s)^6+4\mu\kappa'(s)^2)}{\kappa(s)^4(\kappa(s)^2-\mu)} \\
%        &\stackrel{\eqref{eq:ODE_kappa}}{=}  \frac{1-k^2}{2-k^2} \frac{\kappa(s)^4+4\mu}{\kappa(s)^4},
%      \intertext{which gives (iv). 
     The items (iv),(v) follow directly from
     Proposition~\ref{prop:w1w2_orbitlike}~(iv),(v).
     Finally, we set $t:= -s-2s_*+2l\sqrt{2-k^2}K(k)$ so that (vi) results from}
     W_1(s)+iW_2(s)
     &= (w_1+iw_2) \big( -K(k)+\frac{-(t+s_*)+2l\sqrt{2-k^2}K(k)}{\sqrt{2-k^2}} \big) \\
     &= (w_1+iw_2) \big( K(k)-\frac{t+s_*}{\sqrt{2-k^2}} + (2l-2)K(k)\big) \\
     &\stackrel{\ref{prop:w1w2_orbitlike}.(vi)}{=}  (w_1+iw_2)
     \big(K(k)-\frac{t+s_*}{\sqrt{2-k^2}}\big) e^{i(l-1)\Delta\theta_k}  \\
     &\stackrel{\ref{prop:w1w2_orbitlike}.(i)}{=} (-w_1+iw_2) \big(-K(k)+\frac{t+s_*}{\sqrt{2-k^2}}\big)
     e^{i(l-1)\Delta\theta_k}  \\
     &= (-W_1(t)+iW_2(t))  e^{i(l-1)\Delta\theta_k}.
   \end{align*} 
   \qed

  Next we prove the assertions from Proposition~\ref{prop:properties_vartheta_orbitlike}, namely  
    $$
      \vartheta' = \frac{\sqrt\mu \kappa^2}{2(\kappa^2-\mu)},\qquad
       \matII{W_1}{W_2}{W_1'}{W_2'}
      =  
      \matII{0}{\frac{\sqrt{\kappa^2-\mu}}{\kappa}}{\frac{-\sqrt{\mu}\kappa}{2\sqrt{\kappa^2-\mu}}}{
      \frac{\mu\kappa'}{\kappa^2\sqrt{\kappa^2-\mu}}}
      \matII{\cos(\vartheta)}{\sin(\vartheta)}{-\sin(\vartheta)}{\cos(\vartheta)}
    $$
    as well as 
    \begin{itemize}
	  \item[(i)] $\vartheta(-s_*+l\sqrt{2-k^2}K(k)) = \frac{l-1}{2}\Delta\theta_k$,
      \item[(ii)] $\vartheta(s+2l\sqrt{2-k^2}K(k))-\vartheta(s) = l\Delta\theta_k$,
      \item[(iii)] $\vartheta(-s_*+s)+\vartheta(-s_*-s) = -\Delta\theta_k$.
    \end{itemize}
   To this end we set $c(s):= \frac{\sqrt{\kappa(s)^2-\mu}}{\kappa(s)}$ so that
   the defining equation for $\vartheta$ \eqref{eq:def_theta} reads
   $$
      \vecII{W_1(s)}{W_2(s)} = c(s) \vecII{-\sin(\vartheta(s))}{\cos(\vartheta(s))}.
    $$
    Differentiating both sides and using Proposition~\ref{prop:W1W2_orbitlike}~(ii),(iii) we arrive at
    \begin{align*}
      &-c(s)\vartheta'(s) \vecII{\cos(\vartheta(s))}{\sin(\vartheta(s))}
      + c'(s) \vecII{-\sin(\vartheta(s))}{\cos(\vartheta(s))} \quad
      =\quad \vecII{W_1'(s)}{W_2'(s)} \\
      &= \frac{\mu\kappa'(s)}{\kappa(s)(\kappa(s)^2-\mu)} \vecII{W_1(s)}{W_2(s)}
      + \frac{\sqrt\mu\kappa(s)^2}{2(\kappa(s)^2-\mu)} \vecII{-W_2(s)}{W_1(s)}   \\
      &= \frac{\mu\kappa'(s)c(s)}{\kappa(s)(\kappa(s)^2-\mu)} \vecII{-\sin(\vartheta(s))}{\cos(\vartheta(s))}
      - \frac{\sqrt\mu\kappa(s)^2c(s)}{2(\kappa(s)^2-\mu)} \vecII{\cos(\vartheta(s))}{\sin(\vartheta(s))}.
    \end{align*}
    This identity gives
    $$
      \vartheta'(s) = \frac{\sqrt{\mu}\kappa(s)^2}{2(\kappa(s)^2-\mu)},\qquad
      c'(s)
      = \frac{\mu\kappa'(s)}{\kappa(s)(\kappa(s)^2-\mu)}c(s)
      =  \frac{\mu\kappa'(s)}{\kappa(s)^2\sqrt{\kappa(s)^2-\mu}} 
    $$
    and thus the first claim follows from
    \begin{align*}
      \matII{W_1(s)}{W_2(s)}{W_1'(s)}{W_2'(s)}
      &=
      \matII{0}{c(s)}{-c(s)\vartheta'(s)}{c'(s)}
      \matII{\cos(\vartheta(s))}{\sin(\vartheta(s))}{-\sin(\vartheta(s))}{\cos(\vartheta(s))}.
%       \\
%       &= \sqrt{2-k^2}
%       \matII{0}{\frac{\sqrt{\kappa^2-\mu}}{\kappa}}{\frac{-\sqrt{\mu}\kappa}{2\sqrt{\kappa^2-\mu}}}{
%       \frac{\mu\kappa'}{\kappa^2\sqrt{\kappa^2-\mu}}}\matII{\cos(\vartheta)}{\sin(\vartheta)}{-\sin(\vartheta)}{\cos(\vartheta)}.
    \end{align*}
    Now we prove (i),(ii),(iii). For $l\in\Z$ we have
    \begin{align*}
       \vartheta(-s_*+ l\sqrt{2-k^2}K(k))-\vartheta(-s_*)
       &= \int_{-s_*}^{-s_*+l\sqrt{2-k^2}K(k)} \vartheta'(s) \,ds\\
       &= \int_{-s_*}^{-s_*+l\sqrt{2-k^2}K(k)} \frac{\sqrt{\mu}\kappa(s)^2}{2(\kappa(s)^2-\mu)} \,ds \\
       &= \int_0^{lK(k)}
       \frac{\sqrt{2-k^2}\sqrt{\mu}  \kappa(\sqrt{2-k^2}s-s_*)^2}{2(\kappa(\sqrt{2-k^2}s-s_*)^2-\mu)} \,ds
       \\
       &\stackrel{\eqref{eq:def_kappa_orbitlike}}{=}  \int_0^{lK(k)}
       \frac{\sqrt{1-k^2}\sqrt{2-k^2}\dn(s,k)^2}{(2-k^2)\dn(s,k)^2-1+k^2} \,ds   \\
       &=  \sqrt{1-k^2}\sqrt{2-k^2} \int_0^{lK(k)}
       \frac{\dn(s,k)^2}{(2-k^2)\dn(s,k)^2-1+k^2} \,ds   \\
       &= \frac{l}{2}\cdot 2\sqrt{2-k^2}\sqrt{1-k^2} \int_0^{K(k)}
       \frac{\dn(s,k)^2}{1-k^2(2-k^2)\sn(s,k)^2} \,ds.
     \end{align*}
     Here we used $\dn^2=1-k^2\sn^2$ and $\dn(2K(k)-s,k)=\dn(2K(k)+s,k)=\dn(s,k)$ in the last equation, 
     see~\eqref{eq:Jacobi_trigonometric_identities}. The proof of (i) is finished once we have proved that 
     the second factor equals $\Delta\theta_k$. Using the formula 412.04 from \cite{ByrFri_handbook} for
     $\alpha:=k\sqrt{2-k^2}>k$ we get
     \begin{align*}
       &\;2\sqrt{2-k^2}\sqrt{1-k^2} \int_0^{K(k)}
       \frac{ \dn(s,k)^2}{1-k^2(2-k^2)\sn(s,k)^2} \,ds \\
       &= 2\sqrt{2-k^2}\sqrt{1-k^2} \Big(  K(k) +
       \frac{\pi   \sqrt{\alpha^2-k^2}
       (1-\Lambda_0(\arcsin(\sqrt{(1-\alpha^2)(k')^{-2}}),k))}{2\alpha\sqrt{1-\alpha^2}} \Big)  \\
       &= 2\sqrt{2-k^2}\sqrt{1-k^2} \Big(
       K(k) + \frac{\pi(1-\Lambda_0(\arcsin(k'),k)) }{2\sqrt{1-k^2}\sqrt{2-k^2}} \Big)  \\
       &\stackrel{\eqref{eq:def_rotation}}{=} \Delta\theta_k.
     \end{align*}
     So (i) is proved  once we have shown $\vartheta(-s_*)=-\Delta\theta_k/2$. This identity, however, follows
     from $-2\pi<\vartheta(-s_*)\leq 0$ and 
     \begin{align*}
       e^{i\vartheta(-s_*)}
       \stackrel{\eqref{eq:def_theta}}{=} \frac{(W_2-iW_1)(-s_*)}{\sqrt{2-k^2}}
       \stackrel{\eqref{eq:def_Wj}}{=} (w_2-iw_1)(-K(k))
       = -i(w_1+iw_2)(-K(k))
       \stackrel{\ref{prop:w1w2_orbitlike}.(vii)}{=} e^{- i\Delta\theta_k /2}.
     \end{align*}
     Item (ii) holds thanks to the $2\sqrt{2-k^2}K(k)-$periodicity of $\vartheta'$ and (i), namely
     $$
        \vartheta(s+ 2l\sqrt{2-k^2}K(k))-\vartheta(s)
       = \int_{s}^{s+2l\sqrt{2-k^2}K(k)} \vartheta'(z) \,dz\\
       = \int_{-s_*}^{-s_*+2l\sqrt{2-k^2}K(k)}  \vartheta'(z) \,dz 
       = l\Delta\theta_k
     $$
     Finally, $\vartheta'(-s_*+s)=\vartheta'(-s_*-s)$ and $2\vartheta(-s_*)=-\Delta\theta_k$ imply~(iii).  
     \qed

   \subsection{Proof of Proposition \ref{prop:w1w2_wavelike} and
    Proposition~\ref{prop:properties_vartheta_wavelike}}  \label{subsec:proof_propositions_wavelike}
   
%       \item Ince, The periodic Lamé functions
%       \item The first $2n+1$ eigenvalues are given by Lamé polynomials of degree $n$, in case $n=1$:
%   $$
%     \dn(t),h=k^2,\quad
%     \cn(t),h=1,\quad
%     \sn(t),h=1+k^2.
%   $$ 
%       \item   For $n=2$ the first three eigenpairs are
%   $$
%    \cn(t)\dn(t),h=1+k^2,\quad
%    \sn(t)\dn(t),h=1+4k^2,\quad
%    1-(1+k^2-\sqrt{1-k^2+k^4})\sn(t), h=2(1+k^2-\sqrt{1-k^2+k^4}),
%   $$ 
%   see Ivey, Singer oder Ine p.54 ff. Der letztgenannte EW ist der kleinste.

    We first show  that for all $k\in (0,1)$ the ordinary differential equation $w''-
    \frac{2(1-k^2)}{\cn^2(s,k)}w=0$ has a fundamental system $\{w_1,w_2\}$ on $(-K(k),K(k))$ such that
     \begin{itemize}
       \item[(i)] $w_1$ is odd about 0, $w_2$ is even about 0, $w_2(0)>0>w_1'(0)$,
       \item[(ii)] $\cn^2(s,k)(w_2(s)^2 - w_1(s)^2) = 1-k^2+(2k^2-1)\cn^2(s,k)$ on $(-K(k),K(k))$,
       \item[(iii)] $\cn^3(s,k)(w_2(s)w_2'(s) - w_1(s)w_1'(s)) = (1-k^2)\sn(s,k)\dn(s,k)$ on $(-K(k),K(k))$,
       %\item[(iv)] $\cn^4(s,k)(w_2'(s)^2-w_1'(s)^2) =
       %(1-k^2)(1-k^2-k^2\cn^4(s,k))$ on $(-K(k),K(k))$,
       \item[(iv)] $w_2(s)w_1'(s)-w_1(s)w_2'(s)= -k\sqrt{(1-k^2)(2k^2-1)}$ on $(-K(k),K(k))$, 
        \item[(v)] The functions $\cn(\cdot,k)w_1,\cn(\cdot,k)w_2$ have unique smooth extensions $\hat
       w_1,\hat w_2:\R\to\R$ such that the functions $w_1:=\frac{\hat
       w_1}{\cn(\cdot,k)},w_2:=\frac{\hat w_2}{\cn(\cdot,k)}$ build a fundamental system on
       each connected component of $\R\sm\{\kappa=0\}$. Moreover, $\hat w_2$ is positive on $\R$, $s\hat
       w_1(s)$ is negative on $\R\sm\{0\}$ and, as $s\to\pm\infty$, we have $\hat w_1(s)\to \mp\infty,\hat
       w_2(s)\to\infty$ exponentially and $\hat w_1(s)/\hat w_2(s)\to \mp 1$. 
     \end{itemize}
   
   \medskip
    
   The proof is similar to the orbitlike case. We define
   $$
    	w_\pm (s) 
    	:= \frac{\Theta_1(s\pm\alpha_k,k)}{\Theta(s,k)} e^{\mp s\zeta(\alpha_k,k)} 
    	\quad\text{where }\dn^2(\alpha_k,k)=k^2, \text{ i.e. }\alpha_k:= F(k'/k,k)
   $$
     for $\Theta,\Theta_1,\zeta$ as in section~\ref{sec:Jacobi}.
     Notice that $k\in (\frac{1}{\sqrt 2},1)$ implies $k'/k\in (0,1)$ and thus $\alpha_k$ is real. 
     From Proposition~6 in \cite{Val_Heun} (for
     parameters $m_1=0,m_2=1,m_3=0,m_0=0,A=2k^2,s=\frac{2k^2-1}{4},\sigma=2k^2-1$) we infer that  
	 \begin{equation}\label{eq:defn_w1w2_wavelike}
	    w_1(s):= \frac{k(w_+(s) - w_-(s))}{(w_+(0)+w_-(0))\cn(s,k)},\qquad 
	    w_2(s):= \frac{k(w_+(s) + w_-(s))}{(w_+(0)+w_-(0))\cn(s,k)}
	 \end{equation}
      is a fundamental system of the ODE on each connected component of $\R\sm \{\cn(\cdot,k)=0\}$, which
      satisfies (i) thanks to $w_\pm(-s)=w_\mp(s)$.
      The same proposition tells us that the product $w_+w_-$ is a real multiple of the function $1-k^2+(2k^2-1)\cn^2(\cdot,k)$ so that our choice for
      $w_1,w_2$ yields (ii). Differentiating this identity gives (iii) and (iv). Item (v) follows
      from the constancy of the Wronskian and $w_2(0)w_1'(0)-w_1(0)w_2'(0)= -k
      \sqrt{(1-k^2)(2k^2-1)}$. Finally, \eqref{eq:defn_w1w2_wavelike} shows
      that property (v) holds for the choice
      
      \begin{align*}
        \hat w_1(s)
        &:=  \frac{k(w_+(s) - w_-(s))}{w_+(0)+w_-(0)}
        =\frac{k(\Theta_1(s+\alpha_k,k)e^{-s\zeta(\alpha_k,k)}
        - \Theta_1(s-\alpha_k,k)e^{s\zeta(\alpha_k,k)}}{ 
        (w_+(0)+w_-(0))\Theta(s,k)}, \\
        \hat w_2(s)
        &:= \frac{k(w_+(s) + w_-(s))}{w_+(0)+w_-(0)}
        = \frac{k(\Theta_1(s+\alpha_k,k)e^{-s\zeta(\alpha_k,k)}
        + \Theta_1(s-\alpha_k,k)e^{s\zeta(\alpha_k,k)}}{ 
        (w_+(0)+w_-(0))\Theta(s,k)},
      \end{align*}
      since $\zeta(\alpha_k,k)>0$ and $\Theta_1(\cdot,k),\Theta(\cdot,k)$ are positive periodic functions.
      Notice that the all complex zeros of $\Theta_1,\Theta$ are known to be non-real, see 1051.07 in
      \cite{ByrFri_handbook}. 
      \qed
    
     \medskip
   
    Now we prove Proposition~\ref{prop:properties_vartheta_wavelike}. For $W_1,W_2,\vartheta$ as in
   \eqref{eq:def_Wj_wavelike},\eqref{eq:def_theta_wavelike} we have to prove that the following identities
   hold on $\R\sm\{\kappa=0\}$:
    $$
      \vartheta'
       = \frac{\sqrt{|\mu|} \kappa^2}{2(\kappa^2-\mu)},\quad
       \matII{W_1}{W_2}{W_1'}{W_2'}
      =  
      \matII{0}{\frac{\sqrt{\kappa^2-\mu}}{\kappa}}{-\frac{\sqrt{|\mu|}\kappa}{2\sqrt{\kappa^2-\mu}}}{
      \frac{\mu\kappa'}{\kappa^2\sqrt{\kappa^2-\mu}}}
      \matII{\cosh(\vartheta)}{-\sinh(\vartheta)}{-\sinh(\vartheta)}{\cosh(\vartheta)}.
    $$
    
    \medskip

    We proceed as in the orbitlike case. With $c(s):=
    \frac{\sqrt{\kappa(s)^2-\mu}}{\kappa(s)}$ we may write 
    $$
      \vecII{W_1(s)}{W_2(s)} = c(s) \vecII{-\sinh(\vartheta(s))}{\cosh(\vartheta(s))}.
    $$
    Differentiating both sides and using Proposition~\ref{prop:W1W2_wavelike}~(ii),(iii) we arrive at
    \begin{align*}
      &-c(s)\vartheta'(s) \vecII{\cosh(\vartheta(s))}{-\sinh(\vartheta(s))}
      + c'(s) \vecII{-\sinh(\vartheta(s))}{\cosh(\vartheta(s))} \;
      =\; \vecII{W_1'(s)}{W_2'(s)} \\
      &= \frac{\mu\kappa'(s)}{\kappa(s)(\kappa(s)^2-\mu)} \vecII{W_1(s)}{W_2(s)}
      - \frac{\sqrt{|\mu|}\kappa(s)^2}{2(\kappa(s)^2-\mu)} \vecII{W_2(s)}{W_1(s)}   \\
      &= \frac{\mu\kappa'(s)c(s)}{\kappa(s)(\kappa(s)^2-\mu)}
      \vecII{-\sinh(\vartheta(s))}{\cosh(\vartheta(s))} 
      - \frac{\sqrt{|\mu|}\kappa(s)^2c(s)}{2(\kappa(s)^2-\mu)}
      \vecII{\cosh(\vartheta(s))}{-\sinh(\vartheta(s))}.
    \end{align*}
    This identity gives
    $$
      \vartheta'(s) = \frac{\sqrt{|\mu|}\kappa(s)^2}{2(\kappa(s)^2-\mu)},\qquad
      c'(s)
      = \frac{\mu\kappa'(s)c(s)}{\kappa(s)(\kappa(s)^2-\mu)} 
      =  \frac{\mu\kappa'(s)}{\kappa(s)^2\sqrt{\kappa(s)^2-\mu}}
    $$
    and the claim follows from
    \begin{align*}
      \matII{W_1}{W_2}{W_1'}{W_2'}
      &= 
       \matII{0}{c}{-c\vartheta'}{c'}\matII{\cosh(\vartheta)}{
       -\sinh(\vartheta)}{-\sinh(\vartheta)}{\cosh(\vartheta)}.
    \end{align*}
    \qed

  \section*{Acknowledgements} 
    The author would like to thank the Scuola Normale Superiore di Pisa for the hospitality during his
    stay there. The work on this project was supported by the Deutsche Forschungsgemeinschaft (DFG, German
    Research Foundation) - grant number {MA~6290/2-1}.

\medskip
  
\bibliographystyle{plain}
\bibliography{doc}

\end{document}